\documentclass[11pt, reqno]{article}
\usepackage[normalem]{ulem}

\title
{\textsc{ Uniform Spanning Forests of Planar Graphs}}
\author{Tom Hutchcroft and Asaf Nachmias}

\date{\today}

\usepackage{amsmath,amssymb,amsfonts,amsthm}
\usepackage{color}

\usepackage{graphicx}

\usepackage{esvect}

\usepackage{bbm}

\usepackage{caption}
\usepackage{thmtools}
\usepackage{commath}
\usepackage{mathrsfs}

\usepackage{enumitem}

\usepackage{mathtools}
\usepackage{graphicx}


\usepackage[colorlinks=true,linkcolor=blue,citecolor=blue,urlcolor=blue,pdfborder={0 0 0}]{hyperref}
\usepackage{cleveref}
\usepackage[numeric,initials,nobysame]{amsrefs}

\crefname{theorem}{Theorem}{Theorems}
\crefname{thm}{Theorem}{Theorems}
\crefname{lemma}{Lemma}{Lemmas}
\crefname{lem}{Lemma}{Lemmas}
\crefname{remark}{Remark}{Remarks}
\crefname{prop}{Proposition}{Propositions}
\crefname{defn}{Definition}{Definitions}
\crefname{corollary}{Corollary}{Corollaries}
\crefname{conjecture}{Conjecture}{Conjectures}
\crefname{question}{Question}{Questions}
\crefname{chapter}{Chapter}{Chapters}
\crefname{section}{Section}{Sections}
\crefname{figure}{Figure}{Figures}

\theoremstyle{plain}
\newtheorem{thm}{Theorem}[section]
\newtheorem{lemma}[thm]{Lemma}
\newtheorem{lem}[thm]{Lemma}
\newtheorem{corollary}[thm]{Corollary}
\newtheorem{prop}[thm]{Proposition}

\newtheorem{question}[thm]{Question}
\theoremstyle{definition}

\theoremstyle{remark}
\newtheorem{remark}[thm]{Remark}

\numberwithin{equation}{section}

\renewcommand{\P}{\mathbb P}
\newcommand{\E}{\mathbb E}
\newcommand{\C}{\mathbb C}
\newcommand{\R}{\mathbb R}
\newcommand{\Z}{\mathbb Z}
\newcommand{\N}{\mathbb N}

\newcommand{\F}{\mathfrak F}


\newcommand{\cE}{\mathcal E}
\newcommand{\cF}{\mathcal F}

\newcommand{\cL}{\mathcal L}

\newcommand{\cR}{\mathcal R}


\newcommand{\sA}{\mathscr A}
\newcommand{\sB}{\mathscr B}
\newcommand{\sC}{\mathscr C}
\newcommand{\sD}{\mathscr D}

\newcommand{\sR}{\mathscr R}


\newcommand{\bbG}{\mathbb G}
\newcommand{\bbH}{\mathbb H}


\newcommand{\fA}{\mathfrak A}

\newcommand{\fC}{\mathfrak C}

\newcommand{\fF}{\mathfrak F}

\newcommand{\fH}{\mathfrak H}

\newcommand{\WUSF}{\mathsf{WUSF}}
\newcommand{\FUSF}{\mathsf{FUSF}}

\newcommand{\UST}{\mathsf{UST}}


\numberwithin{equation}{section}


\newcommand{\D}{\mathbb{D}}
\newcommand{\ReffW}{\mathscr{R}_{\mathrm{eff}}^\mathrm{W}}
\newcommand{\ReffF}{\mathscr{R}_{\mathrm{eff}}^\mathrm{F}}
\newcommand{\Reff}{\mathscr{R}_{\mathrm{eff}}}

\newcommand{\CeffF}{\mathscr{C}_{\mathrm{eff}}^\mathrm{F}}
\newcommand{\Ceff}{\mathscr{C}_{\mathrm{eff}}}

\newcommand{\eps}{\varepsilon}
\newcommand{\wusf}{\WUSF}

\newcommand{\bP}{\mathbf{P}}
\newcommand{\bE}{\mathbf{E}}
\newcommand{\past}{\mathrm{past}}

 \newcommand{\barraydebug}{\begin{array}}
 \newcommand{\earraydebug}{\end{array}}
\newcommand{\ah}{a_\bbH}
\newcommand{\diam}{\mathrm{diam}}
\newcommand{\area}{\mathrm{area}}

\newcommand{\Pp}[1]{P(#1)}
\newcommand{\Pd}[1]{P^\dagger(#1)}
\newcommand{\bM}{\mathbf M}

\newcommand{\bigmid}{\;\ifnum\currentgrouptype=16 \middle\fi|\;}

\DeclareMathOperator{\sgn}{sgn}
\usepackage[margin=1in]{geometry}

\newcommand{\tom}[1]{{#1}}

\newcommand{\tomm}[1]{{#1}}

\newcommand{\asaf}[1]{{#1}}


\begin{document}

\maketitle

\begin{abstract}
We prove that the free uniform spanning forest of any bounded degree proper plane graph
is connected almost surely, answering a question of Benjamini, Lyons, Peres and Schramm.

We provide a quantitative form of this result,
calculating the critical exponents governing the geometry of the uniform spanning forests of transient proper plane graphs with bounded degrees and codegrees. We find that the same exponents hold universally over this entire class of graphs provided that measurements are made using the hyperbolic geometry of their circle packings rather than their usual combinatorial geometry.
\end{abstract}

\section{Introduction}

The \textbf{uniform spanning forests} (USFs) of an infinite, locally finite, connected graph $G$ are defined as weak limits of uniform spanning trees of finite subgraphs of $G$. These limits can be taken with either \textbf{free} or \textbf{wired} boundary conditions, yielding the \textbf{free uniform spanning forest (FUSF)} and the \textbf{wired uniform spanning forest (WUSF)} respectively.
Although the USFs are defined as limits of random spanning trees, they need not be connected. Indeed, a principal result of Pemantle \cite{Pem91} is that the WUSF and FUSF of $\Z^d$ coincide, and that they are almost surely (a.s.) a single tree if and only if $d \leq 4$.
Benjamini, Lyons, Peres and Schramm \cite{BLPS} (henceforth referred to as BLPS) later gave a complete characterisation of connectivity of the WUSF, proving that the WUSF of a graph is connected a.s.\ if and only if two independent random walks on the graph intersect a.s.\ \cite[Theorem 9.2]{BLPS}.


The FUSF is much less understood. No characterisation of its connectivity is known, nor has it been proven that connectivity is a zero-one event.
One class of graphs in which the FUSF is relatively well understood are the \emph{proper plane graphs}.
Recall that a \textbf{planar graph} is a graph that can be embedded in the plane, while a \textbf{plane graph} is a planar graph $G$ together with a specified embedding of $G$ in the plane (or some other topological disc). A plane graph is \textbf{proper} if the embedding is proper, meaning that every compact subset of the plane (or whatever topological disc $G$ was embedded in) intersects at most finitely many edges and vertices of the drawing (see \cref{Sec:Embeddings} for further details).
For example, every tree can be drawn in the plane without accumulation points, while the product of $\Z$ with a finite cycle is planar but cannot be drawn in the plane without an accumulation point (and therefore has no proper embedding in the plane).

BLPS proved that the free and wired uniform spanning forests are distinct
whenever $G$ is a transient proper plane graph with bounded degrees, and
asked \cite{BLPS}*{Question 15.2} whether the FUSF is a.s.\ connected in this class of graphs.  They proved that this is indeed the case when $G$ is a self-dual plane Cayley graph that is rough-isometric to the hyperbolic plane \cite{BLPS}*{Theorem 12.7}. These hypotheses were later weakened by Lyons, Morris and Schramm \cite[Theorem 7.5]{LMS08}, who proved  that the FUSF of any bounded degree proper plane graph that is rough-isometric to the hyperbolic plane is a.s.~connected.

Our first result provides a complete answer to \cite{BLPS}*{Question 15.2}, obtaining optimal hypotheses under which the FUSF of a proper plane graph is a.s.\ connected. The techniques we developed to answer this question also allow us to prove quantitative versions of this result, which we describe in the next section.
 We state our result in the natural generality of proper plane networks. Recall that a \textbf{network} $(G,c)$ is a locally finite, connected graph $G=(V,E)$ together with a function $c:E\to(0,\infty)$ assigning a positive \textbf{conductance} to each edge of $G$. The \textbf{resistance} of an edge $e$ in a network $(G,c)$ is defined to be $1/c(e)$. The uniform spanning forest of a finite network gives each tree probability proportional to the product of the conductances of its edges. The free and wired USFs of an infinite network are obtained, as before, by taking weak limits over exhaustions.  Graphs may be considered as networks by setting $c\equiv1$. A \textbf{plane network} is a planar graph $G$ together with specified conductances and a specified drawing of $G$ in the plane.

\begin{thm}
  \label{T:mainthm}
	The free uniform spanning forest is almost surely connected in any bounded degree proper plane network with edge conductances bounded above.
\end{thm}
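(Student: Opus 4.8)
The plan is to reduce the connectivity of the FUSF to a statement about the wired uniform spanning forest of the planar dual network together with the key planar duality fact that, for a proper plane network, the FUSF of $G$ and the WUSF of the dual $G^\dagger$ are coupled so that an edge $e$ is in the FUSF of $G$ if and only if its dual edge $e^\dagger$ is \emph{not} in the WUSF of $G^\dagger$ (with the dual conductances $c^\dagger(e^\dagger)=1/c(e)$). Under our hypotheses --- bounded degree in $G$ and conductances of $G$ bounded above --- the dual network $G^\dagger$ has conductances bounded below, and (assuming, as we may by the quantitative machinery developed in the earlier sections, that $G$ also has bounded codegrees, with the general case obtained by a separate truncation/limiting argument) $G^\dagger$ has bounded degrees as well. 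So it suffices to show that the complement of the WUSF of $G^\dagger$ is connected, i.e.\ that the WUSF of $G^\dagger$ has no ``essential cycles'' in the plane that would disconnect the complement.

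The key step is to translate ``the FUSF of $G$ is connected'' into a statement that no bi-infinite path in $G^\dagger$ consisting of edges of the WUSF of $G^\dagger$ separates the plane in a way that cuts $G$ into two infinite pieces; more precisely, by standard planar-duality reasoning (cf.\ BLPS and Lyons--Morris--Schramm), the FUSF of $G$ fails to be connected precisely when the WUSF of the dual contains two disjoint bi-infinite simple paths, or equivalently when the free-minus-wired discrepancy is witnessed by a dual cycle ``at infinity.'' I would then invoke the structure theory of the WUSF: each component of $\WUSF$ is one-ended a.s.\ in any graph with the relevant properties (using the Benjamini--Lyons--Peres--Schramm one-endedness results, valid here since $G^\dagger$ has bounded degree), so dual WUSF components do not contain bi-infinite simple paths that could do the separating --- unless two distinct components ``converge'' to the same end of the plane. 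Ruling this out is where the quantitative input enters: using circle packing, one embeds $G$ (and hence $G^\dagger$) so that ends of the plane correspond to boundary points of the disc, and one shows via the hyperbolic-geometry exponents (the harmonic-measure and effective-resistance estimates established earlier in the paper, which are uniform over the class) that two independent wired-tree branches in $G^\dagger$ started far apart cannot both limit to the same boundary point: the relevant intersection/confluence probability decays, so a Borel--Cantelli argument along an exhaustion by hyperbolic balls kills all such bad configurations.

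Concretely, the steps in order: (1) reduce to bounded degree and bounded codegree by a monotonicity/limiting argument, replacing high-degree faces by finite gadgets or passing to subnetworks and using that FUSF connectivity passes to appropriate limits; (2) set up the Riemann-mapping/circle-packing framework for $G$, getting a locally finite packing in the unit disc with the carrier equal to the disc (transience and bounded codegree give this via He--Schramm), and transfer it to $G^\dagger$; (3) record the planar duality coupling $\mathsf{FUSF}(G)\leftrightarrow \text{complement of }\WUSF(G^\dagger)$ and restate non-connectivity of $\mathsf{FUSF}(G)$ as the existence of two dual WUSF paths escaping to a common boundary point; (4) use Wilson's algorithm rooted at infinity to generate $\WUSF(G^\dagger)$ and the earlier estimates on harmonic measure / loop-erased walk in the circle-packed coordinates to bound, for two vertices $x,y$ of $G^\dagger$ whose circle-packing images are within hyperbolic distance $R$ of each other but whose tree-paths are required to merge near the boundary, the probability of this event by something summable in $R$; (5) apply Borel--Cantelli over a hyperbolic exhaustion to conclude that almost surely no such pair exists, hence $\mathsf{FUSF}(G)$ is connected.

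The main obstacle I expect is step (4): controlling the loop-erased random walk paths all the way out to the boundary of the disc. Random walk on a circle-packed transient triangulation converges to a boundary point, but the loop-erasure can behave quite differently near the boundary, and one must rule out two distinct branches spiraling into the same boundary point. This requires the genuinely quantitative harmonic-measure and beurling-type estimates (uniform over the class because they are stated in the intrinsic hyperbolic metric of the packing), combined with a careful stationarity/mass-transport argument to reduce the ``some pair of components merges at infinity'' event to a local one whose probability can be estimated. Getting the error terms to be summable --- rather than merely $o(1)$ --- along the exhaustion, uniformly in the graph, is the crux, and is exactly what the critical-exponent computations earlier in the paper are designed to provide.
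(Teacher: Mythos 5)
Your overall framework is right: you reduce to the dual statement that every component of the WUSF of $G^\dagger$ is one-ended (this is exactly the paper's Theorem~\ref{T:planeWUSF}, invoked via the BLPS duality Theorem~\ref{thm:duality}), you use circle packing to obtain a hyperbolic geometry, and you identify that the non-locally-finite-dual case needs a separate argument. But the technical route you propose in steps (4)--(5) is genuinely different from what the paper does, and the gap you flag in your own final paragraph is real and is not filled by the paper either.

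Specifically, you propose generating $\WUSF(G^\dagger)$ via Wilson's algorithm rooted at infinity and then ruling out, via Borel--Cantelli, the possibility that two loop-erased-random-walk branches spiral into the same boundary point. As you note, controlling the loop-erasure all the way to $\partial\D$ is delicate: the estimates of \cite{ABGN14,Chelkak} control the \emph{walk} near the boundary, not the loop-erasure, and there is no off-the-shelf estimate on the probability that two LERW branches merge at a common boundary point that is summable along a hyperbolic exhaustion. The paper does not attempt this. Instead, it proves one-endedness of WUSF components by a quite different mechanism: the \emph{strong spatial Markov property}. Fix an edge $e=(x,y)$; on the event that $e\in\F$ and both endpoints lie on doubly-infinite paths, one algorithmically explores the rightmost path $\eta^x$ from $x$ and the leftmost path $\eta^y$ from $y$ out to the hyperbolic ball of radius $\log(1/\eps)$, showing that the set $K$ of revealed edges is a \emph{local set} for $\F$. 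Conditionally on $\cF_K$, the spatial Markov property (Proposition~\ref{prop:Markov}) reduces $\P(e\in\F\mid\cF_K)$ to $c(e)\ReffW(\eta^x\leftrightarrow\eta^y;\,G-K_c)$ by Kirchhoff's formula, and one bounds this wired effective resistance by $O(1/\log(1/\eps))$ using the method of random sets (Lemma~\ref{lem:randomsets}) on an $O(\log(1/\eps))$-family of disjoint circular annuli around the tip of $\eta^x$ (Lemmas~\ref{lem:uniformdisjointannuli} and~\ref{lem: flow}). Letting $\eps\to0$ gives the result. This completely sidesteps any control of LERW near the boundary and requires no harmonic-measure or Borel--Cantelli argument.

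Two smaller points. First, the paper's Theorem~\ref{T:planeWUSF} needs only bounded codegree and bounded resistances on the graph it is applied to, so no bounded-codegree assumption on $G$ itself is needed: if $G^\dagger$ is locally finite, bounded degrees of $G$ give bounded codegrees of $G^\dagger$, and bounded conductances of $G$ give bounded resistances of $G^\dagger$, and one is done. The only additional work is when $G$ has an infinite face (so $G^\dagger$ is not locally finite); the paper handles this by triangulating the infinite faces with new edges $e_i$ carrying tiny conductances $c'(e_i)=2^{-i-1}\ReffF(e_i^-\leftrightarrow e_i^+;G)^{-1}$, so that with positive probability none of them appear in the FUSF, and then applies the spatial Markov property to transfer the conclusion back to $G$; this is much more specific than a generic ``truncation/limiting argument.'' Second, your paraphrase of the duality as ``the WUSF of the dual contains two disjoint bi-infinite simple paths'' is not the precise statement; the correct dual condition is that some \emph{single} component of $\WUSF(G^\dagger)$ is multi-ended, equivalently that some component contains a bi-infinite simple path.
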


In light of the duality between the free and wired uniform spanning forests of proper plane graphs (see \cref{Sec:Duality}),
the FUSF of a proper plane graph $G$ with locally finite dual $G^\dagger$ is connected a.s.\ if and only if every component of the WUSF of $G^\dagger$ is a.s.\ \emph{one-ended}. Thus, \cref{T:mainthm} follows easily from the dual statement \cref{T:planeWUSF} below. (The implication is immediate when the dual graph is locally finite.) Recall that an infinite graph $G=(V,E)$ is said to be \textbf{one-ended} if, for every finite set $K \subset V$, the subgraph induced by $V \setminus K$ has exactly one infinite connected component.  In particular, an infinite tree is one-ended if and only if it does not contain a simple bi-infinite path.
Components of the WUSF are known to be one-ended a.s.\ in several other classes of graphs \cite{Pem91,BLPS,LMS08,AL07,hutchcroft2015interlacements,H15}, and  are recurrent in any graph \cite{Morris03}. Recall that a plane graph is said to have \textbf{bounded codegree} if its dual has bounded degree.

\begin{thm}\label{T:planeWUSF}
	Every component of the wired uniform spanning forest is one-ended almost surely in any bounded codegree proper plane network with edge resistances bounded above.
\end{thm}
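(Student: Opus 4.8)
The plan is to prove \cref{T:planeWUSF} by exploiting Wilson's algorithm together with the circle packing of the plane network and the geometry it induces. The key structural fact about the WUSF is that the past of a vertex $v$ (the finite "future-free" part of its tree lying between $v$ and infinity along the tree) is governed by loop-erased random walk: a component fails to be one-ended precisely when, with positive probability, there is a bi-infinite simple path in the forest, equivalently, when the loop-erased random walk from $v$ has an infinite intersection pattern allowing a path to "come back from infinity." So the first step is to reduce one-endedness to a quantitative statement: it suffices to show that for a uniformly chosen (or any) vertex $v$, the probability that the past of $v$ has diameter at least $R$ — measured appropriately — decays, and more precisely that $\P(v \text{ is in the past of some vertex far away})\to 0$. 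Concretely, by a Borel--Cantelli / tightness argument it is enough to show $\sum_n \P(\text{some bi-infinite path in } \F \text{ passes through a fixed vertex, separated by level } n)<\infty$, which via Wilson's algorithm rooted at infinity becomes an estimate on loop-erased walks.

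The second and central step is to transport the problem to the hyperbolic plane via the circle packing. Since $G$ has bounded codegree (so $G^\dagger$ has bounded degree) and bounded resistances, the Benjamini--Schramm / He--Schramm theory applies: transient bounded-degree proper plane graphs circle-pack in the disc, and one works with the hyperbolic metric on $\mathbb{D}$ coming from the packing. The heuristic — made precise in the quantitative part of the paper — is that random walk on $G$ is, in the hyperbolic geometry, "ballistic toward the boundary," so that it converges to a boundary point and two walks started far apart in hyperbolic distance converge to different, well-separated boundary points with high probability. The key estimate to establish (or invoke from the circle-packing machinery: the He--Schramm theorem, the convergence of random walk to a boundary point of Benjamini--Schramm type, and the quadrilateral/ring lemma bounds on radii) is that the loop-erased random walk from $v$, run until it leaves a hyperbolic ball of radius $r$ around $v$, has a uniformly positive chance of never returning to a neighborhood of $v$ — this is what prevents long "pasts." Combining this with Wilson's algorithm rooted at infinity, the past of $v$ is stochastically dominated by a subcritical-type exploration whose size has exponential tails in hyperbolic distance, which yields that every component is one-ended.

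The third step handles the network (non-unit-conductance) and local-finiteness subtleties. Bounded resistances ensure the random walk is not slowed down in a way that breaks transience or the circle-packing input, and one should double-check that Wilson's algorithm and the one-endedness criterion of \cite{BLPS}/\cite{LMS08} go through verbatim for networks — they do, since these are measure-theoretic/stochastic-domination arguments insensitive to reweighting. There is also the issue that a proper plane graph need not have a locally finite dual; but for \cref{T:planeWUSF} we only use $G$ itself (with the bounded-codegree hypothesis entering only through the circle-packing estimates, via a suitable double circle packing of $G$ and $G^\dagger$), so no separate reduction is needed here — it is the deduction of \cref{T:mainthm} from \cref{T:planeWUSF} that requires the duality discussion of \cref{Sec:Duality}.

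I expect the main obstacle to be the second step: proving the "no return from infinity" estimate for loop-erased walk in the right uniformity. The difficulty is that one must control LERW, not just the random walk itself, and one must do so using a geometry (hyperbolic, from the circle packing) that is only comparable to the combinatorial geometry in a weak, degree-dependent way; the ring lemma and sharp estimates on the decay of circle radii along the walk, together with a careful separation-of-boundary-points argument, are what make this work, and getting the quantitative exponents (the point of the companion results) requires pushing these estimates to their sharp form rather than merely qualitative transience.
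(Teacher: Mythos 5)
Your proposal takes a genuinely different route from the paper's, and it contains several gaps.

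The paper's proof of \cref{T:planeWUSF} does \emph{not} use Wilson's algorithm at all (Wilson's algorithm appears only in \cref{Sec:lowerbound} for the quantitative bounds). Instead, the paper works with the spatial Markov property: on the event $\sA^e_\eps$ that both components of $\F\setminus\{e\}$ reach far out (past the $V_\eps$-scale), it explores the leftmost and rightmost paths $\eta^x,\eta^y$ from $x$ and $y$, observes that the resulting set $K$ is a local set, applies \cref{prop:Markov} and Kirchhoff's formula to reduce $\WUSF(e\in\F\mid\cF_K,\sA^e_\eps)$ to the wired effective resistance $\ReffW(\eta^x\leftrightarrow\eta^y;G-K_c)$, and bounds this resistance by sending random circles through the region $\cR$ between the two exploration paths. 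This gives a bound of order $1/n(\eps)$, i.e.\ logarithmic decay, which tends to zero.

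The most serious flaw in your plan is the claim that the past has \emph{exponential} tails in hyperbolic distance, feeding into a Borel--Cantelli argument. The tail is in fact only polynomial: \cref{T:Wtailcp} shows $\P(\diam_\bbH(\past_\F(e))\geq R)\asymp R^{-1}$, equivalently decay like $1/\log(1/\eps)$ at Euclidean scale $\eps$. This is not summable, so the summation/Borel--Cantelli reduction you propose would fail. The correct logic is a monotone limit: $\sA^e=\bigcap_{\eps>0}\sA^e_\eps$ and one shows $\P(e\in\F,\,\sA^e_\eps)\to0$; no summability is needed.

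A second gap is the CP parabolic case. Your argument is built on convergence of the random walk to a boundary point in $\D$ and hyperbolic separation; but the theorem is proved for all bounded-codegree proper plane networks with resistances bounded above, and the associated triangulation may be CP parabolic (carrier $\C$), where there is no hyperbolic boundary to converge to. The paper handles this case with an adapted argument using Euclidean annuli around the origin, and it is essential — indeed, the parabolic case is what yields \cref{T:parabolicHD}.

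A third issue is the choice of packing. You invoke the double circle packing of $G$ and $G^\dagger$, but this requires $G$ to be polyhedral and needs a Ring Lemma for primal circles, which in turn uses bounded \emph{degrees}. The hypotheses of \cref{T:planeWUSF} only give bounded \emph{codegrees}, not bounded degrees, so the double circle packing is not available (and the paper's disclaimer at the start of \cref{Sec:mainproof} explains this). The paper instead first reduces to $G$ simple without peninsulas (\cref{lem:peninsulas,lem:nopeninsulas}), circle-packs the ball-bearing triangulation $T(G)$, and uses the Ring Lemma there, where only codegree bounds are needed.

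Finally, the core analytic input of your approach — a uniform "no return" estimate for loop-erased random walk in the right uniformity — is the hardest part, and your proposal leaves it as a heuristic. The paper sidesteps LERW entirely in this argument by passing to effective resistances via the spatial Markov property, which is one of the main points of its method: the random-circle construction gives the needed resistance estimate using only the elementary \cref{lem: flow} and \cref{lem:uniformdisjointannuli}, not any fine LERW control.
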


The uniform spanning trees of $\Z^2$ and other planar Euclidean  lattices are very well understood due to the deep theory of conformally invariant scaling limits. The study of the UST on $\Z^2$ led Schramm, in his seminal paper \cite{SLE}, to introduce the SLE processes, which he conjectured to describe the scaling limits of the loop-erased random walk and UST. This conjecture was subsequently proven in the celebrated work of Lawler,  Schramm, and Werner \cite{lawler2004conformal}.   Overall, Schramm's introduction of SLE  has revolutionised the understanding of statistical physics in two dimensions;  see e.g.\ \cite{Rohde11,garban2012quantum,lawler2008conformally} for guides to the extensive literature in this very active field.

Although our own setting is too general to apply this theory, we nevertheless keep conformal invariance in mind throughout this paper.  Indeed, the key to our proofs is \emph{circle packing}, a canonical method of drawing planar graphs that is closely related to conformal mapping (see e.g.\ \cite{RS87,HS93,HS98,St05,Rohde11} and references therein). For many purposes, one can pretend that the random walk on the packing is a quasiconformal image of standard planar Brownian motion:  Effective resistances, heat kernels, and harmonic measures on the graph can each be estimated in terms of the corresponding Brownian quantities \cite{ABGN14,Chelkak}.

It is natural to ask whether \cref{T:mainthm} extends to \emph{all} bounded degree planar graphs, rather than just those admitting proper embeddings into the plane. In the sequel to this paper, we provide an example, which was analyzed in collaboration with Gady Kozma, to show that the theorem does not admit such an extension. However, we also show there that the conclusion of \cref{T:mainthm} does continue to hold on any bounded degree planar network that admits a proper embedding into a domain with countably many boundary components.

\subsection{Universal USF exponents via circle packing}

A \textbf{circle packing} $P$ is a set of discs in the Riemann sphere $\C\cup\{\infty\}$ that have disjoint interiors (i.e.,~do not overlap) but can be tangent. The \textbf{tangency graph} $G=G(P)$ of a circle packing $P$ is the plane graph with the centres of the circles in $P$ as its vertices and with edges given by straight lines between the centres of tangent circles. The Koebe-Andreev-Thurston Circle Packing Theorem \cite{K36,Th78,marden1990thurston} states that every \emph{finite}, simple planar graph arises as the tangency graph of a circle packing, and that if the graph is a triangulation then its circle packing is unique up to M\"obius transformations and reflections (see \cite{BriSch93} for a combinatorial proof).
 The Circle Packing Theorem was extended to infinite plane triangulations by He and Schramm \cite{HeSc,HS93}, who proved that every  \emph{infinite, proper}, simple plane triangulation admits a locally finite circle packing in either the Euclidean plane or the hyperbolic plane (identified with the interior of the unit disc), but not both. (Recall that in the Poincar\'e disc model, Euclidean and hyperbolic circles in the disc coincide as sets but may have different centres and radii.) We call an infinite, simple, proper plane triangulation \textbf{CP parabolic} if it admits a circle packing in the plane and \textbf{CP hyperbolic} otherwise.

He and Schramm \cite{HeSc} also initiated the use of circle packing to study probabilistic questions on plane graphs. In particular, they showed that a bounded degree, simple, proper plane triangulation is CP parabolic if and only if simple random walk on the triangulation is recurrent (i.e., visits every vertex infinitely often a.s.).
Circle packing has since proven instrumental in the study of planar graphs, and random walks on planar graphs in particular. Most relevantly to us, circle packing was used by Benjamini and Schramm \cite{BS96a} to prove that every transient, bounded degree planar graph admits non-constant harmonic Dirichlet functions; BLPS \cite{BLPS} later applied this result to deduce that the free and wired uniform spanning forest of a bounded degree plane graph coincide if and only if the graph is recurrent.
  We refer the reader to \cite{St05} and \cite{Rohde11} for background on circle packing, and to \cite{HeSc,BS96a,BeSc,JS00,GGN13,ABGN14,Chelkak,AHNR15,GGNS15,hutchcroft2015boundaries,angel2016half,1707.07751} for further probabilistic applications.

A guiding principle of the works mentioned above is that circle packing endows a triangulation with a geometry that, for many purposes, is better than the usual graph metric.
The results described in this section provide a compelling instance of this principle in action: we find that the critical exponents governing the geometry of the USFs are universal over all transient, bounded degree, proper plane triangulations, provided that measurements are made using the hyperbolic geometry of their circle packings rather than the usual combinatorial geometry of the graphs. It is crucial here that we use the circle packing to take our measurements: The exponents in the graph distance are not universal and need not even exist (see \cref{fig:layers}). In \cref{remark:parabolic} we give an example to show that no such universal exponents hold  in the CP parabolic case at this level of generality.
 
\begin{figure}[t]
\centering
\includegraphics[height=0.375\textwidth]{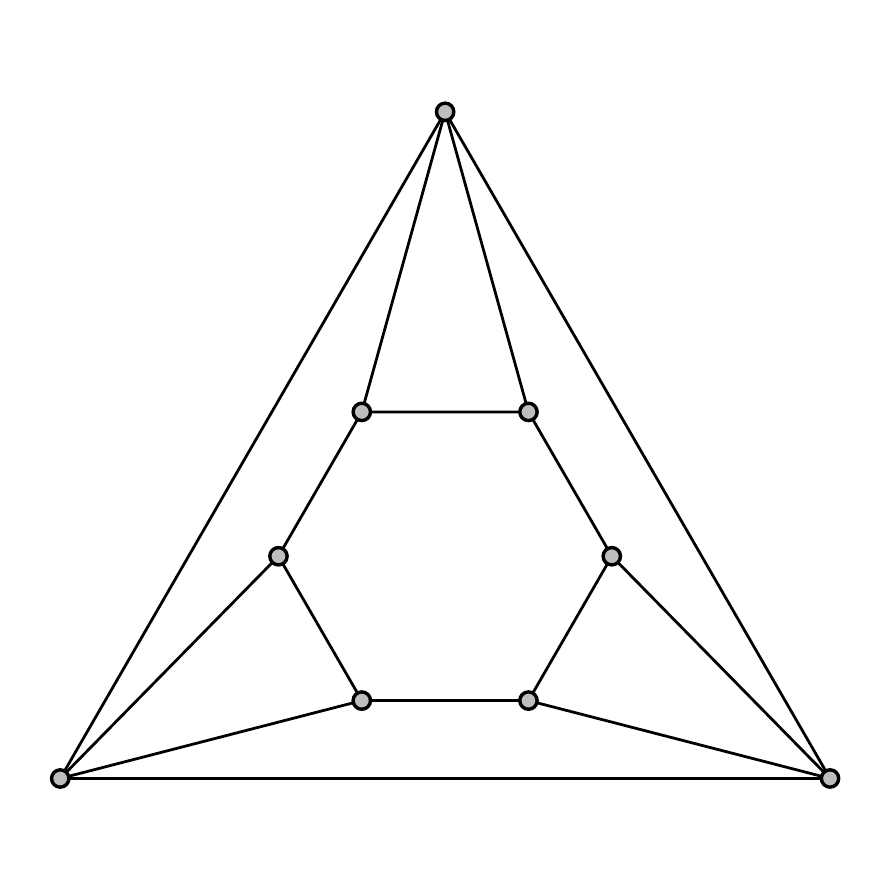} \hspace{1.1cm} \includegraphics[height=0.375\textwidth]{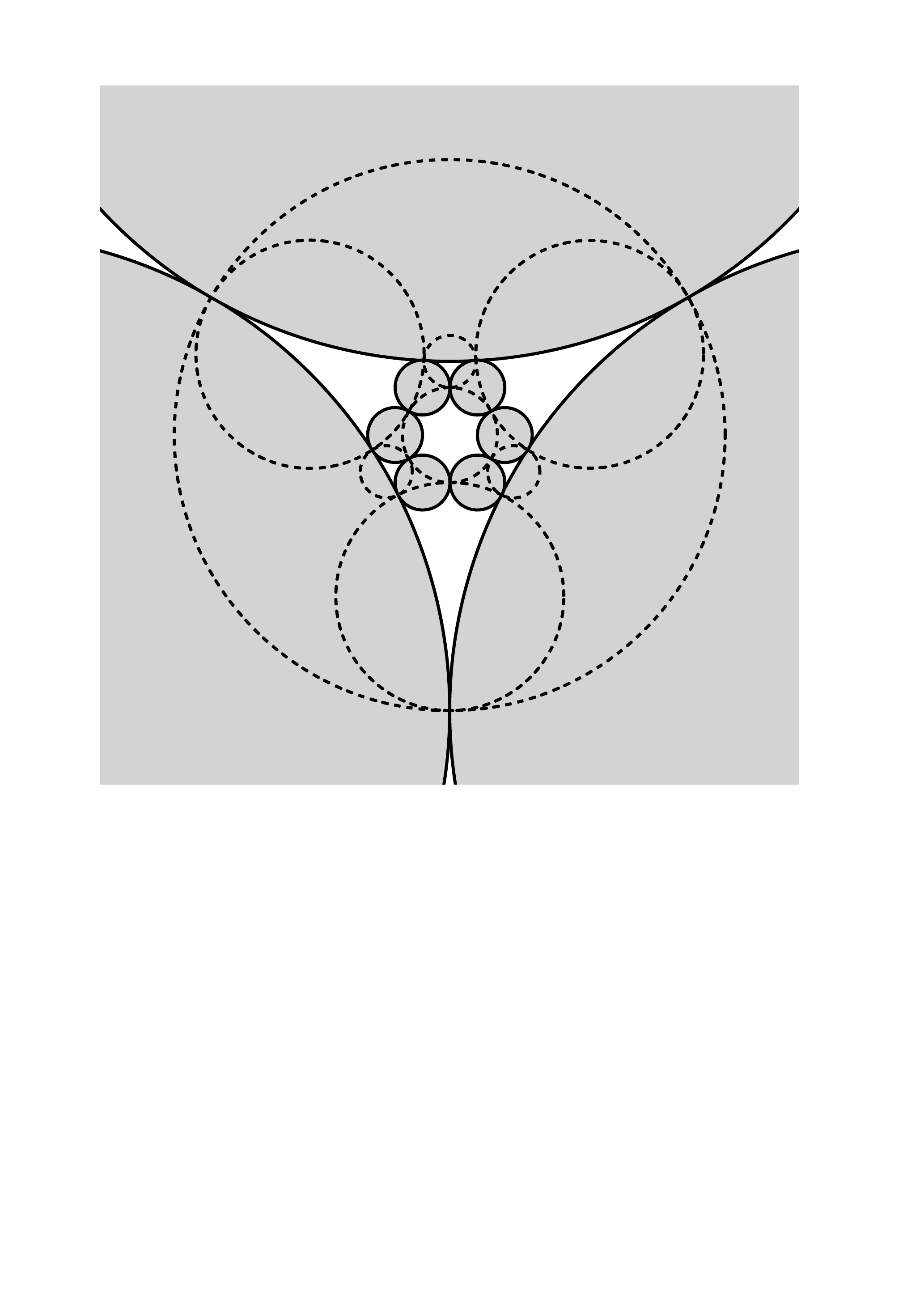}
\caption{
A finite polyhedral plane graph (left) and its double circle packing (right). Primal circles are filled and have solid boundaries, dual circles have dashed boundaries.}
\label{fig.dcp}
\end{figure}

In order to state the quantitative versions of \cref{T:mainthm,T:planeWUSF} in their full generality, we first introduce 
\textbf{double circle packing}.
Let $G$ be a plane graph with vertex set $V$ and face set $F$. A double circle packing of $G$ is a pair of circle packings $P=\{\Pp{v} : v \in V\}$ and $P^\dagger=\{\Pd{f}:f\in F\}$ in $\C \cup\{\infty\}$ satisfying the following conditions (see \cref{fig.dcp}):
\begin{enumerate}
\item (\textbf{$G$ is the tangency graph of $P$}.) For each pair of vertices $u$ and $v$ of $G$, the discs $\Pp{u}$ and $\Pp{v}$ are tangent if and only if $u$ and $v$ are adjacent in $G$.
\item (\textbf{$G^\dagger$ is the tangency graph of $P^\dagger$}.)  For each pair of faces $f$ and $g$ of $G$, the discs $\Pd{f}$ and $\Pd{g}$ are tangent if and only if $f$ and $g$ are adjacent in $G^\dagger$.
\item (\textbf{Primal and dual circles are perpendicular}.) For each vertex $v$ and face $f$ of $G$, the discs $\Pd{f}$ and $\Pp{v}$ have non-empty intersection if and only if $f$ is incident to $v$, and in this case the boundary circles of $\Pd{f}$ and $\Pp{v}$ intersect at right angles.
\end{enumerate}
 It is easily seen that any finite plane graph admitting a double circle packing must be \textbf{polyhedral}, meaning that is it both simple
(i.e., not containing any loops or multiple edges)
  and $3$-connected (meaning that the subgraph induced by $V\setminus\{u,v\}$ is connected for each $u,v \in V$). Conversely, Thurston's interpretation of Andreev's Theorem \cite{Th78,marden1990thurston}, or the Brightwell-Scheinerman Theorem \cite{BriSch93}, implies that every finite, polyhedral plane graph admits a double circle packing. The corresponding infinite theory was developed by He \cite{he1999rigidity}, who proved that every infinite, polyhedral, proper plane graph $G$ with locally finite dual admits a double circle packing\footnote{He phrased his results more generally than this, see \cref{Sec:CP} for an explanation of how the statement given here follows from the one in \cite{he1999rigidity}.} in either the Euclidean plane or the hyperbolic plane, but not both, and that this packing is unique up to M\"obius transformations and reflections. In particular, in the hyperbolic case, the packing is unique up to isometries of the hyperbolic plane. As before, we say that $G$ is CP parabolic or CP hyperbolic as appropriate. As for triangulations \cite{HeSc}, CP hyperbolicity is equivalent to transience for graphs with bounded degrees and codegrees~\cite{he1999rigidity}.

Let $G$ be  CP hyperbolic and let $(P,P^\dagger)$ be a double circle packing of $G$ in the hyperbolic plane. We write $r_\bbH(v)$ for the hyperbolic radius of the circle $\Pp{v}$. For each subset $A \subset V(G)$, we define $\diam_\bbH(A)$  to be the hyperbolic diameter of the set of hyperbolic centres of the circles in $P$ corresponding to vertices in $A$, and define $\area_\bbH(A)$ to be the hyperbolic area of the union of the circles in $P$ corresponding to vertices in $A$. Since $(P, P^\dagger)$ is unique up to isometries of the hyperbolic plane, $r_\bbH(v)$, $\diam_\bbH(A)$, and $\area_\bbH(A)$ do not depend on the choice of $(P, P^\dagger)$.

We say that a network $G$ has \textbf{bounded local geometry} if there exists a constant $M$ such that $\deg(v) \leq M$ for every vertex $v$ of $G$ and $M^{-1}\leq c(e)\leq M$ for every edge $e$ of $G$.
Given a plane network $G$, we let $F$ be the set of faces of $G$, and define
\[\bM=\bM_G=\asaf{\max}\Big\{\sup_{v\in V} \deg(v),\; \sup_{f\in F} \deg(f),\;
\; \sup_{e\in E} c(e),\; \sup_{e\in E}c(e)^{-1}\Big\}. \]
 Given a spanning tree $\F$ and two vertices $x$ and $y$ in $G$, we write $\Gamma_\F(x,y)$ for the unique path in $\F$ connecting $x$ and $y$. In the following theorem, the graph $G$ satisfies the conditions of \cref{T:mainthm} and so the path $\Gamma_\F$ is  well defined almost surely.

\begin{figure}[t]
\centering
\includegraphics[trim = 4.915cm 4.05cm 4.915cm 4.05cm, clip, height=0.4\textwidth]{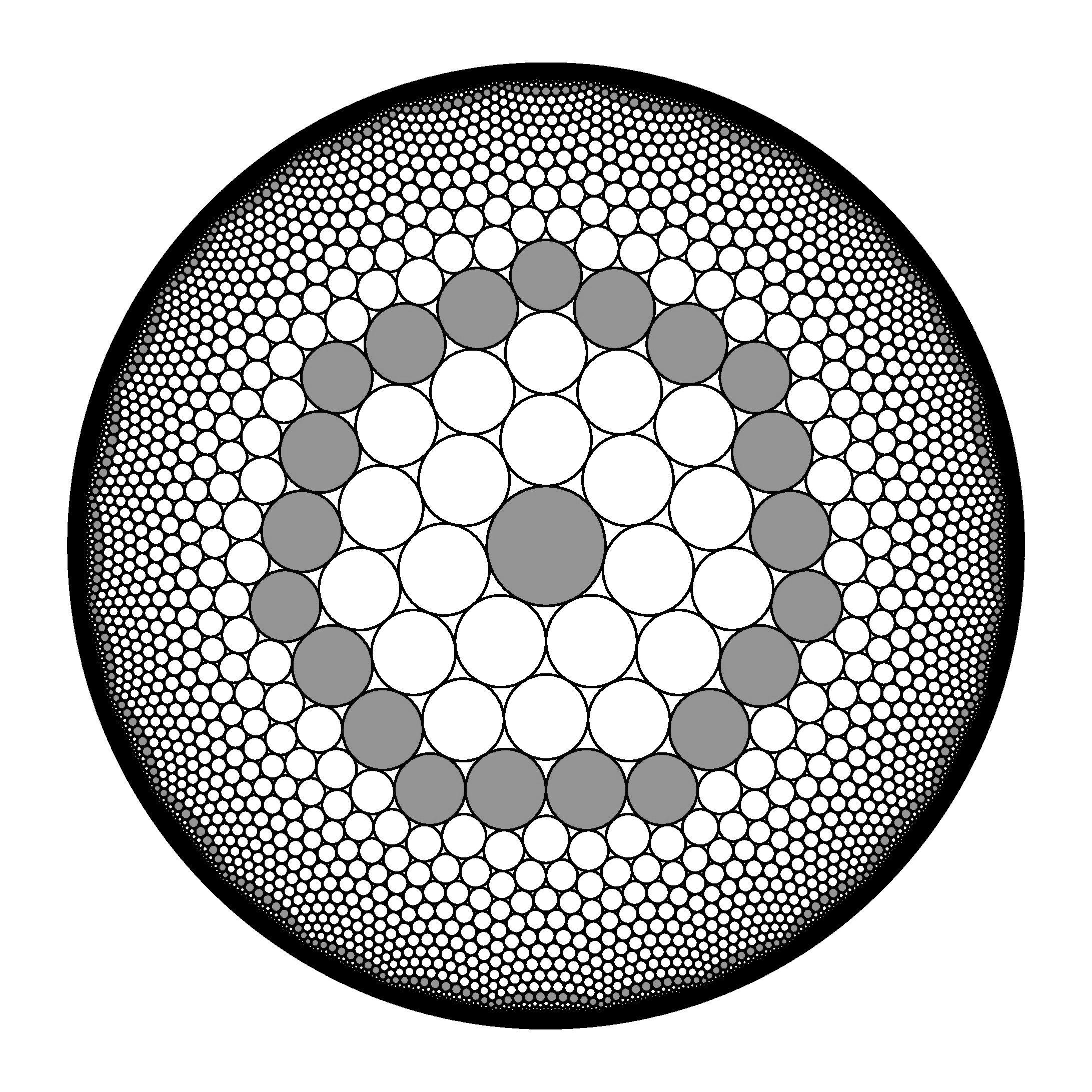} \hspace{1cm}
\includegraphics[trim =0.5cm 0.225cm 0.4cm 0.18cm, clip, height=0.4\textwidth, angle=-1.5]{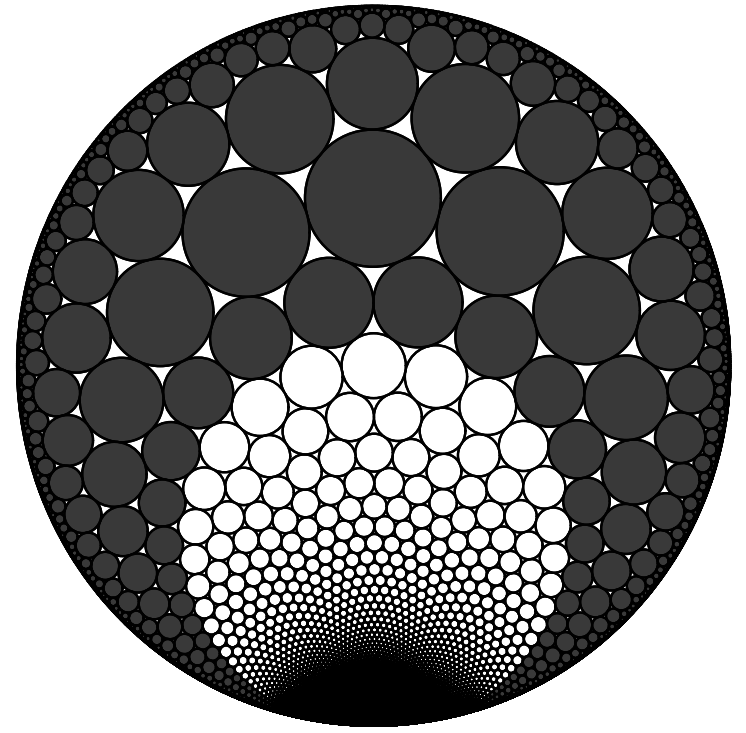}
\caption{Two bounded degree, simple, proper plane triangulations for which the graph distance is not comparable to the hyperbolic distance. Similar examples are given in \cite[Figure 17.7]{St05}. Left: In this example, rings of degree seven vertices (grey) are separated by growing bands of degree six vertices (white), causing the hyperbolic radii of circles to decay. The bands of degree six vertices can grow surprisingly quickly without the triangulation becoming recurrent \cite{siders1998layered}. Right: In this example, half-spaces of the $8$-regular (grey) and $6$-regular (white) triangulations have been glued together along their boundaries;  the circles corresponding to the $6$-regular half-space are contained inside a horodisc and have decaying hyperbolic radii.}
\label{fig:layers}
\end{figure}

\begin{thm}[Free diameter exponent]\label{T:Ftailcp}
Let $G$ be a transient, polyhedral, proper plane network with \asaf{$\bM_G < \infty$},
let $\F$ be the free uniform spanning forest of $G$, and let $e=(x,y)$ be an edge of $G$. Then there exist positive  constants $k_1=k_1(\bM,r_\bbH(x))$, increasing in $r_\bbH(x)$, and $k_2=k_2(\bM)$ such that
\vspace{0.25em}
\[ {k_1}R^{-1} \leq \P\left(\diam_\bbH(\Gamma_\F(x,y)) \geq R\right)  \leq  k_2 R^{-1}  \vspace{0.25em}\]
for every $R\geq 1$.
\end{thm}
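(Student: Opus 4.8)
The upper bound and lower bound should be approached quite differently, though both go through the wired uniform spanning forest of the dual network $G^\dagger$ and Wilson's algorithm. The overarching dictionary is: by the duality between free and wired uniform spanning forests of proper plane networks (\cref{Sec:Duality}), the path $\Gamma_\F(x,y)$ in the FUSF of $G$ is in bijection with the set of dual edges whose removal is needed to disconnect, equivalently with a portion of the boundary between the two sides of the bi-infinite structure cut out by the dual edge $e^\dagger$ in the WUSF of $G^\dagger$. More usefully, $\diam_\bbH(\Gamma_\F(x,y)) \geq R$ is comparable, up to the bounded-geometry constants, to the event that the \emph{past} of one of the two endpoints of $e^\dagger$ in the WUSF of $G^\dagger$ — i.e.\ the finite component cut off when $e^\dagger$ is present, or the relevant portion of the one-ended tree — has hyperbolic diameter at least $\asymp R$. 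So the theorem reduces to two-sided bounds on $\P(\diam_\bbH(\past(f)) \geq R)$ for a dual vertex $f$, where $\past$ is taken in the WUSF of $G^\dagger$; this is the quantitative refinement of the one-endedness statement \cref{T:planeWUSF}.

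**Upper bound.** For the upper bound $\P(\diam_\bbH(\Gamma_\F(x,y)) \geq R) \leq k_2 R^{-1}$, I would generate the relevant past using Wilson's algorithm rooted at infinity in $G^\dagger$: the past of $f$ has large hyperbolic diameter only if the loop-erased random walk from $f$ (or from a neighbour) travels hyperbolic distance $\asymp R$ before being absorbed, or more precisely only if some vertex at hyperbolic distance $\asymp R$ from $f$ has its loop-erased walk pass through $f$. The key estimate is therefore a hitting-probability bound: for a dual vertex $g$ with $d_\bbH(f,g) \asymp R$, the probability that the random walk from $g$ hits $f$ before escaping is $O(1/R)$ (in fact one wants, summing over a dyadic annulus, that the expected number of such $g$ contributing is $O(1)$ per scale and the scales telescope to give $O(1/R)$). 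This is exactly where circle packing enters: because $G^\dagger$ has bounded degree and codegree and is circle packed in the hyperbolic disc, effective resistances and hitting probabilities between vertices are controlled by the hyperbolic geometry of the packing — crossing a ``hyperbolic annulus'' of modulus $\asymp 1$ costs resistance bounded above and below, so $R$ such annuli in series give effective resistance $\asymp R$ and hence hitting probability $\asymp 1/R$. This part should essentially be a ``series of annuli'' resistance estimate combined with a union bound and a first-moment argument over Wilson's algorithm; the constant $k_2$ depends only on $\bM$ because the per-scale geometry is uniformly controlled.

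**Lower bound.** For the lower bound $\P(\diam_\bbH(\Gamma_\F(x,y)) \geq R) \geq k_1 R^{-1}$, I would again use Wilson's algorithm but now argue that with probability $\asymp 1/R$ the loop-erased walk started from $f$ in $G^\dagger$, before being absorbed, passes near a vertex $g$ at hyperbolic distance $\asymp R$ whose own loop-erased walk returns to the neighbourhood of $f$ — forcing the past to have hyperbolic diameter $\asymp R$. Concretely: start a random walk from $f$; with probability bounded below (depending on $r_\bbH(x)$, hence on the local geometry near $f$, explaining the dependence of $k_1$ on $r_\bbH(x)$) it does not immediately escape to infinity but instead wanders; the probability it reaches hyperbolic distance $R$ and then, conditionally, comes back to within $O(1)$ of $f$ — so that $f$ lies in the loop-erasure of a far-away vertex — is $\asymp 1/R$, again by the annulus-resistance estimate (the walk from a vertex at distance $R$ hits $f$ with probability $\asymp 1/R$, and one can arrange the far vertex to lie in the tree with positive conditional probability). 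A clean way to package this is via the second-moment / Reimer-type or via the explicit formula relating $\P(g \in \past(f))$ to effective resistances and the escape probability, showing $\sum_{g : d_\bbH(f,g)\in[R,2R]} \P(g\in\past(f))\asymp 1$ and then that this forces $\diam_\bbH(\past(f))\geq R$ with probability $\gtrsim 1/R$ by a Paley--Zygmund argument, using that the events $\{g\in\past(f)\}$ for distinct $g$ in the annulus are positively correlated but with second moment controlled by the same resistance bounds.

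**Main obstacle.** The hardest part will be converting the one-endedness / past-diameter statement into the statement about $\diam_\bbH(\Gamma_\F(x,y))$ \emph{precisely and with the right constants} — i.e.\ setting up the planar duality correspondence between the FUSF path in $G$ and the boundary of a past in the WUSF of $G^\dagger$, and checking that hyperbolic diameters on the two sides are comparable up to factors depending only on $\bM$ (this needs that primal and dual circles are perpendicular and comparably sized, which is part of the double-circle-packing structure but requires a uniform ring lemma / comparability estimate for packings of bounded-degree polyhedral graphs). The other genuinely nontrivial input is the two-sided annulus resistance estimate in the hyperbolic packing — that $R$ concentric hyperbolic annuli around $f$ in $G^\dagger$ have total effective resistance $\asymp R$ with constants depending only on $\bM$ — which is the circle-packing analogue of the classical fact for $\bbZ^2$ and is presumably established earlier in the paper as the main geometric tool; granting that, both bounds reduce to fairly standard Wilson's-algorithm first- and second-moment arguments.
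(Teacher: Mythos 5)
Your reduction to the wired forest of the dual is exactly the paper's proof of \cref{T:Ftailcp}: one shows $\diam_\bbH(\past_{\F^\dagger}(e^\dagger)) \leq \diam_\bbH(\Gamma_\F(x,y)) \leq \diam_\bbH(\past_{\F^\dagger}(e^\dagger)) + C(\bM)$ using the Ring Lemma for double circle packings (\cref{Thm:DCPRing}), and quotes \cref{T:Wtailcp}. Your lower bound for the underlying past-diameter estimate is also in the paper's spirit: a second-moment/Paley--Zygmund argument over a boundary annulus (\cref{lem:firstmoment,lem:secondmoment}), giving $\P(Z_\eps>0)\geq \E[Z_\eps]^2/\E[Z_\eps^2]\succeq 1/\log(1/\eps)$. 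Two points you glossed over there are, however, genuinely the crux: the vertices in the annulus must be weighted by hyperbolic area $a_\bbH(v)$ (since hyperbolic radii can decay near $\partial\D$, the raw count is the wrong statistic), and the second moment is controlled not by ``positive correlation'' but by decomposing the annulus into $O(\log(1/\eps))$ wedge-shaped pieces and a half-plane hitting argument for the pairwise term (\cref{lem:trig}, together with \cref{lem:escape1,Lem:esacape2,lem:conddiam}).

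The real gap is the upper bound. You frame both bounds as Wilson's-algorithm arguments, and for the upper bound you propose a first-moment/union-bound over vertices $g$ at hyperbolic distance $\asymp R$ from the dual vertex $f$. But in a bounded-degree CP-hyperbolic packing the hitting probability $\bP_g(\tau_f<\infty)$ decays like $e^{-R}$ and there are $\asymp e^R$ vertices in the shell, so the per-scale first moment is $\asymp 1$, as you yourself note; a constant per-scale first moment gives no bound whatsoever on $\P(\diam\geq R)$ --- this is the same elementary fact that a critical branching process has constant expected population at every generation while still surviving each generation with positive probability. The phrase ``the scales telescope to give $O(1/R)$'' is not an argument. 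The paper's upper bound does not use Wilson's algorithm at all. It uses the strong spatial Markov property (\cref{prop:Markov}): reveal the outermost portion of the tree near $\partial\D$ (the rightmost/leftmost paths $\eta^x,\eta^y$ from $x,y$ to the complement of $V_\eps$, which form a local set $K$), then bound the conditional probability that $e$ is open, via Kirchhoff's formula, by $c(e)\,\ReffW\bigl(\eta^x\leftrightarrow\eta^y;\,G-K_c\bigr)$. That wired resistance is shown to be $\preceq 1/\log(1/\eps)\asymp 1/R$ by the method of random sets (\cref{lem:randomsets}, \cref{lem: flow}, \cref{lem:uniformdisjointannuli}): $\asymp R$ disjoint circular annuli around the endpoint of $\eta^x$ each support a random crossing to $\eta^y\cup\{\infty\}$ of bounded energy, and mixing these gives a flow of energy $\preceq 1/R$. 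Note the direction is the opposite of your ``series of annuli giving resistance $\asymp R$'': here the annuli act in parallel to give a \emph{small} resistance between the two revealed boundary paths, hence a small conditional probability that $e$ is present. Without some form of this conditioning argument, your upper bound does not close.
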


Given a spanning forest $\F$ of $G$ in which every component is a one-ended tree and an edge $e$ of $G$, the \textbf{past} of $e$ in $\F$, denoted $\past_\F(e)$, is defined to be the unique finite connected component of $\F \setminus \{e\}$ if $e \in \F$ and to be the empty set otherwise. The following theorem is equivalent to \cref{T:Ftailcp} by duality (see \cref{Sec:Duality,Subsec:dualexponents}).

\begin{thm}[Wired diameter exponent]\label{T:Wtailcp}
Let $G$ be a transient, polyhedral, proper plane network with  \asaf{$\bM_G < \infty$},
let $\F$ be the wired uniform spanning forest of $G$, and let $e=(x,y)$ be an edge of $G$.
Then there exist positive constants $k_1=k_1(\bM,r_\bbH(x))$, increasing in $r_\bbH(x)$, and $k_2=k_2(\bM)$ such that
\vspace{.25em}
\[k_1 R^{-1} \leq \P \left(\diam_\bbH (\past_\F(e)) \geq R\right) \leq k_2 R^{-1}\vspace{.25em} \]
 for every $R\geq 1$.

\end{thm}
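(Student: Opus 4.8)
The plan is to estimate $\P(\diam_\bbH(\past_\F(e)) \geq R)$ using Wilson's algorithm rooted at infinity, exploiting the known identity relating the past of $e=(x,y)$ to the behaviour of loop-erased random walks. Recall that under the WUSF, the event $\{e \in \F\}$ together with the structure of $\past_\F(e)$ is controlled by the following picture: run a random walk from $x$, let $\mathsf{LE}$ be its loop erasure, and examine when (if ever) the edge $e$ is traversed; the past of $e$ is then the union of loop-erased walks from vertices that are ``cut off'' from infinity by the walk from $x$. More usefully, there is a standard reduction (used repeatedly for one-endedness proofs, cf.\ \cite{BLPS,LMS08}) showing that $\diam_\bbH(\past_\F(e)) \geq R$ is comparable, up to the bounded-geometry constants, to the event that a random walk started near $e$ returns to a hyperbolic ball of radius $O(1)$ around $x$ after first reaching hyperbolic distance $R$ from $x$ — i.e.\ to a statement about the hyperbolic-geometric behaviour of loop-erased random walk paths.

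The key analytic input will be estimates on the random walk on $G$ transported through the circle packing. Specifically, I would use that the double circle packing in the hyperbolic plane gives, for bounded degree and codegree $G$, the following: (i) a vertex $v$ at hyperbolic distance $d$ from $x$ has $r_\bbH(v)$ bounded away from $0$ only probabilistically, but the number of vertices within hyperbolic distance $R$ grows at most exponentially and at least linearly in an appropriate averaged sense; (ii) effective resistances between hyperbolic balls are comparable to the corresponding hyperbolic (Brownian) quantities — in particular $\Reff(x \leftrightarrow \{v : d_\bbH(x,v) \geq R\}) \asymp R$ up to constants depending only on $\bM$, by the Benjamini–Schramm / He–Schramm type comparison and the transience of $G$. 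Granting the resistance estimate, the upper bound $\P(\diam_\bbH(\past_\F(e)) \geq R) \leq k_2 R^{-1}$ follows because the probability that the loop-erased walk from $x$ travels hyperbolic distance $R$ before escaping to infinity is at most the probability that simple random walk from $x$ reaches hyperbolic distance $R$ before escaping, which by the commute-time / escape-probability identity is $\Reff(x \leftrightarrow \partial B_\bbH(x,O(1)))/\Reff(x \leftrightarrow \{d_\bbH \geq R\}) \asymp 1/R$. The lower bound $k_1 R^{-1}$ is obtained by the reverse estimate: with probability $\asymp 1/R$ (with the constant degrading as $r_\bbH(x)$ shrinks, since a small circle at $x$ means $x$ is ``deep'' and harder to return to) the walk from a neighbour does a large hyperbolic excursion before escaping, and on this event one shows the past genuinely has hyperbolic diameter $\geq R$ by a second-moment or explicit path-construction argument.

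The main obstacle I expect is establishing the two-sided effective resistance estimate $\Reff \asymp R$ in the hyperbolic metric with constants depending only on $\bM$ — and, relatedly, controlling the fluctuation of $r_\bbH$ along typical walk trajectories. Unlike the Euclidean lattice case, here the circle radii are not uniformly comparable (this is exactly the content of Figure \ref{fig:layers}), so one cannot pass freely between graph distance and hyperbolic distance pointwise. The resolution is to work with the circle packing throughout: use the perpendicularity of primal and dual circles and the Ring Lemma (valid since degrees are bounded) to show that crossing a hyperbolic annulus of unit modulus costs $\Theta(1)$ resistance \emph{on average}, then chain $R$ such annuli. This requires a careful multiplicative/additive accounting and is where the polyhedrality, bounded codegree, and the existence and uniqueness of the double circle packing (He's theorem) are all used in an essential way. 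The remaining steps — the reduction from the past to a random walk excursion statement via Wilson's algorithm, and upgrading the escape-probability bound to a diameter bound — are comparatively routine, following the templates in \cite{BLPS,LMS08} and the duality developed in \cref{Sec:Duality}.
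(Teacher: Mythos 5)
Your high-level framework — circle packing, Wilson's algorithm, a resistance calculation for the upper bound, a second-moment argument for the lower bound — is in the right family, but the specific route you propose contains a genuine error that would derail the proof. The central claim you lean on, that $\Reff(x \leftrightarrow \{v : d_\bbH(x,v) \geq R\}) \asymp R$ with constants depending only on $\bM$, is false. Since $G$ is transient, $\Reff(x \leftrightarrow \infty) < \infty$, and the resistances $\Reff(x \leftrightarrow \{v : d_\bbH(x,v) \geq R\})$ increase to this finite limit as $R \to \infty$; they are \emph{bounded} in $R$, not linear. Relatedly, the ``escape probability'' you invoke is not $\asymp 1/R$: a transient walk a.s.\ attains every hyperbolic distance from $x$, so the probability of reaching distance $R$ before converging to the boundary is $1$. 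Consequently the upper bound argument as written does not produce any quantity of order $R^{-1}$, and the lower bound (which you describe as a ``reverse estimate'') inherits the same problem.

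The paper gets the $R^{-1}$ scaling from a different resistance. For the upper bound, one reuses the explored-set argument of the proof of \cref{T:planeWUSF}: after revealing the rightmost/leftmost paths $\eta^x, \eta^y$ from $x, y$ out to the region within distance $\eps$ of $\partial \D$ (a local set for $\F$), the strong spatial Markov property plus Kirchhoff's formula reduces the conditional probability of $e \in \F$ to $\ReffW(\eta^x \leftrightarrow \eta^y;\, G - K_c)$. This is a resistance \emph{between two long paths} running from $e$ to near the boundary, not a resistance to infinity, and it is $\preceq 1/\log(1/\eps) \asymp 1/R$ because the method of random sets (\cref{lem:randomsets}, \cref{lem: flow}) produces $\asymp \log(1/\eps)$ disjoint annular crossings, each of $\Theta(1)$ conductance, in parallel. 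For the lower bound, the object one applies a second-moment argument to is not a random-walk excursion but the hyperbolic-area-weighted count $Z_\eps = \sum_{v \in W_\eps} a_\bbH(v) \mathbbm{1}(v \in \past_\F(e))$ over the thin annulus $W_\eps$ near the boundary. Showing $\E[Z_\eps] \asymp 1$ uses Wilson's algorithm, the Diffusive Time Estimate and the Cone/half-plane estimate (this is where the constant's dependence on $r_\bbH(x)$ enters), while showing $\E[Z_\eps^2] \preceq \log(1/\eps)\,\E[Z_\eps]$ requires decomposing $W_\eps$ into $\asymp \log(1/\eps)$ geometric pieces $S^m_\eps$ and bounding each contribution by $O(1)$ via the short-lived-set and extremal-length estimates of \cref{subsec:estimates}. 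Cauchy--Schwarz then gives $\P(Z_\eps > 0) \succeq 1/\log(1/\eps)$. None of this is the routine template step you describe — it is the heart of the section and depends crucially on the Ring Lemma for double circle packings and the good-embedding estimates, not on an escape-resistance identity.
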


By similar methods, we are also able to obtain a universal exponent of $1/2$ for the tail of the area of the past of an edge in the WUSF.

\begin{thm}[Wired area exponent]\label{Thm:Wareacp}
Let $G$ be a transient, polyhedral, proper plane network with  \asaf{$\bM_G < \infty$},
let $\F$ be the wired uniform spanning forest of $G$, and let $e=(x,y)$ be an edge of $G$.
Then there exist positive constants $k_1=k_1(\bM,r_\bbH(x))$, increasing in $r_\bbH(x)$, and $k_2=k_2(\bM)$ such that
\vspace{.25em}
\[k_1 R^{-1/2} \leq \P \left(\area_\bbH (\past_\F(e)) \geq R\right) \leq k_2 R^{-1/2} \vspace{.25em}\]
 for every $R\geq 1$.
\end{thm}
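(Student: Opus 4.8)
The plan is to establish matching upper and lower bounds; by the symmetry exchanging the two endpoints of $e$ it suffices to work on the event $\cE$ that $e\in\F$ with $x\in\past_\F(e)=:\mathscr P$, and to multiply the resulting estimates by $\P(\cE)\le 1$ at the end. The starting point is the identity $\P(v\in\mathscr P\mid\cE)=\mathbb P_v(\tau_x<\infty)$, valid for every vertex $v$ and immediate from Wilson's algorithm rooted at infinity, where $\mathbb P_v(\tau_x<\infty)$ denotes the probability that the network random walk from $v$ ever visits $x$. Consequently $\E[\area_\bbH(\mathscr P\cap A)\mid\cE]=\sum_{v\in A}\mathbb P_v(\tau_x<\infty)\,\area_\bbH(\Pp v)$ for every set of vertices $A$, which reduces the first-moment estimates we need to quantities attached to the random walk; these in turn we estimate through the circle packing using the standard comparison with Brownian motion in the hyperbolic plane. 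Throughout we take as given the diameter estimate \cref{T:Wtailcp}.

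\emph{Upper bound.} We split according to the hyperbolic diameter of $\mathscr P$ at scale $\sqrt R$: on $\cE$, if $\diam_\bbH(\mathscr P)<\sqrt R$ then $\mathscr P\subseteq B_\bbH(x,\sqrt R)$, so
\[
\P(\area_\bbH(\mathscr P)\ge R\mid\cE)\le\P(\diam_\bbH(\mathscr P)\ge\sqrt R\mid\cE)+\P\big(\area_\bbH(\mathscr P\cap B_\bbH(x,\sqrt R))\ge R\mid\cE\big).
\]
After multiplying by $\P(\cE)\le 1$, the first term is at most $\P(\diam_\bbH(\past_\F(e))\ge\sqrt R)\le k_2 R^{-1/2}$ by \cref{T:Wtailcp}. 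By Markov's inequality the second term is at most $R^{-1}\E[\area_\bbH(\mathscr P\cap B_\bbH(x,\sqrt R))\mid\cE]=R^{-1}\sum_{v\,:\,d_\bbH(v,x)\le\sqrt R}\mathbb P_v(\tau_x<\infty)\,\area_\bbH(\Pp v)$, so it is enough to prove the purely geometric estimate
\[
\sum_{v\,:\,d_\bbH(v,x)\le D}\mathbb P_v(\tau_x<\infty)\,\area_\bbH(\Pp v)\le C(\bM)\,D\qquad\text{for all }D\ge 1.
\]
By reversibility and the bounds on the conductances, the left-hand side is comparable to $\sum_v\mathbb P_x(\tau_v<\infty)\,\area_\bbH(\Pp v)=\E_x[\area_\bbH(\mathfrak R\cap B_\bbH(x,D))]$, where $\mathfrak R$ is the set of vertices visited by the random walk from $x$. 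Cutting $B_\bbH(x,D)$ into $O(D)$ concentric hyperbolic annuli of bounded width, it suffices to bound the expected hyperbolic area of $\mathfrak R$ inside a single annulus by $O(1)$. This rests on two features of the walk as seen through the packing: (i) the radial part of the walk is transient with a uniformly positive asymptotic drift, so the walk makes only $O(1)$ excursions into a fixed annulus before escaping to infinity; and (ii) during one excursion, the hyperbolic area of $\mathfrak R$ inside a bounded-width annulus is $O(1)$, since a planar walk confined to such an annulus leaves it after a bounded number of radial crossings while diffusing only comparably far in the tangential direction, and the swept hyperbolic area is then comparable to the square of the annulus width irrespective of the local circle sizes. (In (ii) one uses the standard fact that, for a proper plane network with $\bM<\infty$, the hyperbolic radii $r_\bbH(v)$ are bounded above by a constant depending only on $\bM$, so that $\area_\bbH(\Pp v)\asymp r_\bbH(v)^2$ and the thin-annulus picture makes sense.) Summing over the annuli gives the estimate, and hence $\P(\area_\bbH(\mathscr P)\ge R)\le k_2R^{-1/2}$ with $k_2=k_2(\bM)$; for bounded $R$ the bound holds trivially after enlarging $k_2$.

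\emph{Lower bound.} We argue that $\{\area_\bbH(\mathscr P)\ge cR\}$ is comparable in probability to the event that $\mathscr P$ reaches hyperbolic distance $\sqrt R$ from $x$. The lower bound in \cref{T:Wtailcp} gives $\P(\diam_\bbH(\mathscr P)\ge 2\sqrt R)\ge\tfrac12 k_1R^{-1/2}$ with $k_1=k_1(\bM,r_\bbH(x))$, and since $x\in\mathscr P$ this forces $\mathscr P$ to contain a vertex at hyperbolic distance at least $\sqrt R$ from $x$; hence $\P(\mathscr P\text{ reaches hyperbolic distance }\sqrt R)\gtrsim R^{-1/2}$. It therefore suffices to show that, conditionally on $\mathscr P$ reaching hyperbolic distance $\sqrt R$ from $x$, we have $\area_\bbH(\mathscr P)\ge cR$ with probability bounded below by a constant depending only on $\bM$. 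Here we exploit the recursive structure of the past: conditioning on $\mathscr P$ reaching a far vertex $w$ exposes, through Wilson's algorithm, a spine $\Gamma_\F(x,w)\subseteq\mathscr P$ of hyperbolic diameter at least $\sqrt R$, off which the remaining bushes of $\mathscr P$ are generated by a sequential, conditionally near-independent procedure, each bush being --- up to a bounded-factor modification of the network --- a past of an edge. A first- and second-moment computation for the total hyperbolic area of these bushes, carried out by summing along the spine and controlled using the random-walk estimates above together with \cref{T:Wtailcp}, shows that this total concentrates around a mean of order $R$, giving the required uniform lower bound on the conditional probability. Combining the two estimates yields $\P(\area_\bbH(\mathscr P)\ge R)\ge k_1 R^{-1/2}$.

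The main obstacle is this last step: one must show that a past conditioned only to reach hyperbolic distance $\sqrt R$ is necessarily bushy enough to accumulate hyperbolic area of order $R$ with non-negligible probability, uniformly over the entire class of networks. Morally this expresses the principle that the past behaves like a critical branching process in which hyperbolic diameter plays the role of height (tail exponent $1$) and hyperbolic area plays the role of total progeny (tail exponent $1/2$); turning this analogy into a proof requires combining the spine decomposition of the past with the circle-packing and Brownian-motion estimates, and it is here --- together with the uniform upper bound on the $r_\bbH(v)$ used in the upper bound --- that the hypothesis $\bM_G<\infty$, and especially the bound on the codegrees, enters in an essential way.
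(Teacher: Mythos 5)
Your upper bound follows essentially the same decomposition as the paper: split at the scale $\sqrt R$, bound the diameter tail via \cref{T:Wtailcp}, and bound the remaining area by Markov's inequality using a first-moment estimate of order $D$ per hyperbolic ball of radius $D$. The paper proves the latter as Lemma 4.12 (each thin annulus $W_\eps$ contributes $O(1)$ to the first moment, via the short-lived-set machinery built from the Diffusive Time Estimate and the escapability lemma). Your sketch via reversibility and ``radial drift'' is morally the same and could be made rigorous along those lines, though the phrase ``uniformly positive asymptotic drift'' is not quite the right formulation --- the correct statement is the escapability property of \cref{lem:escape-1}, which says the walk leaves each thin annulus for good with uniformly positive probability each time it exits a small Euclidean neighbourhood. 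However, your starting identity $\P(v\in\mathscr P\mid\cE)=\mathbb P_v(\tau_x<\infty)$ is false: in Wilson's algorithm started at $v$, the vertex $v$ is in $\past_\F(e)$ if and only if the \emph{loop-erased} walk from $v$ traverses the edge $e$, which gives only the inequality $\P(v\in\past_\F(e))\le\bP_v(\tau_x<\infty)$, and conditioning on $\cE$ does not repair this. For the upper bound this is harmless since you only use the inequality direction.

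The lower bound is where the proposal departs from the paper and where the genuine gap lies. You propose to condition on $\past_\F(e)$ reaching hyperbolic distance $\sqrt R$, expose the spine $\Gamma_\F(x,w)$, and argue that the bushes hanging off the spine collectively accumulate area of order $R$ with probability bounded below. You flag this yourself as the main obstacle, and rightly so: the conditioning is on a diameter event (not on a specific spine), the conditional law of the spine is a conditioned loop-erased walk which is not straightforward to control, and after conditioning on the spine the bushes are \emph{not} simply independent pasts of edges in modified networks --- their joint law involves the spatial Markov property in a way that requires careful bookkeeping. Moreover the total area of the bushes does not concentrate around its conditional mean (it has a heavy tail inherited from the branching-process picture you invoke), so ``concentrates around a mean of order $R$'' would need to be replaced by a Paley--Zygmund argument, which itself requires a conditional second-moment bound that is not sketched. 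The paper sidesteps the spine decomposition entirely: it works with the random variable $Z=\sum_{k}Z_{s^{-3k}}$ summed over scales $\tfrac12\sqrt R\le k\le\sqrt R$, proves $\E[Z]\succeq_e\sqrt R$ (Lemma 4.13), proves $\E[Z^2]\preceq R\,\E[Z]$ directly by observing that if $u,v$ both lie in $\past_\F(e)$ then their tree-meeting point lies in the half-plane $H(u,v)$ nearer to one of them, which can be fed through Wilson's algorithm and bounded by the effective-conductance estimate of Lemma 4.9 and the trigonometric Lemma 4.14, and then applies Paley--Zygmund combined with the upper bound $\P(Z>0)\preceq R^{-1/2}$ from \cref{T:Wtailcp}. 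This avoids any conditioning on the internal structure of the past. As written, your proposal does not contain a proof of the lower bound.
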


The exponents $1$ and $1/2$ occurring in \cref{T:Wtailcp,Thm:Wareacp} should, respectively, be compared with the analogous exponents for the survival time and total progeny of a critical branching process whose offspring distribution has finite variance (see e.g.\ \cite[{\S\!}~5,12]{LP:book}).

\medskip

For \emph{uniformly transient} proper plane graphs (that is, proper plane graphs in which the escape probabilities of the random walk are bounded uniformly away from zero), the hyperbolic radii of circles in $(P,P^\dagger)$ are bounded away from zero uniformly (\cref{Prop:radii}). This implies that
 the hyperbolic metric and the graph metric are rough-isometric (\cref{cor:rough}).
Consequently, \cref{T:Ftailcp,T:Wtailcp,Thm:Wareacp} hold with the graph distance and counting measure as appropriate (\cref{Cor:graphexponents}).
 This yields the following appealing corollary, which applies, for example, to planar Cayley graphs of co-compact Fuchsian groups.

\begin{corollary}[Free length exponent]\label{cor:Flengthtail}
Let $G$ be a uniformly transient, polyhedral, proper plane network with  \asaf{$\bM_G < \infty$} 
and let $\F$ be the free uniform spanning forest of $G$.
Let $\mathbf{p}>0$ be a uniform lower bound on the escape probabilities of $G$. Then there exist positive  constants $k_1=k_1(\bM,\mathbf{p})$ and $k_2=k_2(\bM,\mathbf{p})$ such that
\vspace{.25em}
\[ k_1 R^{-1/2} \leq \P\left(|\Gamma_\F(x,y)| \geq R\right)  \leq k_2 R^{-1/2}
\vspace{.25em}
\]
for every edge $e=(x,y)$ of $G$ and every $R\geq 1$.
\end{corollary}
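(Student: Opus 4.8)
The plan is to transport the statement to the wired uniform spanning forest of the dual network $G^\dagger$, where it becomes the counting-measure form of the wired area exponent. Using the duality between free and wired uniform spanning forests of proper plane graphs (\cref{Sec:Duality}), I would couple $\F$ with the wired uniform spanning forest $\F^\dagger$ of $G^\dagger$ so that $e\in\F$ if and only if $e^\dagger\notin\F^\dagger$; this makes sense because $G$ is locally finite and has locally finite dual (its codegrees are bounded by $\bM_G<\infty$). The dual network $G^\dagger$ is again polyhedral, has $\bM_{G^\dagger}=\bM_G<\infty$, and is transient — it is CP hyperbolic, since it shares the double circle packing of $G$. Moreover, since $G$ is uniformly transient, \cref{Prop:radii} bounds the hyperbolic radii of \emph{all} circles of the double circle packing, dual circles included, away from $0$; hence (as in \cref{cor:rough}) the graph metric of $G^\dagger$ is rough-isometric to the hyperbolic plane with parameters controlled by $\bM$ and $\mathbf{p}$, so $G^\dagger$ is itself uniformly transient with escape probabilities bounded below by some $\mathbf{p}^\dagger=\mathbf{p}^\dagger(\bM,\mathbf{p})>0$, and is non-amenable with edge-isoperimetric constant bounded below by some $h=h(\bM,\mathbf{p})>0$. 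By \cref{T:planeWUSF} applied to $G^\dagger$, every component of $\F^\dagger$ is almost surely a one-ended tree, so $\past_{\F^\dagger}(e^\dagger)$ is a well-defined finite subtree of $G^\dagger$ on the event $e^\dagger\in\F^\dagger$.

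The key step is the deterministic identity
\[
|\Gamma_\F(x,y)| \;=\; \bigl|\partial_E \past_{\F^\dagger}(e^\dagger)\bigr| - 1,
\]
valid on the event $e=(x,y)\notin\F$ (equivalently $e^\dagger\in\F^\dagger$), where $\partial_E$ denotes the edge boundary in $G^\dagger$. This is the planar-duality dictionary already used in \cref{Subsec:dualexponents} to exchange \cref{T:Ftailcp} with \cref{T:Wtailcp}: the simple cycle $\Gamma_\F(x,y)\cup\{e\}$ of $G$ is a Jordan curve, one checks using one-endedness of $\F^\dagger$ that the faces of $G$ it encloses are exactly the vertices of $G^\dagger$ lying in $\past_{\F^\dagger}(e^\dagger)$, and an edge of $G$ lies on the cycle precisely when its dual edge joins an enclosed face to a non-enclosed one. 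I would then combine this with the elementary two-sided isoperimetric bound $h\,|T|\le|\partial_E T|\le\bM\,|T|$, valid for every finite subtree $T$ of $G^\dagger$ — the upper bound because $G^\dagger$ has maximum degree at most $\bM$, the lower bound by non-amenability of $G^\dagger$ — to conclude that, whenever $R>1$ (so that $|\Gamma_\F(x,y)|\ge R$ forces $e\notin\F$ and the identity applies), the quantities $|\Gamma_\F(x,y)|$ and $|\past_{\F^\dagger}(e^\dagger)|$ agree up to multiplicative constants depending only on $\bM$ and $\mathbf{p}$.

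It then remains to quote the graph-distance, counting-measure form of the wired area exponent — \cref{Cor:graphexponents}, the version of \cref{Thm:Wareacp} valid on uniformly transient polyhedral proper plane networks of bounded local geometry — for the network $G^\dagger$ at the edge $e^\dagger$: this produces constants depending only on $\bM$ and $\mathbf{p}^\dagger$ (hence only on $\bM$ and $\mathbf{p}$) with $\P(|\past_{\F^\dagger}(e^\dagger)|\ge R)\asymp R^{-1/2}$ for all $R\ge1$. Feeding this into the comparison of the previous paragraph gives the theorem for $R>1$, and the case $R\le1$ is trivial since every path in a forest joining two distinct vertices has at least one edge. The only ingredient here that is not already set up in the earlier sections is the passage of uniform transience — and hence of non-amenability, and of the comparability of hyperbolic area with counting measure — from $G$ to $G^\dagger$; I expect this, rather than the duality bookkeeping, to be the point deserving the most care, and it is exactly where \cref{Prop:radii}, applied to the dual circles of the double circle packing, does the work.
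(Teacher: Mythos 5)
Your proposal is correct and, at a high level, parallels the paper's own argument: both pass to the dual wired forest $\F^\dagger$ via \cref{thm:duality}, show that $|\Gamma_\F(x,y)|$ is comparable (with constants depending only on $\bM$ and $\mathbf p$) to the size of $\past_{\F^\dagger}(e^\dagger)$, and then quote \cref{Thm:Wareacp} applied to $G^\dagger$. Where you genuinely diverge is in the middle comparison. You first observe the exact Jordan-curve identity $|\Gamma_\F(x,y)|=|\partial_E\past_{\F^\dagger}(e^\dagger)|-1$ and then control the edge boundary by a two-sided graph-isoperimetric inequality $h|T|\le|\partial_E T|\le\bM|T|$ in $G^\dagger$; the lower bound requires establishing that $G^\dagger$ is non-amenable as a graph, which you get from the rough isometry of $(F(G),d_{G^\dagger})$ with $(\D,d_\bbH)$, itself a consequence of \cref{Prop:radii} and \cref{Thm:DCPRing}. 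The paper stays entirely in hyperbolic terms: it compares the hyperbolic length of the cycle $\Gamma_\F(x,y)\cup\{e\}$ with $\area_\bbH(\past_{\F^\dagger}(e^\dagger))$ using the isoperimetric inequality of $\bbH^2$ for one direction and the incidence of every path vertex with some past face for the other, and only then converts to counting using the uniform two-sided bound on hyperbolic radii. The two arguments use precisely the same ingredients — duality, the Jordan-curve enclosure of the past, non-amenability, uniformly bounded hyperbolic radii, and \cref{Thm:Wareacp} — so the difference is mostly bookkeeping: your exact identity is cleaner than the paper's pair of one-sided bounds \eqref{eq:length1}--\eqref{eq:length2}, but you pay for it by needing the separate fact that $G^\dagger$ is graph-theoretically non-amenable, which the paper avoids by citing the isoperimetric inequality of the hyperbolic plane directly. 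One small presentational issue: you invoke \cref{Cor:graphexponents} for $G^\dagger$, but that corollary is stated in the paper \emph{after} (and lists) \cref{cor:Flengthtail} itself; the line you actually need, namely $\WUSF_{G^\dagger}\bigl(|\past_{\F^\dagger}(e^\dagger)|\ge R\bigr)\asymp R^{-1/2}$, indeed follows from \cref{Thm:Wareacp}, \cref{Prop:radii}, and \cref{Thm:DCPRing} without any circularity, and it is cleaner to cite those directly.
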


See \cite{kenyon2000asymptotic,barlow2011spectral,LMS08,masson2008growth,shiraishi2013growth,bhupatiraju2016inequalities} and the survey \cite{barlow2014loop} for related results on Euclidean lattices.

\subsection*{Organisation}

\cref{Sec:background} contains definitions and background on those notions that will be used throughout the paper. Experienced readers are advised that this section also includes proofs of a few simple folklore-type lemmas and propositions. \cref{Sec:mainproof} contains the proofs of \cref{T:mainthm,T:planeWUSF}, as well as the upper bound of \cref{T:Wtailcp} in the case that $G$ is a triangulation. \cref{Sec:lowerbound} completes the proofs of \cref{T:Ftailcp,T:Wtailcp,Thm:Wareacp,cor:Flengthtail}; the most substantial component of this section is the proof of the lower bound of \cref{T:Wtailcp}. \cref{Sec:lowerbound} also includes the statement and proof of an extension of the Ring Lemma to double circle packings. 
We conclude with a remark and an open problem in \cref{sec:conclusion}.

\section{Background and definitions}\label{Sec:background}

\subsection{Notation}
We write $E^\rightarrow$ for the set of oriented edges of a network $G=(V,E)$. An oriented edge $e\in E^\rightarrow$ is oriented from its tail $e^-$ to its head $e^+$, and has reversal $-e$.

For each $r',r>0$, and $z \in \C$, we define the open ball and closed annulus
\[ B_{z}(r) = \{ z' \in \C : |z-z'| < r \}
\quad \text{and} \quad
 A_{z}(r,r')=\{ z' \in \C : r \leq |z'-z| \leq r' \}. \]
We write $d_\C$ for the Euclidean metric on $\C$ and write $d_\bbH$ for the hyperbolic metric on $\D$.
\subsection{Uniform Spanning Forests}\label{subsec:usfbackground}
We begin with a succinct review of some basic facts about USFs, referring the reader to \cite{BLPS} and Chapters 4 and 10 of \cite{LP:book} for a comprehensive overview.
 For each finite, connected graph $G$, we define $\UST_G$ to be the uniform measure on the set of spanning trees of $G$ (i.e., connected subgraphs of $G$ containing every vertex and no cycles). More generally, for a finite network $G=(G,w)$, we define $\UST_{G}$ to be the probability measure on spanning trees of $G$ for which the measure of each tree is proportional to the product of the conductances of the edges in the tree.

An \textbf{exhaustion} of an infinite network $G$ is a sequence $\langle V_n\rangle_{n\geq0}$ of finite, connected subsets of $V$ such that $V_n \subseteq V_{n+1}$ for all $n\geq 0$ and $\cup_n V_n = V$. Given such an exhaustion, let the network $G_n$ be the subgraph of $G$ induced by $V_n$ together with the conductances inherited from $G$. The \textbf{free uniform spanning forest} measure $\FUSF_G$ is defined to be the weak limit of the sequence  $\langle \UST_{G_n}\rangle_{n\geq1}$, so that
\[ \FUSF_G(S\subset \F) = \lim_{n\to\infty} \UST_{G_n}(S\subset T). \]
for each finite set $S\subset E$, where $\F$ is a sample of $\FUSF_G$ and $T$ is a sample of $\UST_{G_n}$. For each $n$, we also construct a network $G_n^*$ from $G$ by gluing (wiring) every vertex of $G \setminus G_n$ into a single vertex, denoted $\partial_n$, and deleting all the self-loops that are created. We identify the set of edges of $G_n^*$ with the set of edges of $G$ that have at least one endpoint in $V_n$. The \textbf{wired uniform spanning forest} measure $\WUSF_G$ is defined to be the weak limit of the sequence  $\langle \UST_{G_n^*}\rangle_{n\geq1}$, so that
\[ \WUSF_G(S\subset \F) = \lim_{n\to\infty} \UST_{G_n^*}(S\subset T) \]
for each finite set $S\subset E$, where $\F$ is a sample of $\WUSF_G$ and $T$ is a sample of $\UST_{G_n^*}$.
 These weak limits were both implicitly proven to exist by Pemantle~\cite{Pem91}, although the WUSF was not considered explicitly until the work of H\"aggstr\"om \cite{Hagg95}.  Both measures are easily seen to be concentrated on the set of \textbf{essential spanning forests} of $G$, that is, cycle-free subgraphs of $G$ including every vertex such that every connected component is infinite. The measure $\FUSF_G$ stochastically dominates $\WUSF_G$ for every infinite network $G$.

\subsubsection{The Spatial Markov Property} \label{subsec:Markov}
Let $G=(V,E)$ be a finite or infinite  network and let $A$ and $B$ be subsets of $E$. We write $(G-B)/A$ for the (possibly disconnected) network formed from $G$ by deleting every edge in $B$ and contracting (i.e., identifying the two endpoints of) every edge in $A$. We identify the edges of $(G-B)/A$ with $E\setminus B$.
Suppose that $G$ is finite, and that
\[ \UST_G(A \subseteq T, B \cap T =\emptyset)>0.\]
Then, given the event that $T$ contains every edge in $A$ and none of the edges in $B$, the conditional distribution of $T$ is equal to the union of $A$ and the UST of $(G-B)/A$. That is, for every event $\sA\subseteq \{0,1\}^E$,
\[ \UST_G(
T \in \sA \mid A \subset
T,\, B\cap
T =\emptyset) = \UST_{(G-B)/A}(
T \cup A \in \sA).\]

\noindent This is the \textbf{spatial Markov property} of the UST. Taking limits over exhaustions, we obtain a corresponding spatial Markov property for the USFs: If $G=(V,E)$ is an infinite network and $A$ and $B$ are subsets of $E$ such that
\[ \WUSF_G(A \subseteq \F, B \cap \F =\emptyset)>0 \]
and the network $(G-B)/A$ is locally finite, then
\begin{align} \WUSF_G(\F \in \sA \mid A \subset \F,\, B\cap \F =\emptyset) &= \WUSF_{(G-B)/A}(\F \cup A \in \sA). \end{align}
(If $(G-B)/A$ is not connected, we define $\WUSF_{(G-B)/A}$ to be the product of the WUSF measures on the connected components of $(G-B)/A$.)
A similar spatial Markov property holds for the FUSF.

The UST and USFs also enjoy a \emph{strong} form of the spatial Markov property. Let $G$ be a finite or infinite network, and suppose that $H$ is a random element of $\{0,1\}^E$, which we think of as a random subgraph of $G$. We say that a random element $K$ of $\{0,1\}^E$, defined on the same probability space as $H$, is a \textbf{local set for $H$} if for every set $W \subseteq E$, the event $\{K \subseteq W\}$ is, up to a null set, measurable with respect to the $\sigma$-algebra generated by the collection of random variables $\{ H(e) : e \in W \}$.

 For example, suppose that $\F$ is a random spanning forest of a network  $G$, and that $u$ and $v$ are vertices of $G$. Let $K=K(\F)$ be the random set of edges that is equal to the path connecting $u$ and $v$ in $\F$ if such a path exists, and otherwise equal to all of $E$.  Then $K$ is a local set for $\F$, since, for every proper subset $W$ of $E$, the event $\{K \subseteq W\}$ is equal (up to the null event in which $\F$ is not a forest) to the event that $u$ and $v$ are in the same component of $\F$ and that the unique path connecting $u$ and $v$ in $\F$ is contained in $W$, which is clearly measurable with respect to the restriction of $\F$ to $W$.

An example of a random set $K$ that is not a local set for $\F$ is, for instance, the edges of $\F$ incident to a given vertex $v$. If $v$ has degree at least $2$ and $e$ is some edge incident to $v$, then the event $\{K \subset \{e\}\}$ cannot be determined just by knowing $\F(e)$. Note also that \cref{prop:Markov} does not hold for this $K$, since if we know which edges are included in $K$ then we also know that $\F(e)=0$ for all edges $e \not \in K$ incident to $v$.

Given a random subgraph $H$ of a network $G$ and a local set $K$ for $H$, we write
$\cF_K$ to denote the $\sigma$-algebra generated by $K$ and the random collection of random variables $\{ H(e) : e \in K \}$. We also write $K_o = \{e \in E : e \in K,\, H(e)=1\}$ and $K_c = K \setminus K_o$ for the sets of edges in $K$ that are included (open) in $H$ and not included (closed) in $H$ respectively.

\begin{prop}[Strong Spatial Markov Property for the WUSF]\label{prop:Markov} Let $G=(V,E)$ be an infinite network, let $\F$ be a sample of the wired uniform spanning forest of $G$, and let $K$ be a local set for $\F$ that is either finite or equal to all of $E$ almost surely. Conditional on $\cF_K$ and the event that $K$ is finite, let $\hat \F$ be a sample of the wired uniform spanning forest of $(G-K_c)/K_o$. Let $\F' = K_o \cup \hat \F$ if $K_o$ is finite and $\F'=\F$ if $K_o=E$.
Then $\F$ and $\F'$ have the same distribution.
\end{prop}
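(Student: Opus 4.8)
\subsection*{Proof proposal}

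The plan is to deduce this from the ordinary spatial Markov property of the WUSF recorded above, after using the locality of $K$ to describe the atomic structure of $\cF_K$. It will be convenient to prove the equivalent statement that $\E[f(\F)] = \E[f(\F')]$ for every bounded measurable $f \colon \{0,1\}^E \to \R$.

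First I would dispose of the trivial contribution: on the event $\{K = E\}$ we have $\F' = \F$ by definition, and since $\{K \text{ is finite}\} \cup \{K = E\}$ has full probability by hypothesis, it remains only to show that $\E[f(\F)\,;\,K\text{ finite}] = \E[f(\F')\,;\,K\text{ finite}]$.

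The key combinatorial input is that, for every finite edge set $W \subseteq E$, the event $\{K = W\}$ is measurable with respect to $\sigma(\F(e) : e \in W)$ up to null sets; this follows from the definition of a local set together with the identity $\{K = W\} = \{K \subseteq W\} \setminus \bigcup_{e \in W} \{K \subseteq W \setminus \{e\}\}$, every term of which is $\sigma(\F(e):e\in W)$-measurable up to null sets. Hence, on the event $\{K\text{ finite}\}$, the $\sigma$-algebra $\cF_K$ is generated up to null sets by the countable partition of $\{K\text{ finite}\}$ into the atoms $\{K = W\} \cap \{\F \cap W = \omega\}$, with $W$ ranging over finite edge sets and $\omega$ over subsets of $W$; moreover, since $\{\F \cap W = \omega\}$ is an atom of $\sigma(\F(e):e\in W)$ and $\{K=W\}$ is a union of such atoms up to null sets, every non-null such event $\{K = W\} \cap \{\F \cap W = \omega\}$ coincides up to a null set with $\{\F \cap W = \omega\}$. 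On this event we have $K_o = \omega$, $K_c = W\setminus\omega$, and hence $(G-K_c)/K_o = (G-(W\setminus\omega))/\omega$, which is locally finite because $K$ is finite. Writing $\{\F \cap W = \omega\} = \{\omega \subseteq \F\} \cap \{(W\setminus\omega)\cap\F = \emptyset\}$ and applying the spatial Markov property of the WUSF recorded above would then show that, conditionally on this atom, $\F$ has the law of $\omega \cup \WUSF_{(G-(W\setminus\omega))/\omega} = K_o \cup \WUSF_{(G-K_c)/K_o}$, which is exactly the conditional law given $\cF_K$ of $\F' = K_o \cup \hat\F$ on the same atom. (Here one invokes the spatial Markov property in the form allowing $(G-K_c)/K_o$ to be disconnected, in which case $\WUSF_{(G-K_c)/K_o}$ is the product of the WUSF measures over its connected components.) Summing the resulting equality of conditional expectations over the countably many non-null atoms partitioning $\{K\text{ finite}\}$ would yield $\E[f(\F)\,;\,K\text{ finite}] = \E[f(\F')\,;\,K\text{ finite}]$, completing the proof.

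I expect the main obstacle to be the measure-theoretic bookkeeping in the third step: one must check carefully, straight from the definition of a local set, that $\{K = W\}$ is $\sigma(\F(e) : e \in W)$-measurable up to null sets and hence that $\cF_K$ has the claimed atomic structure on $\{K\text{ finite}\}$. Once this is established, everything else is a routine atom-by-atom application of the ordinary spatial Markov property.
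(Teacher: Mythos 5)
Your proposal is correct and follows essentially the same route as the paper: decompose $\cF_K$ on $\{K \text{ finite}\}$ into atoms where $K_o$ and $K_c$ take specific values, use the local set property to identify each non-null atom with the corresponding event $\{W_1 \subseteq \F,\ W_2 \cap \F = \emptyset\}$ up to null sets, and then apply the ordinary spatial Markov property atom-by-atom. You simply make explicit the measure-theoretic step (via $\{K=W\} = \{K\subseteq W\}\setminus\bigcup_{e\in W}\{K\subseteq W\setminus\{e\}\}$) that the paper compresses into the phrase ``since $K$ is a local set.''
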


\begin{proof}
 Expand the conditional probability
\begin{multline*}\WUSF_G\left(\F \in \sA \mid \cF_K\right)\mathbbm{1}(K\neq E)\\ = \sum_{W_1,W_2 \subset E \text{ finite}}\hspace{-1.7em}\WUSF_G\left(\F \in \sA \mid K_o = W_1, K_c=W_2\right)\mathbbm{1}\left(K_o = W_1, K_c = W_2\right).\end{multline*}
Since $K$ is a local set for $\F$, the right-hand side is equal to
\begin{align*}
 \sum_{W_1,W_2 \subset E \text{ finite}}\hspace{-1.7em}\WUSF_G\left(\F \in \sA \mid W_1 \subseteq \F, W_2\cap \F =\emptyset\right) \mathbbm{1}\left(K_o = W_1, K_c = W_2\right). \end{align*}
Applying the spatial Markov property gives
\begin{multline*}\WUSF_G\left(\F \in \sA \mid \cF_K\right)\mathbbm{1}(K\neq E)\\ = \sum_{W_1,W_2 \subset E \text{ finite}}\hspace{-1.7em}\WUSF_{\left(G-W_2\right)/W_1}\left(\hat \F\cup W_1 \in \sA\right)\mathbbm{1}\left(K_o = W_1, K_c = W_2\right) \end{multline*}
\[
\hspace{-2.525cm}= \WUSF_{\left(G-K_c\right)/K_o}\left(\hat \F \cup K_o \in \sA\right)\mathbbm{1}(K\neq E), \]
from which the claim follows immediately.
\end{proof}

Similar strong spatial Markov properties hold for the FUSF and UST, and admit very similar proofs.

\subsection{Random walk, effective resistances}
Given a network $G$ and a vertex $u$ of $G$, we write $\bP^G_u$ for the law of the simple random walk on $G$ started at $u$, and will often write simply $\bP_u$ if the choice of network is unambiguous.
For each set of vertices $A$, we let $\tau_A$  be the first time  the random walk visits $A$, letting $\tau_A=\infty$ if the walk never visits $A$. Similarly, $\tau_A^+$ is defined to be the first positive time that the random walk visits $A$. The conductance $c(u)$ of a vertex $u$ is defined to be the sum of the conductances of the edges emanating from $u$.

Let $A$ and $B$ be sets of vertices in a finite network $G$. The \textbf{effective conductance} between $A$ and $B$ in $G$ is defined to be
\[\Ceff(A\leftrightarrow B; G) = \sum_{v\in A}c(v)\bP_v(\tau_B < \tau^+_A),\]
while the effective resistance $\Reff(A\leftrightarrow B; G)$ is defined to be the reciprocal of the effective conductance,
$\Reff(A\leftrightarrow B; G)= \Ceff(A\leftrightarrow B; G)^{-1}$.
Now suppose that $G$ is an infinite network with exhaustion $\langle V_n\rangle_{n\geq0}$ and let $A$ and $B$ be finite subsets of $V$. Let $\langle G_n \rangle_{n\geq0}$ and $\langle G_n^* \rangle_{n\geq0}$ be defined as in \cref{subsec:usfbackground}.
The \textbf{free effective resistance between $A$ and $B$} is defined to be the limit
\[\ReffF(A \leftrightarrow B;\, G) = \lim_{n\to\infty} \Reff(A \leftrightarrow B;\, G_n),\]
while the \textbf{wired effective resistance between $A$ and $B$} is defined to be
\[\ReffW(A \leftrightarrow B;\, G) = \lim_{n\to\infty} \Reff(A \leftrightarrow B;\, G_n^*).\]
Free and wired effective conductances are defined by taking reciprocals. The free effective resistance between two, possibly infinite, sets $A$ and $B$ is defined to be the limit of the free effective resistances between $A \cap V_n$ and $B \cap V_n$, which are decreasing in $n$.
We also define
\[\Reff\left(A\leftrightarrow B\cup\{\infty\}\right) = \lim_{n\to\infty} \Reff\left(A \leftrightarrow B\cup\{\partial_n\} ; G^*_n\right).\]
See e.g.\ \cite{LP:book} for further background on electrical networks.

We will make frequent use of \textbf{Rayleigh' monotonicity principle} \cite[Chapter 2.4]{LP:book}, which states that the effective conductance between any two sets in a network is an increasing function of the edge conductances. In particular, it follows that the effective conductance between two sets decreases when edges are deleted from the network (which corresponds to taking the conductance of those edges to zero), and increases when edges are contracted (which corresponds to taking the conductance of those edges to infinity).

The proof of \cref{T:planeWUSF} will require the following simple lemma.

\begin{lemma}\label{Lem:triangle} Let $A$ and $B$ be sets of vertices in an infinite network $G$. Then
\[ \ReffW(A \leftrightarrow B;\, G) \leq 3\max\left\{\Reff\left(A \leftrightarrow B \cup \{\infty\};\, G\right),\, \Reff\left(B \leftrightarrow A \cup \{\infty\};\, G\right)\right\}.\]
\end{lemma}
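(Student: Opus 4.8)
The plan is to reduce the whole statement, via the definitions recalled above, to a single inequality in a finite three‑terminal electrical network, and then to carry out a one‑line computation after reducing that network to a triangle.

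\textbf{Reduction to finite networks.} Fix an exhaustion $\langle V_n\rangle$ with associated wired networks $G_n^*$. By the definitions of the wired effective resistance and of $\Reff(\,\cdot\leftrightarrow\cdot\cup\{\infty\}\,)$ given above,
\[\ReffW(A\leftrightarrow B;G)=\lim_{n\to\infty}\Reff(A\leftrightarrow B;G_n^*),\qquad \Reff(A\leftrightarrow B\cup\{\infty\};G)=\lim_{n\to\infty}\Reff(A\leftrightarrow B\cup\{\partial_n\};G_n^*),\]
and similarly with $A$ and $B$ interchanged (for infinite $A$ or $B$ one takes a further limit over $A\cap V_n$ and $B\cap V_n$; I will suppress this and treat $A,B$ as finite, as is the case in our applications). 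Hence it suffices to prove: for every finite network $H$, every two sets of vertices $A,B$, and every vertex $c$ of $H$,
\[\Reff(A\leftrightarrow B;H)\le 3\max\bigl\{\Reff(A\leftrightarrow B\cup\{c\};H),\ \Reff(B\leftrightarrow A\cup\{c\};H)\bigr\},\]
and then apply this with $H=G_n^*$ and $c=\partial_n$ and let $n\to\infty$.

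\textbf{Reduction to a triangle.} To prove the finite statement I would first contract all of $A$ to a single vertex $a$ and all of $B$ to a single vertex $b$ --- this changes none of the three effective resistances involved --- and then eliminate every vertex other than $a,b,c$ by the star--mesh transform. At each step this keeps all conductances non‑negative and preserves the effective resistance between any two disjoint subsets of the surviving vertices, so the result is a triangle on $\{a,b,c\}$ with edge resistances $x\in(0,\infty]$ across $ab$, $y\in(0,\infty]$ across $ac$, $z\in(0,\infty]$ across $bc$, having the same pairwise and pair‑to‑vertex effective resistances as $H$. (The cases in which $a,b,c$ are not distinct are trivial: if $a=c$ then $\Reff(B\leftrightarrow A\cup\{c\})=\Reff(a\leftrightarrow b)=\Reff(A\leftrightarrow B)$, and if $a=b$ the left side is $0$.) By the series and parallel laws,
\[\Reff(a\leftrightarrow b)=\frac{x(y+z)}{x+y+z},\qquad \Reff(a\leftrightarrow\{b,c\})=\frac{xy}{x+y},\qquad \Reff(b\leftrightarrow\{a,c\})=\frac{xz}{x+z}.\]
Assuming without loss of generality that $y\le z$, monotonicity of $t\mapsto xt/(x+t)$ shows the maximum on the right equals $xz/(x+z)$, so it remains to check the elementary inequality $x(y+z)/(x+y+z)\le 3xz/(x+z)$; after clearing denominators (the infinite and zero cases being immediate) this is equivalent to $0\le 2z(x+y+z)-xy$, which holds because $xy\le xz\le 2z(x+y+z)$.

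\textbf{The main point.} The only step that needs to be spelled out carefully is the passage to the three‑terminal triangle, i.e.\ that eliminating interior vertices via the star--mesh transform yields a genuine (non‑negatively weighted) network with the claimed effective resistances; this is standard electrical‑network folklore but I would state it explicitly. (Alternatively one can argue directly with unit current flows --- combine the minimal‑energy flows from $A$ to $B\cup\{c\}$ and from $B$ to $A\cup\{c\}$ in the ratio cancelling the net current at $c$, then normalise --- but extracting the clean constant $3$ that way is fiddlier, so I prefer the reduction above.) Letting $n\to\infty$ in the finite inequality then yields the lemma.
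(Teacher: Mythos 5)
Your proof is correct but takes a genuinely different route from the paper's. The paper works entirely with abstract effective‑resistance identities: applying the subadditivity inequality $\Reff(A\leftrightarrow B\cup C)^{-1}\le \Reff(A\leftrightarrow B)^{-1}+\Reff(A\leftrightarrow C)^{-1}$ in $G_n^*$ with $C=\{\partial_n\}$ and letting $n\to\infty$ gives $\Reff(A\leftrightarrow\infty)\le MW/(W-M)$, where $W=\ReffW(A\leftrightarrow B)$ and $M$ is the max on the right; by symmetry the same bounds $\Reff(B\leftrightarrow\infty)$, and the triangle inequality $W\le\Reff(A\leftrightarrow\infty)+\Reff(B\leftrightarrow\infty)$ then yields $W\le 2MW/(W-M)$, i.e.\ $W\le 3M$. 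You instead contract $A,B$ and Kron‑reduce to a literal three‑vertex triangle and finish by elementary series--parallel arithmetic. Both are sound: the paper's argument is shorter and leans on two cited standard inequalities, while yours is computationally self‑contained but invokes the heavier (though standard, and correctly flagged by you) fact that eliminating interior vertices via star--mesh / Schur complement yields a genuine nonnegative‑conductance network with the same boundary effective resistances. One small dividend of your reduction: the triangle ratio is actually bounded by $2$, not merely $3$ --- with $y\le z$ one has $\frac{(y+z)(x+z)}{z(x+y+z)}\le\frac{2z(x+z)}{z(x+z)}=2$ --- so your route yields the sharper constant $2$ essentially for free, whereas the paper's two‑step argument naturally produces $3$.
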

\begin{proof}
Let $M=\max\left\{\Reff(A \leftrightarrow B \cup \{\infty\};\, G),\, \Reff(B \leftrightarrow A \cup \{\infty\};\, G)\right\}$.
Recall that for any three sets $A$, $B$ and $C$ in $G_n^*$ (or any other finite network) \cite[Exercise 2.33]{LP:book},
\[ \Reff(A \leftrightarrow B \cup C; G_n^*)^{-1} \leq \Reff(A \leftrightarrow B; G_n^*)^{-1} + \Reff(A \leftrightarrow  C; G_n^*)^{-1}. \]
Letting $C=\{\partial_n\}$ and taking the limit as $n\to\infty$, we obtain that
\[ M^{-1} \leq \Reff(A \leftrightarrow B \cup \{\infty\}; G)^{-1} \leq \ReffW(A \leftrightarrow B; G)^{-1} + \Reff(A \leftrightarrow  \infty; G)^{-1}. \]
Rearranging, we have that
\begin{equation*}\label{eq:triangle1}\Reff(A\leftrightarrow\infty;G) \leq \frac{M\ReffW(A\leftrightarrow B;G)}{\ReffW(A\leftrightarrow B;G)-M}.\end{equation*}
By symmetry, the inequality continues to hold when we exchange the roles of $A$ and $B$. Combining both of these inequalities
 with the triangle inequality for effective resistances \cite[Exercise 9.29]{LP:book} yields that
\begin{align*}
\ReffW(A\leftrightarrow B;G) &\leq \Reff(A\leftrightarrow\infty;G)+\Reff(B\leftrightarrow\infty;G)\leq 2\frac{M\ReffW(A\leftrightarrow B;G)}{\ReffW(A\leftrightarrow B;G)-M},
\end{align*}
which rearranges to give the claimed inequality.
\end{proof}





\subsubsection{Kirchhoff's Effective Resistance Formula.}
The connection between effective resistances and spanning trees was first discovered by Kirchhoff \cite{kirchhoff1847ueber} (see also \cite{BuPe93}). 
\begin{thm}[Kirchhoff's Effective Resistance Formula]\label{Thm:Kirchhoff} Let $G$ be a finite network. Then for every $e\in E^\rightarrow$
\[ \UST_G(e \in T) = c(e)\Reff(e^- \leftrightarrow e^+;G).\]
\end{thm}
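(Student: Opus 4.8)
The plan is to prove Kirchhoff's formula by the classical Transfer Current / Matrix-Tree argument, though for a self-contained write-up the most efficient route is a combinatorial pairing between spanning trees and electrical currents. First I would recall the Matrix-Tree Theorem in its weighted form: if $L$ denotes the weighted Laplacian of $G$ and $\mathcal{T}(G)$ the weighted sum $\sum_{T}\prod_{f\in T}c(f)$ over spanning trees, then $\mathcal{T}(G)$ equals any cofactor of $L$. More generally, for any two vertices $a,b$ the weighted sum over spanning trees that, when $a$ and $b$ are contracted, remain spanning trees — equivalently $\mathcal{T}(G/\{a,b\})$ — is given by a $2\times 2$-type cofactor expression. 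The first key step is therefore to express both sides of the claimed identity in terms of such weighted tree counts. By the spatial Markov property (or directly), for an oriented edge $e=(x,y)$ we have
\[
\UST_G(e\in T) = \frac{c(e)\,\mathcal{T}\bigl((G-e)/e\bigr)}{\mathcal{T}(G)} = \frac{c(e)\,\mathcal{T}(G/e)}{\mathcal{T}(G)},
\]
since contracting $e$ and then summing over spanning trees of the contraction is exactly $\sum_{T\ni e}\prod_{f\in T\setminus e}c(f)$. So the statement reduces to showing
\[
\Reff(x\leftrightarrow y;G) = \frac{\mathcal{T}(G/e)}{\mathcal{T}(G)}.
\]

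The second key step is to identify $\mathcal{T}(G/e)/\mathcal{T}(G)$ with the effective resistance. I would do this via the Matrix-Tree Theorem: fix $y$ as a reference vertex and let $L_y$ be the Laplacian with the row and column of $y$ deleted. Then $\mathcal{T}(G) = \det L_y$, and the unit-current potential $v$ (with $v(y)=0$, and net current $1$ injected at $x$) solves $L_y v = \mathbf{1}_x$, so that $v(x) = (L_y^{-1})_{xx} = \mathrm{cofactor}/\det L_y$. By definition $\Reff(x\leftrightarrow y;G) = v(x)$ when a unit current flows from $x$ to $y$. On the other hand, the relevant cofactor of $L_y$ — the one obtained by further deleting the row and column of $x$ — is exactly $\det L_{\{x,y\}} = \mathcal{T}(G/\{x,y\}) = \mathcal{T}(G/e)$ (contracting the edge $e=(x,y)$ and identifying the two endpoints gives the same Laplacian cofactor up to the self-loops, which do not affect spanning-tree counts). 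Combining, $v(x) = \mathcal{T}(G/e)/\mathcal{T}(G)$, which is the desired identity.

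An alternative, more probabilistic route for the second step — which I might prefer to present since the paper works heavily with random walks — uses Wilson's algorithm: running Wilson's algorithm rooted at $y$, the probability that the first edge traversed out of $x$ in the loop-erased walk from $x$ to $y$ is precisely $e$ can be computed, but more directly one uses the identity $\UST_G(e\in T) = \P_x(\text{walk crosses } e \text{ net once before hitting } y)$-type formulas; cleanest is to cite that $\UST_G(x\to y \text{ path uses } e) $ equals the current through $e$, and then sum. I would, however, go with the Matrix-Tree computation as the main line because it is shortest and fully rigorous.

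The main obstacle I anticipate is purely bookkeeping: making sure the cofactor manipulations correctly track the edge weights $c(f)$ (so that the $c(e)$ prefactor emerges cleanly rather than being absorbed or double-counted), and handling the degenerate cases — multiple edges between $x$ and $y$, or the edge $e$ being a bridge (in which case $\Reff(x\leftrightarrow y) = 1/c(e)$ after parallel reduction and $\UST_G(e\in T)=1$ should be checked to be consistent). None of this is deep, but it requires care. If the paper prefers to avoid linear algebra entirely, the cleanest self-contained argument is the deletion–contraction recursion: write $\mathcal{T}(G) = c(e)\mathcal{T}(G/e) + \mathcal{T}(G-e)$, note $\UST_G(e\in T) = c(e)\mathcal{T}(G/e)/\mathcal{T}(G)$ as above, and separately establish the parallel law $\Reff(x\leftrightarrow y;G)^{-1} = c(e) + \Reff(x\leftrightarrow y; G-e)^{-1}$ for the effective conductance together with the corresponding recursion $\mathcal{T}(G/e)/\mathcal{T}(G-e)$-type relation, then conclude by induction on the number of edges. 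I would present the Matrix-Tree version as the primary proof and remark that the inductive version is available.
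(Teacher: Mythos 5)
The paper does not prove this statement: it is presented as a classical result, with citations to Kirchhoff's 1847 paper and to Burton--Pemantle, so there is no in-paper proof to compare against. Your Matrix-Tree argument is a correct and standard proof of the formula. The reduction $\UST_G(e\in T)=c(e)\,\mathcal{T}(G/e)/\mathcal{T}(G)$ follows immediately from the definition of the weighted spanning-tree measure and the bijection between spanning trees of $G$ containing $e$ and spanning trees of $G/e$; and the identification $\Reff(x\leftrightarrow y;G)=\mathcal{T}(G/\{x,y\})/\mathcal{T}(G)$ via the cofactor $(L_y^{-1})_{xx}=\det L_{\{x,y\}}/\det L_y$ is exactly the right computation, since $L_{\{x,y\}}$ is the reduced Laplacian of $G/\{x,y\}$. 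Two small points worth tidying: the expression $(G-e)/e$ in your first display is a typo (one cannot contract an edge that has already been deleted); you mean $G/e$, as the second equality already reads. And your parenthetical that $\mathcal{T}(G/e)=\mathcal{T}(G/\{x,y\})$ because contracting versus merely identifying endpoints differ only by self-loops, which cannot lie in a spanning tree, is exactly the right observation to dispose of the multiple-edge case you flag as a potential worry --- so that case is already handled and does not need a separate treatment. The bridge case is likewise automatic: if $e$ is the unique edge between the two sides, $\mathcal{T}(G)=c(e)\,\mathcal{T}(G/e)$ and the formula returns $1$, consistent with $\Reff(x\leftrightarrow y)=1/c(e)$. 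Either the Matrix-Tree route or the deletion--contraction induction you sketch at the end would serve; the former is the one most texts (e.g.\ Lyons--Peres) present, and is the cleaner choice here.
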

\noindent Taking limits over exhaustions, we also have the following extension of Kirchhoff's formula:
\begin{align}
\WUSF_G(e\in \fF) = c(e)\ReffW(e^-\leftrightarrow e^+;G).\end{align}
A similar equality holds for the FUSF.

\subsubsection{The method of random paths.}

We say that a path $\Gamma$ in a network $G$ is \textbf{simple} if it does not visit any vertex more than once.
Given  a probability measure $\nu$  on simple paths $\Gamma$ in a network $G$,
  we define the \textbf{energy} of $\nu$ to be
\[\cE(\nu)=\frac{1}{2}\sum_{e\in E^\rightarrow}\frac{1}{c(e)}\big(\nu(e \in \Gamma)-\nu(-e\in\Gamma)\big)^2.\]
Effective resistances can be bounded from above by energies of random paths: In particular, if $G$ is an infinite network and $A$ and $B$ are two finite sets of vertices in $G$, then
\[\Reff(A \leftrightarrow B\cup\{\infty\} ; G) = \min\left\{\cE(\nu) :\begin{array}{l} \nu \text{ a probability measure on  simple}\\  \text{paths in $G$ starting in $A$ that are}\\ \text{either infinite or finite and end in $B$}\end{array}\right\}\]
See e.g.\ \cite[\S3]{LP:book} and \cite{peres1999probability} for more detail.
Obtaining resistance bounds by defining flows using random paths in this manner is referred to as the \textbf{method of random paths}.

It will be convenient to use the following weakening of the method of random paths.
 Given the law $\mu$ of a random subset $W  \subset V(G)$, define
 \[\cE(\mu) = \sum_{v\in V}\mu(v\in W)^2.\]

\begin{lem}[Method of random sets]\label{lem:randomsets} Let $A$ and $B$ be two finite sets of vertices in an infinite network $G$, and let $\mu$ be a measure on subsets $W  \subset V(G)$ such that the subgraph of $G$ induced by $V$ almost surely contains a path starting at $A$ that is either infinite or finite and ends at $B$. Then
\begin{equation}\label{eq:randomsets}\Reff(A \leftrightarrow B \cup \{\infty\}; G) \leq \sup_{e\in E}c(e)^{-1} \cE(\mu). \end{equation}
\end{lem}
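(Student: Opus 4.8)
The plan is to reduce this to the method of random paths stated just above. Given the random set $W$ with law $\mu$, the subgraph of $G$ induced by $W$ almost surely contains a path starting in $A$ that is either infinite or ends in $B$; I would select such a path in a measurable way (say, the lexicographically-first simple such path with respect to some fixed enumeration of $V$, after deleting repetitions to make it simple) and call the resulting random simple path $\Gamma$. Write $\nu$ for the law of $\Gamma$. By the method of random paths, $\Reff(A \leftrightarrow B \cup \{\infty\}; G) \leq \cE(\nu)$, so it suffices to bound $\cE(\nu)$ by $\sup_e c(e)^{-1} \cE(\mu)$.

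To do this, first bound the energy of $\nu$ crudely by dropping the cancellation between the two orientations:
\[ \cE(\nu) = \frac12 \sum_{e \in E^\rightarrow} \frac{1}{c(e)}\bigl(\nu(e\in\Gamma) - \nu(-e\in\Gamma)\bigr)^2 \leq \frac12 \sum_{e\in E^\rightarrow}\frac{1}{c(e)}\bigl(\nu(e\in\Gamma)+\nu(-e\in\Gamma)\bigr)^2, \]
and note that $\nu(e\in\Gamma)+\nu(-e\in\Gamma)$ is at most the probability that $\Gamma$ traverses the unoriented edge $\{e^-,e^+\}$; since $\Gamma$ is simple this is at most $\P(e^- \in \Gamma) \wedge \P(e^+ \in \Gamma)$, and each of these is at most $\P(e^-\in W)\wedge \P(e^+\in W) = \mu(e^-\in W)\wedge\mu(e^+\in W)$ because $\Gamma$ is contained in $W$. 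Hence, summing over unoriented edges and using $(\mu(e^-\in W)\wedge\mu(e^+\in W))^2 \leq \mu(e^-\in W)\,\mu(e^+\in W)$,
\[ \cE(\nu) \leq \sup_{e\in E}c(e)^{-1}\sum_{\{x,y\}\in E}\mu(x\in W)\,\mu(y\in W) \leq \sup_{e\in E}c(e)^{-1}\sum_{x\in V}\deg(x)^{1/2}\cdots \]
— wait, that overcounts. The clean route is: for each unoriented edge $\{x,y\}$ bound its contribution by $\mu(x\in W)^2$ (using $\mu(x\in W)\wedge\mu(y\in W) \le \mu(x \in W)$... no, we need it symmetric).

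Let me instead just bound $\P(\Gamma \text{ traverses } \{x,y\})$: since $\Gamma$ is simple, each vertex $x$ lies on $\Gamma$ at most once and is incident to at most two edges of $\Gamma$, so $\sum_{y\sim x}\mathbbm1(\{x,y\} \in \Gamma) \le 2$. Thus $\sum_{\{x,y\}\in E}\P(\{x,y\}\in\Gamma) = \frac12\sum_x \bE\sum_{y\sim x}\mathbbm1(\{x,y\}\in\Gamma) \le \frac12 \sum_x 2\,\P(x\in\Gamma) = \sum_x \P(x \in \Gamma) \le \sum_x \mu(x\in W)$. That gives an $\ell^1$ bound, not the $\ell^2$ bound $\cE(\mu)$ we want — so I need to be more careful and keep the squares. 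The right estimate is: $\nu(e\in\Gamma)+\nu(-e\in\Gamma) \le \P(e^-\in W)$ and also $\le \P(e^+\in W)$, so $(\nu(e\in\Gamma)+\nu(-e\in\Gamma))^2 \le \P(e^-\in W)\,\P(e^+ \in W) = \mu(e^-\in W)\mu(e^+\in W) \le \tfrac12(\mu(e^-\in W)^2 + \mu(e^+\in W)^2)$. Summing over unoriented edges and using that the graph is simple with the $\tfrac12$ from $E^\rightarrow$ cancelling, and that each vertex $x$ contributes $\mu(x\in W)^2$ once for each incident edge — here I must invoke that along $\Gamma$ each vertex has at most two incident edges, but that constraint lives on $\Gamma$, not on $G$, so it does not directly cap the sum over all edges of $G$.

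The actual fix, which I expect is what the paper does: choose $\Gamma$ so that it is simple, and bound $\nu(e\in\Gamma)+\nu(-e\in\Gamma)$ by $\min\{\mu(e^-\in W),\mu(e^+\in W)\}$ as above, then observe
\[ \sum_{\{x,y\}\in E}\min\{\mu(x\in W),\mu(y\in W)\}^2 \le \sum_{\{x,y\}\in E}\mu(x\in W)\mu(y\in W), \]
and this still is not obviously $\le \sum_x \mu(x\in W)^2$ for a general graph. So the genuinely correct and simplest argument must instead directly define the flow from the random set and estimate its energy edge by edge, using that on the event $\{e \in \Gamma\}$ both endpoints are in $W$: I would set $\theta(e) = \nu(e\in\Gamma)-\nu(-e\in\Gamma)$, a unit flow from $A$ to $B\cup\{\infty\}$, and bound
\[ \cE(\theta) = \frac12\sum_{e\in E^\rightarrow}\frac{|\theta(e)|^2}{c(e)} \le \frac12\sup_e c(e)^{-1}\sum_{e\in E^\rightarrow}\nu(\{e^-,e^+\}\text{ traversed by }\Gamma)^2. \]

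The main obstacle, then, is exactly this combinatorial step: converting the edge-indexed sum $\sum_{e}\P(\text{both endpoints of }e\text{ in }W)^2$ into the vertex-indexed sum $\sum_v \mu(v\in W)^2$. I expect this is handled by the simplicity of $\Gamma$: writing $p_e = \P(e\in\Gamma \text{ or } -e\in\Gamma)$, simplicity gives $\sum_{e\ni v}p_e \le 2\,\mu(v\in W)$ for every $v$ (at most two edges of the simple path meet $v$, and only when $v\in W$), hence $\sum_e p_e^2 \le \sum_e p_e \min_{v\in e}\bigl(\sum_{e'\ni v}p_{e'}\bigr) \le 2\sum_e p_e \min_{v \in e}\mu(v\in W)$; then a further application of $\sum_{e\ni v}p_e \le 2\mu(v\in W)$ and Cauchy–Schwarz collapses this to $O(\sum_v \mu(v\in W)^2)$. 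I would need to track the constants to land exactly on $\cE(\mu)$ with no extra factor, which is the one place the argument requires care; everything else (measurable selection of $\Gamma$, the reduction to the method of random paths, Rayleigh monotonicity) is routine.
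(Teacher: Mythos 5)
Your proposal circles the right idea but never lands the clean argument, and the route you ultimately sketch would introduce a spurious constant factor that the statement does not allow. Your step $\cE(\nu)\le\sup_e c(e)^{-1}\sum_{e\in E^\rightarrow}\nu(e\in\Gamma)^2$ is fine (in fact you could get it more tightly via $(a-b)^2\le a^2+b^2$ rather than $(a-b)^2\le(a+b)^2$). The missing idea is a \emph{second-moment} interpretation of the remaining edge sum: $\sum_{e\in E^\rightarrow}\nu(e\in\Gamma)^2$ equals the expected number of oriented edges common to $\Gamma$ and an independent copy $\Gamma'$. Because a simple path uses at most one oriented edge emanating from each vertex, the map $e\mapsto e^-$ injects the set of shared oriented edges into the set of shared vertices, giving at once
\[
\sum_{e\in E^\rightarrow}\nu(e\in\Gamma)^2 \;\le\; \sum_{v\in V}\nu(v\in\Gamma)^2 \;\le\; \sum_{v\in V}\mu(v\in W)^2 \;=\; \cE(\mu),
\]
with no constant lost. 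This is exactly what the paper does.

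By contrast, the argument you propose at the end --- $\sum_e p_e^2 \le 2\sum_e p_e\min_{v\in e}\mu(v\in W) \le 4\sum_v\mu(v\in W)^2$ --- is correct but yields $\cE(\nu)\le 4\sup_e c(e)^{-1}\cE(\mu)$ rather than the stated bound. You correctly flag that ``tracking the constants'' is the sticking point, but the constant here is not a matter of care: the unoriented-edge route genuinely loses factors at two separate places (once passing from $p_e$ to $\mu(v(e)\in W)$, and once passing back). Switching to the oriented-edge formulation and the independent-copy comparison sidesteps both losses. (For the purposes of this paper a factor of $4$ would be harmless, since the lemma is always invoked up to $\preceq$, but your proof as written does not establish the lemma as stated.) Everything else --- the measurable selection of a simple $\Gamma\subseteq W$, and the reduction to the method of random paths --- matches the paper.
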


\begin{proof}
Given $W$, let $\Gamma$ be a simple path connecting $A$ to $B$ that is contained in $W$. Then, letting $\nu$ be the law of $\Gamma$,
\begin{align*}
	\cE(\nu)
	& \leq \sup_{e}\frac{1}{c(e)}\sum_{e\in E^\rightarrow} \nu(e \in \Gamma)^2.
	\end{align*}
Letting $\Gamma'$ be an independent random path with the same law as $\Gamma$, the sum above is exactly the expected number of oriented edges that are used by both $\Gamma$ and $\Gamma'$. Since these paths are simple, they each contain at most one oriented edge emanating from $v$ for each vertex $v\in V$. It follows that the number of oriented edges included in both paths is at most the number of vertices included in both paths. This yields that
\[ \mathcal{E}(\nu) \leq \sup_{e\in E}\frac{1}{c(e)} \sum_{v\in V} \nu(v \in \Gamma)^2\leq \sup_{e\in E}\frac{1}{c(e)} \sum_{v\in V} \mu(v \in W)^2 =  \sup_{e\in E}\frac{1}{c(e)} \cE(\mu).\qedhere\]
\end{proof}

\subsection{Plane Graphs and their USFs}\label{Sec:Embeddings}

Given a graph $G=(V,E)$, let $\mathbb{G}$ be the metric space defined as follows. For each edge $e$ of $G$, choose an orientation of $e$ arbitrarily and let $\{I(e) : e \in E\}$ be a set of disjoint isometric copies of the interval $[0,1]$.
The metric space $\mathbb{G}$ is defined as a quotient of the union $\bigcup_e I(e) \cup V$, where we identify the endpoints of $I(e)$ with the vertices $e^-$ and $e^+$ respectively, and is equipped with the path metric.

Let $S$ be an orientable surface without boundary, which in this paper will always be a domain $D \subseteq \C\cup\{\infty\}$.
A \textbf{proper embedding} of a graph $G$ into $S$ is a continuous, injective map $z:\mathbb{G}\to S$ satisfying the following conditions:
\begin{enumerate}
\item (\textbf{Every face is a topological disc.}) Every connected component of the complement $S\setminus z(\bbG)$, called a \textbf{face} of $(G,z)$, is homeomorphic to the disc. Moreover, for each connected component $U$ of $S \setminus z(\bbG)$, the set of oriented edges of $G$ that have their left-hand side incident to $U$ forms either a cycle or a bi-infinite path in $G$.
 \item (\textbf{$z$ is locally finite.}) Every compact subset of $S$ intersects at most finitely many edges of $z(\mathbb{G})$. Equivalently, the preimage $z^{-1}(K)$ of every compact set $K\subseteq S$ is compact in $\mathbb{G}$.
 \end{enumerate}
A locally finite, connected graph is planar if and only if it admits a proper embedding into some domain $D\subseteq \C\cup\{\infty\}$.
 A \textbf{plane graph} $G=(G,z)$ is a planar graph $G$ together with a specified embedding $z:\mathbb{G}\to D \subseteq \C\cup\{\infty\}$.
A \textbf{plane network} $G=(G,z,c)$ is a planar graph together with a specified embedding and an assignment of positive conductances $c: E \to (0,\infty)$.

 Given a pair $G=(G,z)$ of a graph together with a proper embedding $z$ of $G$ into a domain $D$, the \textbf{dual} $G^\dagger$ of $G$ is the graph that has the faces of $G$ as vertices, and has an edge drawn between two faces of $G$ for each edge incident to both of the faces in $G$. By drawing each vertex of $G^\dagger$ in the interior of the corresponding face of $G$ and each edge of $G^\dagger$ so that it crosses the corresponding edge of $G$ but no others, we obtain an embedding $z^\dagger$ of $G^\dagger$ in $D$. The edge sets of $G$ and $G^\dagger$ are in natural correspondence, and we write $e^\dagger$ for the edge of $G^\dagger$ corresponding to $e$. If $e$ is oriented, we let $e^\dagger$ be oriented so that it crosses $e$ from right to left as viewed from the orientation of $e$.
If $G=(G,z,c)$ is a plane network, we assign the conductances $c^\dagger(e^\dagger)=c(e)^{-1}$ to the edges of $G^\dagger$.

\subsubsection{USF Duality.}\label{Sec:Duality}
Let $G$ be a plane network with dual $G^\dagger$. For each set $W \subseteq E$, let $W^\dagger:=\{e^\dagger : e \notin W\}$. If $G$ is finite and $t$ is a spanning tree of $G$, then $t^\dagger$ is a spanning tree of $G^\dagger$: the subgraph $t^\dagger$ is connected because $t$ has no cycles, and has no cycles because $t$ is connected. Moreover, the ratio
\[\frac{\prod_{e\in t}c(e)}{\prod_{e^\dagger\in t^\dagger}c^\dagger(e^\dagger)}= \prod_{e\in E}c(e)\]
does not depend on $t$. It follows that if $T$ is a random spanning tree of $G$ with law $\UST_G$, then $T^\dagger$ is a random spanning tree of $G^\dagger$ with law $\UST_{G^\dagger}$.
 This duality was extended to infinite proper plane networks by BLPS.
\begin{thm}[{\cite[{Theorem 12.2 and Proposition 12.5}]{BLPS}}]
\label{thm:duality}
 Suppose that $G$ is an infinite proper plane network with locally finite dual $G^\dagger$. Then if $\F$ is a sample of $\FUSF_G$, the subgraph $\F^\dagger$ is an essential spanning forest of $G^\dagger$ with law $\WUSF_{G^\dagger}$. In particular, $\F$ is connected almost surely if and only if every component of $\F^\dagger$ is one-ended almost surely.
 \end{thm}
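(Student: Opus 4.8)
The plan is to deduce the infinite statement from the finite planar duality already recorded above, by choosing the exhaustion of $G$ so that it is simultaneously adapted to $G$ and to its dual $G^\dagger$. Concretely, using that $G$ is properly and locally finitely embedded and that $G^\dagger$ is locally finite, I would first fix an exhaustion $\langle V_n\rangle_{n\ge 0}$ of $G$ such that, for every $n$, the induced subgraph $G_n$ is connected and, when drawn with the embedding inherited from $G$, has exactly one face $f_n$ that is not also a face of $G$ --- equivalently, $G_n$ is the subgraph of $G$ formed by all edges and vertices incident to some finite set $F_n$ of faces of $G$ whose closures have union a topological disc --- and such that moreover $\bigcup_n F_n = F$, so that $\langle F_n\rangle_{n\ge 0}$ is an exhaustion of $G^\dagger$. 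Building such a ``doubly adapted'' exhaustion is the first point at which properness and the local finiteness of both $G$ and $G^\dagger$ are essentially used.

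For each $n$, the dual of the finite plane graph $G_n$ has vertex set $F_n\cup\{f_n\}$, and one checks directly from the construction that it coincides with the finite network $(G^\dagger)^*_n$ obtained from $G^\dagger$ by gluing all faces of $G$ outside $F_n$ into a single vertex $f_n$ and deleting the resulting loops, with matching conductances $c^\dagger(e^\dagger)=c(e)^{-1}$; here $f_n$ plays the role of the vertex $\partial_n$. Hence the finite planar duality recorded above gives that if $T_n\sim\UST_{G_n}$ then $T_n^\dagger=\{e^\dagger : e\notin T_n\}\sim\UST_{(G^\dagger)^*_n}$, so that for every finite $S\subseteq E$ and every $\omega\in\{0,1\}^S$,
\[ \UST_{G_n}\!\left(T_n|_S = \omega\right) = \UST_{(G^\dagger)^*_n}\!\left(T_n^\dagger|_{S^\dagger} = \omega^\dagger\right), \]
where $\omega^\dagger$ denotes the complementary configuration on $S^\dagger = \{e^\dagger : e \in S\}$. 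Letting $n\to\infty$, the left-hand side tends to $\FUSF_G(\F|_S = \omega)$ by definition of the free USF, and the right-hand side tends to $\WUSF_{G^\dagger}(\F^\dagger|_{S^\dagger} = \omega^\dagger)$ by definition of the wired USF along the exhaustion $\langle F_n\rangle$ (legitimate since $G^\dagger$ is locally finite and the wired limit is exhaustion-independent). Thus $\F^\dagger$ has law $\WUSF_{G^\dagger}$, and in particular, since the wired USF is supported on essential spanning forests, $\F^\dagger$ is almost surely an essential spanning forest of $G^\dagger$ whose components are infinite trees.

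It remains to check that, almost surely, $\F$ is connected if and only if every component of $\F^\dagger$ is one-ended. Since $\F$ and $\F^\dagger$ are a.s.\ essential spanning forests of $G$ and $G^\dagger$ with complementary edge sets, and an infinite tree is one-ended if and only if it contains no simple bi-infinite path, it suffices to prove that a.s.\ $\F$ is disconnected if and only if $\F^\dagger$ contains a simple bi-infinite path. If $\F$ is disconnected, take $S$ to be the vertex set of one of its (infinite) components; no edge of the edge boundary $\partial_E S$ lies in $\F$, so every such edge has its dual in $\F^\dagger$, and since $\F^\dagger$ is cycle-free while $S$ and $S^c$ are both infinite, the dual of a bond contained in $\partial_E S$ is a simple bi-infinite path (it cannot be a cycle, since a cycle in $\F^\dagger$ would enclose a region of $G$ containing only finitely many vertices). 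Conversely, a simple bi-infinite path $P\subseteq\F^\dagger$ is a properly embedded line separating the ambient domain into pieces each containing infinitely many vertices of $G$; every edge of $G$ crossing $P$ has its dual on $P$ and so lies outside $\F$, whence no $\F$-path joins two vertices on opposite sides of $P$ and $\F$ is disconnected. This yields an almost sure equality of events, so $\P(\F \text{ connected}) = \P(\text{every component of } \F^\dagger \text{ is one-ended})$, as required.

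The essential difficulty throughout is topological rather than probabilistic: constructing the doubly adapted exhaustion in the first step, and justifying the planar separation statements in the last step (Jordan curve and properly-embedded-line arguments, and the identification of a bond in $G$ with a cycle or bi-infinite path in $G^\dagger$), especially when the embedding domain is not simply connected. Once these topological inputs are granted, the probabilistic content reduces entirely to passing to the limit in the finite planar duality.
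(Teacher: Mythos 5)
The paper does not prove this statement: it is recalled verbatim from \cite{BLPS} (Theorem 12.2 and Proposition 12.5), so there is no in-text proof to compare against. That said, your argument is, as far as I can tell, essentially the BLPS proof and its main ideas are sound: choose a ``face-adapted'' exhaustion for which the dual of the finite piece $G_n$ (faces $F_n$ together with one outer face $f_n$) is literally the wired graph $(G^\dagger)_n^*$ with $f_n$ playing the role of $\partial_n$, apply finite planar tree/cotree duality, and pass to the limit, using exhaustion-independence of both the free limit (any increasing sequence of finite connected subgraphs works, not just vertex-induced ones) and the wired limit. The ``in particular'' part correctly reduces to the planar separation statements: a disconnected $\F$ forces a bi-infinite path in $\F^\dagger$ via the dual of a bond in the edge boundary of an infinite component (which cannot close up into a cycle since $\F^\dagger$ is a forest, equivalently since a dual cycle would enclose a precompact region and so only finitely many primal vertices), and conversely a properly embedded bi-infinite path in $\F^\dagger$ separates the disc into two pieces, each of which contains a vertex of $G$ (from the two endpoints of any primal edge it crosses) and hence, since components of $\F$ are a.s.\ infinite, infinitely many vertices of $G$, so $\F$ is disconnected.

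Two small cautions. First, your parenthetical ``equivalently'' equating the vertex-induced subgraph on $V_n$ with the face-determined subgraph on $F_n$ is not quite right --- the vertex-induced subgraph can contain extra edges not bounding any face of $F_n$, which would subdivide the outer face. You should simply define $G_n$ to be the face-determined subgraph; that still exhausts $G$ by an increasing sequence of finite connected subgraphs, which is all the free limit requires. Second, the existence of a nested face exhaustion with $\overline{\bigcup F_n}$ a disc (so that $G_n$ has exactly one added face) is a genuine topological lemma, not automatic, and it uses both properness of the embedding and simple connectivity of the domain (the paper's ``plane graph'' is embedded in $\C$ or a topological disc, so this is available); you flag this, which is appropriate, but a complete write-up would need to construct it, e.g.\ by exhausting $G^\dagger$ by connected induced balls and taking $F_n$ to be such a ball together with the face sets of the bounded complementary components.
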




\subsection{Circle packing}\label{Sec:CP}




We now give some background on circle packing.
The \textbf{carrier} of a circle packing $P$, denoted carr$(P)$, is the union of all the discs of $P$ and of all the faces of $G(P)$, so that the embedding $z$ of $G(P)$ defined by drawing straight lines between the centres of tangent circles is a proper embedding of $G(P)$ into carr$(P)$. Similarly, the carrier of a double circle packing $(P,P^\dagger)$ is defined to be the union of all the discs in $P\cup P^\dagger$. Given a domain $D\subset \C\cup\{\infty\}$, a circle packing $P$ (or double circle packing $(P,P^\dagger)$) is said to be \textbf{in $D$} if its carrier is $D$. In particular, a (double) circle packing $P$ is said to be \textbf{in the plane} if its carrier is the plane $\C$ and \textbf{in the disc} if its carrier is the open unit disc $\D$. The following theorems, which we stated in the introduction, are the cornerstones of the theory for infinite proper plane graphs.

\begin{thm}[He-Schramm Existence and Uniqueness Theorem \cite{Schramm91,HS93,HeSc,he1999rigidity}] \label{dblCP1}
Let $G$ be an infinite, polyhedral, proper plane graph with locally finite dual.
Then $G$ admits a double circle packing either in the plane or the disc, but not both, and this packing is unique up to M\"obius transformations and reflections of the plane or the disc as appropriate.
\end{thm}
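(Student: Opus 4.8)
The plan is to reduce the existence and uniqueness of double circle packings to the corresponding statement for circle packings of triangulations, via a standard augmentation/contraction dictionary. First I would explain the dictionary: given a polyhedral proper plane graph $G$ with locally finite dual $G^\dagger$, form the \textbf{bipartite incidence triangulation} $T = T(G)$ whose vertex set is $V(G) \sqcup F(G)$, with an edge between $v \in V(G)$ and $f \in F(G)$ whenever $v$ is incident to $f$, and edges between pairs of primal vertices (resp.\ pairs of faces) that are adjacent in $G$ (resp.\ $G^\dagger$). Since $G$ is $3$-connected, simple, and properly embedded, $T$ is an infinite, simple, proper plane triangulation. The crucial point is that a double circle packing $(P, P^\dagger)$ of $G$ in a domain $D$ is essentially the same object as a circle packing of $T$ in $D$: conditions (1)--(3) in the definition of a double circle packing are precisely the statement that the primal circles $\{P(v)\}$ together with the dual circles $\{P^\dagger(f)\}$, but with the dual circles replaced by their \emph{perpendicular complements} (the reflection through the boundary circle, or equivalently the disc on the other side when the packing is suitably normalised), form a genuine circle packing whose tangency graph is $T$. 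I would make this correspondence precise, noting that perpendicularity of primal and dual circles translates exactly into tangency in the doubled picture, and that the bounded-codegree hypothesis guarantees $T$ has the local finiteness needed.

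Next I would invoke the He--Schramm theory for triangulations. By the He--Schramm Existence and Uniqueness Theorem for infinite triangulations \cite{HeSc,HS93} (together with Schramm's rigidity results \cite{Schramm91}), the triangulation $T$ admits a circle packing in either the plane or the disc, but not both, and this packing is unique up to M\"obius transformations and reflections. Translating back through the dictionary yields a double circle packing of $G$ in the plane or disc, unique up to M\"obius transformations and reflections, which is exactly the statement of \cref{dblCP1}. I would also remark that the dichotomy (plane versus disc) for $T$ transfers to the dichotomy for $G$ since the carrier is preserved under the correspondence, and that CP parabolicity of $T$ corresponds to CP parabolicity of $G$; this is how one sees that the dichotomy for $G$ is well-defined. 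The footnote in the excerpt indicates that He \cite{he1999rigidity} phrased things more generally (allowing non-locally-finite duals and working with the notion of packings of disc patterns), so part of the write-up is just extracting the clean statement above from his framework.

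I expect the main obstacle to be handling the correspondence rigorously at the level of \emph{infinite} packings, particularly the interplay between the domain of the packing and local finiteness. Two technical wrinkles need care. First, in the doubled picture one must check that replacing each dual disc $P^\dagger(f)$ by the ``exterior'' region bounded by its perpendicular circle genuinely produces a circle packing with disjoint interiors filling out $D$ with only the combinatorial tangencies of $T$ and no spurious ones; this uses the right-angle crossing condition (3) in an essential way, and also uses that $G$ is $3$-connected to rule out degenerate incidences. Second, one must verify that $T(G)$ is locally finite and properly embedded precisely when $G$ has bounded (or at least locally finite) codegree together with locally finite dual --- this is where the codegree hypothesis enters --- and that the carrier of the $T$-packing coincides with the carrier of the double circle packing, so that the plane/disc dichotomy and the uniqueness statement carry over verbatim. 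Once these points are nailed down, the theorem follows immediately from the triangulation case, so the heart of the argument is really the combinatorial-geometric translation rather than any new analytic input.
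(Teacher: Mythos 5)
Your proposal diverges from the paper's route and, as written, has two genuine problems.

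First, the auxiliary graph you call the ``bipartite incidence triangulation'' --- with primal-primal edges (from $G$), dual-dual edges (from $G^\dagger$), \emph{and} vertex-face incidence edges --- is not planar. A quick Euler computation makes this visible: if $G$ has $V$ vertices, $E$ edges and $F$ faces, your $T$ has $V+F$ vertices and $E+E+2E=4E$ edges, and for a planar triangulation on $V+F$ vertices the Handshake Lemma and Euler's formula force $(V+F)-4E+\tfrac{8E}{3}=2$, i.e. $(V+F)-\tfrac{4E}{3}=2$, which combined with $V-E+F=2$ gives $E=0$. Drawing both $(e^-,e^+)$ and $(f,g)$ inside the quadrilateral $e^-,f,e^+,g$ surrounding each primal edge $e$ inevitably creates a crossing. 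The paper's auxiliary triangulation $T(G)$ is the \emph{star} triangulation: vertex set $V\cup F$ and edges consisting only of the edges of $G$ together with the vertex-face incidence edges, \emph{without} dual-dual edges; this has $3E$ edges and is consistent with Euler's formula.

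Second, and more seriously, the ``perpendicular complement'' dictionary does not achieve the conversion you want. Two circles are orthogonal if and only if each is fixed setwise by inversion in the other, so reflecting $\partial P^\dagger(f)$ through $\partial P(v)$ returns the same circle, and ``the disc on the other side'' is bounded by the very same circle; in either case the orthogonality relation between $\partial P(v)$ and $\partial P^\dagger(f)$ is unchanged and does not become a tangency. There is no M\"obius trick that simultaneously turns all the primal-dual orthogonal crossings of a double circle packing into tangencies while preserving the primal-primal and dual-dual tangencies, which is precisely why the paper does \emph{not} reduce to the tangency-only He--Schramm theorem. Instead, it invokes He's theory of \emph{disc patterns} (Theorem 1.3 of \cite{he1999rigidity}), in which the angle of intersection along each edge of the triangulation may be prescribed arbitrarily in $[0,\pi)$: one builds the star triangulation $T(G)$, assigns angle $0$ to edges of $T$ coming from $G$ and angle $\pi/2$ to the incidence edges, checks He's conditions (C1) and (C2), and observes that the resulting disc pattern \emph{is} the double circle packing --- with the dual-dual tangencies coming for free from local geometry at each primal tangency point rather than from the combinatorics of $T$. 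Uniqueness then follows from Theorems 1.1 and 1.2 of \cite{he1999rigidity}. In other words, the reference you relegate to a footnote-level ``generality'' remark is doing the essential work: disc pattern existence and rigidity with prescribed angles is not a corollary of the tangency case, and the reduction you sketch does not go through.
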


\begin{thm}[He-Schramm Recurrence Theorem \cite{HeSc,he1999rigidity}] \label{dblCP2}
Let $G$ be an infinite, polyhedral, proper plane graph with bounded degrees and codegrees. Then $G$ is CP parabolic if and only if it is recurrent for simple random walk.
\end{thm}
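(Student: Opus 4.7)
The plan is to apply the He-Schramm Existence Theorem (\cref{dblCP1}) to obtain a double circle packing $(P,P^\dagger)$ of $G$ in either $\C$ or $\D$, and to treat the two implications separately. Both rely on an extension of the classical Ring Lemma to double circle packings (established in \cref{Sec:lowerbound}): for adjacent vertices $u,v$ of $G$, the primal radii $r(u)$ and $r(v)$ are comparable, with ratio bounded by a constant depending only on the degree/codegree bound $M$. In particular the disjointness of primal discs yields the area bound $\sum_{z(v)\in A_{z_0}(s,2s)}\pi r(v)^2 \le 3\pi s^2$ on each Euclidean annular shell.

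For the direction CP parabolic $\Rightarrow$ recurrent, I would fix $v_0$ with centre $z_0$ and radius $r_0$, and for each $R>r_0$ consider the truncated logarithmic potential
\[ f_R(v) \;=\; 1\wedge \frac{\bigl(\log|z(v)-z_0|-\log r_0\bigr)^+}{\log(R/r_0)}, \]
which vanishes at $v_0$ and equals $1$ outside $B_{z_0}(R)$. The tangency identity $|z(u)-z(v)|=r(u)+r(v)$ on primal edges together with the mean value theorem gives, for each edge $\{u,v\}$ with endpoints in $A_{z_0}(r_0,R)$,
\[ |f_R(u)-f_R(v)|^2 \;\le\; \frac{C\bigl(r(u)^2+r(v)^2\bigr)}{|z(u)-z_0|^2\,\log^2(R/r_0)}. \]
Summing over edges dyadically and applying the area bound on each shell yields Dirichlet energy $\cE(f_R) \le C'/\log(R/r_0)$. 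Since $f_R$ separates $v_0$ from infinity, this gives $\ReffW(v_0\leftrightarrow\infty)\ge c\log(R/r_0)\to\infty$, so $G$ is recurrent.

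For the direction CP hyperbolic $\Rightarrow$ transient, I would reduce to the triangulation case of the He-Schramm theorem \cite{HeSc} by constructing an auxiliary triangulation $\tilde G$ whose vertex set is $V(G)\sqcup V(G^\dagger)$ and whose edges correspond to tangencies within $P\cup P^\dagger$ (primal-primal, dual-dual, and primal-dual via the perpendicular intersection structure). The bounded degrees and codegrees of $G$ make $\tilde G$ a bounded-degree triangulation whose tangency circle packing is $P\cup P^\dagger$ in $\D$; the triangulation version of He-Schramm then gives that $\tilde G$ is transient. To transfer transience from $\tilde G$ to $G$, I would short each dual vertex to an arbitrary incident primal vertex in $\tilde G$: by the bounded codegree assumption, this shorting creates only bounded-length detours between primal vertices, so the resulting network on $V(G)$ is rough-isometric to $G$, and transience is preserved by Rayleigh monotonicity and the rough-isometry invariance of transience for bounded-degree graphs.

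The principal obstacle is the Ring Lemma extension to double circle packings, which is indispensable both for the area estimates in the parabolic case and for verifying the bounded-geometry structure (and hence the tangency packing property) of $\tilde G$ in the hyperbolic case. A secondary delicacy in the hyperbolic reduction is the transfer step, where one must ensure that the shorting used to eliminate dual vertices does not change transience—an argument that reduces to standard effective-resistance comparisons under bounded-length identifications.
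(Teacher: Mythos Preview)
Your parabolic $\Rightarrow$ recurrent direction via a truncated logarithmic test function and dyadic area bounds is correct and is essentially the classical argument behind the cited results in \cite{HeSc}.

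The hyperbolic $\Rightarrow$ transient direction, however, has a genuine gap. The collection $P\cup P^\dagger$ is \emph{not} a tangency circle packing: primal and dual discs that are incident intersect orthogonally and therefore overlap, so there is no simple triangulation whose circle packing is $P\cup P^\dagger$. Relatedly, the graph $\tilde G$ you describe---with primal-primal, dual-dual, \emph{and} primal-dual edges---is not a plane triangulation: for each edge $e$ of $G$, the primal edge $e$ and the dual edge $e^\dagger$ cross, so both diagonals of the corresponding vertex-face quadrilateral are present. You therefore cannot invoke the triangulation version of the He--Schramm theorem for $\tilde G$, and the transfer step never gets off the ground.

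The paper does not give an independent proof; it explains how the statement follows from He's more general theory of \emph{disc patterns} \cite{he1999rigidity}. One forms the simple plane triangulation $T$ with vertex set $V\cup F$ by adding a star inside each face of $G$ (so primal-primal and primal-dual edges only, no dual-dual edges). The pair $(P,P^\dagger)$ is then a disc pattern of $T$ in the sense of \cite{he1999rigidity}, with intersection angle $0$ along primal-primal edges and $\pi/2$ along primal-dual edges. He's Theorem~1.3 covers existence and the parabolic/hyperbolic dichotomy for such patterns, and combining this with Theorems~2.6 and~8.1 of \cite{HeSc} yields the recurrence equivalence directly. If you wish to salvage your reduction-to-triangulations strategy without invoking disc patterns, you would need a separate argument that $T$ itself is CP hyperbolic (as a triangulation, with its own tangency packing) whenever $G$ is; this does not follow just from possessing the overlapping configuration $P\cup P^\dagger$ in $\D$.
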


Let us now explain how these theorems follow from the more general statements given in \cite{he1999rigidity}. In that paper, He considers \textbf{disc patterns} of simple proper plane triangulations, which are like circle packings except that circles corresponding to adjacent vertices intersect at a specified angle, between $0$ and $\pi$, rather than being tangent. Given a proper plane network $G=(V,E)$ with  with locally finite dual and face set $F$, we can form a proper plane triangulation $T$ with vertex set $V \cup F$ by adding a vertex inside each face of $G$ and connecting this vertex to each vertex in the boundary of the face. It is easily verified that $T$ is simple if and only if $G$ is polyhedral. Moreover, a double circle packing of $T$, either in the plane or the disc, can now be obtained using Theorem 1.3 of \cite{he1999rigidity} by requiring that the angle between a pair of circles that are adjacent in $T$ is $0$ if the pair corresponds to an edge of $G$ and $\pi/2$ otherwise.  It is straightforward to check that conditions (C1) and (C2) of Theorem 1.3 of \cite{he1999rigidity} hold, and that the packing obtained this way is indeed the double circle packing of $G$. Thus, the existence statement of \cref{dblCP1} follows. The uniqueness of this packing, as formulated in \cref{dblCP1}, is the content of Theorems 1.1 and 1.2 of \cite{he1999rigidity}. Lastly, \cref{dblCP2} is a direct consequence of Theorem 1.3 of \cite{he1999rigidity} together with Theorems 2.6 and 8.1 of \cite{HeSc}.

Given a double circle packing $(P,P^\dagger)$ of a plane graph $G$ in a domain $D\subseteq \C$, we write $\diam_\C(A)$  for the Euclidean diameter of the set $\{z(v) : v\in A\}$, and write $d_\C(A,B)$ for the Euclidean distance between the sets $\{z(v) : v\in A\}$ and $\{z(v):v\in B\}$. We write $r(v)$ and $r(f)$ for the Euclidean radii of the circles $\Pp{v}$ and $\Pd{f}$. If $(P,P^\dagger)$ has carrier $\D$, we write $\sigma(v)$ for $1-|z(v)|$, which is the distance between $z(v)$ and the boundary of $\D$, and write $r_\bbH(v)$ for the hyperbolic radius of $\Pp{v}$. (Recall that Euclidean circles in $\D$ are also hyperbolic circles with different centres and radii; see e.g.\ \cite{anderson2006hyperbolic} for further background on hyperbolic geometry.)

We will also use
 the Ring Lemma of Rodin and Sullivan \cite{RS87}; see \cite{Hansen} and \cite{Ahar97} for quantitative versions. In \cref{subsec:ring} we formulate and prove a version of the Ring Lemma for double circle packings, \cref{Thm:DCPRing}.

\begin{thm}[The Ring Lemma]\label{T:Ring} There exists a sequence of positive constants $\langle k_m : m\geq 3\rangle$ such that for every circle packing $P$ of every simple triangulation $T$ in a domain $D \subseteq \C \cup \{\infty\}$, and every pair of adjacent vertices $u$ and $v$ of $T$ such that $P(v)$ does not contain $\infty$, the ratio of Euclidean radii
$ r(v)/r(u)$ is at most $k_{\deg(v)}$.
\end{thm}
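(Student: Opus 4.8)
The plan is to reduce the statement to a local assertion about the \emph{flower} of the vertex $v$ and then analyse that configuration using the elementary trigonometry of mutually tangent circles. Since $T$ is a simple triangulation, the neighbours of $v$ can be listed in cyclic order as $u_1,\dots,u_n$ with $n=\deg(v)$ so that $P(u_i)$ is tangent to $P(v)$ for every $i$ and $P(u_i)$ is tangent to $P(u_{i+1})$ for every $i$ (indices modulo $n$), and the discs $P(u_1),\dots,P(u_n)$ form a closed chain around $P(v)$. After a translation and rescaling we may assume $P(v)$ is the unit disc centred at the origin, and it then suffices to produce a constant $c_n>0$ depending only on $n$ with $r(u_i)\ge c_n$ for every $i$; we may then take $k_n=1/c_n$. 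If some disc $P(u_j)$ in the flower contains $\infty$ then either $u_j=u$, in which case $r(v)/r(u)=0$ and there is nothing to prove, or we simply run the argument below treating $r(u_j)=\infty$ as a limiting case (corresponding to $s_j=1$ in the notation introduced next), which causes no difficulty; so we may assume every disc in the flower is a bounded Euclidean disc.

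Write $r_i=r(u_i)$ and $s_i=\sqrt{r_i/(1+r_i)}\in(0,1)$, and let $t_i$ be the point at which $P(u_i)$ touches $P(v)$. Applying the law of cosines to the triangle whose vertices are the centres of $P(v)$, $P(u_i)$ and $P(u_{i+1})$, with side lengths $1+r_i$, $1+r_{i+1}$ and $r_i+r_{i+1}$, one finds after simplification that the angle $\beta_i$ subtended at the origin by $t_i$ and $t_{i+1}$ satisfies $\sin(\beta_i/2)=s_is_{i+1}$. Since $t_1,\dots,t_n$ lie in cyclic order on $\partial P(v)$ and partition it, this gives the identity $\sum_{i=1}^n\beta_i=2\pi$, that is, $\sum_{i=1}^n\arcsin(s_is_{i+1})=\pi$. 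When $n=3$ this already finishes the proof: using the elementary bound $\arcsin x\le\tfrac{\pi}{2}x$ for $x\in[0,1]$ we get $s_1s_2+s_2s_3+s_3s_1\ge2$, and since $s_2s_3<1$ this forces $s_1(s_2+s_3)\ge1$; as $s_2+s_3\le2$ we conclude $s_1\ge\tfrac12$, whence $r_1=s_1^2/(1-s_1^2)\ge\tfrac13$. The same argument applies to every petal, so $k_3=3$ works.

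For $n\ge4$ the identity above is no longer enough on its own — one can satisfy $\sum_i\arcsin(s_is_{i+1})=\pi$ with a single $s_i$ arbitrarily small — and the extra input must come from the requirement that the interiors of non-adjacent petals are disjoint. Here the plan is the propagation-of-smallness argument of Rodin and Sullivan. Suppose $r_1$ is very small. Inverting the plane in the tangency point of $P(v)$ and $P(u_1)$ sends these two mutually tangent circles to a pair of parallel lines bounding a strip of width of order $1/r_1$, which is large; every other petal is sent to a disc inside this strip tangent to the line coming from $P(v)$, and in particular the neighbouring petals $u_2$ and $u_n$ are sent to discs tangent to \emph{both} lines, hence of radius equal to half the strip width. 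The disjointness of $u_2$ and $u_n$ (valid since $n\ge4$) then forces their images to be far apart along the strip, which constrains the original radii $r_2$ and $r_n$; quantitatively, one shows that at least one of $r_2,r_n$ is forced to be small whenever $r_1$ is, with an explicit rate. Iterating this estimate around the cycle $u_1,\dots,u_n$, one finds that if $r_1$ were below a threshold depending only on $n$ then all of the angles $\beta_i$ would be too small for the identity $\sum_i\beta_i=2\pi$ to hold, a contradiction; this yields $c_n$, and hence $k_n$.

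The step I expect to be the main obstacle is the last one: making the smallness estimate quantitative enough to be iterated around the whole flower, while keeping careful track of how the bound degrades at each of the (up to $n$) steps. This bookkeeping is precisely the delicate part of the classical Rodin--Sullivan argument; sharper forms, with better dependence of $k_n$ on $n$, are due to Aharonov \cite{Ahar97} and to Hansen \cite{Hansen}. We will only use the qualitative statement of \cref{T:Ring}, and in \cref{subsec:ring} we will in any case develop a variant of the lemma adapted to double circle packings (\cref{Thm:DCPRing}), which is what we actually need in the sequel.
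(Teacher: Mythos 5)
The paper does not actually prove \cref{T:Ring}; it is cited directly to Rodin and Sullivan \cite{RS87}, with quantitative refinements attributed to \cite{Hansen,Ahar97}. So there is no in-paper proof to compare against, and your proposal should be judged as a free-standing argument.

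Your setup is correct: the normalisation, the derivation $\sin(\beta_i/2)=s_is_{i+1}$ from the law of cosines (I checked this; it is exact), the identity $\sum_i\arcsin(s_is_{i+1})=\pi$, and the $n=3$ case are all sound. In particular the chain of inequalities $\arcsin x\le \tfrac{\pi}{2}x$ (by convexity of $\arcsin$ on $[0,1]$), hence $\sum s_is_{i+1}\ge 2$, hence $s_1(s_2+s_3)\ge 1$, hence $s_1\ge\tfrac12$ and $r_1\ge\tfrac13$, is complete and correct (though not sharp; the true infimum of $r_1$ for $n=3$ is $1$, attained when $s_2=s_3=1$ and $2\arcsin s_1 = \pi/2$).

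For $n\ge 4$, however, the proof is not actually carried out, and the iteration you describe has a real gap rather than merely deferred bookkeeping. The inversion argument does give, from disjointness of $P(u_2)$ and $P(u_n)$, that $\tfrac{1}{s_2}+\tfrac{1}{s_n}\ge\tfrac{1}{s_1}$, hence $\min(s_2,s_n)\le 2s_1$, which is the "propagation of smallness" you assert. But you then claim to iterate "around the cycle" to conclude \emph{all} angles $\beta_i$ are small. That step does not follow as stated: after concluding (say) $s_2\lesssim s_1$, the next application of the same estimate to the petal $u_2$ says only that one of $s_1,s_3$ is $\lesssim s_2$ --- and this can be satisfied by $s_1$, which is already small, so the smallness can "bounce back" without ever reaching $u_3,\dots,u_{n-1}$. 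The propagation therefore need not travel around the flower at all; what one obtains directly is only that $\beta_n,\beta_1,\beta_2$ are small. This already yields a contradiction for $n=4$ (one remaining angle cannot equal $\approx 2\pi$), but for larger $n$ one must combine the angle identity with further geometric input (the inversion also gives the complementary bound $\tfrac{1}{s_2}+\tfrac{1}{s_n}\le \tfrac{n-2}{s_1}$ from the chain-length estimate, and one needs a compactness- or induction-type argument to rule out limiting degenerate flowers). This is precisely the content of Rodin--Sullivan's proof and of the sharper versions in \cite{Ahar97,Hansen}, so your attribution is appropriate; but the passage as written should not give the impression that the iteration is routine, since a naive "iterate around the cycle" fails for the reason above.
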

An immediate corollary of the Ring Lemma is that, whenever $P$ is a circle packing in $\D$ of a CP hyperbolic proper plane triangulation $T$ and $v$ is a vertex of $T$, the hyperbolic radius of $P_v$ is bounded above by a constant $C=C(\deg(v))$.

\section{Connectivity of the FUSF}\label{Sec:mainproof}

In this section we prove \cref{T:planeWUSF} and show it easily implies \cref{T:mainthm}. We will write $\preceq$, $\succeq$ and $\asymp$ for inequalities that hold up to a positive multiplicative constant depending only on $\sup_{f\in F} \deg(f)$ and $\sup_{e\in E}c(e)^{-1}$.

\medskip

\textbf{Disclaimer:} The proof of \cref{T:planeWUSF} will also essentially yield a proof of the upper bound of \cref{T:Wtailcp,T:Ftailcp}. In particular, the inequality \eqref{eq:upperbound} is a direct analogue of this bound. Unfortunately, since \cref{T:mainthm,T:planeWUSF} require weaker assumptions on the graph $G$ than \cref{T:Wtailcp,T:Ftailcp} do, it will be technically convenient for the proofs of this section to use the circle packing of the triangulation obtained by adding a star inside each face of $G$ (sometimes referred to as the \emph{ball-bearing packing}), rather than the double circle packing of $G$: For \cref{T:mainthm,T:planeWUSF} it is better to use the ball-bearing packing than the double circle packing due to the way the Ring Lemma works, while the fact that the double circle packing defines a good embedding of $G$ in the sense of \cite{ABGN14} makes it better to use in \cref{T:Wtailcp,T:Ftailcp}. Nevertheless, we will want to use the bound analogous to \eqref{eq:upperbound} for double circle packing in \cref{Sec:lowerbound}, which holds for any polyhedral proper plane graph with bounded local geometry. The proof of this bound follows by a superficial modification of the proof of \eqref{eq:upperbound}, the details of which are omitted. The reader may wish to keep this in mind as they read this section, and observe that the proofs are easily generalised to the alternative setting once the Ring Lemma for double circle packings (\cref{Thm:DCPRing}) is established.

\subsection{Preliminaries}

We begin with some preliminary estimates that will be used in the proof. We first reduce the statement of \cref{T:planeWUSF} to the case where the graph is simple and has no \emph{peninsulas}.

Let $G$ be a bounded codegree proper plane network with locally finite dual $G^\dagger$. Recall that a \textbf{peninsula} of $G$ is a finite connected component of $G\setminus\{v\}$ for some vertex $v\in V$. %
In order to apply the theory of circle packing, we first reduce to the case in which $G$ is simple and does not contain a peninsula; this will ensure that the triangulation $T=T(G)$ formed by drawing a star inside each face of $G$ is simple. We will then use the circle packing of $T$ to analyse the WUSF of $G$. We first deal with the case that \emph{every} vertex of $G$
is in a peninsula. 

\medskip


\begin{lem}\label{lem:peninsulas}
Let $G$ be a network such that every vertex $v$ of $G$ is contained in a peninsula of $G$. Then every essential spanning forest of $G$ is connected and one-ended. In particular, it follows that the WUSF of $G$ is connected and one-ended a.s.
\end{lem}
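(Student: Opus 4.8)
The plan is to isolate a single structural fact about $G$ and deduce everything from it.

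\medskip
\noindent\emph{Structural claim.} If every vertex of $G$ lies in a peninsula, then $G$ contains no two vertex-disjoint rays (one-way infinite simple paths).
\medskip

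Granting this, the lemma is quick. Suppose $\F$ is an essential spanning forest of $G$ that is either disconnected or has a component that is not one-ended. In the first case, two distinct components of $\F$ are infinite and locally finite, so each contains a ray (by K\"onig's lemma), and these rays are vertex-disjoint since the components are. In the second case, a component of $\F$ is an infinite tree that is not one-ended, and such a tree contains a bi-infinite simple path, which splits into two vertex-disjoint rays. Either way we contradict the structural claim, so every essential spanning forest of $G$ is connected and one-ended; since $\WUSF_G$ is supported on essential spanning forests, the last assertion of the lemma follows at once.

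To prove the structural claim I would argue by contradiction: suppose $\rho=(\rho_i)_{i\geq0}$ and $\eta=(\eta_j)_{j\geq0}$ are vertex-disjoint rays. For each $i$, fix a peninsula $A_i$ of $\rho_i$, that is, a finite connected component of $G\setminus\{a_i\}$ containing $\rho_i$ for some cut vertex $a_i$. Since $a_i$ separates $A_i$ from the rest of $G$, and $\rho$ is an infinite simple path meeting the finite set $A_i$ at $\rho_i$, the path $\rho$ must leave $A_i$ and hence pass through $a_i$; being simple it does so exactly once, at some time $t_i$, and the separating property then forces $\rho\cap A_i$ to be the initial segment $\{\rho_0,\dots,\rho_{t_i-1}\}$, so that $\rho_0\in A_i$ and $t_i\geq i+1$. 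The same reasoning shows $\eta$ cannot meet $A_i$ at all: if it did then, being infinite, $\eta$ would also have to pass through $a_i=\rho_{t_i}$, contradicting disjointness of $\rho$ and $\eta$.

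Finally, using that $G$ is connected, fix a finite path $\pi$ from $\rho_0$ to $\eta_0$. For every $i$ we have $\rho_0\in A_i$ but $\eta_0\notin A_i$, so $\pi$ must cross the separating vertex $a_i$; hence $\{a_i:i\geq0\}$ is contained in the finite vertex set of $\pi$ and is therefore finite. But $a_i=\rho_{t_i}$ with $t_i\geq i+1\to\infty$, and $\rho$ is simple, so the vertices $\rho_{t_i}$ take infinitely many distinct values --- a contradiction, proving the claim. The only real work is in the structural claim, and its one subtle point is that peninsulas taken along $\rho$ merely constrain $\eta$ to avoid various finite regions and do not by themselves yield a contradiction; it is the finite connecting path $\pi$, forced to pass through every one of the infinitely many distinct cut vertices $a_i$, that closes the argument.
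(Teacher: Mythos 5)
Your proof is correct and rests on the same core mechanism as the paper's: a sequence of cut vertices, each separating an initial part of a ray from infinity, forces any finite connecting path to pass through all of them, yielding a contradiction. The only difference is organizational --- the paper builds the sequence $\langle v_i\rangle$ iteratively from an arbitrary base vertex $v_0$ and shows directly that every ray from any vertex eventually visits all the $v_i$, while you extract a clean ``no two disjoint rays'' lemma and prove it by contradiction, taking the cut vertices $a_i$ along one of the two rays; your formulation is arguably a little more self-contained in that it makes explicit the distinctness of the cut vertices ($a_i = \rho_{t_i}$ with $t_i\geq i+1$), a point the paper leaves implicit.
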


\begin{proof}
Let $v_0$ be an arbitrary vertex of $G$, and for each $i\geq 1$ let $v_i$ be a vertex of $G$ such that $v_{i-1}$ is contained in a finite connected component of $G\setminus\{v_{i}\}$.
 Let $u$ be another vertex of $G$ and let $\gamma$ be a path from $v_0$ to $u$ in $G$. If $u$ is contained in an infinite connected component of $G\setminus\{v_i\}$, then $\gamma$ must pass through the vertex $v_i$. Since $\gamma$ is finite,  we deduce that there exists an integer $I$ such that $u$ is contained in a finite connected component of $G\setminus\{v_i\}$ for all $i\geq I$.
Thus, any infinite simple path from $u$ in $G$ must visit $v_i$ for all $i\geq I$. We deduce that every essential spanning forest of $G$ is connected and one-ended as claimed.
\end{proof}

Thus, to prove \cref{T:planeWUSF}, it suffices to consider the case that there is some vertex of $G$ that is not contained in a peninsula. In this case, let $G'$ be the simple plane network formed from $G$ by first splitting every edge $e$ of $G$ into a path of length $2$ with both edges given the weight $c(e)$, and then deleting every peninsula of the resulting network. The assumption that $G$ has bounded codegrees and edge resistances bounded above ensures that $G'$ does also (indeed, the maximum codegree of $G'$ is at most twice that of $G$).


\begin{lem} \label{lem:nopeninsulas}
Let $G$ be a plane network and let $G'$ be as above. Then every component of the WUSF of $G$ is one-ended a.s.\ if and only if every component of the WUSF of $G'$ is one-ended a.s.
\end{lem}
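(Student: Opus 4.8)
The network $G'$ is produced from $G$ by two successive modifications: first \emph{subdivision}, replacing each edge $e$ by a path of length two through a new vertex $m_e$ with both new edges given conductance $c(e)$, which produces an intermediate network $H$; then \emph{deletion of all peninsulas} of $H$. The plan is to show that each of these operations separately preserves the property ``almost surely every component of the WUSF is one-ended'', so that the lemma follows by combining the two equivalences. For the subdivision step one uses that, on a finite network, the spanning trees of the subdivided graph are in bijection with pairs consisting of a spanning tree $t$ of the original graph together with a choice, for each edge $e \notin t$, of which of the two halves of the subdivided edge $e$ to retain; since the number $|E| - |V| + 1$ of non-tree edges does not depend on $t$, a short weight computation shows that the pushforward of $\UST$ of the subdivided finite network under the map $\phi(\omega)(e) = \omega(e_1) \wedge \omega(e_2)$ equals $\UST$ of the original network. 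Passing to limits over compatible exhaustions, $\phi$ pushes $\WUSF_H$ forward to $\WUSF_G$. Moreover, any essential spanning forest $\F$ of $H$ is obtained from the essential spanning forest $\phi(\F)$ of $G$ by subdividing each of its edges and attaching a pendant vertex $m_e$ for every $e \notin \phi(\F)$; since neither subdividing edges nor attaching leaves alters the number of ends of a component, every component of $\F$ is one-ended if and only if every component of $\phi(\F)$ is, which gives the equivalence between $G$ and $H$.

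For the peninsula step, write $\mathrm{Pen}$ for the union of all peninsulas of $H$, so that $G' = H \setminus \mathrm{Pen}$; the reduction preceding the lemma lets us assume $\mathrm{Pen} \neq V(H)$. The combinatorial heart of the argument is to establish that $G'$ is a connected infinite network and that each connected component $C$ of the subgraph of $H$ induced by $\mathrm{Pen}$ is finite and is joined to the rest of $H$ only through edges emanating from a single vertex $b_C \in V(G')$. This is proved by a cut-vertex argument: a shortest path in $H$ between two vertices of $G'$ cannot enter a peninsula, which gives connectedness of $G'$; and if some such $C$ were infinite, then, taking a vertex $u^* \in C$ adjacent to a boundary vertex $b$, one reaches a contradiction with the fact that $b \in G'$ is not separated from infinity by any single vertex. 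Granting this structure, for \emph{any} essential spanning forest $\F$ of $H$ the intersection $\F \cap E(G')$ is an essential spanning forest of $G'$ whose components are precisely the traces on $G'$ of the components of $\F$, and each component of $\F$ is recovered from the corresponding component of $\F \cap E(G')$ by attaching finite subtrees at the vertices $b_C$; in particular a component of $\F$ is one-ended if and only if the corresponding component of $\F \cap E(G')$ is.

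It remains to identify the law of $\F \cap E(G')$ when $\F \sim \WUSF_H$, and this is where I expect the routine bookkeeping to lie: on each finite network $H_n^*$ of an exhaustion --- taken fine enough that the finitely many blobs relevant to a given cylinder event lie inside $V_n$ --- the block decomposition of spanning trees factorises $\UST_{H_n^*}$ over the cut vertices $b_C$ (the UST of a finite graph with a cut vertex is the independent union of the USTs of the two pieces), so in the limit $\F \cap E(G')$ has the law of $\WUSF_{G'}$; equivalently, one may run Wilson's algorithm rooted at infinity on $H$, processing the vertices of $G'$ first, and observe that every excursion of the random walk into a blob $C$ begins and ends at $b_C$ and is therefore deleted by loop-erasure, so that the loop-erased paths out of $V(G')$ agree with those produced by Wilson's algorithm on $G'$. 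Combining the subdivision and peninsula equivalences proves the lemma. The main obstacle is the combinatorial structure theorem for $\mathrm{Pen}$ --- that its components are finite blobs attached to $G'$ at single vertices, and that $G'$ is a connected infinite network --- which requires some care because peninsulas may be numerous and nested; the probabilistic content of both steps is then straightforward.
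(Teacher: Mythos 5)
Your proposal is correct, and in fact it supplies exactly the verification that the paper dismisses with ``it is easily verified.'' Where the paper constructs the coupling in one shot---take $\F \sim \WUSF_G$, subdivide the retained non-peninsula edges, flip independent fair coins for the non-retained ones---you decompose it into the two natural factors (subdivision of all edges, then peninsula deletion) and check each factor carefully; the two couplings are the same, since conditionally on $\phi(\F_H)=\F$ your argument shows the choice of half for each $e\notin\F$ is uniform and independent. The weight computation for the subdivision step is right (the constant factor $\prod_e c(e)$ and the common count $2^{|E|-|V|+1}$ of choices both drop out), and your two alternative justifications for the peninsula step---factorisation of the UST across cut vertices in finite exhaustions, or Wilson's algorithm rooted at infinity with excursions into a blob erased at $b_C$---are both sound. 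Your structural claim about peninsulas is also correct, and it is genuinely the crux: given a component $C$ of $\mathrm{Pen}$ and $u^*\in C$ adjacent to $b\in V(G')$, any peninsula containing $u^*$ must be cut off precisely at $b$ (otherwise the edge $u^*b$ would place $b$ in that peninsula, contradicting $b\notin\mathrm{Pen}$), whence $C$ itself is a finite component of $H\setminus\{b\}$ attached at the single vertex $b_C=b$. So the ``main obstacle'' you flag resolves exactly as you sketch, and no further hypothesis is needed.

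The one small thing worth noting: the Wilson's-algorithm phrasing of the peninsula step implicitly assumes transience (so that there is a version rooted at infinity), whereas the lemma and its use cover recurrent plane networks as well. Your first justification via the block decomposition of $\UST_{H_n^*}$ over exhaustions is the one that works uniformly in both cases, so it should be the primary argument, with Wilson's algorithm mentioned as an alternative only in the transient case. Apart from this, the proposal is a faithful and more detailed rendering of the paper's coupling.
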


\begin{proof}
Let $\F$ be a sample of $\WUSF_G$, and let $\F'$ be the essential spanning forest of $G'$ defined as follows. For each edge $e\in\F$ such that neither endpoint of $e$ is contained in a peninsula of $G$, let both of the edges of the path of length two corresponding to $e$ in $G'$ be included in $\F'$. For each edge $e\notin\F$ such that neither endpoint of $e$ is contained in a peninsula of $G$, choose uniformly and independently exactly one of the two edges of the path of length two corresponding to $e$ in $G'$ to be included in $\F'$. Then every component of $\F'$ is one-ended if and only if every component of $\F$ is one-ended, and it is easily verified that $\F'$ is distributed according to $\WUSF_{G'}$.
\end{proof}

Thus it suffices to consider the case that $G$ is simple and has no peninsulas. This assumption allows us to circle pack $G$ as follows. Let $T$ be the triangulation obtained by adding a vertex inside each face of $G$ and drawing an edge between this vertex and each vertex of the face it corresponds to. The assumption that $G$ is simple and does not contain a peninsula ensures that $T$ is a simple triangulation. We identify the vertices of $T$ with $V(G)\cup F(G)$, where $F(G)$ is the set of faces of $G$. Let $P=\{\Pp{v}: v \in V(G)\}\cup\{\Pp{f} : f \in F(G)\}$ be a circle packing of $T$ in either the plane or the unit disc. (Note that this is not the double circle packing of $G$, which is less useful for us at this stage since we are not assuming that $G$ has bounded degrees.)

We proceed with two geometric lemmas. For each $r'>r>0$ and $z$ in the carrier of $P$ let $V_{z}(r,r')$ be the set of vertices $v$ of $G$ such that either the intersection of the circle $\Pp{v}$ with the annulus $A_{z}(r,r')$ is non-empty or
the circle $\Pp{v}$ is contained in the ball $B_{z}(r)$ and there is a face $f$ incident to $v$ such that the intersection $\Pp{f} \cap A_{z}(r,r')$ is non-empty. We define the set $W_z(r):=V_z(r,r)$ similarly by using a circle instead of an annulus.

\begin{lemma}[Existence of a uniformly large number of disjoint annuli  around a point close to the boundary]\label{lem:uniformdisjointannuli}
Suppose that $T$ is CP hyperbolic so that the carrier of $P$ is $\D$. Then the following hold:
\begin{enumerate}
\itemsep=0em
\item[$(1)$]
There exists a decreasing sequence  $\langle r_n \rangle_{n\geq 0}$ with $r_n \in (0,1/4)$
 such that for every $z\in\D$ with $|z|\geq 1-r_n$, the sets
\[ V_z(r_i,2r_i) \qquad 1\leq i \leq n \]
are disjoint.
\item[$(2)$] If $G$ has bounded degrees, then there exists a constant $C=C(\bM_G)$ such that we may take
$r_n = C^{-n}$ in the previous statement.
\end{enumerate}
\end{lemma}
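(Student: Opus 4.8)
The plan is to exploit the asymmetry in the definition of $V_z(r,2r)$ — the ``annulus clause'' $\Pp{v}\cap A_z(r,2r)\neq\emptyset$ versus the ``face clause'' $\Pp{v}\subseteq B_z(r)$ together with an incident face circle $\Pp{f}$ meeting $A_z(r,2r)$ — combined with two external inputs: a soft compactness fact for $(1)$, and the uniform hyperbolic‑radius bound coming from the Ring Lemma for $(2)$. Throughout I use the reductions already made (so $G$ is simple without peninsulas, $T$ is a simple triangulation, and $P$ is a circle packing of $T$ in $\D$). Since the carrier of $P$ is the \emph{open} disc $\D$, no circle of $P$ can touch $\partial\D$, so $\delta(x):=d_\C(\Pp{x},\partial\D)>0$ for every vertex or face $x$ of $G$.

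The geometric heart of the argument is the following claim, which I would prove first: there is a constant $\lambda=\lambda(\bM)\geq 1$ such that if $0<r'<r/\lambda<1$, if $z\in\D$ satisfies $1-|z|\leq r'$, and if $V_z(r,2r)\cap V_z(r',2r')\neq\emptyset$, then there exists a vertex $v$ of $G$ with $r(v)\geq r/\lambda$ and $\delta(v)\leq 3r'$ — a vertex whose circle is comparable in size to $r$ yet lies far closer to $\partial\D$ than its own radius. To prove this, take $v$ in the intersection and split into four cases according to which clause places $v$ in $V_z(r,2r)$ and in $V_z(r',2r')$. If both memberships are via the annulus clause, connectedness of $\Pp{v}$ (meeting $A_z(r,2r)$ and $A_z(r',2r')$) gives two of its points at distance at least $r-2r'\geq r/2$ apart, so $r(v)\geq r/4$, while the point of $\Pp{v}$ inside $A_z(r',2r')$ is within $2r'$ of $z$, hence within $3r'$ of $\partial\D$; so $v$ works. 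The combination ``annulus clause at scale $r$, face clause at scale $r'$'' is impossible, since then $\Pp{v}\subseteq B_z(r')\subseteq B_z(r)$ cannot meet $A_z(r,2r)$. In the remaining cases an incident face circle $\Pp{f}$ meets $A_z(r,2r)$, and a diameter estimate shows that either $\Pp{v}$ already has $r(v)\geq r/(8k_{\bar d})$, or $\Pp{f}$ has $r(f)\geq r/8$ and then the Ring Lemma (\cref{T:Ring}), applied at the bounded‑degree vertex $f$ of $T$, yields $r(v)\geq r(f)/k_{\deg_T(f)}\geq r/(8k_{\bar d})$, where $\bar d$ bounds the codegree; in either subcase $v$ again works (its circle meets an annulus near $z$, putting it within $3r'$ of $\partial\D$), except that in the face‑clause/face‑clause combination one instead gets $r(v)<r'$ together with $r(v)\geq r/(8k_{\bar d})$, a contradiction once $\lambda\geq 8k_{\bar d}$. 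Taking $\lambda$ to be the largest of the finitely many constants appearing proves the claim.

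For part $(1)$, I would combine this with the elementary observation that, since the circles of $P$ have disjoint interiors and lie in $\D$, at most $s^{-2}$ of them have Euclidean radius at least $s$; being finitely many and compactly contained in $\D$, they satisfy $\rho(s):=\min\{\delta(x):r(x)\geq s\}>0$ for every $s>0$. Then build $\langle r_n\rangle$ inductively: take $r_0:=1/8$, and given $r_0>\dots>r_{n-1}$ in $(0,1/4)$ set $r_n:=\min\bigl(\lambda^{-1}r_{n-1},\ \tfrac{1}{3}\min_{1\leq i\leq n-1}\rho(r_i/\lambda)\bigr)$ (empty minimum $=+\infty$), which is positive and $<r_{n-1}$. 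If at level $n$ some $V_z(r_i,2r_i)$ and $V_z(r_n,2r_n)$ met with $i<n$ and $|z|\geq 1-r_n$, the claim (with $r=r_i$, $r'=r_n$) produces $v$ with $r(v)\geq r_i/\lambda$ and $\delta(v)\leq 3r_n\leq\rho(r_i/\lambda)$, contradicting the definition of $\rho$; disjointness among $V_z(r_1,2r_1),\dots,V_z(r_{n-1},2r_{n-1})$ is inherited because $\{|z|\geq 1-r_n\}\subseteq\{|z|\geq 1-r_{n-1}\}$. This gives $(1)$, necessarily without an explicit rate. For part $(2)$, bounded degrees force every vertex and face of $G$ to have bounded degree in $T$, so by the corollary to the Ring Lemma every circle of $P$ has hyperbolic radius at most some $C_1=C_1(\bM)$; a short computation in the Poincar\'e disc then gives $r(x)\leq\kappa_1\,\delta(x)$ for a constant $\kappa_1=\kappa_1(C_1)$. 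Hence the $v$ of the claim would satisfy $r/\lambda\leq r(v)\leq\kappa_1\delta(v)\leq 3\kappa_1 r'$, i.e.\ $r/r'\leq 3\lambda\kappa_1$; so setting $C:=\max(\lambda,3\lambda\kappa_1,4)$ and $r_n:=C^{-n}$ rules out every intersection, since $r_i/r_n=C^{\,n-i}\geq C>3\lambda\kappa_1$ for $i<n$, while $C\geq 4$ keeps $r_n<1/4$.

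The main obstacle is the geometric claim, and within it the case analysis around the annulus/face dichotomy and the use of the Ring Lemma to upgrade a large face circle into a comparably large neighbouring vertex circle; the compactness input and the inductive choice of $r_n$ are then routine bookkeeping. It is worth keeping in mind that part $(1)$ cannot be made quantitative at this generality — a very high‑degree vertex can carry a macroscopic circle arbitrarily close to $\partial\D$ — which is exactly why finiteness of the set of large circles, rather than an estimate, is the right tool, and why bounded degree is what restores the explicit rate $C^{-n}$ in $(2)$.
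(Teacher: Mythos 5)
Your argument is correct (up to two easily repaired strictness slips: the inductive choice $r_n=\min\bigl(\lambda^{-1}r_{n-1},\tfrac13\min_i\rho(r_i/\lambda)\bigr)$ only gives $\delta(v)\leq\rho(r_i/\lambda)$ rather than a strict contradiction, and $C=\max(\lambda,3\lambda\kappa_1,4)$ may give $r_i/r_n=3\lambda\kappa_1$ exactly; shrink $r_n$ or enlarge $C$ by a constant factor to make both strict). The route, however, is genuinely different from the paper's. The paper proves $(1)$ by a direct recursion: it defines $K_n=\{v: r(v)\geq r_n/4 \text{ or } r(f)\geq r_n/4 \text{ for some incident } f\}$, which is finite, and sets $r_{n+1}=\sup\{r\leq r_n/4: \Pp{v}\subseteq\overline{B_0(1-3r)}\ \forall v\in K_n\}$; then any $v\in V_z(r_{n+1},2r_{n+1})$ with $|z|\geq 1-r_{n+1}$ lies outside $K_n$, so $\Pp{v}$ and all incident face circles are small and hence contained in $B_z(r_n)$, which immediately excludes $v$ from the larger annuli. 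For $(2)$ the paper uses the Ring Lemma directly to show that all circles touching $A_z(C^{-n},2C^{-n})$, or tangent to one that does, have radius $\preceq C^{-n}$ and so sit in a ball of radius $(2+4k)C^{-n}$, whence taking $C$ a sufficiently large constant separates the scales. You instead isolate a single geometric claim — that an intersection $V_z(r,2r)\cap V_z(r',2r')\neq\emptyset$ near the boundary forces the existence of a vertex whose circle has radius $\gtrsim r$ yet lies within $O(r')$ of $\partial\D$ — and then feed it two different inputs: a soft compactness function $\rho(s)$ for $(1)$, and the bound $r(x)\preceq\delta(x)$ (from bounded hyperbolic radii) for $(2)$. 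What your decomposition buys is a uniform mechanism for both parts, making transparent exactly what extra hypothesis (bounded degree) converts the qualitative statement into the quantitative one; what the paper's version buys is brevity, since each part is a short self-contained computation and the case analysis over the annulus/face clauses, which is the only technically delicate step in your claim, never has to be spelled out. The two proofs ultimately rest on the same two facts — there are only finitely many large circles, and the Ring Lemma controls radii of tangent circles — but you package them through an intermediate lemma, while the paper applies them in situ.
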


\begin{proof}
We first prove item $(1)$.
	We construct the sequence recusively, letting $r_0=1/8$. Suppose that $\langle r_i \rangle_{i=0}^n$ satisfying the conclusion of the lemma have already been chosen, and
	consider the set of vertices
	\[ K_n=\Big\{ v \in V(G) : r(v) \geq \frac{r_n}{4} \text{ or } r(f)\geq \frac{r_n}{4} \text{ for some face $f$ incident to $v$}\Big\}.\]
Since the carrier of $P$ has finite area $K_n$ is a finite set.
	We define $r_{n+1}$ to be
	\[ r_{n+1} = \sup \big\{ r \leq r_{n}/4 \,:\, \Pp{v} \subseteq \overline{B_0(1-3r)} \text{ for all } v \in K_n\big\}\]
	which is positive since $K_n$ is finite.
	 We claim that $\langle r_i \rangle_{i=0}^{n+1}$ continues to satisfy the conclusion of the lemma.
That is, we claim that $V_z(r_{n+1},2r_{n+1}) \cap V_z(r_i,2r_i) =\emptyset$  for every $1\leq i\leq n$ and every $z\in \D$ with $|z| \geq 1- r_{n\asaf{+1}}$.

	Indeed, let $z$ be such that $|z|\geq 1-r_{n+1}$ and let $v \in V_z(r_{n+1},2r_{n+1})$. By definition of $V_z(r_{n+1},2r_{n+1})$, the intersection $\Pp{v}\cap B_z(2r_{n+1})$ is non-empty, and we deduce that $\Pp{v}$ is not contained in the closure of $B_0(1-3r_{n+1})$.
By definition of $r_{n+1}$, it follows that $v \notin K_n$ and hence that $r(v) \leq r_n/4$ and $r(f) \leq r_n/4$ for every face $f$ incident to $v$. Thus, since $r_{n+1}\leq r_n/4$,
	\[ \Pp{v}\subseteq B_z\left(2r_{n+1}+2r_n/4\right)\subseteq B_z\left(r_n\right)
	\text{
	 and
	  }
	 \Pp{f}\subseteq B_z\left(2r_{n+1}+2r_n/4\right)\subseteq B_z\left(r_n\right)\]
	 for every face $f$ incident to $v$.
	 It follows that
	   $V_z(r_{n+1},2r_{n+1}) \cap V_z(r_i,2r_i) =\emptyset$  for every $1\leq i\leq n$ as claimed.


To prove $(2)$, observe that, by the Ring Lemma (\cref{T:Ring}), there exists a constant $k=k(\bM)$ such that for every $C>1$, every  $z$ with $|z| \geq 1- C^{-m}$, and every $0\leq n\leq m$, every circle in $P$ that either has centre in $A_z(C^{-n},2C^{-n})$  or is tangent to some circle with centre in $A_{z}(C^{-n},2C^{-n})$ has radius at most $k C^{-n}$. Thus, this set of circles is contained in the ball $B_{z}((2+4k)C^{-n})$.
It follows that taking $C= 1/(4+8k)$ suffices. \qedhere
\end{proof}

Lastly, we estimate the energy of a random set of vertices that we will frequently use.

\begin{lemma}\label{lem: flow}  Let $z$ be a point in the carrier of $P$ (which may be either $\C$ or $\D$), let $U$ be a uniform random variable on the interval $[1,2]$  and, for each $r>0$, let $\mu_r$ be the law of the random set of vertices $W_z(Ur)=V_z(Ur,Ur)$. Then
$$\cE(\mu_r)\preceq 1$$
uniformly in $r>0$.
\end{lemma}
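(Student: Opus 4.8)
The plan is to bound $\cE(\mu_r)=\sum_{v\in V}\P\!\left(v\in W_z(Ur)\right)^2$ by combining, for each individual vertex $v$, a bound on $\P(v\in W_z(Ur))$ with a packing (area) estimate over the vertices whose circles lie near the circle of radius $r$ about $z$. For the first ingredient I would show
\[
  \P\!\left(v\in W_z(Ur)\right)\ \preceq\ \frac{r(v)}{r}\wedge 1 .
\]
Writing $d=|z(v)-z|$, the disc $\Pp v$ meets the circle of radius $ur$ about $z$ exactly when $ur\in[d-r(v),\,d+r(v)]$, which contributes a set of $u\in[1,2]$ of Lebesgue measure at most $2r(v)/r$. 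For the part of the definition of $W_z$ involving faces, note that if $f$ is incident to $v$ then $\Pp f$ is tangent to $\Pp v$, so $|z(f)-z(v)|=r(f)+r(v)$ and hence $|z(f)-z|\le d+r(v)+r(f)$; consequently the event that $\Pp v\subseteq B_z(ur)$ and $\Pp f$ meets the circle of radius $ur$ about $z$ forces $ur\in[\,d+r(v),\,d+r(v)+2r(f)\,]$. Since $f$ is adjacent to $v$ in the simple triangulation $T$, in which $f$ has degree at most $\sup_g\deg(g)$, the Ring Lemma (\cref{T:Ring}) gives $r(f)\preceq r(v)$; hence, as $f$ ranges over the faces incident to $v$, all of these intervals lie inside one interval with left endpoint $(d+r(v))/r$ and length $\preceq r(v)/r$. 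Therefore the set of $u\in[1,2]$ with $v\in W_z(ur)$ has measure $\preceq r(v)/r$, and it is trivially at most $1$.

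For the second ingredient, observe that $\P(v\in W_z(Ur))>0$ forces $\Pp v\cap B_z(3r)\ne\emptyset$: in the first part of the definition $\Pp v$ meets a circle of radius $ur\le 2r$ about $z$, while in the second $\Pp v\subseteq B_z(ur)\subseteq B_z(2r)$. I would then prove the elementary packing estimate
\[
  \sum_{v\,:\,\Pp v\cap B_z(3r)\ne\emptyset} \min\!\left(r(v),3r\right)^2\ \preceq\ r^2
\]
by splitting on whether $r(v)\le 3r$: if $r(v)\le 3r$ then $\Pp v\subseteq B_z(9r)$, so these disjoint discs have total area at most $\area\!\left(B_z(9r)\right)\preceq r^2$; if $r(v)>3r$ then $\Pp v$ contains a Euclidean ball of radius $r$ lying inside $B_z(5r)$ (obtained by moving a distance $r$ inward from a point of $\Pp v\cap B_z(3r)$), and as these balls are disjoint there are only boundedly many such $v$. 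Combining the two ingredients with the elementary inequality $\frac{r(v)}{r}\wedge 1\le \frac{\min(r(v),3r)}{r}$ gives
\[
  \cE(\mu_r)\ \preceq\ \frac{1}{r^2}\sum_{v\,:\,\Pp v\cap B_z(3r)\ne\emptyset}\min\!\left(r(v),3r\right)^2\ \preceq\ 1
\]
uniformly in $r>0$, as required.

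The computation is identical whether the carrier of $P$ is $\C$ or $\D$, as it uses only the Euclidean geometry of the disjoint discs of $P$; in the hyperbolic case the Euclidean balls $B_z(5r)$ and $B_z(9r)$ may extend past $\partial\D$, but this is harmless since we only use Euclidean areas as upper bounds. The one genuinely delicate point is the per-vertex bound: because $G$ need not have bounded degree, a vertex $v$ may be incident to arbitrarily many faces, so a priori the face-part of the definition might place $v$ in $W_z(ur)$ for many disjoint ranges of $u$. What rescues the estimate is that this part of the definition requires $\Pp v\subseteq B_z(ur)$, which pins the left endpoint of every such range to the common value $(d+r(v))/r$, while the Ring Lemma --- which needs only a bound on the codegree --- controls their lengths.
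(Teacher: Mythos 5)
Your proof is correct and follows essentially the same two-step structure as the paper's: a per-vertex bound $\P(v\in W_z(Ur))\preceq \min(r(v),r)/r$ via the Ring Lemma, followed by a packing (area) estimate over the vertices whose circles come near the annulus. The small differences --- you use $3r$ rather than $r$ in the truncation and a case split in the packing step, while the paper replaces oversized circles by subcircles of radius $r$ --- are cosmetic.

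One remark on your closing comment: you flag the constraint $\Pp v\subseteq B_z(ur)$ as the ``genuinely delicate point'' that keeps the face contributions under control when $\deg(v)$ is unbounded, but this constraint is not actually needed. Every face $f$ incident to $v$ has $\Pp f$ tangent to $\Pp v$ in the packing of $T$, so $\Pp f$ is contained in the ball of radius $r(v)+2\max_{f\perp v}r(f)$ about $z(v)$; together with $r(f)\preceq r(v)$ from the Ring Lemma (which, as you note, uses only the codegree bound), this already confines all the face intervals to a window of length $\preceq r(v)/r$ without pinning the left endpoint. This is precisely the bound the paper uses, and it is the reason unbounded vertex degree is harmless here.
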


\begin{proof}
For a vertex $v$ of $G$ to be included in $W_z(Ur)$, the circle $\{z'\in\C: |z'-z|=Ur\}$
must either intersect the circle $\Pp{v}$ or intersect $\Pp{f}$ for some face $f$ incident to $v$. The union of $\Pp{v}$ and all the $\Pp{f}$ incident to $\Pp{v}$ is contained in the ball of radius $r(v) + 2\max_{f \sim v}r(f)$ around $z(v)$. Since the codegrees of $G$ are bounded, the Ring Lemma implies that $r(f) \preceq  r(v)$ for  all incident $v\in V$ and $f\in F$, and so
\begin{align}
	\mu_r(v \in W_z(Ur)) &\leq \frac{1}{r}\min\left(2r(v) + 4\max_{f \ni v}r(f),\,r\right) \preceq \frac{1}{r}\min\{r(v),r\}.\label{eq:energy1}
\end{align}
We claim that
\begin{equation}\label{eq:energy2}\sum_{v\in V_z(r,2r)} \min\{r(v),\, r\}^2 \leq 16r^2.\end{equation}
To see this, replace each circle of a vertex in $V_{z}(r,2r)$ that has radius larger than $r$ with a circle of radius $r$ that is contained in the original circle and intersects $B_z(2r)$: The circles in this new set still have disjoint interiors, are contained in the ball $B_z(4r)$, and have total area $\pi\sum_{v\in V_z(r,2r)} \min(r(v),\, r)^2$, yielding \eqref{eq:energy2}. The claim follows from  \eqref{eq:energy1} and \eqref{eq:energy2} by definition of the energy $\cE(\mu_r)$. \qedhere
\end{proof}

We are now ready to prove \cref{T:planeWUSF}.

\subsection{Proof of Theorem \ref{T:planeWUSF}}\label{subsec:exploration}
Let $G$ be a simple proper plane network with bounded codegrees and bounded edge resistances. By \cref{lem:nopeninsulas} we can assume that $G$ does not contain a peninsula. Let $\F$ be a sample of $\WUSF_G$ and given an edge $e=(x,y)$ let $\sA^e$ be the event that $x$ and $y$ are in distinct infinite connected components of $\F \setminus \{e\}$. It is clear that every component of $\F$ is one-ended a.s.\ if and only if
\begin{equation}\label{eq:mainthmgoal} \WUSF_G(e\in \F \, , \sA^e)= 0 \end{equation}
for every edge $e$ of $G$.

Consider the triangulation $T$ obtained from $G$ and its circle packing $P=\{\Pp{v}: v \in V(G)\}\cup\{\Pp{f} : f \in F(G)\}$, as described in the previous subsection. By applying a M\"obius transformation, we normalise $P$ by setting the centres $z(x)$ and $z(y)$ to be on the negative and positive real axes respectively, setting the circles $\Pp{x}$ and $\Pp{y}$ to have the origin as their tangency point and, in the parabolic case, fixing the scale by setting $z(y)-z(x)=1$.

Let $\eps>0$ be arbitrarily small.  If $T$ is CP hyperbolic, let $V_\eps$ be the set of vertices of $G$ with $|z(v)|\leq 1 -\eps$. Otherwise, $T$ is CP parabolic and we define $V_\eps$ to be the set of of vertices of $G$ with $|z(v)|\leq \eps^{-1}$. We also denote by $E_\eps$ the set of edges that have both endpoints in $V_\eps$.

Let $\sB_\eps^e$ be the event that every component of $\F\setminus \{e\}$ intersects $V\setminus V_\eps$.
On the event $\sB_\eps^e$, we define $\eta^x$ to be the rightmost path in $\F\setminus\{e\}$ from  $x$ to $V\setminus V_\eps$ when looking at $x$ from $y$, and $\eta^y$ to be the leftmost path in $\F\setminus\{e\}$ from $y$ to $V\setminus V_\eps$ when looking at $y$ from $x$. Note that the paths $\eta_x$ and $\eta_y$ are not necessarily disjoint. Nonetheless, concatenating the reversal of $\eta^x$ with $e$ and $\eta^y$ separates $V_\eps$ into two sets of vertices, $\cL$ and $\cR$, which are to the left and right of $e$ (when viewed from $x$ to $y$) respectively. See Figure \ref{fig:exploration} for an illustration of the case when $\eta_x$ and $\eta_y$ are disjoint (when they are not, $\cR$ is a ``bubble'' separated from $V \setminus V_\eps$).

On the event $\sB^e_\eps$, let $K$ be the set of edges that are either incident to a vertex in $\cL$ or belong to the path $\eta_x \cup \eta_y$, setting $K=E$ off of this event. Note that the edges of $K$ do not touch the vertices of $\cR$. The condition that $\eta^x$ and $\eta^y$ are the rightmost and leftmost paths to $V\setminus V_\eps$ from $x$ and $y$ is equivalent to the condition that $K$ does not contain any open path from $x$ to $V\setminus V_\eps$ other than $\eta^x$, and does not contain any open path from $y$ to $V\setminus V_\eps$ other than $\eta^y$. It follows from this characterisation that  $K$ is a local set for $\F$. (Indeed, we note that $K$ can be explored algorithmically, without querying the status of any edge in $E\setminus K$, by performing a right-directed depth-first search of $x$'s component in $\F$ and a left-directed depth-first search of $y$'s component in $\F$, stopping each search when it first leaves $V_\eps$.)

\begin{figure}[t]
\centering
\includegraphics[width=0.44\textwidth]{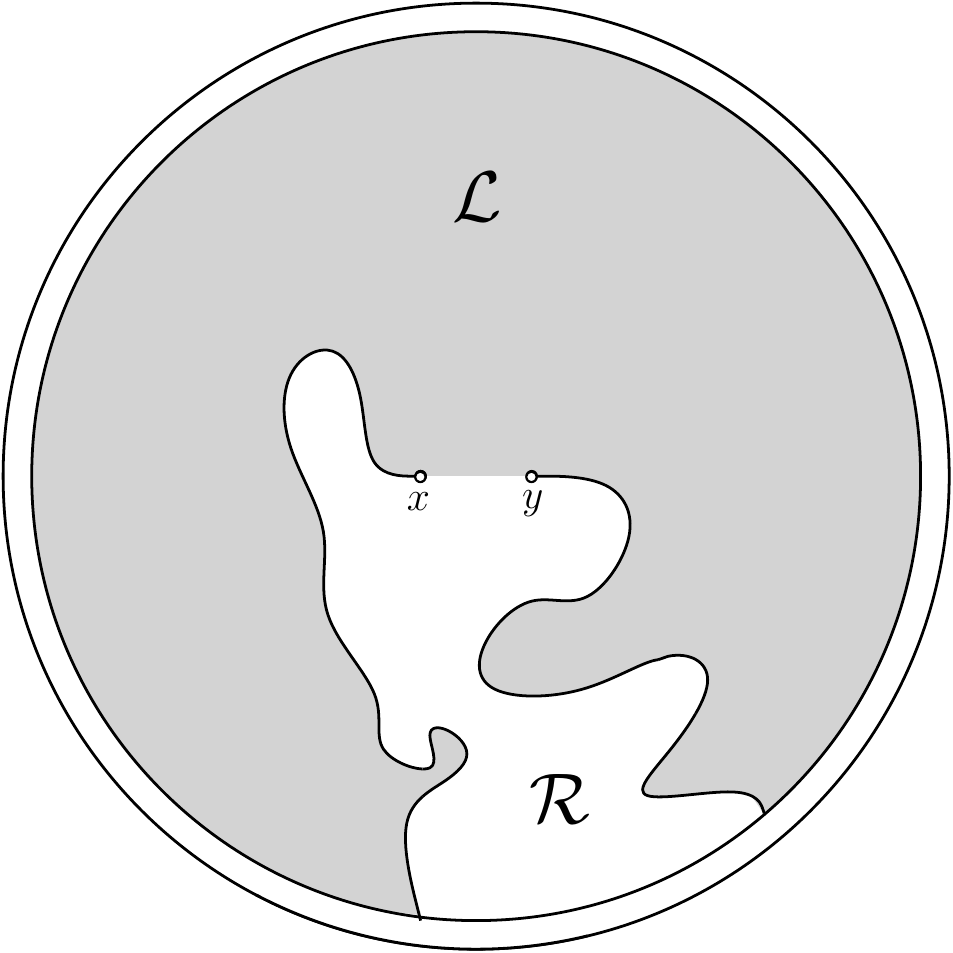} \qquad \includegraphics[width=0.44\textwidth]{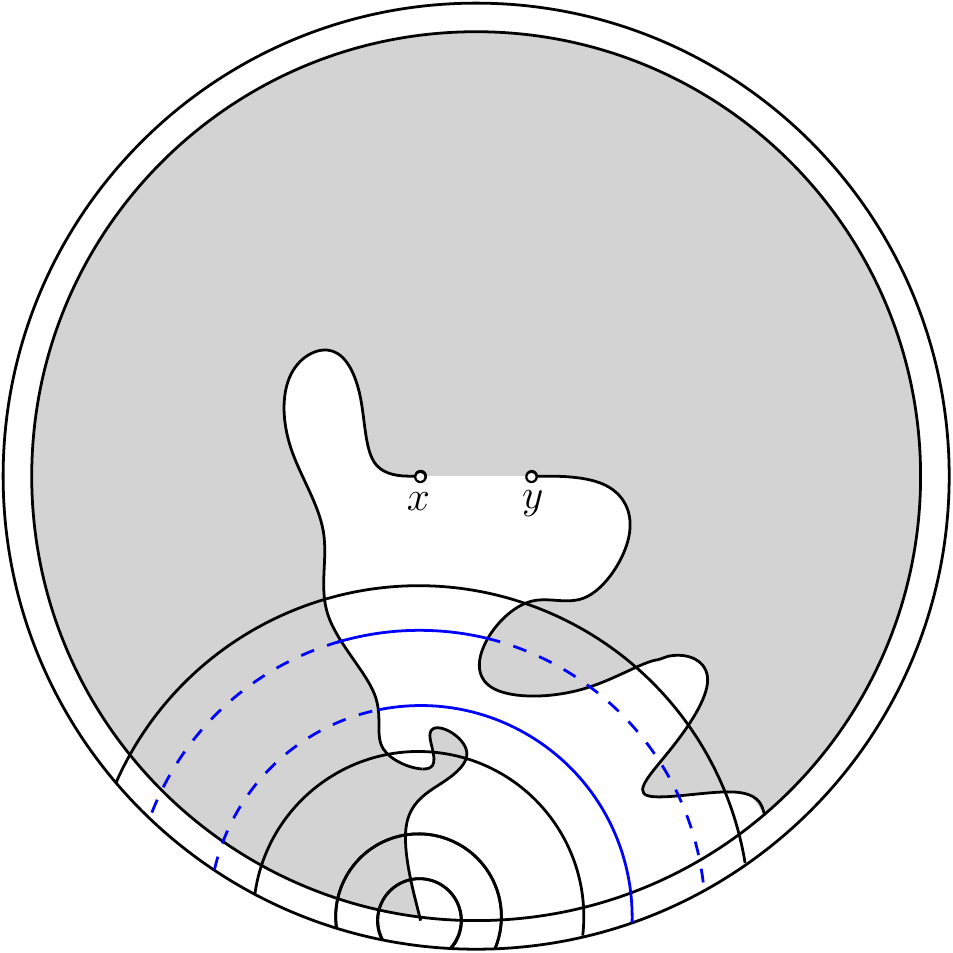}
\caption{Illustration of the proof of \cref{T:planeWUSF} in the case that $T$ is CP hyperbolic. Left: On the event $\sA^e_\eps$, the paths $\eta^x$ and $\eta^y$ split $V_\eps$ into two pieces, $\cL$ and $\cR$. Right: We define a random set containing a path (solid blue) from $\eta^x$ to $\eta^y\cup\{\infty\}$ in $G\setminus K_c$ using a random circle (dashed blue). Here we see two examples, one in which the path ends at $\eta^y$, and the other in which the path ends at the boundary (i.e., at infinity).}\label{fig:exploration}
\end{figure}

Let $\sA^e_\eps$ denote the event that $\sB_\eps^e$ occurs and that $K$ does not contain an open path from $x$ to $y$, or, equivalently, that $\eta_x$ and $\eta_y$ are disjoint. Note that $\sA_\eps^e$ is measurable with respect to $\cF_K$ (as defined in \cref{subsec:Markov}), and that $\sA^e = \cap_{\eps>0} \sA^e_\eps$. Thus, 
$$ \wusf_G(e \in \F \, , \sA^e) \leq \wusf_G(e \in \F \mid \sA_\eps^e) = \E \big [ \wusf_G(e \in \F  \, |\, \cF_K, \, \sA_\eps^e) \big ] \, .$$
By the strong spatial Markov property (\cref{prop:Markov}), conditioned on $\cF_K$ and the event $\sA_\eps^e$, the law of $\F$ is equal to the union of $K_o$ with a sample of the WUSF  of the network $(G-K_c)/K_o$ obtained from $G$ by deleting all the revealed closed edges and contracting all the revealed open edges. In particular, by Kirchhoff's Effective Resistance Formula (\cref{Thm:Kirchhoff}),
\begin{equation}\label{eq:wusfkirchhoffexploration} \wusf_G\big(e\in\F \, |\, \cF_K,\,\sA_\eps^e\big) \leq c(e)\,\ReffW\big(\eta^x \leftrightarrow \eta^y;\; G- K_c\big)
 \end{equation}
Since the edge $e$ was arbitrary, to prove \eqref{eq:mainthmgoal} (and hence \cref{T:planeWUSF}) it suffices to prove that there is a upper bound on the effective resistance appearing in \eqref{eq:wusfkirchhoffexploration} that tends to zero as $\eps\to0$ uniformly in $\cF_K$. We perform this analysis now according to whether $T$ is CP hyperbolic or parabolic.

\medskip


\begin{proof}[Proof of \cref{T:planeWUSF}, hyperbolic case.] Suppose that $T$ is CP hyperbolic, let $v^x$ be the endpoint of the path $\eta^x$ and let $z_0=z(v^x)$. 
On the event $\sA_\eps^e$, for each $1-|z_0|\leq r\leq 1/4$, we claim that
 the set $W_{z_0}(r)$, as defined in \cref{lem: flow}, contains a path in $G$ from $\eta^x$ to $\eta^y \cup \{\infty\}$ that is contained in $\cR \cup \eta^x \cup \eta^y$, and is therefore a path in $G\setminus K_c$.

Indeed, consider the arc $\fA'(r)=\{z\in \overline{\D} : |z-z_0|=r\}$,  parameterised in the clockwise direction. Let $\fA(r)$ be the subarc of $\fA'(r)$ beginning at the last time that $\fA'(r)$ intersects a circle corresponding to a vertex in the trace of $\eta^x$, and ending at the first time after this time that $\fA'(r)$ intersects either $\partial \D$ or a circle corresponding to a vertex in the trace of $\eta^y$ (see \cref{fig:exploration}).
Thus, on the event $\sA^e_\eps$, the set of vertices of $T$ whose corresponding circles in $P$ are intersected by $\fA(r)$ contains a path in $T$ from $\eta^x$ to $\eta^y\cup\{\infty\}$, for every $1-|z_0|\leq r\leq 1/4$. (Indeed, for Lebesgue a.e.\ $1-|z_0| \leq r \leq 1/4$, the arc $\fA(r)$ is not tangent to any circle in $P$, and in this case the set is precisely the trace of a simple path in $T$.) To obtain a path in $G$ rather than $T$, we divert the path counterclockwise around each face of $G$. That is, whenever the path passes from a vertex $u$ of $G$ to a face $f$ of $G$ and then to a vertex $v$ of $G$, we replace this section of the path with the list of vertices of $G$ incident to $f$ that are between $u$ and $v$ in the counterclockwise order. This construction shows that the subgraph of $G \setminus K_c$ induced by the set $W_{z_0}(r)$ contains a path from $\eta^x$ to $\eta^y\cup\{\infty\}$, as claimed.

Let $\langle r_n \rangle_{n\geq0}$ be as in \cref{lem:uniformdisjointannuli} and let $n(\eps)$ be the maximum $n$ such that $\eps<r_n$. By \cref{lem:uniformdisjointannuli} the measures $\mu_{r_i}$ are supported on sets that are contained in the disjoint sets $V_z(r_i,2r_i)$. Thus, by \cref{lem:randomsets} and \cref{lem: flow} we have
\begin{align*} \ReffW\Big(\eta^x \leftrightarrow \eta^y \cup \{\infty\};\, G \setminus K_c\Big)  \preceq \cE\left(\frac{1}{n(\eps)}\sum_{i=1}^{n(\eps)}\mu_{r_i}\right)=\frac{1}{n(\eps)^2}\sum_{i=1}^{n(\eps)} \cE(\mu_{r_i}) \preceq \frac{1}{n(\eps)}  \end{align*}
and hence, by symmetry,
\[ \ReffW(\eta^y \leftrightarrow \eta^x \cup \{\infty\};\, G \setminus K_c) \preceq \frac{1}{n(\eps)}.  \]
Applying \cref{Lem:triangle} and \eqref{eq:wusfkirchhoffexploration}, we have
\begin{equation} \wusf(e\in\F \, |\, \cF_K,\,\mathscr{B}_\eps^e) \preceq \frac{c(e)}{n(\eps)}, \label{eq:neps}\end{equation}
which by \cref{lem:uniformdisjointannuli} converges to zero as $\eps \to 0$, completing the proof of \cref{T:planeWUSF} in the case that $T$ is CP hyperbolic. If $G$ has bounded degrees, then combining \eqref{eq:neps} with \cref{lem:uniformdisjointannuli}  implies that there exists a positive constant $C=C(\bM)$ such that
\begin{equation}\label{eq:upperbound}\wusf(e\in\F \, |\, \cF_K,\,\mathscr{B}_\eps^e) \leq C\frac{c(e)}{\log(1/\eps)}\end{equation}
 for all $\eps \leq 1/2$, which is a direct analogue of the upper bound of \cref{T:Wtailcp}.
\end{proof}

\begin{proof}[Proof of \cref{T:planeWUSF}, parabolic case.]

Suppose that $T$ is CP parabolic.
Let $r_1=1$ and define $\langle r_n \rangle_{n\geq1}$ recursively by
\[ r_n = 1+\inf\{r\geq r_{n-1} : V_0(r,2r) \cap V_0(r_{n-1},2r_{n-1})=\emptyset\}.\]
Since $V_0(r_{n-1},2r_{n-1})$ is finite, this infimum is finite.
 By definition, the sets $V_0(r_i,2r_i)$ are disjoint. A similar analysis to the hyperbolic case shows that, on the event $\sA_\eps^e$, for each $r\geq 1$,
 the set $W_0(r)$ contains a path in $G$ from $\eta^x$ to $\eta^y$ that is contained in $\cR \cup \eta^x \cup \eta^y$, and is therefore a path in $G\setminus K_c$.  For each $\eps>0$, let $n(\eps)$ be the maximal $n$ such that $r_n \leq \eps^{-1}$. Then on the event $\sA_\eps^e$, by \cref{lem:randomsets} and \cref{lem: flow},
\begin{align*} \ReffF(\eta^x \leftrightarrow \eta^y;\; G \setminus K_c) &\preceq \cE\left(\frac{1}{n(\eps)}\sum_{i=1}^{n(\eps)}\mu_{r_i}\right) \leq \frac{1}{n(\eps)^2}\sum_{i=1}^{n(\eps)} \cE(\mu_{r_i}) \preceq \frac{1}{n(\eps)}.
\end{align*}
Thus, by \eqref{eq:wusfkirchhoffexploration},
\begin{equation}\label{eq:parabolicKirchhoff} \WUSF_G(e\in\F \, |\, \cF_K,\,\mathscr{B}_\eps^e) \preceq \frac{c(e)}{n(\eps)}.\end{equation}
The right hand side converges to zero as $\eps \to 0$, completing the proof of \cref{T:planeWUSF}.
\end{proof}

\begin{remark} Since the random sets used in the CP parabolic case above are always finite, they can also be used to bound free effective resistances. Therefore, by repeating the proof above with the FUSF in place of the WUSF, we deduce that every component of the FUSF of $G$ is one-ended almost surely if $T$ is CP parabolic.
Since the FUSF stochastically dominates the WUSF, and an essential spanning forest with one-ended components does not contain any strict subgraphs that are also essential spanning forests, we deduce that if $T$ is CP parabolic, then the FUSF and WUSF of $G$ coincide. In particular, using \cite[Theorem 7.3]{BLPS}, we obtain the following. \end{remark}

\begin{thm}\label{T:parabolicHD}
Let $T$ be a CP parabolic proper plane triangulation.
Then $T$ does not admit non-constant harmonic functions of finite Dirichlet energy.
\end{thm}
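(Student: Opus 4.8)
The plan is to read off the theorem from the remark immediately preceding it together with \cite[Theorem 7.3]{BLPS}, which asserts that $\FUSF_G=\WUSF_G$ for a network $G$ if and only if $G$ admits no non-constant harmonic function of finite Dirichlet energy. That remark establishes $\FUSF_G=\WUSF_G$ for every simple, peninsula-free, proper plane network $G$ with bounded codegrees and edge resistances bounded above for which the face-star triangulation $T(G)$ is CP parabolic, so I would simply apply it with $G=T$ equipped with unit conductances. This is legitimate: $T$ is simple by definition, and since the link of each of its vertices is a cycle, $T\setminus\{v\}$ is connected for every vertex $v$, so $T$ has no peninsula; moreover every codegree of $T$ equals $3$ and every edge resistance equals $1$. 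It therefore remains only to verify that $\widehat T:=T(T)$, the triangulation obtained from $T$ by inserting a vertex inside each face joined to that face's three vertices, is CP parabolic; granting this, the remark yields $\FUSF_T=\WUSF_T$ and \cite[Theorem 7.3]{BLPS} finishes the argument.

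To establish that $\widehat T$ is CP parabolic I would argue directly with circle packings. Since $T$ is CP parabolic it has a circle packing $P=\{\Pp v:v\in V(T)\}$ with carrier $\C$. For a face $f$ of $T$ with vertices $u,v,w$, the three mutually tangent circles $\Pp u,\Pp v,\Pp w$ bound a curvilinear triangle whose three arcs are concave as seen from inside it, and which therefore contains a unique inscribed circle $D_f$ tangent to all three (the inner Soddy circle, of positive finite radius). Adjoining the family $\{D_f\}$ to $P$ produces a locally finite circle packing $Q$ whose tangency graph is precisely $\widehat T$. I claim $\operatorname{carr}(Q)=\C$: writing $\Delta_f$ for the Euclidean triangle on the centres of $\Pp u,\Pp v,\Pp w$, the interstice bounded by those three circles is contained in the convex hull of the three tangency points, each of which lies on a side of $\Delta_f$, so $D_f$ and in particular its centre lie in the interior of $\Delta_f$; joining that centre to the three corners of $\Delta_f$ partitions $\Delta_f$ into three Euclidean triangles that tile it, giving $\operatorname{carr}(Q)\supseteq\bigcup_v\Pp v\cup\bigcup_f\Delta_f=\operatorname{carr}(P)=\C$, while the reverse inclusion is trivial. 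Being an infinite simple proper plane triangulation that admits a circle packing with carrier $\C$, $\widehat T$ is CP parabolic by definition.

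For completeness, the mechanism by which the remark delivers $\FUSF_T=\WUSF_T$ from the CP parabolicity of $\widehat T$ is as follows: running the proof of \cref{T:planeWUSF} in the CP parabolic case with $G=T$ and the packing $Q$, the method of random sets (\cref{lem:randomsets} and \cref{lem: flow}) bounds the relevant free effective resistances by quantities tending to $0$, which via Kirchhoff's formula (\cref{Thm:Kirchhoff}) and the strong spatial Markov property (\cref{prop:Markov}) forces $\WUSF_T(e\in\F,\sA^e)=0$ for every edge $e$; since the random sets used are finite, the same computation applies verbatim to the FUSF, so every component of both $\WUSF_T$ and $\FUSF_T$ is one-ended almost surely. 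In the monotone coupling with $\WUSF_T\subseteq\FUSF_T$ both are essential spanning forests, and an essential spanning forest all of whose components are one-ended is minimal for inclusion among essential spanning forests (removing any of its edges creates a finite component), so the two coincide and hence have the same law.

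I expect the only step requiring genuine, though elementary, work to be the CP parabolicity of $\widehat T=T(T)$: as above it reduces to the planar-geometry fact that the inner Soddy circle of three mutually tangent circles has its centre inside the triangle of their centres, so that refining a plane circle packing by its face-centres again yields a plane circle packing. Everything else is bookkeeping already carried out in \cref{Sec:mainproof} and in the preceding remark, transcribed to the case $G=T$.
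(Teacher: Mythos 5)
Your proposal is correct and follows the same route the paper intends: the theorem is read off from the remark immediately above it together with BLPS Theorem~7.3. The one step you single out as needing genuine work --- that when $G=T$ is itself a CP parabolic triangulation, the star-refined triangulation $T(T)$ used in the proof of \cref{T:planeWUSF} is again CP parabolic --- is indeed the gap that the paper's ``In particular'' silently passes over (and one cannot use the He--Schramm recurrence characterisation here, since no bounded-degree hypothesis is assumed), and your direct construction handles it cleanly: circle-pack $T$ in $\C$, inscribe the inner Soddy circle in each interstice to obtain a locally finite packing $Q$ whose tangency graph is exactly $T(T)$, and check that $\operatorname{carr}(Q)=\C$ by verifying that the centre of each inscribed circle lies in the interior of the corresponding centre-triangle $\Delta_f$ (which follows since the interstice is contained in the triangle of tangency points, itself a subset of $\Delta_f$), so that the three small faces of $T(T)$ inside $f$ tile $\Delta_f$. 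Your auxiliary checks --- that a proper plane triangulation is peninsula-free because vertex links are cycles, and that an essential spanning forest all of whose components are one-ended is inclusion-minimal among essential spanning forests, so that the monotone coupling $\WUSF_T\subseteq\FUSF_T$ forces equality once both have one-ended components --- match the argument sketched in the paper's remark.
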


\subsection{The FUSF is connected.}

\begin{proof}[Proof of \cref{T:mainthm}]
First suppose that $G^\dagger$ is locally finite. Since $G$ has bounded degrees and bounded conductances, $G^\dagger$ has bounded codegrees and bounded resistances. Thus, \cref{T:planeWUSF} implies that every component of the WUSF on $G^\dagger$ is one-ended a.s.\ and consequently, by \cref{thm:duality}, that the FUSF  of $G$ is connected a.s.

Now suppose that $G$ does not have locally finite dual. In this case, we form a plane network $(G',c')$ from $G$ by adding edges to triangulate the infinite faces of $G$ while keeping the degrees bounded. We enumerate these additional edges $\langle e_i \rangle_{i\geq1}$ and define conductances \[c'(e_i)=2^{-i-1}\ReffF\big(e_i^- \leftrightarrow e_i^+ ; G\big)^{-1}.\]
Since $G'$ has bounded degrees, bounded conductances and locally finite dual, its FUSF is connected a.s.
By Kirchhoff's Effective Resistance Formula (\cref{Thm:Kirchhoff}), Rayleigh monotonicity and the union bound, the probability that the FUSF of $G'$
 contains any of the additional edges $e_i$ is at most
\[\sum_{i\geq0}c'(e_i)\ReffF\big(e_i^- \leftrightarrow e_i^+ ; G'\big) \leq \sum_{i\geq0}c'(e_i)\ReffF\big(e_i^- \leftrightarrow e_i^+ ; G\big) \leq 1/2.\]
In particular, there is a positive probability that none of the additional edges are contained in the FUSF of $G'$.
The conditional distribution of the FUSF of $G'$ on this event is $\FUSF_G$ by the spatial Markov property, and it follows that the FUSF of $G$ is connected a.s.
\end{proof}

\section{Critical Exponents}\label{Sec:lowerbound}

\subsection{The Ring Lemma for double circle packings}\label{subsec:ring}

In this section we extend the Ring Lemma to double circle packings. While unsurprising, we were unable to find such an extension in the literature.

\begin{thm}[Ring Lemma]\label{Thm:DCPRing} There exists a family of positive constants $\langle k_{n,m} : n\geq 3, m\geq 3\rangle$ such that if $(P,P^\dagger)$ is a double circle packing in $\C \cup \{\infty\}$ of a polyhedral plane graph $G$ and $v$ is a vertex of $G$, then for every $f\in F$ incident to $v$
such that $\Pp{v}$ does not contain $\infty$, then
\[r(v)/r(f) \leq k_{\deg(v),\max_{g \perp v}\deg(g)} \]
where $g \perp v$ means that the face $g$ is incident to the vertex $v$.
\end{thm}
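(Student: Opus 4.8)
The plan is to reduce the double-circle-packing Ring Lemma to the ordinary Ring Lemma (\cref{T:Ring}) applied to the auxiliary triangulation. Given the double circle packing $(P,P^\dagger)$ of the polyhedral plane graph $G$, form the triangulation $T$ on vertex set $V \cup F$ by adding a vertex inside each face $f$ of $G$ and joining it to all vertices incident to $f$; since $G$ is polyhedral, $T$ is simple. The circles $\{\Pp{v}\}$ and $\{\Pd{f}\}$ together do \emph{not} form a circle packing of $T$ (primal and dual circles overlap rather than being tangent), but one can produce a genuine circle packing of $T$ that is comparable to $(P,P^\dagger)$: for each face $f$, the perpendicularity condition forces $\Pd{f}$ to pass through the tangency points of the primal circles around $f$, so one expects $r(v) \asymp r(f)$ for incident $v,f$ already from elementary trigonometry in the ideal polygon cut out by the primal circles around $f$. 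More precisely, first I would handle the case $\deg(f) = 3$ separately (a triangular face), where the dual circle is the inscribed circle of the curvilinear triangle formed by three mutually tangent primal circles, and the ratio $r(v)/r(f)$ is then a fixed continuous — hence bounded — function of the three primal radii, which by the primal Ring Lemma are comparable to one another. For $\deg(f) \ge 4$, subdivide: add the face-vertex and its edges to reduce to triangular faces of $T$, then run the primal Ring Lemma on $T$.

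The cleaner route, which I would actually carry out, is: apply \cref{T:Ring} directly to the circle packing of $T$ obtained from He's disc-pattern theorem (as recalled after \cref{dblCP2} in \cref{Sec:CP}) — but that packing is exactly $(P,P^\dagger)$ viewed as a disc \emph{pattern}, not a packing, so instead I will argue geometrically. Fix a face $f$ incident to $v$. Normalise by a Möbius transformation so that $\Pp{v}$ does not contain $\infty$ (possible by hypothesis). Consider the two primal circles $\Pp{v}, \Pp{u}$ adjacent along the two edges of $f$ at $v$; they are tangent to each other (if $\deg(f)=3$) or to further primal circles around $f$. The boundary circle of $\Pd{f}$ meets $\partial \Pp{v}$ orthogonally and is tangent to no primal circle, but it is contained in the region bounded by the primal circles around $f$, and its radius is controlled above by the gap between $\Pp{v}$ and the nearest non-adjacent primal circle around $f$ and below by... — here is where the bound $\max_{g\perp v}\deg(g)$ enters: the relevant neighbouring primal radii are comparable to $r(v)$ with constants from \cref{T:Ring} depending on $\deg(v)$, and the number of circles one must chain through around each face $g$ incident to $v$ is $\deg(g)$, so iterating \cref{T:Ring} along the $\le \deg(v)$ faces and $\le \deg(g)$ edges per face gives a bound $r(f)/r(v) \le k_{\deg(v), \max_{g\perp v}\deg(g)}$, and symmetrically $r(v)/r(f) \le k_{\deg(v),\max_{g\perp v}\deg(g)}$ after adjusting the constant.

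The key steps, in order: (i) set up $T$ and record that it is a simple triangulation; (ii) prove the statement for a triangular face using explicit trigonometry of three mutually tangent circles and their orthogonal inscribed circle, reducing to the primal Ring Lemma to control the three primal radii; (iii) reduce a general face of degree $m$ to the triangular case by noting that the $m$ primal circles around $f$, together with $\Pd{f}$, form a configuration in which $\Pd{f}$ is determined by (and comparable to) any consecutive triple, again via \cref{T:Ring} with constants depending on $m$ and on $\deg(v)$ through the chaining; (iv) assemble the two-sided bound and define $k_{n,m}$ by taking the max over the finitely many combinatorial possibilities.

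The main obstacle I anticipate is step (iii) — controlling $r(f)$ for a high-degree face $f$ purely in terms of $\deg(f)$ and the degrees $\deg(g)$ of faces incident to $v$, without a priori control on the \emph{primal} radii around $f$ (which a priori could be wildly unequal if the degrees of the \emph{vertices} of $f$ are large). The resolution is that the orthogonality condition is rigid: $\partial\Pd{f}$ threads all the tangency points of consecutive primal circles around $f$, so the hyperbolic geometry of the ideal polygon with these tangency points as ideal vertices pins down $r(f)/r(v)$ once the adjacent primal radii $r(u)$ for $u \sim_f v$ are comparable to $r(v)$ — and that comparability is exactly \cref{T:Ring} applied at $v$ with constant $k_{\deg(v)}$. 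So the only degree of a \emph{vertex} that enters is $\deg(v)$ itself, while the degrees of incident faces enter through how many orthogonality constraints $\Pd{g}$ imposes; hence the stated dependence $k_{\deg(v),\max_{g\perp v}\deg(g)}$ is the right one, and no dependence on the degrees of the other vertices of $f$ is needed.
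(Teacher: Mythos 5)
Your proposal has a genuine gap, and the central step cannot be made to work as written. Steps (ii)--(iv) hinge on the claim that ``the adjacent primal radii $r(u)$ for $u \sim_f v$ are comparable to $r(v)$ \dots\ and that comparability is exactly \cref{T:Ring} applied at $v$ with constant $k_{\deg(v)}$.'' But \cref{T:Ring} is a statement about circle packings of simple \emph{triangulations}, and no such packing is available here: $P$ alone is a circle packing of $G$, which need not be a triangulation (consecutive primal circles around $v$ are separated by a dual circle and are generally \emph{not} tangent to each other), while $(P,P^\dagger)$ is a disc pattern, not a packing. So \cref{T:Ring} simply cannot be invoked, and you have no a priori control on the ratio $r(u)/r(v)$ for a neighbour $u$ of $v$ along $f$. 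The ``rigidity of the ideal polygon'' observation is fine as far as it goes --- $\Pd{f}$ is indeed determined by $\Pp{v}$ and the two tangency points of $\Pp{v}$ on $\partial\Pd{f}$, so $r(f)$ is controlled by the angular separation of those tangency points --- but lower-bounding that angular separation is exactly the difficulty, and you have pushed it onto a lemma that does not apply. A secondary error: you assert that a bound $r(f)/r(v)\leq k$ gives ``symmetrically $r(v)/r(f)\leq k$ after adjusting the constant.'' These are not symmetric; the dual statement to $r(v)/r(f)\leq k_{\deg(v),\max_{g\perp v}\deg(g)}$ would carry constants depending on $\deg(f)$ and $\max\{\deg(u):u\text{ a vertex of }f\}$, parameters the theorem you want explicitly does not permit.

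The paper's proof takes a different and self-contained route that never compares adjacent primal circles and never appeals to \cref{T:Ring}. Normalise $r(v)=1$; each arc $\partial\Pp{v}\cap\Pd{f}$ has length $2\tan^{-1}(r(f))$ and the $\deg(v)$ arcs cover $\partial\Pp{v}$, so one only needs lower bounds on arc lengths. The key estimate (the function $\psi_m$ in the paper) says that the gap on $\partial\Pp{v}$ between two dual circles $\Pd{f},\Pd{f'}$ cannot be too small relative to their radii: if it were, then walking around $f$ the primal circles $\Pp{{}v_2},\Pp{{}v_3},\ldots$ would all have to be tiny (each avoids $\Pd{{}f'}$), and since there are only $\deg(f)\leq m$ of them they could not cover $\partial\Pd{f}\setminus\Pp{v}$. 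Ordering the faces around $v$ by radius, the largest one satisfies $r(f_1)\geq\tan(\pi/\deg(v))$ by pigeonhole, and an induction using the $\psi_m$ bound then forces each remaining $r(f_k)$ away from zero. The parameter $\max_{g\perp v}\deg(g)$ enters exactly as you guessed --- through the number of primal circles one must chain around each incident face --- but via this covering argument on the dual side, not via iterating the triangulation Ring Lemma.
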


As for triangulations, the Ring Lemma immediately implies that whenever a polyhedral, CP hyperbolic proper plane network $G$ with bounded degrees and codegrees is circle packed in $\D$, the hyperbolic radii of the discs in $P\cup P^\dagger$ are bounded above by a constant $C$ depending only on the maximal degree and codegree.

\begin{proof}[Proof of \cref{Thm:DCPRing}]

Let $n$ be the degree of $v$ and let $m$ be the maximum degree of the faces incident to $v$ in $G$.
We may assume that $r(v)=1$.  Note that for each two distinct discs $\Pd{f},\Pd{{}f'}\in P^\dagger$ that are not tangent, there is at most one disc $\Pp{u}\in P$ that intersects both $\Pd{f}$ and $\Pd{{}f'}$, while if $\Pd{f}$ and $\Pd{{}f'}$ are tangent, there exist exactly two discs in $P$ that intersect both $\Pd{f}$ and $\Pd{{}f'}$.
For each two faces $f$ and $f'$ incident to $v$, the complement $\partial \Pp{v}\setminus \left(\Pd{{}f_1} \cup \Pd{{}f_2}\right)$ is either a single arc (if $\Pd{f}$ and $\Pd{{}f'}$ are tangent), or is equal to the union of two arcs (if $\Pd{f}$ and $\Pd{{}f'}$ are not tangent).

We claim that there exists an  function
$\psi_{m}(\cdot,\cdot): (0,\infty)^2 \to (0,2\pi],$
increasing in both coordinates,
 such that if $f$ and $f'$ are two distinct faces of $G$ incident to $v$, then each of the (one or two) arcs forming the complement $\partial \Pp{v}\setminus (\Pd{{}f_1} \cup \Pd{{}f_2})$  have length at least $\psi_{m}(r(f),r(f'))$. Indeed, let $r(f)$ and $r(f')$ be fixed, and suppose that one of the arcs forming the complement $\partial \Pp{v} \setminus \Pd{f} \cup \Pd{{}f'}$ is extremely small, with length $\eps$. Let the primal circles incident to $f$ be enumerated $v_1,\ldots,v_{\deg(f)}$, where $v_1=v$, $\Pp{{}v_2}$ is the primal circle that is tangent to $\Pp{v}$ and
intersects $\Pd{f}$ on the same side as the small arc, $\Pp{{}v_3}$ is the next primal circle  that is tangent to $\Pp{v_2}$ and intersects $\Pd{f}$, and so on. Since $\eps$ is small, $\Pp{{}v_2}$ must also be very small, as it does not intersect $\Pd{{}f'}$. Similarly, if $\eps$ is sufficiently small, $\Pp{{}v_3}$ must also be small, since it also does not intersect $\Pd{{}f'}$. See \cref{fig:ring}. Applying this argument recursively, we see that, if $\eps$ is sufficiently small, then the circles $\Pp{{}v_2},\ldots,\Pp{{}v_{\deg(f)}}$ are collectively too small to contain $\partial \Pd{f} \setminus \Pp{v}$ in their union, a contradiction. We write $\psi_m(r(f),r(f'))$ for the minimal $\eps$ that is not ruled out as impossible by this argument.

\begin{figure}[t]
\centering
\includegraphics[height=0.3\textwidth]{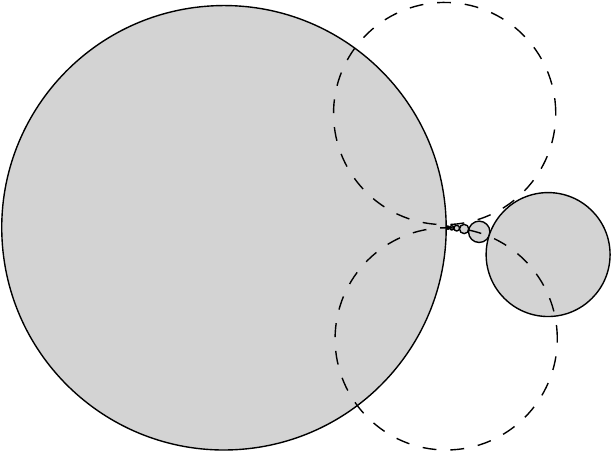}\hspace{1cm}\includegraphics[height=0.3\textwidth]{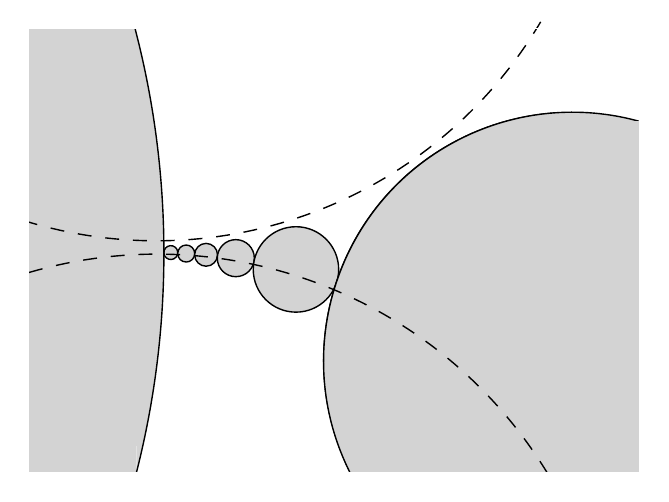}
\caption{Proof of the Double Ring Lemma. If two dual circles are close but do not touch, there must be many  primal circles  contained in the crevasse between them. This forces the two dual circles to each have large degree. The right-hand figure is a magnification of the left-hand figure.
 }
\label{fig:ring}
\end{figure}

Let the faces incident to the vertex $v$ be indexed  in clockwise order $f_1,\ldots,f_n$, where $n =\deg(v)$ and $f_1$ has maximal radius among the faces incident to $v$. For each face $f$ incident to $v$, the arc $\partial \Pp{v} \cap \Pd{f}$ has length $2\tan^{-1}(r(f))$.
 Since $\partial \Pp{v} = \cup_{i=1}^n (\partial \Pp{v} \cap \Pd{f_i})$, we deduce that $r(f_1)$ is bounded below by $\tan(\pi/n)$.
 By definition of $\psi_m$, we have that $r(f_k)$ satisfies
\begin{equation}\label{eq:ringproof}\psi_{m}\left(\tan(\pi/n),r(f_k)\right) \leq \psi_{m}\left(r(f_1),r(f_k)\right) \leq \sum_{i=2}^{k-1}2\tan^{-1}\left(r(f_i)\right)\end{equation}
for all $3 \leq k\leq n$. For each such $k$, \eqref{eq:ringproof} yields an implicit upper bound on $r(f_k)$ which converges to zero as $r(f_2)$ converges to zero.
This in turn yields a uniform lower bound on $r(f_2)$: if $r(f_2)$ were sufficiently small, the bound \eqref{eq:ringproof} would imply that $\sum_{k=2}^n 2\tan^{-1}(r(f_k))$ would be less than $\pi$, a contradiction (since we have trivially that $2\tan^{-1}(r(f_1)) \leq \pi$). We obtain uniform lower bounds on $r(f_k)$ for each $3\leq k \leq n$ by repeating the above argument inductively.
\end{proof}

\subsection{Good embeddings of planar graphs}

If $G$ is a plane graph and $(P,P^\dagger)$ is a double circle packing of $G$ in a domain $D \subseteq \C$, then
drawing straight lines between the centres of the circles in $P$ yields a proper embedding of $G$ in $D$ in which every edge is a straight line. We call such an embedding a \textbf{proper straight-line embedding of $G$} in $D$. Following \cite{ABGN14}, a proper straight-line embedding of a graph $G$ in  a domain $D \subseteq \C$ is said to be \textbf{$\eta$-good} if the following conditions are satisfied:
\begin{enumerate}\item \textbf{(No near-flat, flat, or reflexive angles.)} All internal angles of every face in the drawing are at most $\pi-\eta$. In particular, every face is convex.
\item \textbf{(Adjacent edges have comparable lengths.)} For every pair of edges $e,e'$ sharing a common endpoint, the ratio of the lengths of the straight lines corresponding to $e$ and $e'$ in the drawing is at most $\eta^{-1}$.
\end{enumerate}
The Ring Lemma (\cref{Thm:DCPRing}) has the following immediate corollary.
\begin{corollary}\label{Cor:DCPgood}
The proper straight-line embedding given by any double circle packing  of a plane graph $G$ with bounded degrees and bounded codegrees \tomm{in a domain $D \subseteq \C$} is $\eta$-good for some positive $\eta=\eta(\bM_G)$.
\end{corollary}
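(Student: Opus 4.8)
The plan is to deduce both defining properties of an $\eta$-good embedding directly from the Ring Lemma for double circle packings (\cref{Thm:DCPRing}), together with the elementary geometry of the perpendicularity condition in the definition of a double circle packing. Throughout, write $M = \bM_G$ for the bound on degrees and codegrees, so that $\deg(v) \le M$ and $\deg(f)\le M$ for all $v \in V$ and $f\in F$. We may assume that the carrier $D$ is bounded away from $\infty$, or else apply a Möbius transformation; since $\eta$-goodness only involves ratios of Euclidean lengths and internal angles, which are not Möbius-invariant, one must be slightly careful, but in fact the bound we prove is local to each vertex and uses only finitely many circles, so it suffices to establish it assuming the relevant circles do not contain $\infty$.

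First I would establish condition $(2)$, comparability of adjacent edge lengths. Let $e = (u,v)$ and $e' = (u,w)$ be two edges sharing the endpoint $u$. The length of the straight line representing $e$ in the embedding is $r(u) + r(v)$ (the two tangent primal circles meet at a point on the segment between their centres), and similarly $e'$ has length $r(u) + r(w)$. Hence
\[
\frac{r(u)+r(v)}{r(u)+r(w)} \le \frac{r(u) + r(v)}{r(u)} \le 1 + \frac{r(v)}{r(u)}.
\]
Now $v$ and $u$ are adjacent, so there is a face $f$ incident to both; by \cref{Thm:DCPRing} applied at $v$, we have $r(v)/r(f) \le k_{\deg(v),\, m_v} \le k_{M,M}$ where $m_v = \max_{g\perp v}\deg(g)$, and applied at $u$ we have $r(u)/r(f)\le k_{M,M}$ as well; thus $r(v)/r(u) = (r(v)/r(f))\cdot(r(f)/r(u))$. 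The second factor is not immediately bounded by the Ring Lemma in that direction, so instead I would argue symmetrically: $r(v)/r(u)$ is controlled because by the Ring Lemma applied at $u$ to any face $f'$ incident to the edge $e=(u,v)$, the circle $\Pp{v}$ tangent to $\Pp{u}$ and meeting $\Pd{f'}$ has $r(v) \le C(M) r(f') \le C(M)' r(u)$, where the last step again uses the Ring Lemma at $u$. Cleaning this up, one gets $r(v)/r(u)$ bounded above and below by constants depending only on $M$, giving condition $(2)$ with $\eta^{-1}$ any such constant.

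Next I would establish condition $(1)$, that every internal face angle is at most $\pi - \eta$. Fix a face $f$ of $G$ and a vertex $v$ on its boundary, with $u$ and $w$ the two neighbours of $v$ along $\partial f$. The internal angle of $f$ at $v$ is the angle $\angle(u,v,w)$ between the segments $z(v)z(u)$ and $z(v)z(w)$, measured on the side of $f$. The key point is the perpendicularity condition: $\partial\Pd{f}$ meets $\partial\Pp{v}$ at right angles, and $\Pd{f}$ intersects the two tangency arcs $\partial\Pp{v}\cap\Pp{u}$ (a point) appropriately — more precisely, the circle $\Pd{f}$ passes through the two tangency points of $\Pp{v}$ with $\Pp{u}$ and with $\Pp{w}$ (these are exactly the points where consecutive primal circles around $f$ touch), and crosses $\partial\Pp{v}$ orthogonally there. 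An elementary computation with the orthogonal circle $\Pd{f}$ of radius $r(f)$ shows that the two tangency points on $\partial\Pp{v}$ subtend, from the centre $z(v)$, an angle of $2\arctan(r(f)/r(v))$ — wait, rather $2\arctan(r(v)/r(f))$ depending on the convention; in any case this angle is bounded away from $0$ and $\pi$ because $r(f)/r(v)$ is bounded above and below by the Ring Lemma (using condition $(2)$-type estimates relating $r(f)$ to $r(v)$). The internal angle of $f$ at $v$ equals $\pi$ minus this subtended angle, so it is at most $\pi - \psi(M)$ for some $\psi(M)>0$; taking $\eta = \min(\psi(M), 1/C(M)')$ over the two conditions completes the proof.

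The main obstacle I expect is the geometric identification in the proof of condition $(1)$: pinning down exactly how the internal angle of the face at a vertex relates to the arc of $\partial\Pp{v}$ cut off by the perpendicular dual circle $\Pd{f}$, and checking this is a monotone function of the ratio $r(f)/r(v)$ that stays in a compact subinterval of $(0,\pi)$. This requires carefully using property $(3)$ of double circle packings (perpendicularity, and that $\Pd{f}$ passes through the consecutive tangency points on $\partial\Pp{v}$), and then invoking the Ring Lemma in both directions to bound $r(f)/r(v)$ — which in turn leans on the already-established comparability estimates. None of these steps is deep, but the bookkeeping of which Ring-Lemma inequality controls which ratio, and assembling them into two-sided bounds, is where the real care is needed; everything else is routine planar trigonometry.
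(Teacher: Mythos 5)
Your overall strategy --- deduce both defining conditions of $\eta$-goodness from the Ring Lemma for double circle packings, together with the elementary geometry of orthogonal circles --- is the intended one; the paper treats this corollary as immediate from \cref{Thm:DCPRing}. However, there is a genuine gap in how you obtain the ``other direction'' of the Ring Lemma, and that direction is precisely what both conditions require.

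The statement of \cref{Thm:DCPRing} only gives $r(v)/r(f) \le k_{\bM,\bM}$ for a vertex $v$ and an incident face $f$: it bounds the \emph{primal} radius by the \emph{dual} radius. In your argument for condition (2) you write that ``$r(f') \le C(M)' r(u)$, where the last step again uses the Ring Lemma at $u$'', but the Ring Lemma at $u$ gives $r(u) \le k\, r(f')$, which is the \emph{opposite} inequality; this step is simply wrong as stated. To bound $r(f)/r(v)$ from above you should observe that if $(P,P^\dagger)$ is a double circle packing of $G$, then $(P^\dagger,P)$ is a double circle packing of $G^\dagger$ (the three defining conditions are symmetric under swapping $P\leftrightarrow P^\dagger$ and $G\leftrightarrow G^\dagger$), and then apply \cref{Thm:DCPRing} with $G^\dagger$ in place of $G$. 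This yields $r(f)/r(v)\le k_{\deg(f),\,\max_{u\perp f}\deg(u)}\le k_{\bM,\bM}$. With both directions in hand, $r(u)\asymp r(f)\asymp r(v)$ for any edge $(u,v)$ and any face $f$ incident to it, and condition (2) follows as you intend.

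For condition (1), your geometric picture is right, but the details you leave unresolved in fact determine which direction of the Ring Lemma is needed. The circle $\Pd{f}$ is orthogonal to $\Pp{v}$ and meets $\partial\Pp{v}$ exactly at the two tangency points $\Pp{v}\cap\Pp{u}$ and $\Pp{v}\cap\Pp{w}$; since these tangency points lie on the embedded edges, the internal angle of $f$ at $v$ \emph{equals} the angle subtended at $z(v)$ by the arc of $\partial\Pp{v}$ inside $\Pd{f}$ (not $\pi$ minus it). A short law-of-cosines computation with orthogonal circles of radii $r(v)$ and $r(f)$ at centre distance $\sqrt{r(v)^2+r(f)^2}$ gives this angle as $2\arctan\bigl(r(f)/r(v)\bigr)$; a sanity check on the hexagonal packing, where $r(f)/r(v)=1/\sqrt3$, returns $\pi/3$ as it must. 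Thus the angle is at most $\pi-\eta$ iff $r(f)/r(v)\le\cot(\eta/2)$, which again is the \emph{dual} direction of the Ring Lemma. Once the duality observation is made explicit, both conditions close up and your outline becomes a complete proof. (Your opening remark about applying a M\"obius transformation is unnecessary: the hypothesis $D\subseteq\C$ already guarantees no circle of the packing contains $\infty$.)
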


\asaf{\tom{\noindent We remark that, in contrast, the embedding of $G$ obtained by circle packing the triangulation $T(G)$ formed by drawing a star inside each face of $G$ (and then erasing these added vertices), as done in \cref{Sec:mainproof}, does not necessarily yield a good embedding \tomm{of $G$.}}}

For the remainder of this section and in \cref{subsec:estimates,subsec:lowerboundproof,Subsec:dualexponents}, $G$ will be a fixed transient, polyhedral proper plane network with bounded codegrees and bounded local geometry, and $(P,P^\dagger)$ will be a double circle packing of $G$ in $\D$. We will write $\preceq$, $\succeq$ and $\asymp$ to denote inequalities or equalities that hold up to positive multiplicative constants depending only upon $\bM$.
We will also fix an edge $e=(x,y)$ of $G$ and, by applying a M\"obius transformation if necessary, normalise $(P,P^\dagger)$ by setting the centres $z(x)$ and $z(y)$ to be on the negative real axis and positive real axis respectively and setting the circles $\Pp{x}$ and $\Pp{y}$ to have the origin as their tangency point.

In \cite{ABGN14} and \cite{Chelkak}, several estimates are established that allow one to compare the random walk on a good embedding of a planar graph with Brownian motion.
The following estimate, proven for general good embeddings of proper plane graphs in \cite[Theorems 1.4 and 1.5]{AHNR15}, is of central importance to the proofs of the lower bounds in \cref{T:Wtailcp} and \cref{Thm:Wareacp}.
Recall that $\sigma(v)$ is defined to be $1-|z(v)|$.

\begin{thm}[Diffusive Time Estimate]\label{Thm:Time}
There exists a constant $C_{1}=C_{1}(\bM)\geq 1$ such that the following holds. For each vertex $v$ of $G$,
 let $\langle X_n \rangle_{n\geq0}$ be a random walk on $G$ started at $v$, let $r(v) \leq r \leq C_{1}^{-1}\sigma(v)$ and let $T_r$ be the first time $n$ that $|z(X_n)-z(v)|\geq r$. Then
\[\bE_v\sum_{n=0}^{T_r}r(X_n)^2 \asymp r^2.\]
\end{thm}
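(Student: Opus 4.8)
The plan is to prove the two inequalities $\bE_v \sum_{n=0}^{T_r} r(X_n)^2 \succeq r^2$ and $\bE_v \sum_{n=0}^{T_r} r(X_n)^2 \preceq r^2$ separately, in both cases exploiting the fact that the double circle packing gives an $\eta$-good embedding (\cref{Cor:DCPgood}) and comparing a single step of the random walk against the geometry of the embedding. The key local observation is that, because the embedding is $\eta$-good, the length of each edge $e'=(u,w)$ incident to $u$ is comparable to $r(u)$, so a single step of the walk from a vertex $u$ moves the Euclidean position $z(X_n)$ by a distance $\asymp r(X_n)$. Consequently the quantity $r(X_n)^2$ plays the role of the local "quadratic variation" of the process $z(X_n)$, and the sum $\sum_{n=0}^{T_r} r(X_n)^2$ should be comparable to the expected quadratic variation accumulated by the planar process $z(X_n)$ before it exits a Euclidean ball of radius $r$ around $z(v)$. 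Since $z(X_n)$ is approximately a (time-changed) martingale on the relevant scale — here we need $r \leq C_1^{-1}\sigma(v)$ so that the exit ball stays well inside $\D$ and the walk does not feel the boundary — one expects its displacement squared to be $\asymp r^2$ at the exit time by the optional stopping / orthogonality argument for martingales, and this is exactly the content of \cite[Theorems 1.4 and 1.5]{AHNR15}, which I would invoke essentially as a black box.

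For the lower bound $\bE_v \sum_{n=0}^{T_r} r(X_n)^2 \succeq r^2$, the first step is to observe that at time $T_r$ we have $|z(X_{T_r}) - z(v)| \geq r$, so the total Euclidean displacement of the path up to $T_r$ is at least $r$. Writing this displacement as a telescoping sum $z(X_{T_r}) - z(v) = \sum_{n=0}^{T_r - 1}(z(X_{n+1}) - z(X_n))$ and using that $|z(X_{n+1}) - z(X_n)| \preceq r(X_n)$ for each step (by goodness of the embedding), a Cauchy–Schwarz argument would bound $r \leq |z(X_{T_r}) - z(v)| \preceq \sum_{n=0}^{T_r-1} r(X_n)$, but that only gives a bound on the sum of radii, not of squared radii. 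To get the squared version one should instead argue via the second moment of the displacement: decompose $|z(X_{T_r}) - z(v)|^2$ into the sum of squared increments plus cross terms, control the cross terms using that $z(X_n)$ is nearly a martingale (so the expected cross terms nearly vanish — this is where one uses that we stay away from $\partial \D$ and that harmonic measure on the good embedding is comparable to that of Brownian motion, again citing \cite{AHNR15,ABGN14}), and conclude $r^2 \leq \bE|z(X_{T_r}) - z(v)|^2 \asymp \bE \sum_{n=0}^{T_r-1}|z(X_{n+1}) - z(X_n)|^2 \asymp \bE \sum_{n=0}^{T_r} r(X_n)^2$.

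For the upper bound $\bE_v \sum_{n=0}^{T_r} r(X_n)^2 \preceq r^2$, the cleanest route is again to use near-martingale control of $|z(X_{n \wedge T_r}) - z(v)|^2$ minus its compensator: one shows that the process $M_n := |z(X_{n \wedge T_r}) - z(v)|^2 - c\sum_{k=0}^{(n \wedge T_r)-1} r(X_k)^2$ is, up to small errors coming from the deviation of the embedding from exact conformality, a supermartingale for an appropriate constant $c \asymp 1$; optional stopping then gives $c\,\bE\sum_{k=0}^{T_r} r(X_k)^2 \leq \bE|z(X_{T_r}) - z(v)|^2 + (\text{error})$, and since the embedding is good the walk cannot overshoot the radius-$r$ sphere by more than one step of size $\preceq r$, so $|z(X_{T_r}) - z(v)| \leq r + O(r) \preceq r$, yielding $\bE\sum_{k=0}^{T_r} r(X_k)^2 \preceq r^2$. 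In practice, rather than re-deriving these martingale estimates I would simply cite \cite[Theorems 1.4 and 1.5]{AHNR15}, which establish precisely the diffusive time estimate for arbitrary good embeddings, and note that \cref{Cor:DCPgood} guarantees that our double circle packing embedding is $\eta$-good with $\eta = \eta(\bM)$, together with the fact (from the Ring Lemma, \cref{Thm:DCPRing}, and the bounded geometry of $G$) that edge lengths are comparable to $r(\cdot)$. The main obstacle — the one genuine technical point — is making precise the comparison between "sum of $r(X_n)^2$" and "Euclidean quadratic variation of $z(X_n)$", i.e.\ verifying that a single random walk step has displacement comparable to the local radius and that the relevant cross terms are controlled; but all of this is subsumed by the cited results of \cite{AHNR15} and \cite{ABGN14} once goodness of the embedding is in hand, so the proof here is essentially a matter of correctly packaging those inputs.
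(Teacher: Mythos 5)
Your final answer coincides exactly with the paper's treatment: the paper states that this theorem is ``proven for general good embeddings of proper plane graphs in \cite[Theorems 1.4 and 1.5]{AHNR15},'' and \cref{Cor:DCPgood} supplies the required $\eta$-goodness of the double circle packing, so the theorem is cited rather than reproved. One small caveat on your heuristic sketch: for circle-packing embeddings the process $z(X_n)$ is \emph{not} a near-martingale (the circle centres do not define a harmonic function of the vertex), so the cross-term control genuinely requires the harmonic-measure comparisons developed in \cite{ABGN14,AHNR15} rather than a direct near-martingale or supermartingale decomposition — but since you ultimately invoke those theorems as a black box, this does not affect the correctness of your proposal.
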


Murugan [personal communication] has shown that the constant $C_1$ above can in fact be taken to be $1$.

\begin{thm}[Cone Estimate]\label{Thm:Cone}
Let $\eta=\eta(\bM)>0$ be the constant from \cref{Cor:DCPgood}. There exists a positive constant $q_1=q_1(\bM)$ such that the following holds. For each vertex $v$ of $G$, let $\langle X_n \rangle_{n\geq0}$ be a random walk on $G$ started at $v$, let $0\leq r \leq \sigma(v)$, and let $T_r$ be the first time $n$ that $|z(X_n)-z(v)|\geq r$. Then for any interval $I \subset \R/(2\pi \Z)$ of length at least $\pi - \eta$ we have
\[ \bP_v\left( \arg\left(z(X_{T_r}) - z(v)\right) \in I \right) \geq q_1   . \]
\end{thm}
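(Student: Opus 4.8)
The plan is to deduce the Cone Estimate from the Diffusive Time Estimate (\cref{Thm:Time}) together with the goodness of the circle-packing embedding (\cref{Cor:DCPgood}) and a standard comparison with Brownian motion. First I would reduce to the case $r = \sigma(v)$ is not needed; rather, since the statement must hold for \emph{all} $0 \le r \le \sigma(v)$ and the constant $q_1$ is uniform, I would fix $v$ and $r$ and set $\rho = \min\{r, C_1^{-1}\sigma(v)\}$, where $C_1$ is the constant of \cref{Thm:Time}. The key point is that it suffices to prove the cone estimate at scale $\rho$: if $T_\rho$ denotes the exit time of the disc of radius $\rho$ about $z(v)$, then on the event that $z(X_{T_\rho}) - z(v)$ has argument in a slightly enlarged interval one can restart the walk and iterate a bounded (i.e.\ $O(1)$, depending on $C_1$) number of times to reach the larger scale $r$, losing only a multiplicative constant in the probability. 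Actually the cleanest route is to prove directly: for $\rho \le C_1^{-1}\sigma(v)$ the walk exits the disc of radius $\rho$ in each fixed angular sector of opening $\ge \pi - \eta$ with probability bounded below, and then note that exiting a larger disc $B_{z(v)}(r)$ through a given sector $I$ contains the event of first exiting $B_{z(v)}(\rho)$ through a slightly narrower cone pointing into $I$ and thereafter — by the strong Markov property at successive scale-doublings — continuing to move into that cone; since each doubling succeeds with probability $\succeq 1$ and only $O(\log(r/\rho))$ — no, that is not bounded. Let me instead use the harmonic-measure/martingale approach described below, which handles all scales at once.

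The core argument is as follows. Consider the stopped walk $Y_n = z(X_{n \wedge T_r})$, which is a bounded martingale-like process: because the embedding is a straight-line embedding of $G$ and $X$ is the simple random walk, $z(X_n)$ is \emph{not} a martingale in general (the walk is reversible but the embedding need not be harmonic), but the goodness of the embedding gives uniform control. The right tool here is the coupling/comparison of \cite{ABGN14,AHNR15}: the exit measure of the walk from $B_{z(v)}(r)$ is mutually absolutely continuous with the exit measure of Brownian motion from a comparable domain, with Radon–Nikodym derivative bounded above and below by constants depending only on $\bM$ — but in fact \cref{Thm:Time}, the Diffusive Time Estimate, is precisely the input that \cite{AHNR15} use to establish such comparisons. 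So the plan is: (i) use \cref{Thm:Time} to control the quadratic variation $\sum r(X_n)^2$ accumulated up to $T_r$, which is comparable to $r^2$; (ii) combine this with the fact that the one-step displacements $|z(X_{n+1}) - z(X_n)|$ are $\asymp r(X_n)$ (adjacent edges have comparable lengths, and $r(X_n)$ is comparable to the edge lengths at $X_n$ by the Ring Lemma), so that the process $z(X_n)$ behaves diffusively; (iii) conclude that $z(X_{T_r}) - z(v)$, after rescaling by $r$, is comparable in distribution to a point whose law has a density bounded below on the unit circle, giving the cone estimate for \emph{any} arc of positive length — in particular length $\ge \pi - \eta$.

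To make (iii) rigorous without invoking a scaling-limit theorem, I would argue by a hitting-probability estimate. Fix the sector $S_I = \{z(v) + te^{i\theta} : t > 0,\ \theta \in I\}$ with $I$ of length $\ge \pi - \eta$; its opening exceeds $\pi - \eta$, so its complement within the disc $B_{z(v)}(r)$ is contained in a sector of opening $\le \pi + \eta$, which — crucially — is less than $2\pi$ and, after a suitable choice, one can fit a cone of opening $> \pi$ pointing away from the "bad" direction. For Brownian motion, the probability of exiting a disc through an arc of length $\ell$ is exactly $\ell / 2\pi$ regardless of starting point at the centre, and for a starting point not at the centre it is still bounded below as long as the starting point is not too close to the complementary arc — and here we start at the exact centre $z(v)$ (up to an error of one circle radius $r(v) \le r$, hence negligible relative to $r$). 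The comparison of harmonic measures from \cite{AHNR15} (whose hypotheses are met by \cref{Cor:DCPgood} and \cref{Thm:Time}) then transfers this lower bound, with a constant depending only on $\eta$ and $\bM$, to the random walk. I expect the main obstacle to be the bookkeeping near the two endpoints: the "disc of radius $r$" in the graph is only approximately a Euclidean disc (it is $\{|z(X_n) - z(v)| \ge r\}$ stopped at the first such $n$, so the walk may overshoot by up to $\asymp r(X_{T_r})$, which by \cref{Thm:Time} and the Ring Lemma is $\preceq r$ only if $r \preceq \sigma(v)$ — and when $r$ is comparable to $\sigma(v)$ the overshoot could in principle be as large as $r$ itself, pushing the exit point near the boundary $\partial \D$ where the embedding degenerates). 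Handling the regime $C_1^{-1}\sigma(v) < r \le \sigma(v)$ is therefore the delicate part: there I would first run the walk to exit $B_{z(v)}(C_1^{-1}\sigma(v))$, establish the cone estimate at that scale by the argument above (where $r \preceq \sigma(v)$ with room to spare), and then observe that from the new location the walk is still at Euclidean distance $\asymp \sigma(v) \asymp \sigma(X_{T})$ from $\partial \D$, so that with probability $\succeq 1$ it reaches $\partial B_{z(v)}(r)$ before returning deep inside, and by a further cone estimate from the new centre — whose opening can be taken large because we only need to preserve membership in a fixed arc $I$ of opening $\ge \pi - \eta$ — it does so through $I$; a single such step suffices because $r/\sigma(v) = O(1)$ here, so no unbounded iteration is needed.
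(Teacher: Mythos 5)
The paper does not give a proof of the Cone Estimate, so there is no proof of record here for your attempt to be compared against. It is one of several random-walk-to-Brownian-motion comparison estimates that the authors import as black boxes: the paragraph preceding \cref{Thm:Time} explains that such estimates were established in \cite{ABGN14} and \cite{Chelkak}, the Diffusive Time Estimate is explicitly attributed to \cite{AHNR15}, and the Cone Estimate is stated immediately afterwards without proof, evidently as one of the cited imports. Your write-up is therefore best read as a sketch of how one might supply a proof from scratch.

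As such a sketch, it has two genuine gaps. First, the harmonic-measure comparison you invoke (``the comparison of harmonic measures from \cite{AHNR15}\ldots then transfers this lower bound'') is not what that reference is cited for; the paper cites Theorems 1.4 and 1.5 of \cite{AHNR15} only for \cref{Thm:Time}, which controls the accumulated quadratic variation, not the exit distribution. Passing from a diffusive time estimate to a pointwise lower bound on harmonic measure on a sub-arc requires a genuinely additional ingredient (for instance a submartingale or Beurling-type estimate exploiting the no-flat-angles property of $\eta$-good embeddings), and supplying that ingredient is essentially the whole content of the theorem. Second, your patching of the regime $C_1^{-1}\sigma(v) < r \le \sigma(v)$ is circular: the ``further cone estimate from the new centre'' is itself an instance of the statement being proven, applied at a scale $r-\rho$ that is again comparable to the distance from the new centre to $\partial\D$ --- exactly the delicate regime you were trying to handle --- so it does not terminate in ``a single step.'' Moreover, the sector $I$ has apex at $z(v)$, not at the new centre; after the first stage the walker sits in a sub-cone of opening strictly less than $\pi-\eta$, and the arc $I$ seen from that new location need not subtend an angle of at least $\pi-\eta$, so the theorem as stated does not even apply to the second stage. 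A more robust route is to argue directly from $\eta$-goodness: because no internal angle exceeds $\pi-\eta$, at every vertex some edge points within $(\pi-\eta)/2$ of any prescribed direction and is taken with probability at least of order $\bM^{-2}$; combining this uniform one-step drift with \cref{Thm:Time} (to control how long the walk can linger) and an optional-stopping argument on a suitable projection functional gives the exit-through-a-sector lower bound at one stroke, without needing an intermediate harmonic-measure comparison.
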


We also apply the following result of Benjamini and Schramm \cite[Lemma 5.3]{BS96a}; their proof was given for simple triangulations but extends immediately to our setting.

\begin{thm}[Convergence Estimate \cite{BS96a}]\label{thm:convergenceestimate}
For every vertex $v$ of $G$, we have that
\[\bP_v\Bigl( \left|z(X_n)-z(v)\right| \geq t \sigma(v) \text{ for some }n\geq0\Bigr) \preceq \frac{1}{\log t}.\]
\end{thm}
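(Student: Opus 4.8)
The plan is to follow the argument of Benjamini and Schramm \cite{BS96a}, adapted to the double circle packing via \cref{Cor:DCPgood} and \cref{Thm:DCPRing}. First I would dispose of trivial cases: since the carrier of $P$ is $\D$, the event is empty once $t\sigma(v)>2$, and for $t$ bounded the estimate holds with a large enough implied constant; so we may assume $\sigma:=\sigma(v)$ is small and $t$ is large. Let $\xi\in\partial\D$ be the boundary point nearest to $z(v)$, so $|z(v)-\xi|=\sigma$. By the triangle inequality the event in question is contained in the event that the walk ever reaches Euclidean distance at least $(t-1)\sigma$ from $\xi$; writing $A=\{u\in V:|z(u)-\xi|\ge (t-1)\sigma\}$, it therefore suffices to bound $\bP_v(\tau_A<\infty)$.

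The heart of the argument is a radial estimate relative to $\xi$. Fix a constant factor $\lambda=\lambda(\bM)>1$ large enough (in terms of the $\eta$ of \cref{Cor:DCPgood}) that a single step of the walk cannot skip an entire shell, using that the step length from a vertex $X_n$ is $\preceq r(X_n)\le\sigma(X_n)\le|z(X_n)-\xi|$ together with $\eta$-goodness. For $-1\le j\le m$, where $m:=\lceil\log_\lambda(t-1)\rceil\asymp\log t$, let $\Gamma_j$ be the circular arc $\{w\in\D:|w-\xi|=\lambda^{\,j}\sigma\}$. Then to reach $A$ the walk must cross $\Gamma_0,\Gamma_1,\dots,\Gamma_m$ in succession, each time from the $\xi$-side to the far side, and it starts essentially on $\Gamma_0$. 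One then establishes two uniform inputs:
\begin{enumerate}
\item[(i)] (\emph{no outward bias}) there is $q=q(\bM)>0$ such that whenever the walk sits on $\Gamma_j$ with $0\le j\le m$, the probability that it reaches $\Gamma_{j+1}$ before returning to $\Gamma_{j-1}$ is at most $1-q$;
\item[(ii)] (\emph{trapping near $\xi$}) there is $c=c(\bM)>0$ such that whenever the walk reaches $\Gamma_{-1}$, with probability at least $c$ it never returns to $\Gamma_0$ (it converges to a point of $\partial\D$ within distance $\sigma$ of $\xi$).
\end{enumerate}
Granting (i) and (ii), comparing the ``scale'' of the walk to a nearest-neighbour walk on $\{-1,0,\dots,m\}$ with absorption at $-1$ and no upward drift, a gambler's-ruin computation gives $\bP_v(\tau_A<\infty)\preceq 1/m\asymp 1/\log t$.

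Input (i) is where the Ring Lemma (\cref{Thm:DCPRing}) and a length--area estimate enter: any edge crossing $\Gamma_j$ joins disjoint circles of radius comparable to their distance from $\Gamma_j$ lying in an $O(\lambda^{\,j}\sigma)$-neighbourhood of an arc of length $O(\lambda^{\,j}\sigma)$, so exactly as in the proof of \cref{lem: flow} the relevant conductances at scale $\lambda^{\,j}\sigma$ are $\preceq 1$; combined with $\eta$-goodness and a Nash--Williams/parallel-law comparison of $\Reff$ across the shell this yields the definite inward-return probability $q$. Input (ii) is the delicate point, and the reason one obtains only a logarithmic (rather than a polynomial) rate: it requires controlling the walk inside the ``funnel'' $B_\xi(\sigma)\cap\D$, whose packing may degenerate near $\xi$ — the radii can decay much faster than the distance to $\partial\D$, cf.\ \cref{fig:layers} — so the funnel is only barely transient. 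The key is that even so, the effective conductance from any vertex of the funnel out to the arc $\Gamma_0$ is bounded above by a constant depending only on $\bM$ (again a consequence of the disjointness of the packing via a length--area bound), so that a definite fraction of the walk's mass escapes to graph-infinity inside the funnel before returning to $\Gamma_0$. The remaining work — the clipping of the arcs $\Gamma_j$ by $\partial\D$, the matching up of the ``near $\xi$'' and ``macroscopic'' regimes, and the assembly of (i) and (ii) into the gambler's-ruin comparison — is routine bookkeeping. I expect establishing (ii) uniformly in $v$ and in the graph to be the main obstacle.
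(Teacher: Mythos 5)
Note first that the paper does not actually prove this statement: it cites \cite[Lemma 5.3]{BS96a} and remarks only that the proof given there for bounded-degree simple triangulations extends immediately to the present setting once the Ring Lemma for double circle packings (\cref{Thm:DCPRing}) is in hand. There is therefore no proof in the paper to compare against, and your sketch must be judged as an attempt to reconstruct the Benjamini--Schramm argument from scratch.

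As written, the assembly of (i) and (ii) into the $O(1/m)$ bound has a genuine gap. The gambler's-ruin comparison you invoke --- a nearest-neighbour walk on $\{-1,\dots,m\}$ absorbed at $-1$ with \emph{no upward drift} --- yields a hitting probability of $O(1/m)$ for the top endpoint only when the upward step probability is at most $1/2$. Your (i) asserts only that the probability of reaching $\Gamma_{j+1}$ before $\Gamma_{j-1}$ is at most $1-q$ for some $q=q(\bM)>0$; when $q<1/2$ this permits a persistent outward drift, under which the probability of ever reaching $\Gamma_m$ stays bounded away from zero as $m\to\infty$ rather than decaying like $1/m$. The length--area/Nash--Williams input you describe controls the \emph{total conductance} of each shell (hence $\Reff\succeq 1$ across it), but says nothing about the relative magnitudes of the inward versus outward crossing resistances, and it is the latter comparison that fixes the step probability of the induced shell walk; nothing in \cref{Cor:DCPgood} or \cref{Thm:DCPRing} rules out a shell being markedly easier to cross outward than inward. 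Input (ii) is also not established by the argument you offer: an upper bound on $\Ceff(\text{funnel}\leftrightarrow\Gamma_0)$ does not yield a lower bound on the probability of never returning to $\Gamma_0$ --- for that one also needs a uniform \emph{lower} bound on $\Ceff(\text{funnel}\leftrightarrow\infty)$ from inside the funnel, which is a uniform-transience statement for the funnel and is precisely what is unavailable in this generality (as you yourself observe, the radii inside the funnel can decay much faster than the distance to $\partial\D$, cf.\ \cref{fig:layers}; compare also \cref{Prop:radii}, which makes uniform transience an extra hypothesis in the paper rather than a free consequence of the Ring Lemma). You correctly flag (ii) as the crux, but the sketch does not close it. The robust formulation works entirely in resistance terms via $\bP_v(\tau_A<\infty)=\Reff(v\leftrightarrow A\cup\{\infty\})/\Reff(v\leftrightarrow A)$, with the denominator handled by Nash--Williams on the shells exactly as in your (i); the numerator is what must be bounded, and it is this bound --- not a pointwise funnel-escape estimate --- that the Benjamini--Schramm argument supplies.
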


We remark that \asaf{the logarithmic decay in} \cref{thm:convergenceestimate} is not sharp. Sharp \asaf{polynomial} estimates of the same quantity have been obtained by Chelkak \cite[Corollary 7.9]{Chelkak}.



\subsection{Preliminary estimates}\label{subsec:estimates}

For each vertex $v$ of $G$, let $a_{\bbH}(v)$ denote the hyperbolic area of $\Pp{v}$ (recall that by the uniqueness of \cref{dblCP1} this quantity does not depend on the choice of packing). Consider a M\"obius transformation of the unit disc taking $z(v)$ to the origin. Then by the Ring Lemma (\cref{Thm:DCPRing}), the Euclidean radius of $v$ must be bounded by a constant $c=c(\bM)<1$ and we deduce that the hyperbolic radii of the discs in $P$ are bounded from above. Thus,
\begin{equation}\label{eq:hypeuclcomp}\ah(v) \asymp r_\bbH(v)^2 \asymp \sigma(v)^{-2}r(v)^2\end{equation}
for every vertex $v$ of $G$. 

Recall that given a set of vertices $B$ we write $\tau_B$ for the first time the random walk visits $B$, letting $\tau_B=\infty$  if no such time exists. In the following section, we will wish to estimate sums of the form
\begin{equation}
\sum_{u \in A}\ah(u)\bP_{u}(\tau_B<\infty)\label{eq:sumform}
\end{equation}
where $A$ and $B$ are subsets of $V$.
We begin with a useful preliminary estimate. For $r\in (0,1]$ and a vertex $v$ let $\fH(v,r)$ be the half-plane containing $z(v)$ whose boundary is the unique straight line with distance $r\sigma(v)$ to $z(v)$ that is orthogonal to the line connecting $z(v)$ to the origin.

\begin{lem}[Remain in half-plane estimate] \label{lem:ballandcone} For any $r\in(0,1]$ there exists a positive constant $q_2=q_2(\bM,r)$, that is increasing in $r$, such that for any vertex $v$ the probability of the random walk starting at $v$ to remain in $\fH(v,r)$ forever is at least $q_2$. That is,
\begin{equation}\label{eq:coneline}
 \bP_v\big( z(X_n) \in \fH(v,r) \text{ for all } n \big ) \geq q_2.
 \end{equation}
\end{lem}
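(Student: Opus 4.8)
The plan is to work in the normalised picture obtained by rotating so that the radius of $\D$ through $z(v)$ points along the positive real axis; then $z(v)=1-\sigma(v)$, the bounding line of $\fH(v,r)$ is $L=\{w:\Re w = 1-(1+r)\sigma(v)\}$, and $\fH(v,r)=\{w:\Re w > 1-(1+r)\sigma(v)\}$. The single geometric fact that drives everything is that $\overline{\fH(v,r)}$ meets $\partial\D$ exactly in the arc about $\xi:=z(v)/|z(v)|$ of Euclidean diameter $\asymp\sqrt{(1+r)\sigma(v)}$ (of definite angular size when $\sigma(v)\asymp 1$), which is much larger than $\sigma(v)$; since the random walk on a good embedding converges to a point of $\partial\D$ almost surely, remaining in $\fH(v,r)$ for all time is the same as never crossing $L$ together with converging into this arc.

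I would prove the bound in two phases. \emph{Descent.} The point $z(v)$ sits at distance only $r\sigma(v)$ from $L$, so we first push the walk ``towards $\xi$'' by iterating the Cone Estimate (\cref{Thm:Cone}) over $\asymp\log(1/r)$ scales: at the $j$-th step, from the current vertex $w_{j-1}$, apply the Cone Estimate with radius a fixed fraction of $\operatorname{dist}(z(w_{j-1}),L)$ and the length-$(\pi-\eta)$ arc of directions $\{|\arg|<(\pi-\eta)/2\}$ aimed at $\xi$. With probability at least $q_1(\bM)$ this both confines the excursion to a ball disjoint from $L$ and increases $\operatorname{dist}(\cdot,L)$ by the fixed factor $1+\sin(\eta/2)$, while the Ring Lemma (\cref{Thm:DCPRing}), together with the fact that all circles lie inside $\D$, keeps $\sigma(w_j)\asymp\sigma(v)$ along the way. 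After $\asymp\log(1/r)$ consecutive successes --- an event of probability $\succeq q_1^{\,C(\bM)\log(1/r)}\asymp r^{\alpha(\bM)}$, increasing in $r$ --- the walk reaches a vertex $w$ with $\sigma(w)\asymp\sigma(v)$ and $\operatorname{dist}(z(w),L)\asymp\sigma(w)$, having stayed inside $\fH(v,r)$. \emph{Gambler's ruin.} It then remains to bound below, by a constant $q(\bM)>0$, the probability that the walk started from such a generic $w$ converges to $\partial\D$ (necessarily into the arc around $\xi$, which one checks using \cref{thm:convergenceestimate}) before ever crossing $L$; since $z(w)$ is at comparable distances $\asymp\sigma(v)$ from $L$ and from $\partial\D$, this is a gambler's-ruin estimate for the thin strip bounded by $L$ and $\partial\D$, which follows from the comparison between the walk and planar Brownian motion furnished by the good-embedding estimates of \cite{ABGN14,Chelkak,AHNR15}. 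Multiplying the two probabilities gives $q_2\succeq r^{\alpha(\bM)}$, which is positive and increasing in $r$.

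The hard part is the gambler's-ruin phase. The Cone Estimate controls only the direction of a single exit point, so it prevents the real part of $z(X_n)$ from drifting too far to the left but gives no matching control on the right; when $q_1(\bM)<\tfrac12$ this is not enough to exclude an exponentially small crossing probability, and one genuinely needs the sharper (approximately harmonic, two-sided) comparison with Brownian motion to obtain a crossing bound that is uniform in $\sigma(v)$. The descent phase, by contrast, is routine once one keeps track of the overshoot at each exit (bounded because adjacent circles have comparable radii), of the relation $\sigma(w)\asymp\sigma(v)$ along the path, and of the $O(\sigma(v))$ distortion coming from the curvature of $\partial\D$ at scale $\sigma(v)$ --- none of which eats into the constant margin $\sin(\eta/2)>0$ supplied by the strictness of the cone.
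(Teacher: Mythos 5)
Your two-phase plan diverges from the paper's proof at precisely the point where your argument has a genuine gap, namely the gambler's-ruin phase, which you yourself flag as ``the hard part.'' The paper never needs such an estimate. Whereas you stop the descent once $d_\C(\cdot,L)\asymp\sigma(v)\asymp\sigma(\cdot)$ and then require a two-sided, harmonicity-type comparison with Brownian motion (not established in the paper, and not implied by the Cone Estimate or the one-sided logarithmic Convergence Estimate, as you correctly observe), the paper instead prolongs the descent for $n_0 = 2\lceil 1/\delta^2\rceil$ steps with $\delta=\min\{r(2t_0)^{-1},\sin(\eta/2)\}$, taking the radius at each step to be half the distance from the \emph{current} vertex to $\partial\fH(v,r)\cup\partial\D$. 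The crucial observation is that each successful cone step simultaneously decreases $\sigma$ (inequality \eqref{eq:pushclosetobdry}) and increases $d_\C(\cdot,\partial\fH(v,r))$ (inequality \eqref{eq:pushfarfromhalfplane}), each by a definite fraction $\sin(\eta/2)/2$ of the step size. Consequently, after $n_0$ successes one is forced to have $\sigma(X_{T_{n_0}})\leq\delta\sigma(v)\leq r(2t_0)^{-1}\sigma(v)$ while still $d_\C(X_{T_{n_0}},\partial\fH(v,r))\geq r\sigma(v)$, i.e.\ the walk is now $2t_0$ times farther from $L$ than from $\partial\D$. At that point the crude \cref{thm:convergenceestimate}, applied with its threshold $t_0$, gives probability $\geq 1/2$ that the walk never travels distance exceeding $t_0\sigma(X_{T_{n_0}})\leq r\sigma(v)/2 < d_\C(X_{T_{n_0}},\partial\fH(v,r))$ from $z(X_{T_{n_0}})$, hence remains in $\fH(v,r)$ forever. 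No gambler's ruin, no two-sided strip estimate, no control over which boundary arc the walk converges to. The cost is the weaker constant $q_2=q_1^{n_0}/2\asymp q_1^{C/r^2}$ rather than your claimed $r^{\alpha}$, but that is all the lemma asks for.

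There is also a secondary issue in your descent phase: since the cone is aimed radially, each successful step has a definite radially outward component, so $\sigma$ genuinely decreases; over $\asymp\log(1/r)$ geometrically growing steps that accumulate total displacement $\asymp\sigma(v)$, the claim $\sigma(w_j)\asymp\sigma(v)$ ``along the way'' is not justified (the Ring Lemma only controls adjacent circles, not a long chain of exits). Similarly, choosing the step radius as a fixed fraction of $\operatorname{dist}(\cdot,L)$ alone risks exceeding $\sigma$ of the current vertex, outside the range where \cref{Thm:Cone} applies; the radius must be taken relative to the distance to $\partial\fH(v,r)\cup\partial\D$, as the paper does. In the paper's argument the decrease of $\sigma$ is a feature, not a distortion to be controlled: the walker is intentionally driven towards $\partial\D$ so that the Convergence Estimate can finish.
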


\begin{proof} 
This is an estimate similar to ones made in \cite{ABGN14}. We will use \cref{Thm:Cone} to ``push'' the walker deep inside the half plane $\fH(v,r)$ (the number of times we apply \cref{Thm:Cone} will only depend on $r$) and once it is close enough to the boundary \cref{thm:convergenceestimate} will ensures that it stays close forever with positive probability. We now make this precise. For each vertex $u$, let $\fC(u)$ be the cone
\[ \fC(u) =  \big  \{ z \in \C : |\arg z - \arg z(u)| \leq \pi/2 - \eta/2 \big \} \, . \]

Define numbers $\langle \rho_i\rangle_{i\geq0}$ and stopping times $\langle T_i\rangle_{i\geq0}$ by putting $T_0=0$ and $\rho_0=r \sigma(v)$ and recursively setting for $i \geq 1$
\[T_i = \min\left\{n \geq 0: |z(X_n) - z(X_{T_{i-1}})|\geq \rho_{i-1}/2\right\} \, ,\]
and setting $\rho_i$ to be the distance between $z(X_{T_i})$ and $\partial \fH(v,r) \cup \partial \D$.
Since the closed ball of radius $\rho_{i-1}/2$ around $z(X_{T_{i-1}})$ is contained in $\D$, it has only finitely many vertices in it and therefore $T_i < \infty$ almost surely for all $i \geq 1$.

Denote by $\sA_i$ the event that $z(X_{T_{i}}) \in \fC(X_{T_{i-1}})$,
or in words, that the random walk starting from $X_{T_{i-1}}$ exits the ball of radius $\rho_{i-1}/2$ around $z(X_{T_{i-1}})$ inside the cone that is parallel to $\fC(v)$ and is centered at $z(X_{T_{i-1}})$. By basic trigonometry this event implies that
\begin{equation}\label{eq:pushclosetobdry}\sigma(X_{T_{i}}) \leq \sigma(X_{T_{i-1}}) - \rho_{i-1} \sin(\eta/2)/2 \, ,\end{equation}
and
\begin{equation}\label{eq:pushfarfromhalfplane} d_\C(X_{T_{i}},\partial \fH(v,r))\geq  d_\C(X_{T_{i-1}},\partial \fH(v,r)) + \rho_{i-1} \sin(\eta/2)/2. \end{equation}
In particular, the distance between $z(X_{T_i})$ and $\partial \D$ is decreasing in $i$ and the distance between $z(X_{T_i})$ and $\partial \fH(v,r)$ is increasing in $i$.

By  \cref{thm:convergenceestimate} there exists $t_0=t_0(\bM)\geq 1$ such that we have
\[\bP_u\left( \left|z(X_n)-z(u)\right| \geq t_0 \sigma(u) \text{ for some }n\geq0\right) \leq {1 \over 2} \, ,\]
for every vertex $u$. Set $\delta=\min \{r (2t_0)^{-1}, \sin(\eta/2)\}$ and put $n_0 = 2\lceil 1/\delta^2 \rceil$. We first note that by \eqref{eq:pushfarfromhalfplane} the event $\cap_{i=0}^{n_0} \sA_i$ implies that $z(X_n) \in \fH(v,r)$ for $0 \leq n \leq T_{n_0}$. Next we have that if $X_{T_i}$ has distance at least $\delta\sigma(v)$ from $\partial \fH(v,r) \cup \partial \D$, then $\rho_i\sin(\eta/2)\geq \delta^2\sigma(v)$. Therefore, if the event $\cap_{i=0}^{n_0} \sA_i$ occurs, then $X_{T_{n_0}}$ has distance at most $\delta\sigma(v)$ from $\partial \D$ (otherwise the contradictory assertion that $\sigma(X_{T_{n_0}}) \leq 0$ follows by \eqref{eq:pushclosetobdry}) and has distance at least $r\sigma(v)$ from $\partial \fH(v,r)$ by \eqref{eq:pushfarfromhalfplane}. That is,
$$ \sigma(X_{T_{n_0}}) \leq r(2t_0)^{-1} \sigma(v) \qquad \hbox{and} \qquad d_\C(X_{T_{n_0}},\partial \fH(v,r)) \geq r \sigma(v) \, .$$
Therefore, by the definition of $t_0$ and the strong Markov property, conditioned on the event $\cap_{i=0}^{n_0} \sA_i$ we have that $|z(X_n) - z(X_{T_{n_0}})|\leq r \sigma(v)/2$ for all $n \geq T_{n_0}$ with probability at least $1/2$. Thus, we deduce that
$$ \bP_v(z(X_n) \in \fH(v,r) \hbox{ for all } n \geq T_{n_0} \mid \cap_{i=0}^{n_0} \sA_i) \geq {1 \over 2} \, .$$
\cref{Thm:Cone} and the strong Markov property imply that $\bP_v(\sA_i \mid X_{T_{i-1}}) \geq q_1$, and hence that
$$ \bP_v\Bigl(\cap_{i=0}^{n_0} \sA_i\Bigl) \geq q_1^{n_0} \, ,$$
which concludes our proof by taking $q_2 = q_1^{n_0}/2$.
\end{proof}

Let $p,r\in (0,1]$.
We say that a set $A \subset V$ is \textbf{$(p,r)$-escapable} if for every vertex $v\in A$, the random walk started at $v$ has probability at least $p$ of not returning to $A$ after first leaving the set of vertices whose corresponding discs in $P$ have centres contained in the  Euclidean ball of  radius $r\sigma(v)$ about $z(v)$.
To avoid trivialities, we also declare the empty set to be $(p,r)$-escapable for all $p$ and $r$. \cref{lem:ballandcone} leads to the following corollary.

\begin{corollary}\label{lem:escape-1}
Let $r\in (0,1]$. There exist positive constants $\delta_2=\delta_2(\bM,r)$ and $p_1=p_1(\bM,r)$, both increasing in $r$, such that the set
\[\{v\in V: (1-\delta_2)\eps \leq \sigma(v) \leq \eps\}  \]
is $(p_1,r)$-escapable for every $\eps>0$.
\end{corollary}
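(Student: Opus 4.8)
The plan is to deduce \cref{lem:escape-1} directly from the Remain in half-plane estimate (\cref{lem:ballandcone}) together with a geometric observation about the band $\{v : (1-\delta_2)\eps \leq \sigma(v) \leq \eps\}$. The key point is that if a vertex $v$ lies in this thin annular band and the random walk started at $v$ stays forever inside the half-plane $\fH(v,r')$ for a suitably chosen $r'$ (depending on $r$ but not $\eps$), then the walk is separated forever from the rest of the band after it exits the ball of radius $r\sigma(v)$ about $z(v)$: the half-plane $\fH(v,r')$, being a half-plane at Euclidean distance $r'\sigma(v)$ from $z(v)$ on the side \emph{away} from the origin, is contained (in the relevant region) in $\{z : |z| \geq 1 - (1-\delta_2)\eps\}$ once $\delta_2$ is small enough relative to $r'$. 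Hence any vertex whose disc has centre in $\fH(v,r')$ has $\sigma < (1-\delta_2)\eps$ and therefore does not lie in the band. This gives the escape property with $p_1 = q_2(\bM, r')$.

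First I would fix $r \in (0,1]$ and make the choice of the auxiliary parameter explicit. Writing $z(v) = (1 - \sigma(v)) e^{i\theta}$ for the unit vector $e^{i\theta}$ pointing from the origin towards $z(v)$, the half-plane $\fH(v,r')$ is the set of $z$ with $\langle z, e^{i\theta}\rangle \geq (1 - \sigma(v)) + r'\sigma(v) \cdot(\pm 1)$; I will take the sign so that $\fH(v,r')$ is the component \emph{not} containing the origin, i.e. the set with $\langle z, e^{i\theta}\rangle \geq 1 - (1 - r')\sigma(v)$. For such $z \in \D$ one has $|z| \geq \langle z, e^{i\theta}\rangle \geq 1 - (1-r')\sigma(v)$, so $\sigma$-value of any vertex with centre in $\fH(v,r')$ is at most $(1-r')\sigma(v) \leq (1-r')\eps$. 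Now set $\delta_2 = \delta_2(\bM,r)$ to be \emph{any} positive number with $\delta_2 \leq r/2$, say, and choose $r' = r'(\bM,r) \in (0,1]$ with $1 - r' \leq 1 - \delta_2$, e.g. $r' = \delta_2$; then a vertex with centre in $\fH(v,r')$ has $\sigma \leq (1-\delta_2)\eps$ and hence is not in the band. (One should double-check the degenerate interplay of constants here; since everything is monotone, choosing $r' = \delta_2 = r/2$ will do, and monotonicity in $r$ is inherited from \cref{lem:ballandcone}.)

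Next I would combine this with \cref{lem:ballandcone}: with probability at least $q_2(\bM, r')$ the walk started at $v$ stays in $\fH(v,r')$ for all time. On this event, once the walk first leaves the set of vertices whose centres lie in the Euclidean ball of radius $r\sigma(v)$ about $z(v)$ — note $r \geq r' = \delta_2$, and $\fH(v,r')$ is disjoint from that ball minus a sliver; one wants the walk, after exiting the ball, to be in the part of $\fH(v,r')$ that avoids the band, which holds automatically since \emph{all} of $\fH(v,r')$ avoids the band — the walk never again visits a vertex of the band, in particular never returns to $A := \{v : (1-\delta_2)\eps \leq \sigma(v) \leq \eps\}$. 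Thus $A$ is $(p_1, r)$-escapable with $p_1 = q_2(\bM, r')$, which is increasing in $r$. The main (minor) obstacle is purely bookkeeping: making sure the three radii $r, r', \delta_2$ are chosen consistently so that "leaves the ball of radius $r\sigma(v)$" and "is in $\fH(v,r')$" together imply "is outside the band", and verifying the monotonicity in $r$ survives the choice; there is no substantive analytic difficulty since all the work is already done in \cref{lem:ballandcone}.
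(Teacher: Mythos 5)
There is a genuine gap, and it stems from a misreading of the definition of $\fH(v,r)$. By the definition above \cref{lem:ballandcone}, $\fH(v,r)$ is the half-plane \emph{containing} $z(v)$ whose boundary line is orthogonal to the radial direction at Euclidean distance $r\sigma(v)$ from $z(v)$; thus the boundary lies on the origin side of $z(v)$, and $\fH(v,r) = \{z : \langle z, e^{i\arg z(v)}\rangle \geq 1 - (1+r)\sigma(v)\}$. You instead take $\fH(v,r')$ to be the component ``on the side away from the origin'', i.e.\ $\{z : \langle z, e^{i\theta}\rangle \geq 1 - (1-r')\sigma(v)\}$, which does \emph{not} contain $z(v)$ and is not what \cref{lem:ballandcone} controls. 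With the correct reading, the half-plane always contains $z(v)$, which lies in the band $A = \{u : (1-\delta_2)\eps \leq \sigma(u) \leq \eps\}$; hence $\fH(v,r')$ necessarily intersects $A$ (indeed in a whole neighbourhood of $z(v)$), and ``staying in $\fH(v,r')$'' cannot by itself guarantee ``not returning to $A$ after leaving the ball''. Concretely, a point at the same Euclidean distance from the origin as $z(v)$ but slightly rotated can lie both outside $B_{z(v)}(r\sigma(v))$ and inside $\fH(v,r')$ while still belonging to $A$, so the claimed separation fails.

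The missing idea, and the one the paper uses, is to first apply the Cone Estimate (\cref{Thm:Cone}) to the exit time $T$ of the ball $B_{z(v)}(r\sigma(v))$: with probability at least $q_1(\bM)$, the exit point satisfies $\sigma(X_T) \leq \sigma(v) - r\sin(\eta/2)\sigma(v) \leq (1-2\delta_2)\eps$ for the choice $\delta_2 = \min\{r\sin(\eta/2)/2, 1/4\}$. Only \emph{then} does one apply \cref{lem:ballandcone} at the new base point $X_T$ (not at $v$): every $z \in \fH(X_T,\delta_2)$ satisfies $|z| \geq 1 - (1+\delta_2)\sigma(X_T) \geq 1 - (1+\delta_2)(1-2\delta_2)\eps > 1 - (1-\delta_2)\eps$, so $\fH(X_T,\delta_2)$ is disjoint from the band, and the strong Markov property gives escapability with $p_1 = q_1(\bM)\,q_2(\bM,\delta_2)$. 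Your outline omits this intermediate cone step entirely, and without it the half-plane argument cannot disconnect the walk from $A$.
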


\begin{proof}
Let $\eta=\eta(\bM)$ be the constant appearing in \cref{Cor:DCPgood}, and set $$\delta_2 = \min\bigl\{r \sin(\eta/2)/2,\, 1/4\bigl\}\, .$$
Let $\eps>0$ be arbitrary, let $v \in \{v\in V: (1-\delta_2)\eps \leq \sigma(v) \leq \eps\}$, and
let $T$ be the stopping time
\[T=\min\left\{n \geq 0: |z(X_n) - z(v)|\geq r\sigma(v)\right\}.\]
We apply \cref{Thm:Cone} with $r \sigma(v)$ to get
\begin{equation}
\label{eq:space0}
\bP_v\left(\sigma(X_{T}) \leq \sigma(v) -  r\sin(\eta/2)\sigma(v) \right) \geq q_1(\bM) > 0.\end{equation}
By definition of $\delta_2$, on the event appearing in the left-hand side of \eqref{eq:space0}, we have $\sigma(X_T) \leq \eps(1-2\delta_2)$. Therefore, the half-plane $\fH(X_T,\delta_2)$, defined above \cref{lem:ballandcone}, is disjoint from the ball $\{z \in \mathbb{C} : |z| \leq 1-(1-\delta_2)\eps\}$. Thus, the claim follows from \eqref{eq:space0} and \cref{lem:ballandcone} by taking $p=q_1(\bM)q_2(\bM,\delta_2)$, where $q_2$ from \cref{lem:ballandcone}.
\end{proof}

\begin{lem}\label{lem:escape1}
Let $A\subset V$ be $(p,r)$-escapable for some $p\in(0,1)$ and $r\in (0,C_1^{-1}/2]$, where $C_1=C_1(\bM)$ is the constant appearing in \cref{Thm:Time}. Then for every vertex $u\in V$,
\begin{equation}\bE_{u}\Bigg[\sum_{n\geq0}\ah(X_n)\mathbbm{1}(X_n \in A)\Bigg] \preceq \frac{r^2}{p}.\label{eq:escapetime}
 \end{equation}
\end{lem}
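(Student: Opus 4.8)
The plan is to sum the Diffusive Time Estimate (\cref{Thm:Time}) over the successive excursions that the walk makes away from the escapable set $A$, using the escape probability $p$ to control the number of such excursions. First I would fix $u \in V$ and decompose the orbit of the walk into epochs according to its visits to $A$. Let $\tau_0 = \min\{n \geq 0 : X_n \in A\}$, and whenever the walk is at a vertex $v = X_{\tau_k} \in A$, let $S_k$ be the first time after $\tau_k$ that the walk leaves the ball of vertices with centres in $B_{z(v)}(r\sigma(v))$, and let $\tau_{k+1}$ be the first time after $S_k$ that the walk returns to $A$ (all of these possibly infinite). The key point is that, by the definition of $(p,r)$-escapability and the strong Markov property, conditioned on everything up to time $S_k$ the walk has probability at least $p$ of never returning to $A$; hence the number $N$ of epochs that actually occur is stochastically dominated by a geometric random variable with $\bE[N] \leq 1/p$.

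Next I would bound the contribution of a single epoch. Between time $\tau_k$ and time $S_k$, writing $v = X_{\tau_k}$, we have $r\sigma(v) \leq r\cdot 1 \leq 1$ so $r \leq C_1^{-1}/2 \leq C_1^{-1}$ ensures the hypothesis $r(v) \leq r\sigma(v) \leq C_1^{-1}\sigma(v)$ of \cref{Thm:Time} is met (using $r(v) \preceq \sigma(v)$, or more simply that $r\sigma(v) \geq r(v)$ since the ball must contain at least the disc $\Pp{v}$ — one should check the precise normalisation, but the point is that the relevant radius is $\asymp r\sigma(v)$). Applying \cref{Thm:Time} with radius $\asymp r\sigma(v)$ and then \eqref{eq:hypeuclcomp} to convert back to hyperbolic area, I get
\[ \bE\!\left[\sum_{n=\tau_k}^{S_k - 1} \ah(X_n) \;\middle|\; \mathcal{F}_{\tau_k}\right] \asymp \bE\!\left[\sum_{n=\tau_k}^{S_k-1} \sigma(X_n)^{-2} r(X_n)^2 \;\middle|\; \mathcal F_{\tau_k}\right] \preceq \sigma(v)^{-2}(r\sigma(v))^2 = r^2,\]
where the middle comparison uses that $\sigma(X_n) \asymp \sigma(v)$ throughout the epoch because the walk stays within Euclidean distance $r\sigma(v) \leq \sigma(v)$ of $z(v)$ (so $\sigma(X_n)$ is between $(1-r)\sigma(v)$ and $2\sigma(v)$, say). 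Crucially the bound $r^2$ does not depend on $v$, so this is a clean per-epoch estimate uniform over the epoch's starting point.

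Finally I would assemble the two pieces: every visit of the walk to $A$ lies in some epoch $[\tau_k, S_k)$ (a vertex of $A$ visited during an excursion away from $A$ would be a contradiction, and at the moment the walk leaves the ball it need not be in $A$), so
\[ \bE_u\!\left[\sum_{n\geq 0}\ah(X_n)\mathbbm{1}(X_n \in A)\right] \leq \bE_u\!\left[\sum_{k\geq 0}\mathbbm{1}(\tau_k < \infty)\sum_{n=\tau_k}^{S_k-1}\ah(X_n)\right] \leq \bE_u[N]\cdot \sup_v (\text{per-epoch bound}) \preceq \frac{r^2}{p},\]
where the middle inequality is Wald-type: conditioning on $\mathcal F_{S_{k-1}}$ (or $\mathcal F_0$ for $k=0$), the event $\{\tau_k < \infty\}$ together with the per-epoch bound gives that the $k$-th summand has conditional expectation $\preceq r^2 \,\P(\tau_k<\infty \mid \cdots)$, and summing over $k$ the probabilities telescope into $\bE_u[N] \leq 1/p$. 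The main obstacle I anticipate is bookkeeping the nested conditional expectations carefully enough to justify this last step rigorously — in particular making sure that "escapable" is applied at exactly the right stopping time and that the geometric domination of $N$ is valid conditionally; the geometric and circle-packing inputs are all black-boxed by \cref{Thm:Time} and \eqref{eq:hypeuclcomp}, so no new geometry is needed.
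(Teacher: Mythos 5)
Your proof is correct and takes essentially the same route as the paper: your stopping times $\tau_k, S_k$ are the paper's $T_k^-, T_k^+$, the per-epoch estimate via the Diffusive Time Estimate and \eqref{eq:hypeuclcomp} (noting $\sigma(X_n) \asymp \sigma(X_{\tau_k})$ during an epoch, so the hyperbolic area contribution is $\preceq r^2$) is the same, and the bound $\bP_u(\tau_k < \infty) \leq (1-p)^k$ from $(p,r)$-escapability combined with a tower-property summation over epochs is exactly how the paper concludes. The bookkeeping worry you raise at the end is handled as you expect: condition on $\cF_{\tau_k}$ and the event $\{\tau_k < \infty\}$, apply the per-epoch bound, then multiply by $\bP_u(\tau_k < \infty) \leq (1-p)^k$ and sum the geometric series.
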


\begin{proof}

Define the sequences of stopping times $\langle T_i^-\rangle_{i\geq0}$ and $\langle T_i^+ \rangle_{i\geq0}$ by letting $T_0^-=\tau_A$ and recursively letting
\[T_i^+=\min\left\{n \geq T_i^-: |z(X_n) - z(X_{T_i^-})|\geq r\sigma(X_{T_i^-})\right\}\]
and
$T_i^-=\min\{n\geq T_{i-1}^+: X_n \in A\}$.
Then
\begin{align*}
\bE_{u}\Bigg[\sum_{n\geq0}\ah(X_n)\mathbbm{1}(X_n \in A)\Bigg] \leq \sum_{i\geq0}\bE_{u}\Bigg[\mathbbm{1}(T_i^-<\infty)\sum_{n=T_i^-}^{T_i^+-1}\ah(X_n)\Bigg].
\end{align*}
Since $\sigma(X_n)\geq (1-r)\sigma(X_{T_{i}^-})$ for all $T_i^- \leq n < T_i^+$, the Diffusive Time Estimate (\cref{Thm:Time}), the strong Markov property, and \eqref{eq:hypeuclcomp} imply that
\begin{multline*}
\bE_u\Bigg[\sum_{n=T_i^-}^{T_i^+-1}\ah(X_n) \,\Bigg|\, T_i^-<\infty,\, X_{T_i^-} \Bigg]
 \preceq (1-r)^{-2}\sigma(X_{T_i^-})^{-2}\bE_u\Bigg[\sum_{n=T_i^-}^{T_i^+}r(X_n)^2 \,\Bigg|\, T_i^-<\infty,\, X_{T_i^-} \Bigg]\preceq r^2.
\end{multline*}
Meanwhile, since $A$ is $(p,r)$-escapable, we have that $\bP_u(T_i^-<\infty)\leq (1-p)^{i}$.
Combining these estimates yields the desired inequality \eqref{eq:escapetime}.
\end{proof}

We say that a set $A\subset V$ is $C$-\textbf{short-lived} if \begin{equation}\label{def:shortlived}\bE_{u}\Bigg[\sum_{n\geq0}\ah(X_n)\mathbbm{1}(X_n \in A)\Bigg] \leq C
 \end{equation}
for every $u \in V$.
 Note that if $A$ is a $C$-short-lived set of vertices, then every subset of A is also $C$-short-lived. Also note that \cref{lem:escape1} states that escapable sets are short-lived.

\begin{lem}\label{Lem:esacape2}
Let $A$ and $B$ be two sets of vertices in $G$, and suppose that $A$ is finite and $C$-short-lived for some $C>0$. Then
\[\sum_{v\in A} \ah(v)\bP_v(\tau_B<\infty) \preceq C\CeffF(A\leftrightarrow B). \]
\end{lem}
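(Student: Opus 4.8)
The plan is to combine a last-exit (renewal) decomposition of the hitting probability $\bP_v(\tau_B<\infty)$ with reversibility of the random walk, and then to bound the sum $\sum_{w\in A}c(w)\bP_w(\tau_B<\tau_A^+)$ that emerges by the free effective conductance $\CeffF(A\leftrightarrow B;G)$. First I would dispose of two trivial cases: if $A\cap B\neq\emptyset$ then $\ReffF(A\leftrightarrow B;G)=0$ and the right-hand side is infinite, so we may assume $A\cap B=\emptyset$; and replacing $B$ by $B\cap V_n$ for an exhaustion $\langle V_n\rangle$ only decreases both sides, so by monotone convergence (using that $A$ is finite) it suffices to treat the case that $B$ is finite. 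Since $G$ is transient, the Green function $g(u,v)=\bE_u[\#\{n\ge 0:X_n=v\}]$ is everywhere finite, and I will use it freely.

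The first step is the pointwise bound $\bP_v(\tau_B<\infty)\le\sum_{w\in A}g(v,w)\,\bP_w(\tau_B<\tau_A^+)$ for $v\in A$. On $\{\tau_B<\infty\}$ let $L$ be the last time strictly before $\tau_B$ at which the walk is in $A$; this is well defined since $X_0=v\in A$ and, because $A\cap B=\emptyset$, $X_{\tau_B}\notin A$. Decomposing over $L=n$ and $X_L=w\in A$ and applying the Markov property at time $n$ gives $\bP_v(L=n,X_n=w,\tau_B<\infty)=\bP_v(X_n=w,\tau_B>n)\,\bP_w(\tau_B<\tau_A^+)$; summing over $n$ collapses the first factor to $\bE_v[\#\{n<\tau_B:X_n=w\}]\le g(v,w)$, and summing over $w\in A$ gives the claim. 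Multiplying by $\ah(v)$, summing over $v\in A$ and exchanging the order of summation, the left side of the lemma is at most
\[\sum_{w\in A}\bP_w(\tau_B<\tau_A^+)\Big(\sum_{v\in A}\ah(v)\,g(v,w)\Big).\]
Now I would invoke reversibility of the walk with respect to the conductance measure, $c(v)g(v,w)=c(w)g(w,v)$, together with bounded local geometry, which forces $c(v)\asymp 1$ for every vertex; hence $\sum_{v\in A}\ah(v)g(v,w)\asymp\sum_{v\in A}\ah(v)g(w,v)=\bE_w\big[\sum_{n\ge 0}\ah(X_n)\mathbbm{1}(X_n\in A)\big]\le C$ by the $C$-short-lived hypothesis. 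Using $c(w)\asymp 1$ once more this yields $\sum_{v\in A}\ah(v)\bP_v(\tau_B<\infty)\preceq C\sum_{w\in A}c(w)\bP_w(\tau_B<\tau_A^+)$.

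It then remains to show $\sum_{w\in A}c(w)\bP_w(\tau_B<\tau_A^+)\le\CeffF(A\leftrightarrow B;G)$, and I expect this to be the step needing the most care, precisely because the naturally occurring probabilistic quantity must be controlled by the \emph{free} (smaller) of the two boundary-condition variants rather than the wired one. The argument I have in mind is an exhaustion: any fixed finite nearest-neighbour path contained in $V_n$ is at least as likely under the random walk on the induced subnetwork $G_n$ as under the walk on $G$ (the transition probabilities have the same numerators and no larger denominators), so summing over the finitely many such paths running from $w$ to $B$ while avoiding $A$, and letting $n\to\infty$ by monotone convergence, gives $\bP_w(\tau_B<\tau_A^+)\le\liminf_n\bP_w^{G_n}(\tau_B<\tau_A^+)$. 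For $n$ large enough that $G_n$ contains $B$ and every edge incident to $A$ we have $\sum_{w\in A}c(w)\bP_w^{G_n}(\tau_B<\tau_A^+)=\Ceff(A\leftrightarrow B;G_n)$, which increases to $\CeffF(A\leftrightarrow B;G)$; a final application of Fatou's lemma to the finite sum over $A$ completes the proof. Everything else — the last-exit decomposition, the reversibility identity, and the comparability $c(v)\asymp 1$ — is routine.
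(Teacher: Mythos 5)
Your proof is correct and takes essentially the same route as the paper's: decompose the hitting probability over visits to $A$, use time reversal of the Green function together with $c\asymp 1$ to feed in the $C$-short-lived hypothesis, and then bound $\sum_{u\in A}c(u)\bP_u(\tau_B<\tau_A^+)$ by $\CeffF(A\leftrightarrow B)$ via the same exhaustion/path-monotonicity argument. The only cosmetic difference is that you use a last-exit decomposition at the final visit to $A$ before $\tau_B$, whereas the paper uses a union bound over the excursions of the walk away from $A$; after summing, both collapse to the identical Green-function bound $\sum_{v\in A}\ah(v)g(v,u)\preceq C$.
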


\begin{proof}
 Let the stopping times $\tau_i$ be defined recursively by setting $\tau_0=\tau_{A}$ and $\tau_{i+1}=\min\{t>\tau_i: X_t \in A\}$, so that $\tau_i$ is the $i$th time the random walk $\langle X_n \rangle_{n\geq0}$ visits $A$. As usual if $A$ is not hit by the random walk we set the corresponding stopping time to $\infty$. Then
\begin{align*}
\sum_{v\in A} \ah(v)\bP_{v}(\tau_{B} < \infty) &\leq \sum_{v\in A} \sum_{i\geq0}\ah(v)\bP_{v}(\text{$B$ hit between time $\tau_i$ and $\tau_{i+1}$})\\
&=\sum_{v\in A} \sum_{u\in A} \sum_{i\geq0} \ah(v)\bP_{v}(\tau_i<\infty,\, X_{\tau_i}=u)\bP_{u}(\tau_B < \tau_{A}^+).
\end{align*}
Reversing time gives that $\bP_{v}(\tau_i<\infty,\, X_{\tau_i}=u) = {\deg(u) \over \deg(v)} \bP_{u}(\tau_i<\infty,\, X_{\tau_i}=v)$ and since the degrees are bounded we get
\begin{align*}
\sum_{v\in A} \ah(v)\bP_{v}(\tau_{B} < \infty) \preceq \sum_{v\in A} \sum_{u\in A} \sum_{i\geq0} \ah(v)\bP_{u}(\tau_i<\infty,\, X_{\tau_i}=v)\bP_{u}(\tau_B < \tau_{A}^+).
\end{align*}
By exchanging the order of summation and using our assumption on $A$ we obtain that
\begin{align*}
\sum_{v\in A} \ah(v)\bP_{v}(\tau_{B} < \infty) &\preceq
\sum_{u\in A} \bE_{u}\Bigg[\sum_{n\geq0}\ah(X_n)\mathbbm{1}(X_n \in A)\Bigg] \bP_{u}(\tau_B < \tau_{A}^+)
\\
& \preceq C\sum_{u\in A} \bP_{u}(\tau_B < \tau_{A}^+).
\end{align*}
To conclude, let $\langle V_j \rangle_{j\geq1}$ be an exhaustion of $V$ and let $G_j$ be the subgraph of $G$ induced by $V_j$, with conductances inherited from $G$, and observe that
\begin{align*}
 \sum_{u\in A} c(u)\bP_{u}^G(\tau_{B} < \tau_{A}^+) &= \lim_{j\to\infty}\sum_{u\in A} c(u)\bP_{u}^{G}(\tau_{B} < \min\{\tau_{A}^+,\tau_{ V\setminus V_j}\})\\ &\leq  \lim_{j\to\infty}\sum_{u\in A} c(u)\bP_{u}^{G_j}(\tau_{B} < \tau_{A}^+) = \CeffF(A \leftrightarrow B) \, ,
\end{align*}
concluding the proof since $c(u)$ is bounded away from $0$.
\end{proof}

Recall that $\diam_\C(A)$ and $d_\C(A,B)$ denote the Euclidean diameter of $\{z(v):v\in A\}$ and the Euclidean distance between $\{z(v):v\in A\}$ and $\{z(v):v\in B\}$ respectively.

\begin{lem}\label{lem:conddiam}
Let $A$ and $B$ be disjoint sets of vertices in $G$. Then
\[\CeffF(A\leftrightarrow B) \preceq
\frac{\diam_\C(A)^2}{\min\left\{\diam_\C(A),\,d_\C(A,B)\right\}^2},
\]
with the convention that the right-hand side is $1$ if $\diam_\C(A)=0$.
\end{lem}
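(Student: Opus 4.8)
The plan is to bound $\CeffF(A\leftrightarrow B)$ from above by exhibiting a unit flow from $A$ to $B$ (or rather, to argue via the dual variational characterization of effective conductance) of bounded energy, using the geometry of the circle packing. Since $\CeffF(A\leftrightarrow B)=\lim_j \Ceff(A\leftrightarrow B;G_j)$ along an exhaustion, it suffices to bound the effective conductance in each finite $G_j$ uniformly; equivalently, by the Dirichlet principle it suffices to find a function $h:V\to[0,1]$ with $h\equiv 1$ on $A$, $h\equiv 0$ on $B$, and Dirichlet energy $\sum_{e}c(e)(dh(e))^2$ bounded by the right-hand side. The natural choice is to take $h$ to be (a truncation of) a function of the Euclidean distance to the set $\{z(v):v\in A\}$, scaled so that it drops from $1$ to $0$ over the annular region between radius $\asymp\diam_\C(A)$ and radius $\asymp\min\{\diam_\C(A),d_\C(A,B)\}$ around (a point of) $A$; the logarithmic profile $h(v)=\bigl(\log(\rho/|z(v)-z_0|)\bigr)_+\wedge 1$ normalized appropriately is the standard candidate, where $z_0\in\{z(v):v\in A\}$ and $\rho$ is chosen so the level set $\{h=0\}$ surrounds $B$ while $\{h=1\}\supseteq A$.

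The key geometric input is that the embedding coming from the double circle packing is $\eta$-good by \cref{Cor:DCPgood}, so adjacent vertices $u,v$ satisfy $|z(u)-z(v)|\asymp r(u)\asymp r(v)$ and, crucially, an annulus $A_{z_0}(s,2s)$ can only meet $\asymp 1$ circles of any given size and, by a packing/area argument as in \cref{lem: flow}, $\sum_{v:\,z(v)\in A_{z_0}(s,2s)} r(v)^2\preceq s^2$. Combined with the fact that the gradient of the logarithmic profile across an edge near radius $s$ is $\asymp r(v)/s$, this gives that the Dirichlet energy contributed by the dyadic annulus at scale $s$ is $\preceq \sum c(e)(r(v)/s)^2 \preceq s^{-2}\sum r(v)^2 \preceq 1$ per scale (here we use $c(e)\asymp 1$). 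Summing over the $\asymp \log\bigl(\diam_\C(A)/\min\{\diam_\C(A),d_\C(A,B)\}\bigr)$ dyadic scales between the inner and outer radii and then renormalizing $h$ to have the correct boundary values (dividing by this same logarithm) gives total energy $\preceq 1/\log(\cdots)$. Wait — that would give a logarithmic rather than the stated quadratic bound, so in fact the correct candidate is the \emph{linear} profile $h(v)=\bigl((\rho-|z(v)-z_0|)/(\rho-\rho')\bigr)_+\wedge 1$ across a single annulus of inner radius $\rho'\asymp\min\{\diam_\C(A),d_\C(A,B)\}$ and outer radius $\rho\asymp \diam_\C(A)$; its gradient across an edge is $\asymp r(v)/(\rho-\rho')$, and since $\rho-\rho'\asymp\min\{\diam_\C(A),d_\C(A,B)\}$ while the relevant circles all lie in $B_{z_0}(O(\diam_\C(A)))$ and have $\sum r(v)^2\preceq \diam_\C(A)^2$, the total energy is $\preceq \diam_\C(A)^2/\min\{\diam_\C(A),d_\C(A,B)\}^2$, matching the claim. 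One must separately check that with these radii the level set $\{h=1\}$ genuinely contains all of $A$ (true since $\diam_\C(A)\geq |z(v)-z_0|$ for $v\in A$, so choose the inner radius no larger than... actually take $z_0$ to be a point realizing the diameter and note every point of $A$ is within $\diam_\C(A)$ of $z_0$; adjust constants) and $\{h=0\}$ separates $A$ from $B$ (true since $d_\C(A,B)$ is the distance from the $A$-cluster to the $B$-cluster).

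The main obstacle I anticipate is the bookkeeping of the two regimes $d_\C(A,B)\le\diam_\C(A)$ versus $d_\C(A,B)>\diam_\C(A)$: in the first regime the "outer" radius $\rho$ and "inner gap" are both $\asymp\diam_\C(A)$ and the bound is just $\preceq 1$ (i.e. the trivial bound, consistent with the $\diam_\C(A)=0$ convention), while in the second regime $\rho-\rho'\asymp d_\C(A,B)$ and one needs the circles meeting the annulus $B_{z_0}(\rho)\setminus B_{z_0}(\rho')$ to still have total squared-radius $\preceq \diam_\C(A)^2$ rather than $\preceq d_\C(A,B)^2$ — this is where the assumption that $h$ is supported on the annulus matters, but one has to be careful that circles of vertices \emph{in} $A$ could themselves be large; the $\eta$-goodness and the Ring Lemma (\cref{Thm:DCPRing}) control this since any circle meeting the $A$-cluster has radius $\preceq\diam_\C(A)$. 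Handling the degenerate case $\diam_\C(A)=0$ (a single vertex, where $A=\{z(v)\}$ is a point but $A$ may still be several vertices mapped... no, distinct vertices have distinct centres, so $\diam_\C(A)=0$ forces $|A|=1$) is immediate: one uses the test function $h\equiv 1$ on $v$ and the flow of total energy $\preceq\deg(v)\asymp 1$, giving the stated value $1$.
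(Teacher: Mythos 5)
Your high-level strategy — bound $\CeffF(A\leftrightarrow B)$ via the Dirichlet variational principle by exhibiting an explicit test function supported near $z(A)$, and estimate its energy using the disjointness/packing bound $\sum_{v} \min\{r(v),\rho\}^2 \preceq \rho^2$ — is the natural dual of what the paper actually does. The paper uses the extremal-length characterisation $\CeffF(A\leftrightarrow B)=\inf_\ell \sum_e c(e)\ell(e)^2/d_\ell(A,B)^2$ with the test metric $\ell(e)=\min\{|z(e^-)-z(e^+)|,D\}$ supported on the set $W$ of vertices whose circles meet the $D$-neighbourhood of $z(A)$ (where $D=\diam_\C(A)$), shows $d_\ell(A,B)\geq\min\{D,d_\C(A,B)\}$, and bounds $\sum c(e)\ell(e)^2\preceq D^2$ by the same area argument you invoke. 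These two variational principles are essentially equivalent, and your packing/area estimate and $\eta$-goodness bookkeeping in the "good" regime are correct.

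However, there is a genuine gap in the regime $d_\C(A,B)\leq \diam_\C(A)$ (and in fact already when $d_\C(A,B)$ is only slightly larger than $\diam_\C(A)$). You base the test function $h(v)=\bigl((\rho-|z(v)-z_0|)/(\rho-\rho')\bigr)_+\wedge 1$ on the distance to a \emph{single} point $z_0\in z(A)$. For $h\equiv 1$ on $A$ you need $\rho'\geq D$, and for $h\equiv 0$ on $B$ you need $\rho\leq d_\C(A,B)$, since for $z_0\in z(A)$ one only knows $|z(v)-z_0|\geq d_\C(A,B)$ for $v\in B$. These are incompatible as soon as $d_\C(A,B)\leq D$, and give only $\rho-\rho'\leq d_\C(A,B)-D$ (which can be arbitrarily small relative to $\min\{D,d_\C(A,B)\}$) when $D<d_\C(A,B)<2D$. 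Your dismissal of the first regime — that "the bound is just $\preceq 1$" — is incorrect: when $d_\C(A,B)\leq D$ the right-hand side of the lemma is $D^2/d_\C(A,B)^2$, which can be arbitrarily large, and so can $\CeffF(A\leftrightarrow B)$ (consider, for instance, two long parallel chains of circles at small Euclidean separation). So the single-point profile is not a valid test function there and the claimed bound does not follow.

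The fix is to base $h$ on the distance to the \emph{set} $z(A)$ rather than to a point: take $\delta=\min\{D,d_\C(A,B)\}$ and set $h(v)=\bigl(1-d_\C(z(v),z(A))/\delta\bigr)_+$. This is automatically $1$ on $A$ and $0$ on $B$ in every regime, it is $1$-Lipschitz in the Euclidean metric divided by $\delta$, and the supporting vertices still have circles contained in a ball of radius $O(D)$ about $z(A)$ (using that $r(v)\leq d_\C(z(v),z(A))\leq\delta\leq D$ for $v\notin A$ and $r(v)\leq D$ for $v\in A$ once $|A|\geq 2$). Your area estimate then closes the argument. This "distance to the set" idea is exactly what the paper's choice of support $W$ for the test metric $\ell$ encodes, and is the reason the extremal-length formulation goes through cleanly in both regimes.

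Two smaller points: your stated values of the radii, $\rho'\asymp\min\{D,d_\C(A,B)\}$ and $\rho\asymp D$ with $\rho-\rho'\asymp\min\{D,d_\C(A,B)\}$, are mutually inconsistent when $d_\C(A,B)<D$; and the handling of the degenerate case $\diam_\C(A)=0$ (a single vertex $v$, $\CeffF\leq c(v)\preceq 1$) is correct and matches the paper.
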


\begin{proof}
Let $D=\diam_\C(A)$. If $D=0$ then $A$ is a single vertex $v$ and $\CeffF(A\leftrightarrow B) \leq c(v) \preceq 1$, so assume not.  Recall the \textbf{extremal length} characterisation of the free effective conductance \cite[Exercise 9.42]{LP:book}:  For each function  $\ell:E\to[0,\infty)$  assigning a non-negative length to every edge $e$ of $G$, let $d_\ell$ be the shortest path pseudometric on $G$ induced by $\ell$. Then
\[\CeffF(A \leftrightarrow B) = \inf\left\{\frac{\sum_{e\in E}c(e)\ell(e)^2}{d_\ell(A,B)^2} : \ell: E \to [0,\infty),\, d_l(A,B) >0  \right\}. \]
Let $W$ be the set of vertices $v$ of $G$ whose corresponding circles intersect the $D$-neighbourhood of $z(A)$ in $\C$.
 Define lengths by setting $\ell(e)$ to be
\[\ell(e)=\left\{
\barraydebug{ll}
\min\left\{|z(e^-)-z(e^+)|,\,D\right\} & \text{ if $e$ has an endpoint in $W$}\\
0 & \text{ otherwise}.
\earraydebug
 \right.\]
 Then
 \begin{equation}\label{eq:condiam1}
 d_{\ell}(A,B) \geq \min\left\{D,\, d_\C(A,B))\right\}>0
 \end{equation}
while, since $|z(e^-)-z(e^+)|=r(e^-)+r(e^+)$,
\begin{align}\label{eq:condiam3}
\sum_{e}c(e)\ell(e)^2 & \preceq \sum_{v\in W}\sum_{v'\sim v} \min\left\{r(v)+r(v'),\, D\right\}^2 \preceq \sum_{v\in W}\min\left\{r(v),\, D\right\}^2,
\end{align}
where the Ring Lemma (\cref{Thm:DCPRing}) is used in the second inequality.
 As in the proof of \cref{lem: flow}, consider replacing each circle corresponding to a vertex in $W$ that has radius larger than $D$ with a circle of radius $D$ that is contained in the original circle and intersects the $D$-neighbourhood of $z(A)$. This yields a set of circles contained in the $3D$-neighbourhood of $z(A)$. Comparing the total area of this set of circles with that of the $3D$-neighbourhood of $z(A)$ yields that
\begin{equation}\label{eq:condiam2}
 \sum_{v\in W}\min\left\{r(v),\,D\right\}^2\preceq D^2.
\end{equation}
We conclude by combining \eqref{eq:condiam1}, \eqref{eq:condiam3} and \eqref{eq:condiam2}.
\end{proof}

\subsection{Wilson's Algorithm}

\textbf{Wilson's algorithm rooted at infinity} \cite{Wilson96,BLPS} is a powerful method of sampling the WUSF of an infinite, transient graph by joining together loop-erased random walks. We now give a very brief description of the algorithm. See \cite{LP:book} for a detailed exposition.
Let $G$ be a transient network. Let $\gamma$ be a path in $G$ that visits each vertex of $G$ at most finitely many times. The \textbf{loop-erasure} is formed by erasing cycles from $\gamma$ chronologically as they are created. (The loop-erasure of a random walk path is referred to as \textbf{loop-erased random walk} and was first studied by Lawler \cite{lawler1980self}.)
Let $\{v_j : j \in \mathbb{N} \}$ be an enumeration of the vertices of $G$. Let $\F_0=\emptyset$ and define a sequence of forests in $G$ as follows:
\begin{enumerate}
\item Given $\F_i$, start an independent random walk from $v_{i+1}$. Stop this random walk if it hits the set of vertices already included in $\F_i$, running it forever otherwise.
\item Form the loop-erasure of this random walk path and let $\F_{i+1}$ be the union of $\F_i$ with this loop-erased path.
\end{enumerate}
Then the forest $\F =\bigcup_{i\geq0}\F_i$ is a sample of the WUSF of $G$ \cite[Theorem 5.1]{BLPS}.

\subsection{Proof of Theorems \ref{T:Ftailcp}, \ref{T:Wtailcp} and \ref{Thm:Wareacp}}\label{subsec:lowerboundproof}

Recall that $e=(x,y)$ is a fixed edge. We write  $\succeq_e$ to denote a lower bound that holds up to a positive multiplicative constant that depends only on $\bM$ and $r_\bbH(x)$, and that is increasing in $r_\bbH(x)$.

Let $(P,P^\dagger)$ be the double circle packing of $G$ in $\D$ normalized so that $z(x)$ and $z(y)$ lie on the negative and positive axes  respectively and so that the tangency point of $P(x)$ and $P(y)$ is the origin. By the Ring Lemma (\cref{Thm:DCPRing}) there exists a constant $c=c(\bM)<1$ such that for any $\eps>0$ and any vertex $v$ with $|z(v)|\geq 1-\eps$ we must have that $r(v) \leq c \eps$. It follows that there exists a constant $s=s(\bM)\in (0,1/2]$ such that for every $\eps>0$, the set
\[ \left\{v \in V : z(v) \in A_0(1-\eps,1-s\eps)\right\}\]
disconnects $e$ from $\infty$ in $G$. An equivalent formulation that we will use is that if $(v,u)$ is an edge in the graph and $0<\eps_1 < \eps_2$, then
\begin{equation}\label{eq:ringlem} \sigma(v) \in [\eps_1, \eps_2] \Longrightarrow \sigma(u) \in [s\eps_1, s^{-1}\eps_2] \, .\end{equation}
For each $\eps>0$, we define
\[W_\eps :=\left\{v \in V : z(v)\in  A_0(1-\eps,1-s^5\eps)\right\}.\]
Recall the definition of a $C$-short-lived set from \eqref{def:shortlived}. We trivially have that unions of boundedly many short-lived sets are short-lived with a larger constant. In particular, letting $s=s(\bM)$  be as above and letting $\delta=\delta_2(\bM,C_1^{-1}(\bM))$, where $C_1$ is the constant appearing in \cref{Thm:Time} and $\delta_2$ is the constant from \cref{lem:escape-1}, we have that
\[ W_\eps \subseteq \bigcup_{m=0}^{\lceil 5\log_{1-\delta}(s) \rceil}\bigl\{v \in V: (1-\delta)^{m+1}\eps \leq  \sigma(v)\leq (1-\delta)^m\eps\bigr\}  \]
for every $\eps>0$. Applying \cref{lem:escape-1,lem:escape1} immediately yields the following.

\begin{corollary}\label{lem:escape0}
There exist a constant $C_2=C_2(\bM)>0$ such that the set $W_\eps$ is $C_2$-short-lived for every $\eps>0$.
\end{corollary}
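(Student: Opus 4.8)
The plan is to combine the decomposition of $W_\eps$ into a bounded number of thin annular shells that was recorded just before the statement with two elementary facts: escapable sets are short-lived (\cref{lem:escape1}), and a union of finitely many $C$-short-lived sets is again short-lived, with the constant multiplied by the number of sets (the pointwise indicator bound noted in the text preceding the corollary).

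First I would recall from that discussion that, with $s = s(\bM) \in (0,1/2]$ and $\delta$ chosen as there (that is, $\delta = \delta_2(\bM,r)$ for a radius $r \le C_1^{-1}/2$, so that both $\delta$ and $p_1 = p_1(\bM,r)$ depend only on $\bM$), one has
\[ W_\eps \;\subseteq\; \bigcup_{m=0}^{N} S_m, \qquad S_m := \bigl\{v \in V: (1-\delta)^{m+1}\eps \leq \sigma(v)\leq (1-\delta)^m\eps\bigr\}, \qquad N := \lceil 5\log_{1-\delta}(s) \rceil. \]
The crucial point is that $N$ depends only on $\bM$ and not on $\eps$. Note also that, since $\delta = \delta_2$, we have $S_m = \{v : (1-\delta_2)\eps' \le \sigma(v) \le \eps'\}$ for $\eps' = (1-\delta)^m \eps$, which is exactly the form of the set appearing in \cref{lem:escape-1}.

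Next, for each $0 \le m \le N$, applying \cref{lem:escape-1} with its parameter ``$\eps$'' set equal to $(1-\delta)^m\eps$ shows that $S_m$ is $(p_1,r)$-escapable. Since $r \le C_1^{-1}/2$, \cref{lem:escape1} then gives that each $S_m$ is $C_3$-short-lived with $C_3 \preceq r^2/p_1$, hence $C_3 = C_3(\bM)$, uniformly in $m$ and $\eps$.

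Finally I would assemble these: for any vertex $u$ and any random walk $\langle X_n \rangle_{n\geq0}$ started at $u$ we have the pointwise bound $\mathbbm{1}(X_n \in W_\eps) \le \sum_{m=0}^{N} \mathbbm{1}(X_n \in S_m)$, so summing over $n$ and taking expectations yields
\[ \bE_u\Bigl[\sum_{n\geq0}\ah(X_n)\mathbbm{1}(X_n \in W_\eps)\Bigr] \;\leq\; \sum_{m=0}^{N}\bE_u\Bigl[\sum_{n\geq0}\ah(X_n)\mathbbm{1}(X_n \in S_m)\Bigr] \;\leq\; (N+1)C_3 =: C_2, \]
and $C_2 = C_2(\bM)$ as required. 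There is no real obstacle here: all of the substantive probabilistic work is already contained in \cref{lem:ballandcone,lem:escape-1,lem:escape1}, and the only thing that needs care is that both the number of shells $N+1$ and the escape radius $r$ are controlled purely in terms of $\bM$, which is precisely what makes the final constant $C_2$ independent of $\eps$.
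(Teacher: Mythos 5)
Your proposal is correct and follows essentially the same route as the paper: decompose $W_\eps$ into a bounded number of thin shells using $\delta_2$ from \cref{lem:escape-1}, apply \cref{lem:escape-1} and \cref{lem:escape1} to each shell, and sum. One small point in your favour: the paper writes $\delta=\delta_2(\bM,C_1^{-1}(\bM))$, which corresponds to taking $r=C_1^{-1}$ in \cref{lem:escape-1}, whereas \cref{lem:escape1} requires $r\le C_1^{-1}/2$ (and $(p,r)$-escapability is not monotone decreasing in $r$), so your explicit choice of $r\le C_1^{-1}/2$ is the cleaner way to line up the two lemmas.
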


Let $\F$ be the WUSF of $G$ and let $\eps\in(0,1/2)$ be arbitrarily small. The proofs in this section will be based upon the study of the random variable
\begin{equation}\label{eq:defzeps} Z_\eps = \sum_{v \in W_\eps}\ah(v)\mathbbm{1}\bigl(v \in \past_\F(e)\bigr)\, ,\end{equation}
and its moments. Intuitively, we think of the process $\langle Z_{2^{-n}} \rangle_{n\geq 1}$ as behaving similarly to a critical branching process, so that in particular we expect the random variable $Z_\eps$ to be distributed similarly to the number of particles at generation $\log(1/\eps)$ in a critical branching process. In particular, we expect its first moment to be of order $1$ and its second moment to be of order $\log(1/\eps)$. This is what we prove below. We begin by bounding its first moment.

\begin{lem}\label{lem:firstmomentupper}
$\E[Z_\eps]\preceq 1$ for all $\eps>0$.
\end{lem}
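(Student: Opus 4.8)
The plan is to bound $\P\bigl(v\in\past_\F(e)\bigr)$ by a single random-walk hitting probability for each vertex $v$, and then to sum these bounds against the weights $\ah(v)$ using the estimates established above.

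First I would record a deterministic observation about the WUSF. Since $G$ is transient, polyhedral and proper plane with $\bM<\infty$, \cref{T:planeWUSF} guarantees that almost surely every component of $\F$ is a one-ended tree, so that $\past_\F(e)$ is well defined. If $v\in\past_\F(e)$ then $e\in\F$ and $v$ lies in the finite component of $\F\setminus\{e\}$; consequently the unique ray from $v$ to infinity in $\F$ must traverse the edge $e$, and in particular it visits both $x$ and $y$. Sampling $\F$ by Wilson's algorithm rooted at infinity with $v$ as the first vertex explored, this ray is exactly the loop-erasure of a random walk $\langle X_n\rangle_{n\ge0}$ started at $v$, and a loop-erasure visits only vertices that were visited by the walk itself. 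Hence
\[\P\bigl(v\in\past_\F(e)\bigr)\;\le\;\bP_v(\tau_x<\infty)\qquad\text{for every }v\in V.\]

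Next I would sum over $v\in W_\eps$. The set $W_\eps$ is finite, since only finitely many circles of the packing can have centre inside the compact annulus $\overline{A_0(1-\eps,1-s^5\eps)}\subset\D$, and it is $C_2$-short-lived with $C_2=C_2(\bM)$ by \cref{lem:escape0}. Applying \cref{Lem:esacape2} with $A=W_\eps$ and $B=\{x\}$ then gives
\[\E[Z_\eps]=\sum_{v\in W_\eps}\ah(v)\,\P\bigl(v\in\past_\F(e)\bigr)\;\le\;\sum_{v\in W_\eps}\ah(v)\,\bP_v(\tau_x<\infty)\;\preceq\;C_2\,\CeffF\bigl(W_\eps\leftrightarrow\{x\}\bigr).\]
Finally, the free effective conductance out of the single vertex $x$ never exceeds its conductance $c(x)=\sum_{e'\ni x}c(e')\le\bM^2$, so $\CeffF(W_\eps\leftrightarrow\{x\})\preceq1$ and hence $\E[Z_\eps]\preceq1$, the implied constant depending only on $\bM$. (Alternatively one may invoke \cref{lem:conddiam} with the degenerate choice $A=\{x\}$, for which the right-hand side equals $1$ by convention.)

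I do not expect a serious obstacle: this lemma merely assembles tools already in hand. The only point needing care is the opening reduction, as the inequality $\P(v\in\past_\F(e))\le\bP_v(\tau_x<\infty)$ uses both that the one-endedness theorem \cref{T:planeWUSF} has already been established and the standard description of Wilson's algorithm rooted at infinity — so this step genuinely relies on $G$ being transient and proper plane. The facts that $W_\eps$ is finite and short-lived and that $\CeffF(W_\eps\leftrightarrow\{x\})\preceq1$ are then immediate. (The substantive work is reserved for the matching second-moment estimate and the lower bounds, where the dependence of the constants on $r_\bbH(x)$ appears.)
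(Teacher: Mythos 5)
Your proposal is correct and follows essentially the same route as the paper: bound $\P(v\in\past_\F(e))$ by $\bP_v(\tau_x<\infty)$ via Wilson's algorithm rooted at infinity, then apply \cref{lem:escape0} and \cref{Lem:esacape2} with $B=\{x\}$ and bound the resulting free effective conductance by $c(x)\preceq 1$. The only cosmetic difference is that the paper takes $A=W_\eps\setminus\{x\}$ rather than $A=W_\eps$, which sidesteps any worry about $x$ lying in $W_\eps$; since $\ah(x)\preceq 1$ the extra term is harmless in any case.
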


\begin{proof}
If we generate $\F$ using Wilson's algorithm,
 starting with the vertex $v$, then $v$ is in $\past_\F(e)$ if and only if the loop-erased random walk from $v$ passes through $e=(x,y)$. In particular $\P(v \in \past_\F(e)) \leq \bP_v(\tau_{x}<\infty)$. Thus, the claim follows immediately from
\cref{lem:escape0,Lem:esacape2}, taking $A=W_\eps\setminus \{x\}$ and $B=\{x\}$, since $\CeffF(x\leftrightarrow V\setminus \{x\})\leq c(x)\preceq 1$.
\end{proof}

\begin{lem}  There exists a positive constant $\delta_5 =\delta_5(\bM,r_\bbH(x))$, increasing in $r_\bbH(x)$, such that  $\E[Z_\eps] \succeq_e 1$ for all  $\eps\leq \delta_5$. \label{lem:firstmoment}
\end{lem}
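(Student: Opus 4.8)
The plan is to express $\P(v\in\past_\F(e))$ through random-walk hitting probabilities using Wilson's algorithm started from $v$, then use reversibility to reduce $\E[Z_\eps]$ to a hyperbolic occupation time of the walk from $x$, and finally bound that occupation time below using the Diffusive Time Estimate.

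By \cref{T:planeWUSF}, every component of $\F$ is a one-ended tree, so $v\in\past_\F(e)$ if and only if the future of $v$ in $\F$ traverses the edge $e$. Generating $\F$ by Wilson's algorithm rooted at infinity started from $v$, the future of $v$ is the loop-erasure of a random walk $\langle X_n\rangle_{n\ge0}$ from $v$, and the vertex immediately following $x$ in a loop-erased path is the vertex the walk moves to right after its last visit to $x$. Applying the strong Markov property at $\tau_x$ gives
\[\P\bigl(v\in\past_\F(e)\bigr)=\bP_v(\tau_x<\infty)\,\rho_{x\to y}+\bP_v(\tau_y<\infty)\,\rho_{y\to x},\qquad \rho_{x\to y}:=\frac{\bP_x\bigl(X_1=y,\ \tau_x^+=\infty\bigr)}{\bP_x\bigl(\tau_x^+=\infty\bigr)},\]
with $\rho_{y\to x}$ defined symmetrically. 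Since the normalisation puts $z(x)=-r(x)$, $z(y)=r(y)$ and the Ring Lemma gives $r(x),r(y)<1/2$, both $x$ and $y$ lie outside $W_\eps$ once $\eps\le 1/2$, so discarding the second term yields $\E[Z_\eps]\ge \rho_{x\to y}\sum_{v\in W_\eps}\ah(v)\bP_v(\tau_x<\infty)$.

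A standard reversibility computation, using the boundedness of the conductances and the identity $\CeffW(x\leftrightarrow\infty)=c(x)\bP_x(\tau_x^+=\infty)$, gives
\[\sum_{v\in W_\eps}\ah(v)\bP_v(\tau_x<\infty)\asymp \CeffW(x\leftrightarrow\infty)\;\bE_x\Bigl[\sum_{n\ge0}\ah(X_n)\mathbbm{1}(X_n\in W_\eps)\Bigr].\]
The leading constants telescope, since $\rho_{x\to y}\cdot\CeffW(x\leftrightarrow\infty)=c(x)\bP_x(X_1=y,\tau_x^+=\infty)=c(e)\,\bP_y(\tau_x=\infty)$, so
\[\E[Z_\eps]\succeq \bP_y(\tau_x=\infty)\;\bE_x\Bigl[\sum_{n\ge0}\ah(X_n)\mathbbm{1}(X_n\in W_\eps)\Bigr].\]
The factor $\bP_y(\tau_x=\infty)$ is bounded below by a positive constant increasing in $r_\bbH(x)$ via \cref{lem:ballandcone}: since $z(x)$ and $z(y)$ lie on opposite sides of the origin on the real axis, the half-plane $\fH(y,r)$ with $r:=\min\{1,(r(x)+r(y))/\sigma(y)\}$ does not contain $z(x)$, and $r\asymp\min(r_\bbH(x),1)$ because $r(x)=\tfrac12\tanh r_\bbH(x)$; hence the walk from $y$ remains in $\fH(y,r)$ --- so never visits $x$ --- with probability at least $q_2(\bM,r)$, which is increasing in $r_\bbH(x)$.

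It remains to prove $\bE_x[\sum_{n\ge0}\ah(X_n)\mathbbm{1}(X_n\in W_\eps)]\succeq 1$ for all sufficiently small $\eps$, which is the crux of the lemma and the step I expect to be the main obstacle. Because $G$ is transient, the finite set $\{u:\sigma(u)>s\eps\}$ is a.s.\ left forever, so the walk from $x$ a.s.\ reaches a first vertex $u^\ast$ with $\sigma(u^\ast)\le s\eps$; by \eqref{eq:ringlem} applied to its predecessor, $\sigma(u^\ast)>s^2\eps$, so $u^\ast\in W_\eps$. Restarting at $u^\ast$ via the strong Markov property and running the walk until it exits the Euclidean ball of radius $\rho:=\tfrac12 C_1^{-1}\sigma(u^\ast)$ about $z(u^\ast)$, every vertex visited has $\sigma$-value in $[\tfrac12 s^2\eps,\tfrac32 s\eps]\subseteq[s^5\eps,\eps]$ since $s\le 1/2$; thus the whole excursion stays in $W_\eps$ --- this is exactly the role of the exponent $5$ in the definition of $W_\eps$, making it a hyperbolic band wide enough to contain a full diffusive excursion at the local scale. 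If $\rho\ge r(u^\ast)$, the Diffusive Time Estimate (\cref{Thm:Time}) and \eqref{eq:hypeuclcomp} bound the expected hyperbolic length of this excursion below by $\asymp\rho^2/\sigma(u^\ast)^2\asymp 1$; if instead $\rho<r(u^\ast)$ then $\ah(u^\ast)\asymp r_\bbH(u^\ast)^2\succeq 1$ already. Since $u^\ast$ exists a.s., this proves the bound for $\eps$ below a suitable threshold $\delta_5$ (which we allow to depend on $r_\bbH(x)$, though here it may in fact be taken to depend only on $\bM$). The ingredients --- transience, the Ring Lemma consequence \eqref{eq:ringlem}, and the Diffusive Time Estimate --- are all in hand; the real work is choosing the scales so that the excursion provably remains in $W_\eps$ while still having expected hyperbolic length of order $1$.
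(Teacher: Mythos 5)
Your Wilson's-algorithm reduction contains a genuine gap at the very first displayed formula. You write
\[
\P\bigl(v\in\past_\F(e)\bigr)=\bP_v(\tau_x<\infty)\,\rho_{x\to y}+\bP_v(\tau_y<\infty)\,\rho_{y\to x},
\]
justified by the observation that the vertex following $x$ on the loop-erasure is the vertex stepped to at the last visit to $x$. But this is only a \emph{necessary} condition for the edge $(x,y)$ to appear on the loop-erasure: after the walk last departs $x$ via $y$, the edge $(x,y)$ survives in the final loop-erased path only if the continuation never revisits any vertex lying on the loop-erased path \emph{strictly before} $x$ (if it revisits such a vertex, the erasure wipes out $x$ and everything after it). For instance a walk $v\to a\to y\to x\to y\to\infty$ (never returning to $\{v,a,y,x\}$) has its last visit to $x$ followed by a step to $y$, yet the loop-erasure is $v,a,y,\dots$ and does not traverse $e$. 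So the displayed formula is really a ``$\le$'', and discarding the second term gives a quantity that is neither an upper nor a lower bound on $\P(v\in\past_\F(e))$. Since the lemma asks for a lower bound on $\E[Z_\eps]$, the argument breaks at this step.

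This is not a cosmetic issue, because the decoupling that follows from it --- factoring out $\rho_{x\to y}\CeffW(x\leftrightarrow\infty)=c(e)\bP_y(\tau_x=\infty)$ and then bounding the \emph{unconditional} occupation time $\bE_x\bigl[\sum_n\ah(X_n)\mathbbm{1}(X_n\in W_\eps)\bigr]$ --- silently drops the disjointness requirement entirely. The event one must really intersect with the occupation time is the event that the reversed walk from $x$ out to $W_\eps$ and the forward walk through $y$ never meet, and $\bP_y(\tau_x=\infty)$ only asks the latter walk to avoid the single vertex $x$, which is much weaker. The paper's proof is built precisely around keeping this joint event: it uses the \emph{first} hitting time $\tau_x$ and the honest lower bound $\P(v\in\past_\F(e))\ge\bP_v(\tau_x<\infty,\,X_{\tau_x+1}=y,\,\langle X_n\rangle_{n\ge\tau_x+1}\text{ disjoint from }\langle X_n\rangle_{n\le\tau_x})$, reverses time, and then retains the disjointness event $\sC_m$ inside the expectation, finally bounding $\bP_x(\sC_{\tau_1},\,d_\C(X_{\tau_1},\{Y_n\})\ge 2\eps/s)$ from below by two applications of the half-plane estimate (\cref{lem:ballandcone}) that confine the two walks to disjoint half-planes determined by $e$. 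Your use of the half-plane estimate to bound $\bP_y(\tau_x=\infty)$ is in the right spirit, and your occupation-time argument via $u^\ast$, \eqref{eq:ringlem}, and the Diffusive Time Estimate is sound and close to the paper's $\tau_1,\tau_2$ argument --- but without keeping the full disjointness event coupled to the occupation time, the factorised lower bound is invalid. To repair it you would essentially be led back to the paper's decomposition.
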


\begin{proof}

As in the previous lemma, $v$ is in $\past_\F(e)$ if and only if the loop-erased random walk from $v$ passes through $e=(x,y)$.
In particular, we obtain the lower bound
\[\P(v \in \past_\F(e)) \geq
\bP_v\left(
\tau_{x}<\infty,\, X_{\tau_{x}+1}=y,\, \langle X_n \rangle_{n\geq \tau_x+1} \text{ disjoint from } \langle X_n \rangle_{n=0}^{\tau_x}
\right)
\]
and hence, decomposing according to the value of $\tau_{x}$,
\begin{align*}\E[Z_\eps]
&\geq \sum_{v \in W_\eps}\sum_{m\geq1}\ah(v)\bP_v\left(
\tau_x=m,\, X_{m+1}=y,\, \langle X_n \rangle_{n\geq m} \text{ disjoint from } \langle X_n \rangle_{n=0}^{m-1}
\right). \end{align*}
Letting $\langle Y_n\rangle_{n\geq0}$ be a random walk started at $x$ independent of $\langle X_n \rangle_{n\geq0}$ and reversing time yields that
\begin{align*}
\E[Z_\eps] \succeq \sum_{v \in W_\eps}\sum_{m\geq1}\ah(v)\bP_x\left(
X_m=v,\, \sC_m
\right)
= \bE_x\Bigg[\sum_{m\geq0}\ah(X_m)\mathbbm{1}\left(
X_m\in W_\eps,\,
\sC_m
\right)\Bigg],
\end{align*}
where $\sC_m$ is the event
\[\sC_m = \left\{Y_1=y,\, \text{ and } \langle X_n \rangle_{n =0}^m \text{ disjoint from } \langle Y_n \rangle_{n\geq 0}\right\}.\]
(Note that on the event $\sC_m$ the walk $\langle X_n\rangle_{n=0}^m$ does not return to $x$ after time $0$.)

Let $\tau_1$ be the first time that the random walk $\langle X_m \rangle_{m\geq0}$ visits $\{v \in V : s^3\eps \leq \sigma(v) \leq s^2 \eps\}$, which is finite a.s.\ since this set separates $x$ from $\infty$ by definition of $s$. Let $\tau_2$ be the first time $m$ after $\tau_1$ that $|z(X_m)-z(X_{\tau_1})|\geq C_1^{-1}s^3\eps$, where $C_1 =C_1(\bM)\geq 1$ is the constant from \cref{Thm:Time}. By the triangle inequality
$$ s^3 \eps - C_1^{-1}s^3 \eps \leq \sigma(X_{\tau_2-1}) \leq s^2\eps + C_1^{-1}s^3 \eps \, ,$$
and hence by \eqref{eq:ringlem} we get that
$$ s^4 \eps - C_1^{-1}s^4 \eps \leq \sigma(X_{\tau_2}) \leq s \eps + C_1^{-1} s^2 \eps \, .$$
We may assume that $s$ is sufficiently small  that  $X_m \in W_\eps$ for all $\tau_1 \leq m \leq \tau_2$. Therefore,
\begin{eqnarray*}
	\E[Z_\eps] &\succeq&
	\bE_x\Bigg[\sum_{m = \tau_1}^{\tau_2}\ah(X_m)\mathbbm{1}\left(\sC_{m} \right)\Bigg] \geq \bE_x\Bigg[\sum_{m = \tau_1}^{\tau_2}\ah(X_m)\mathbbm{1}\left(\sC_{\tau_2} \right)\Bigg] \\
	&\geq& \bE_x\Bigg[\sum_{m = \tau_1}^{\tau_2}\ah(X_m)\mathbbm{1}\left(d_\C(X_{\tau_1},\{Y_n:n\geq0\})\geq 2\eps/s \textrm{ and } \sC_{\tau_2} \right)\Bigg] \, ,
\end{eqnarray*}
where the second inequality follows since the events $\sC_m$ are decreasing and third inequality is trivial. We have that
$$ |z(X_{\tau_2-1}) - z(X_{\tau_1})| \leq C_1^{-1} s^3 \eps \, ,$$
and since $X_{\tau_1-1} \in W_\eps$ we have
$$ |z(X_{\tau_2}) - z(X_{\tau_2-1})| \leq \eps/s \, ,$$
by the definition of $s$. By the triangle inequality $|z(X_{\tau_2}) - z(X_{\tau_1})|\leq 2\eps/s$ since $s \leq 1/2$. We deduce that the events $\sC_{\tau_1}\cap\{d_\C(X_{\tau_1},\{Y_n:n\geq0\})\geq 2\eps/s\}$ and $\sC_{\tau_2}\cap\{d_\C(X_{\tau_1},\{Y_n:n\geq0\})\geq 2\eps/s\}$ are equal. Therefore,
\begin{align*}\E[Z_\eps]&\succeq \bE_x\left[\sum_{m= \tau_1}^{\tau_2} \ah(X_m)\mathbbm{1}\left(d_\C(X_{\tau_1},\{Y_n : n\geq 0\}) \geq 2\eps/s, \sC_{\tau_1} \right)\right]\\
&=\bE_x\left[\bE_x\left[\sum_{m= \tau_1}^{\tau_2} \ah(X_m) \bigmid X_{\tau_1}\right]\mathbbm{1}\left(d_\C(X_{\tau_1},\{Y_n : n\geq 0\}) \geq 2\eps/s, \sC_{\tau_1} \right)\right].
\end{align*}
Applying  \eqref{eq:hypeuclcomp} and the Diffusive Time Estimate (\cref{Thm:Time}), we obtain that
\begin{align*}
	\bE_x\left[\sum_{m= \tau_1}^{\tau_2} \ah(X_m) \bigmid X_{\tau_1}\right]
	&\asymp
	\eps^{-2}\bE_x\left[\sum_{m= \tau_1}^{\tau_2} r(X_m)^2 \bigmid X_{\tau_1}\right]
	\asymp 1,
\end{align*}
and hence
\begin{align}
	 \E[Z_\eps] &\succeq \bP_{x}\Bigl(d_\C(X_{\tau_1},\{Y_n : n\geq 0\}) \geq 2\eps/s, \sC_{\tau_1}\Bigr).
	\label{eq:firstmoment1}
\end{align}

We now set
$$ \delta_5 = {s(r(x) + r(y)) \over 4 } \, ,$$
and note that as usual by Ring Lemma $\delta_5 \asymp r_{\bbH}(x)$. Since the degrees are bounded by $\bM$, the probability that $Y_1=y$ is at least $\bM^{-1}$ and we may assume this indeed occurred. We now apply \cref{lem:ballandcone} with $r_1={r(x) \over 2 \sigma(x)}$ (note that by the Ring Lemma $\sigma(x) \succeq 1$ and hence $r_1 \succeq r_{\bbH(x)}$) and establish that with probability at least $q_2$ the walker $\{X_n\}_{n \geq 0}$ is confined to the half plane $\fH(x,r_1)$ (which is just $\{ (x,y) : x \leq -r(x)/2 \}$). Similarly, we put $r_2={r(y) \over 2 \sigma(y)}$ and by \cref{lem:ballandcone} we get that with probability at least $q_2$ the walker $\{Y_n\}_{n \geq 1}$ starting from $y$ is confined to the halfplace $\fH(y,r_2)$ (which is $\{ (x,y) : x \geq r(y)/2\}$). If these two events occur, then the distance between $z(X_n)$ and $z(Y_m)$ is at least $(r(x)+r(y))/2$ for any $n\geq 0$ and $m\geq 1$. By our choice of $\delta_5$ we deduce that for any $\eps \leq \delta_5$
\[\bP_{x}\Bigl(d_\C\bigl(X_{\tau_1},\{Y_n : n\geq 0\}\bigr) \geq 2\eps/s, \sC_{\tau_1}\Bigr) \succeq \bM^{-1} q_2\bigl(\bM,r_\bbH(x)\bigr)^2 \succeq_e 1 \, ,\]
concluding the proof. \qedhere

\end{proof}

We now wish to estimate the second moment of $Z_\eps$. Doing so amounts to estimating the conditional first moment of $Z_\eps$ given that some specified vertex of $W_\eps$ is contained in the past of $e$. 
We will show that this conditional first moment is $O(\log(1/\eps))$ by decomposing the annulus $W_\eps$ into $O(\log(1/\eps))$ many pieces, the sets $U_\eps^m$ below, and then proving that each such piece contributes at most a constant to the conditional first moment.  Using the bounds established in \cref{subsec:estimates} will reduce the problem of bounding the contribution to the conditional first moment of each piece to geometric estimates that can be proven by elementary trigonometry.

We now begin to carry out this program.
For each $\eps>0$ and $m\in \Z \setminus \{0\}$, we define
\[U^m_\eps = \left\{z \in \D: 1-\eps \leq |z| \leq 1- s^3\eps \text{ and } \sgn(m)\frac{4^{|m|}}{5^{|m|}}\pi\leq \arg z \leq \sgn(m)\frac{4^{|m|-1}}{5^{|m|-1}}\pi   \right\}\]
and define $U^m_\eps(\theta)$ to be the rotated set $e^{i\theta}U^m_\eps$.

\begin{lem}\label{lem:trig} There exist universal constants $\delta_3,\delta_4>0$ and $k<\infty$ such that
\[d_\C\left(U^m_\eps(\theta), \{re^{i\theta}: r \geq0\}\right) \geq (2+\delta_3)\diam_\C\left(U^m_\eps(\theta)\right)\] for all $\theta\in [-\pi,\pi]$, $\eps \leq \delta_4$ and $|m| \leq \log_{5/4}(1/\eps)-k$.
\end{lem}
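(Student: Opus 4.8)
The plan is to use the rotational and reflectional symmetry of the configuration to reduce the statement to a single planar trigonometric inequality for one annular sector, and then to establish that inequality by bounding the diameter of the sector from above and its distance to the positive real axis from below; the hypothesis $|m|\le\log_{5/4}(1/\eps)-k$ will be used precisely to ensure that the radial thickness of the sector, which is of order $\eps$, is negligible compared with the angular distance from the sector to the axis.

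First I would reduce to $\theta=0$ and $m\ge1$. Rotation of $\C$ by $e^{i\theta}$ is an isometry carrying $U^m_\eps$ to $U^m_\eps(\theta)$ and the half-line $[0,\infty)$ to $\{re^{i\theta}:r\ge0\}$, and complex conjugation is an isometry fixing $[0,\infty)$ and carrying $U^{-m}_\eps$ to $U^m_\eps$; since $d_\C$ and $\diam_\C$ are isometry invariant, it suffices to produce universal constants $\delta_3,\delta_4>0$, $k<\infty$ with
\[ d_\C\bigl(U^m_\eps,[0,\infty)\bigr)\ \ge\ (2+\delta_3)\,\diam_\C\bigl(U^m_\eps\bigr) \]
whenever $1\le m\le\log_{5/4}(1/\eps)-k$ and $\eps\le\delta_4$. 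Writing $\alpha_m=(4/5)^m\pi$, the set $U^m_\eps$ equals $\{\rho e^{i\phi}:1-\eps\le\rho\le1-s^3\eps,\ \alpha_m\le\phi\le\frac54\alpha_m\}$, an annular sector of radial thickness at most $\eps$ and angular aperture $\frac14\alpha_m$, with $\frac54\alpha_m\le\pi$ because $m\ge1$.

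Now for the two estimates. Any two points of $U^m_\eps$ differ by at most the radial thickness plus the chord of a circle of radius $\le1$ subtending the aperture, so $\diam_\C(U^m_\eps)\le\eps+2\sin(\frac18\alpha_m)\le\eps+\frac14\alpha_m$; for $m$ in a bounded range this can be sharpened using the exact outer-corner chord $2(1-s^3\eps)\sin(\frac18\alpha_m)$ together with a law-of-cosines bound for the long diagonal of the sector. On the other hand the distance of $\rho e^{i\phi}$ to $[0,\infty)$ is $\rho\sin\phi$ for $\phi\le\pi/2$ and $\rho$ for $\pi/2\le\phi\le\pi$; minimising first over $\rho\ge1-\eps$ and then over $\phi\in[\alpha_m,\frac54\alpha_m]$ gives the exact value $d_\C(U^m_\eps,[0,\infty))=(1-\eps)\sin\bigl(\min\{\alpha_m,\pi/2\}\bigr)\ge(1-\eps)\frac2\pi\min\{\alpha_m,\pi/2\}$.

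Finally I would combine these. For $m$ in a fixed finite range — equivalently $\alpha_m$ bounded below — the displayed inequality reduces, via the two estimates above, to an elementary trigonometric inequality which holds with a universal margin once $\eps$ is small; this fixes a first upper bound on $\delta_4$ and a value of $\delta_3$. For the remaining $m$ we have $\alpha_m\le\pi/2$ small, and
\[ \frac{d_\C(U^m_\eps,[0,\infty))}{\diam_\C(U^m_\eps)}\ \ge\ \frac{(1-\eps)\,\frac2\pi\,\alpha_m}{\eps+\frac14\alpha_m}, \]
whose right-hand side tends to $8/\pi>2$ as $\eps/\alpha_m\to0$ and exceeds $2+\delta_3$ as soon as $\alpha_m\ge c\,\eps$ for a suitable universal $c$; since $\alpha_m=(4/5)^m\pi$, this last bound holds throughout the range $m\le\log_{5/4}(1/\eps)-k$ provided $k=k(c)$ is taken large enough. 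The step I expect to require the most care is this final combination: the hypotheses $\eps\le\delta_4$ and $m\le\log_{5/4}(1/\eps)-k$ are there precisely to guarantee that the $O(\eps)$ radial thickness of the shell is dominated by the angular gap $\alpha_m$ to the real axis throughout the allowed range of $m$, and one must in addition verify that the finitely many small-$|m|$ sectors are all consistent with a single positive choice of $\delta_3$.
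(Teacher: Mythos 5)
Your plan is the same as the paper's: bound $\diam_\C(U^m_\eps)$ above by the aperture $\tfrac14\alpha_m$ plus the radial thickness, bound $d_\C(U^m_\eps,\{re^{i\theta}:r\ge0\})$ below by an exact trigonometric formula, and combine using $\sin t\ge 2t/\pi$. Your closed form $d_\C(U^m_\eps,[0,\infty))=(1-\eps)\sin(\min\{\alpha_m,\pi/2\})$ is a tidy packaging of the paper's two-case split at $|m|\le 3$ versus $|m|\ge 4$.

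The difficulty is the step you defer — that ``for $m$ in a fixed finite range the inequality holds with a universal margin once $\eps$ is small'' — which you do not actually verify, and which in fact \emph{fails} at $|m|=1$. For $m=1$ the aperture of $U^1_\eps$ is $\pi/5$, so the outer corners at arguments $\tfrac45\pi$ and $\pi$ already give $\diam_\C(U^1_\eps)\ge 2(1-\eps)\sin(\pi/10)$, while every point of $U^1_\eps$ has argument at least $\tfrac45\pi>\pi/2$, so $d_\C(U^1_\eps,[0,\infty))=1-\eps$ exactly. Hence
\[
\frac{d_\C(U^1_\eps,[0,\infty))}{\diam_\C(U^1_\eps)}\ \le\ \frac{1}{2\sin(\pi/10)}\ \approx\ 1.618\ <\ 2
\]
for every $\eps>0$, so no choice of $\delta_3>0$, $\delta_4>0$, $k<\infty$ makes the stated inequality hold at $m=1$; plugging $m=1$ into your own formulas would have exposed this. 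The paper's proof has the same flaw: it applies $\sin t\ge 2t/\pi$ with $t=\alpha_m$ even when $\alpha_m>\pi/2$, which silently replaces the true distance $1-\eps$ by the larger quantity $(1-\eps)\tfrac2\pi\alpha_m$ and yields the incorrect bound $\tfrac{\pi}{8(1-\eps)}+o(1)$ on the ratio $\diam_\C/d_\C$ for $|m|\le 3$. (For $|m|=2$ the limiting ratio $d_\C/\diam_\C$ is about $2.01$, so only a vanishingly small $\delta_3$ survives; for $|m|\ge3$ there is ample room.) Both your argument and the paper's therefore need a repair at $m=1$: either use a finer angular decomposition, say with a base $\beta>4/5$ so the aperture-to-angle ratio $(1-\beta)/\beta$ is strictly less than what the factor $2+\delta_3$ tolerates, or restrict the lemma to $|m|\ge 2$ and handle the two $|m|=1$ sectors by a separate, ad hoc estimate where the lemma is invoked in \cref{lem:secondmoment}.
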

The constants here are more important than usual since we will later need to estimate  the difference $\frac{1}{2}d_\C\left(U^m_\eps(\theta), \{re^{i\theta}: r \geq0\}\right) -  \diam_\C\left(U^m_\eps(\theta)\right)$.

\begin{figure}
\centering
\includegraphics[height=0.325\textwidth]{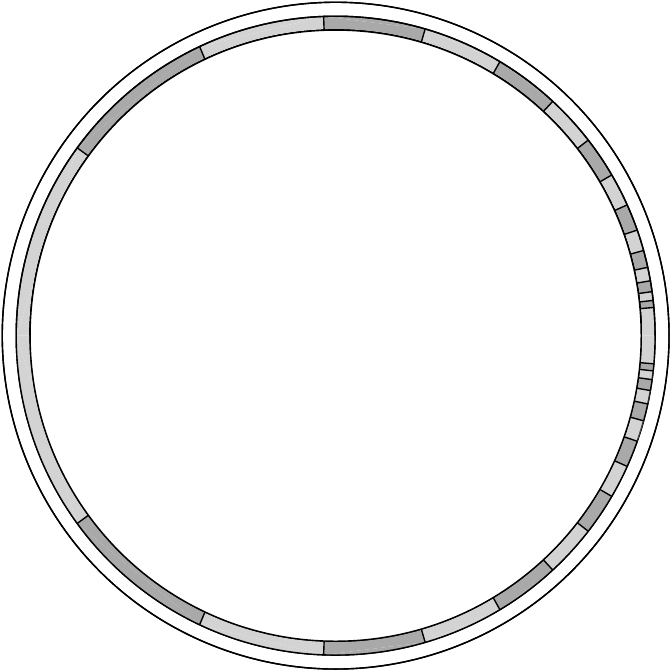}
\hspace{1cm}
\includegraphics[height=0.325\textwidth]{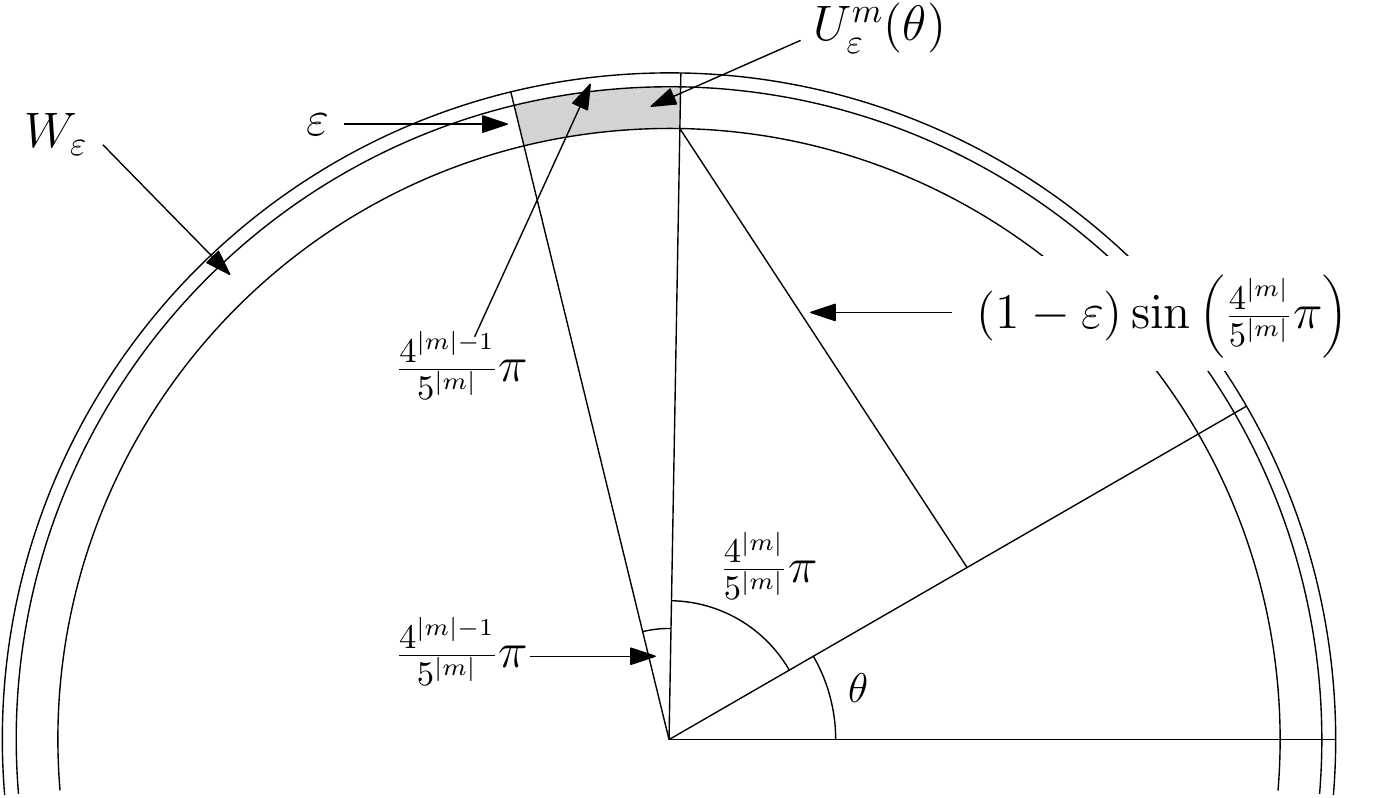}
\caption{Left: The decomposition of $W_\eps$ that it used to bound the second moment of $Z_\eps$. Each shaded region contributes at most a constant to the conditional first moment. Right: Illustration of the trigonometric calculations used to prove \cref{lem:trig}.}
\end{figure}

\begin{proof}

We begin by calculating, using elementary trigonometry, that
\[\diam_\C(U^m_\eps(\theta)) \leq \frac{4^{|m|-1}}{5^{|m|}}\pi+\eps\]
and
\[d_\C(U^m_\eps(\theta), \{re^{i\theta}:r\geq0\}) = \left\{
\begin{array}{ll} 1-\eps & |m| \leq 3\\
(1-\eps)\sin\left(\frac{4^{|m|}}{5^{|m|}}\pi\right) & |m| \geq 4. \end{array} \right.\]
By concavity of the function $\sin(t)$ on $[0,\pi]$, we have that $\sin(t)\geq 2t/\pi$ for all $t\in [0,\pi/2]$, and in particular
\begin{align*} \frac{\diam_\C(U^m_\eps(\theta))}{d_\C(U^m_\eps(\theta), \{re^{i\theta}:r\geq0\})}
 &\leq \frac{1}{8(1-\eps)}\pi +\frac{5^{|m|}\eps}{2\cdot 4^{|m|}(1-\eps)} \leq \frac{1}{8(1-\delta_2)}\pi + \frac{4^k\delta_4}{2\cdot 5^k (1-\delta_4)}
\end{align*}
 for all $\eps\leq\delta_4$ and $|m|\leq \log_{5/4}(1/\eps)-k$. This upper bound  is less than $\pi/7<1/2$ when $\delta_4$ is sufficiently small and $k$ is sufficiently large.
  \qedhere


\end{proof}
\medskip

\begin{lem} \label{lem:secondmoment} $\E[Z_\eps^2] \preceq \E[Z_\eps]\log(1/\eps)$ for all $0<\eps\leq \delta_4$, where $\delta_4$ is the constant from \cref{lem:trig}.
\end{lem}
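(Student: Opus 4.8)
The plan is to estimate the second moment by writing $\E[Z_\eps^2] = \sum_{u,v \in W_\eps} \ah(u)\ah(v)\,\P(u,v \in \past_\F(e))$ and, for each fixed $v$, bounding the conditional sum $\sum_{u}\ah(u)\P(u \in \past_\F(e) \mid v \in \past_\F(e))$ by $O(\log(1/\eps))$ uniformly in $v$. Summing this against $\ah(v)\P(v \in \past_\F(e))$ and using $\sum_v \ah(v)\P(v\in\past_\F(e)) = \E[Z_\eps]$ then gives the claim. So the heart of the matter is the conditional first-moment bound.

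To carry this out, fix $v \in W_\eps$ and generate $\F$ using Wilson's algorithm rooted at infinity, starting the algorithm by running loop-erased walks from $x$, then $y$, then $v$, then from an enumeration of the remaining vertices. On the event $\{v \in \past_\F(e)\}$, the loop-erased walk from $v$ passes through the edge $e=(x,y)$; call $\gamma$ the resulting loop-erased path from $v$ to $x$ (run through $e$), and note that $\gamma$, together with the branches already attached, forms part of the tree. For another vertex $u$ to lie in $\past_\F(e)$, the walk from $u$ must hit $x$ (equivalently, hit the already-drawn part of the tree at a vertex on the "correct side" of $e$) before escaping to infinity; in particular $\P(u \in \past_\F(e)\mid \cF) \le \bP_u(\tau_{\Upsilon}<\infty)$ where $\Upsilon$ is the part of the tree revealed so far and $\cF$ the corresponding $\sigma$-algebra. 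Using the Markov property of Wilson's algorithm, the conditional first moment is then dominated by $\sum_{u\in W_\eps}\ah(u)\,\bP_u(\tau_{\Upsilon}<\infty)$ averaged over the conditional law of $\Upsilon$; since $\Upsilon \supseteq \{x\}$ always and $x$ is ``deep inside'' the disc, the dangerous contributions come from $u$ near $v$ and from $u$ separated from $x$ only by a small annular gap.

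The key geometric decomposition is to split $W_\eps \setminus (\text{bounded set near }v)$ into the rotated pieces $U^m_\eps(\theta)$ for $\theta = \arg z(v)$ and $|m| \lesssim \log_{5/4}(1/\eps)$, of which there are $O(\log(1/\eps))$, plus a controlled remainder near $v$ and near the antipode. For each piece, \cref{lem:trig} ensures $d_\C(U^m_\eps(\theta), \text{ray through }v) \ge (2+\delta_3)\diam_\C(U^m_\eps(\theta))$, so by \cref{lem:conddiam} the free effective conductance from (the vertex set corresponding to) $U^m_\eps(\theta)$ to the relevant part of $\Upsilon$ is $\preceq 1$; combining \cref{lem:escape0} (that $W_\eps$, hence each piece, is $C_2$-short-lived) with \cref{Lem:esacape2} shows each piece contributes $\preceq \ah(v)\P(v\in\past_\F(e))$ to $\E[Z_\eps^2]$. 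For the $O(1)$-diameter block of $W_\eps$ closest to $v$ and the block near $-z(v)/|z(v)|$, one bounds the contribution directly: the near-$v$ block has only boundedly many vertices (by the Ring Lemma and area considerations, since all their circles lie in a ball of radius $\asymp \eps$, so $\sum \ah \asymp 1$), and the antipodal block is handled by noting the walk from $u$ there must traverse an annulus to reach $x$, giving a conductance bound again via \cref{lem:conddiam}. Summing the $O(\log(1/\eps))$ contributions of size $\preceq \ah(v)\P(v\in\past_\F(e))$ yields $\sum_v \ah(v)\P(v\in\past_\F(e))\cdot O(\log(1/\eps)) = \E[Z_\eps]\,O(\log(1/\eps))$.

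The main obstacle is making precise the claim that, conditioned on $v \in \past_\F(e)$, the reversed-walk / Wilson's-algorithm structure lets us treat ``$u \in \past_\F(e)$'' as essentially ``the walk from $u$ hits $\{x\}\cup\gamma$''—one must be careful that conditioning on $v$'s loop-erased walk only adds a path joining $v$ to $x$ and does not change the relevant hitting estimates beyond constants, and that the short-lived and conductance estimates of \cref{subsec:estimates}, which were stated for the unconditioned walk on $G$, still apply after this conditioning (they do, since hitting $\Upsilon \supseteq \{x\}$ only makes the target larger, and $\CeffF$ is monotone). The trigonometric bookkeeping in splitting $W_\eps$ into the pieces $U^m_\eps(\theta)$ and controlling the two exceptional blocks is routine given \cref{lem:trig,lem:conddiam}.
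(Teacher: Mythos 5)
The outer shell of your argument (expand $\E[Z_\eps^2]$, reduce to a conditional first-moment bound of order $\log(1/\eps)$, split $W_\eps$ into $O(\log(1/\eps))$ annular sectors $U^m_\eps(\theta)$, and invoke \cref{lem:escape0}, \cref{Lem:esacape2}, \cref{lem:conddiam}, \cref{lem:trig}) matches the paper's. But there is a genuine gap at the step where you replace the event $\{u\in\past_\F(e)\}$ by ``the walk from $u$ hits the revealed tree $\Upsilon$'' and then try to apply \cref{lem:conddiam} sector by sector. The revealed set $\Upsilon$ (after running Wilson's algorithm from $x,y,v$) contains the loop-erased walk from $x$ to $\infty$, which exits through $W_\eps$ at a random angle unrelated to $\arg z(v)$, and it contains the loop-erased path $\gamma$ from $v$ to $e$, which can wander across $W_\eps$. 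Consequently $d_\C\bigl(S^m_\eps(\theta),\Upsilon\bigr)$ can be zero, and the conductance bound from \cref{lem:conddiam} gives nothing. This is not a technical nuisance: a single sector $S^m_\eps$ can have hyperbolic area (hence $\sum_{u\in S^m_\eps}\ah(u)$) of order $1/\eps$, so if $\Upsilon$ meets it, the contribution of that one sector to $\sum_u\ah(u)\bP_u(\tau_\Upsilon<\infty)$ can blow up. Your appeal to ``the relevant part of $\Upsilon$'' does not fix this, because there is no a priori way to discard the part of $\Upsilon$ that happens to land in a given sector.

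The missing idea is the paper's meeting-vertex symmetrization. If $u,v\in\past_\F(e)$, let $w(u,v)$ be the first vertex at which the two paths to $e$ in $\F$ meet; by swapping the roles of $u$ and $v$ one may insert a factor of $2$ and assume $w(u,v)\in H(u,v)$, the Euclidean half-space of points closer to $u$. Running Wilson's algorithm from $u$ and then $v$, this forces the walk \emph{from $v$} to hit $H(u,v)$, so $\E[Z_\eps^2]\le 2\sum_u\ah(u)\P(u\in\past_\F(e))\sum_v\ah(v)\bP_v(\tau_{H(u,v)}<\infty)$. The decisive point is that $H(u,v)$, unlike $\Upsilon$, is a \emph{deterministic} set that is far from $v$ by construction; this is exactly the separation that \cref{lem:trig} and \cref{lem:conddiam} quantify when $v$ ranges over $S^m_\eps(\theta)$ with $\theta=\arg z(u)$. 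Without this symmetrization the hitting target lacks the needed separation from the summation variable, and the sector-by-sector conductance bound does not close.
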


Again, we remark that $\log(1/\eps)$ is of the same order as the hyperbolic distance between $e$ and $W_\eps$.

\begin{proof}
For each two vertices $u$ and $v$ in G, let
\[H(u,v):=\left\{ w \in V : |z(w)-z(u)| \leq |z(w)-z(v)|\right\}\]
be the set of vertices closer to $u$ than to $v$ with respect to the Euclidean metric on the circle packing.
Expand $\E[Z_\eps^2]$ as the sum
\[\E[Z_\eps^2] = \sum_{u,v \in W_\eps}\ah(u)\ah(v)\P\left(u,v \in \past_\F(e)\right).\]
If $u$ and $v$ are both in the past of $e$ in $\F$, let $w(u,v)$ be the first vertex at which the unique simple paths in $\F$ from $u$ to $e$ and from $v$ to $e$ meet. Then
\[ \E[Z_\eps^2] \leq 2\sum_{u,v \in W_\eps}\ah(u)\ah(v)\P\left(
u,v \in \past_\F(e) \text{ and }
w(u,v) \in H(u,v)
\right). \]
Consider generating $\F$ using Wilson's algorithm rooted at infinity, starting first with $u$ and then $v$. In order for $u$ and $v$ both to be in the past of $e$ and for $w(u,v)$ to be in $H(u,v)$, we must have that the simple random walk from $v$ hits $H(u,v)$,
so that
\begin{equation}\label{eq:Hestimate} \E[Z_\eps^2] \leq 2 \sum_{u\in  W_\eps} \ah(u)\P(u \in \past_\F(e))\sum_{v\in W_\eps}\ah(v)\bP_{v}(\tau_{H(u,v)}<\infty).\end{equation}
Thus, it suffices to prove that
\begin{equation}\label{eq:secondmomentsufficient} \sum_{v \in  W_\eps} \ah(v)\bP_{v }(\tau_{H(u,v)}<\infty) \preceq \log(1/\eps)
\end{equation}
for all $\eps \leq \delta_4$ and $u \in  W_\eps$.
\medskip

Recall the definition of $U^m_\eps$ from \cref{lem:trig}.
Let $k$ be the universal constant from \cref{lem:trig}, let $\ell(\eps)=\lceil\log_{5/4}(\eps)-k\rceil$, and let $\theta=\arg(u)$. For each $m\in \Z$ with $1\leq |m| \leq \ell(\eps)$, let $S^m_\eps(\theta)$ be the set of vertices whose centres are contained in $U^m_\eps(\theta)$, and let
\[
	S^0_\eps=S^0_\eps(u):=\left\{v \in  W_\eps : \left|\arg \frac{z(v)}{z(u)}\right|
	\leq \frac{5^{k}\pi\eps}{4^{k}} \right\}.
\]
Then
\begin{align}\sum_{v \in  W_\eps} \ah(v)\bP_{v }(\tau_{H(u,v)}<\infty)
&= \sum_{m=-\ell(\eps)}^{\ell(\eps)}\sum_{v \in S^m_\eps} \ah(v)\bP_{v }(\tau_{H(u,v)}<\infty)\nonumber\\
 &\leq \sum_{m=-\ell(\eps)}^{\ell(\eps)}\sum_{v \in S^m_\eps} \ah(v)\bP_{v }(\tau_{H(u,S^m_\eps)}<\infty) \, ,\label{eq:secondmomentU} \end{align}
where for a set $A\subset V$ and a vertex $u$ we denote $H(u,A) = \cup_{v\in A}H(u,v)$.

Since $S^m_\eps$ is contained in $W_\eps$, it is $C$-short-lived for some $C=C(\mathbf{M})$ by \cref{lem:escape0}.
Thus, applying \cref{Lem:esacape2,lem:conddiam} 
 yields that
\begin{equation}\label{eq:boundsum}\sum_{v\in S^m_\eps }\ah(v)\bP_{v}(\tau_{H(u,S^m_\eps)}<\infty)
\preceq \frac{\diam_\C(S^m_\eps)^2}{\min \big ( \diam(S^m_\eps), d_\C(S^m_\eps, H(u, S^m_\eps)) \big )^2 } \,\,  . \end{equation}
Our goal is to show that the last ratio is $\preceq 1$. If the minimum in the denominator is attained on the first term, then we are done. Otherwise, we bound
\begin{eqnarray*}  d_\C(S^m_\eps, H(u, S^m_\eps)) &=& \inf_{v_1,v_2 \in S^m_\eps} d_\C(v_1, H(u,v_2)) \nonumber \\&\geq& \inf_{v_1 \in S^m_\eps} d_\C(v_1, H(u,v_1)) - \sup_{v_1,v_2\in S^m_\eps} d_\C(v_1,v_2) \nonumber \\ &\geq& \frac{1}{2}d_\C\left(u,S^m_\eps\right) -  \diam_\C\left(S^m_\eps\right) \, .\end{eqnarray*}
%
%
%
Since $S^m_\eps \subset U^m_\eps$ have that $\diam_\C(S^m_\eps) \leq \diam_\C(U^m_\eps)$ and furthermore $d_\C\left(u,S^m_\eps\right) \geq  d_\C(U^m_\eps(\theta), \{re^{i\theta}: r \geq0\})$. Thus \cref{lem:trig} implies that for
$\eps \leq \delta_4$ and $1 \leq |m| \leq \ell(\eps)$ we have that
$$ d_\C(S^m_\eps, H(u, S^m_\eps)) \succeq \diam_\C(U^m_\eps) \, .$$
We get that the right hand side of \eqref{eq:boundsum} is $\preceq 1$. We put this in \eqref{eq:secondmomentU} and use the fact that $\sum_{v \in S_\eps^0}\ah(v)\preceq 1$ to get \eqref{eq:secondmomentsufficient} and conclude. \end{proof}

We now have everything we need in place to prove \cref{T:Wtailcp}.

\begin{proof}[Proof of \cref{T:Wtailcp}] Denote by $\sR^e_\eps$ the event that $\past_\F(e)$ contains a vertex whose center is within Euclidean distance $\eps$ of the unit circle, and let $\sD^e_R$ be the event that $\diam_\bbH(\past_\F(e))\geq R$. Then, letting $\eps(R)=1-\tanh(R/2)$, we have that $\sR^e_{\eps(R)/2}\subseteq \sD^e_R \subseteq \sR^e_{\eps(R)}$.  Thus, to prove \cref{T:Wtailcp}, it suffices to prove that \[\log(1/\eps)^{-1}\preceq_e \P(\sR^e_\eps) \preceq \log(1/\eps)^{-1}\]
for all $\eps\leq 1/2$.

As discussed at the beginning of \cref{Sec:mainproof}, the proof of \cref{T:planeWUSF}, and in particular of the estimate \eqref{eq:upperbound}, adapts immediately to yield the desired upper bound when the analysis there is carried out using the double circle packing $(P,P^\dagger)$ of $G$ rather than the circle packing of $T(G)$. Indeed,  we replace $P(f)$ with $P^\dagger(f)$ in the definition of $V_z(r,r')$ above \cref{lem:uniformdisjointannuli} and observe that part (2) of \cref{lem:uniformdisjointannuli} holds by the Ring Lemma \ref{T:Ring} since $G$ has bounded degrees. Furthermore, the proof of \cref{lem: flow} applies as written once we replace $P(f)$ with $P^\dagger(f)$. The proof of \cref{T:planeWUSF} now follows almost verbatim; we only note that the procedure of diverting the random path from $T$ to $G$ works seamlessly in the double circle packing setting and \eqref{eq:upperbound} is obtained. 

The remainder of this proof is devoted to proving the lower bound. Towards that aim, note that by the definition of $W_\eps$, the random variable $Z_\eps$ defined in \eqref{eq:defzeps} is positive if and only if $\sR^e_\eps$ occurs.
Let $\delta_4$ be the constant from \cref{lem:trig} and let $\delta_5=\delta_5(\bM,r_\bbH(x))$ be the constant from \cref{lem:firstmoment}.
The lower bound of \cref{T:Wtailcp} now follows from  \cref{lem:firstmoment,lem:secondmoment} together with the Cauchy-Schwartz inequality, which imply that
\begin{equation*} \P(Z_\eps >0) \geq \frac{\E[Z_\eps]^2}{\E[Z^2_\eps]} \succeq_e \frac{1}{\log(1/\eps)} \end{equation*}
for all $\eps \leq \min\{\delta_4,\delta_5\}$. Since $\P(Z_\eps>0)$ is an increasing function of $\eps$ and $\min\{\delta_4,\delta_5\}$ is an increasing function of $r_\bbH(x)$, it follows that
\begin{equation*} \P(Z_\eps >0) \geq \P\bigl(Z_{\min\{\delta_4,\delta_5\}\eps}>0\bigr) \succeq_e \frac{1}{\log(1/\eps)} \end{equation*}
for all $\eps\leq 1/2$.
\end{proof}

\begin{proof}[Proof of \cref{Thm:Wareacp}]
We continue to use the notation from the proof of \cref{T:Wtailcp}.
  We first prove the upper bound.
 Let $R \geq 1$. Applying Markov's inequality and \cref{lem:firstmomentupper} we obtain that
\begin{align}
\P\bigg(\sum_{k=0}^{R^{1/2}}  Z_{s^{-3k}} \geq R \bigg)
& \leq \frac{1}{R}\E\bigg[\sum_{k=0}^{R^{1/2}}  Z_{s^{-3k}}\bigg] \preceq  R^{-1/2},\label{eq:areatri2}
\end{align}
and hence
\begin{align}\P\bigg(\area_\bbH(\past_\F(e)) \geq R\bigg) &\leq \P\bigg(\sum_{k=0}^{R^{1/2}}  Z_{s^{-3k}} \geq R \bigg) +  \P\bigg(\sum^\infty_{k=R^{1/2}}Z_{s^{-3k}} > 0\bigg) \preceq R^{-1/2}, \label{eq:areatri1}
\end{align}
where the second inequality follows from \cref{T:Wtailcp}  and \eqref{eq:areatri2}.

We now obtain a matching lower bound.
Let $\delta_4$ be the constant from \cref{lem:trig} and let $\delta_5=\delta_5(\bM,r_\bbH(x))$ be the constant from \cref{lem:firstmoment}, and let \[R_0=R_0(\bM,r_\bbH(x))= (4/9)\log_{1/s}^2(1/\min\{\delta_4,\delta_5\}),\] so that that $s^{-3R^{1/2}/2}$ is less than both $\delta_4$ and $\delta_5$ for all $R \geq R_0$. Let $R\geq R_0$ and let
\[W =  \bigcup_{k=\frac{1}{2}R^{1/2}}^{R^{1/2}} W_{s^{-3k}}
\qquad
\text{
 and let
 }
 \qquad
 Z=\sum_{k=\frac{1}{2}R^{1/2}}^{R^{1/2}}Z_{s^{-3k}}. \]

\noindent
The argument used to derive \eqref{eq:Hestimate} is imitated and yields that
\begin{equation}\label{eq:areaexpand} \E[Z^2] \leq 2 \sum_{u\in W}\ah(u)\P(u\in \past_\F(e))\sum_{v\in W}\ah(v)\bP_{v}(\tau_{H(u,v)}<\infty).\end{equation}

Let $u \in W$ and let $\theta=\arg(z(u))$. Let $\ell(s^{-3k})$ and let the sets $S^m_{s^{-3k}}(\theta)$ be defined as in the proof of \cref{lem:secondmoment}. Then,
\begin{align}
\sum_{v\in W}\ah(v)\bP_{v}(\tau_{H(u,v)}<\infty) \leq \sum_{k=\frac{1}{2}R^{1/2}}^{R^{1/2}} \sum_{m=- \ell(s^{-3k})}^{\ell(s^{-3k})}\left[\sum_{v\in S^m_\eps}\ah(v)\bP(\tau_{H(u,S^m_\eps)}<\infty)\right]. \label{eq:bracket}
\end{align}
The arguments used to bound the sum on the left-hand side of \eqref{eq:boundsum} in the proof of \cref{lem:secondmoment} also yield that the sums appearing in the square brackets on the right-hand side of \eqref{eq:bracket}  are bounded above by a constant depending only on $\bM$. It follows that
\begin{equation}
\label{eq:old}
\sum_{v\in W}\ah(v)\bP_{v}(\tau_{H(u,v)}<\infty) \preceq R\end{equation}
for all $u\in W$. Putting \eqref{eq:areaexpand} and \eqref{eq:old} together yields that $\E[Z^2] \preceq R\E[Z]$ as claimed.

Next, \cref{lem:firstmoment} implies that $\E[Z]\succeq_e R^{1/2}$, while \cref{T:Wtailcp} implies that $\P(Z >0) \preceq R^{-1/2}$. Thus, there exists a positive constant $C=C(\bM,r_\bbH(x))$ such that $\E[Z \mid Z>0]\geq CR$.
Applying the second moment estimate above, the Paley-Zigmund Inequality implies that
  \begin{equation}
  \P\left(Z \geq \frac{C}{2}R \;\middle|\;  Z > 0\right) \geq \frac{\E[Z \mid Z>0]^2}{4\E[Z^2 \mid Z>0]} = \frac{\E[Z]^2}{4\E[Z^2]\P(Z>0)}\succeq_e 1.\label{eq:areafinal}
  \end{equation}
Combining \eqref{eq:areafinal} with the lower bound of \cref{T:Wtailcp} yields that
\[\P\left(\area_\bbH(\past_\F(e)) \geq \frac{C}{2}R \right) \geq \P\left(Z \geq \frac{C}{2}R\right) \succeq_e R^{-1/2}\]
for all $R\geq R_0$. Since the probability on the left hand side is decreasing in $R$, we conclude that
\[\P(\area_\bbH(\past_\F(e)) \geq R ) \succeq_e R^{-1/2}\]
as claimed.
 \qedhere

\end{proof}

\begin{proof}[Proof of \cref{T:Ftailcp}]
Let $\F$ be a spanning forest of $G$ and let $e^\dagger$ be the edge of $G^\dagger$ dual to $e=(x,y)$. The past of $e^\dagger$ in the dual forest $\F^\dagger$ is contained in the region of the plane bounded by $e$ and $\Gamma_\F(x,y)$, so that \[\diam_\bbH(\Gamma_\F(x,y)) \geq \diam_\bbH(\past_{\F^\dagger}(e^\dagger)).\]
Meanwhile, if $\past_{\F^\dagger}(e^\dagger)$ is non-empty, then every edge in the path $\Gamma_\F(x,y)$ is incident to a face of $G$ that is in $\past_{\F^\dagger}(e^\dagger)$. By the Ring Lemma (\cref{Thm:DCPRing}), the hyperbolic radii of circles in $P \cup P^\dagger$ are bounded above. We deduce that there exists a constant $C=C(\bM)$ such that
\[\diam_\bbH(\Gamma_\F(x,y))\leq \diam_\bbH(\past_{\F^\dagger}(e^\dagger))+C.\]
We deduce \cref{T:Ftailcp} from \cref{T:Wtailcp} by applying these estimates when $\F$ is the FUSF of $G$ and $\F^\dagger$ is the WUSF of $G^\dagger$.
\end{proof}

\subsection{The uniformly transient case}\label{Subsec:dualexponents}

Recall that a graph is said to be \textbf{uniformly transient} if $\mathbf{p} = \inf_{v \in V} \bP_v(\tau_v^+=\infty)$ is positive. If in addition the graph has bounded degrees, this is equivalent to the property that $\Ceff(v\leftrightarrow \infty)$ is bounded away from zero uniformly in $v$.

\begin{prop}\label{Prop:radii}
Then there exists a constant $C=C(\bM)$ such that
\[  r_{\bbH}(v) \geq {1 \over C} \exp\left(-C \Reff(v \leftrightarrow \infty)\right)  . \]
\end{prop}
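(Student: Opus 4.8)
The plan is to prove that $\Reff(v \leftrightarrow \infty) \succeq \log\bigl(1/r_\bbH(v)\bigr)$ whenever $r_\bbH(v)$ is below a threshold $\eps_0 = \eps_0(\bM)$; rearranging this gives the claimed inequality after enlarging $C$, and the complementary range $r_\bbH(v) \ge \eps_0$ is immediate because $\Reff(v\leftrightarrow\infty) \ge \Reff\bigl(v \leftrightarrow \{u : u\sim v\}\bigr) = c(v)^{-1} \ge \bM^{-1}$ while $r_\bbH(v)$ is bounded above by a constant depending only on $\bM$ by the Ring Lemma (\cref{Thm:DCPRing}). So assume $r_\bbH(v)\le\eps_0$. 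By the uniqueness part of \cref{dblCP1} we may first apply a Möbius transformation of $\D$ taking $z(v)$ to $0$; this affects neither $r_\bbH(v)$ nor $\Reff(v\leftrightarrow\infty)$. Then $\Pp{v}$ is a Euclidean disc centred at $0$ of radius $\rho := \tanh(r_\bbH(v)/2) \asymp r_\bbH(v)$, and since the circles of $P$ have disjoint interiors, every vertex $u\neq v$ satisfies $|z(u)|\ge\rho$ and, more generally, $r(u)\le|z(u)|$; in particular $v$ is the unique vertex with $|z(u)|<\rho$. Set $S = \{u\in V : |z(u)|\ge 1/2\}$. Since $\overline{B_0(1/2)}$ is a compact subset of $\D$ and the packing is locally finite, the set $\{u : |z(u)| < 1/2\}$ is finite, so $S$ separates $v$ from $\infty$ in $G$; consequently a walk escaping to infinity from $v$ must hit $S$ before returning to $v$, whence $\Reff(v\leftrightarrow\infty)\ge\ReffF(v\leftrightarrow S;G)$.

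To bound $\ReffF(v\leftrightarrow S)$ from below I will use the extremal-length characterisation of effective conductance (\cite[Exercise 9.42]{LP:book}), exactly as in the proof of \cref{lem:conddiam}, applied to the ``annular'' edge-length
\[ \ell(e) \;=\; \frac{\min\bigl\{\, r(e^-)+r(e^+),\ |z^*(e)| \,\bigr\}}{|z^*(e)|}\,, \qquad |z^*(e)| := \max\bigl\{ |z(e^-)|,\, |z(e^+)| \bigr\}, \]
assigned to each edge $e$ both of whose endpoints have $|z|\in[\rho,1/2]$, and $\ell(e)=0$ otherwise; this is the discrete analogue of the conformal metric $|dz|/|z|$ on the annulus $\{\rho<|z|<1/2\}$, whose modulus is of order $\log(1/\rho)$. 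Two estimates are required. First, grouping the edges with $\ell(e)\neq 0$ according to the dyadic scale $|z^*(e)|\in[2^k\rho,2^{k+1}\rho]$ (there are $\asymp\log(1/\rho)$ such scales), and bounding, for a fixed scale, $\sum_{e} c(e)\ell(e)^2 \preceq \rho^{-2}2^{-2k}\sum_{u} r(u)^2$ where the sum is over vertices whose circles lie in $B_0(2^{k+2}\rho)$ --- this uses the Ring Lemma to compare the radii of adjacent circles, together with the disjointness of the circles, exactly as in the proofs of \cref{lem: flow} and \cref{lem:conddiam} --- one finds that each scale contributes $\preceq 1$, so that $\sum_e c(e)\ell(e)^2 \preceq \log(1/\rho) \asymp \log(1/r_\bbH(v))$. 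Second, $d_\ell(v,S)\succeq\log(1/r_\bbH(v))$: using $r(u)\le|z(u)|$ and the Ring Lemma one checks that $\ell(e)\succeq\bigl|\log|z(e^+)|-\log|z(e^-)|\bigr|$ for every edge $e$ lying in the annulus; since $v$ is the only vertex inside $B_0(\rho)$, any path from $v$ to $S$ enters the annulus at a neighbour $u_1$ of $v$ with $|z(u_1)|\asymp\rho$ and last leaves it at a vertex adjacent to $S$, which therefore has $|z|\succeq 1$, so telescoping $\sum_i\bigl|\log|z(u_i)|-\log|z(u_{i-1})|\bigr| \ge \bigl|\log|z(u_{\mathrm{last}})|-\log|z(u_1)|\bigr|$ over the edges of the path that lie inside the annulus gives $d_\ell(v,S)\succeq\log(1/\rho)$. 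Combining,
\[ \Reff(v\leftrightarrow\infty) \;\ge\; \ReffF(v\leftrightarrow S) \;\ge\; \frac{d_\ell(v,S)^2}{\sum_{e} c(e)\,\ell(e)^2} \;\succeq\; \log\!\bigl(1/r_\bbH(v)\bigr), \]
which rearranges to the claimed bound.

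The step I expect to be most delicate is the lower bound on $d_\ell(v,S)$: a priori a path from $v$ to $S$ may oscillate radially many times, so it does not simply ``cross the annulus once''. The telescoping inequality above deals with this, but it depends on the two Ring-Lemma facts ($r(u)\le|z(u)|$ and comparability of adjacent radii, which together force $|z|$ to change by at most a bounded factor across any edge on which $\ell$ is small) together with the structural observation that $v$ is the unique vertex of $B_0(\rho)$, so the path is forced to traverse the whole annulus starting immediately next to $v$. The remaining ingredients --- the reduction to small $r_\bbH(v)$, the passage between $r(v)$ and $r_\bbH(v)$ via \eqref{eq:hypeuclcomp}, and the per-scale area bound --- are routine and closely modelled on \cref{lem: flow,lem:conddiam,lem:uniformdisjointannuli}.
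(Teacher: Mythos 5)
Your proposal is correct, but it takes a genuinely different route from the paper. The paper's proof is a two-line citation: after normalising so that $\Pp{v}$ is centred at the origin, it invokes \cite[Corollary 3.3]{GGN13} (together with \cref{Thm:DCPRing}) to obtain $\Reff(v\leftrightarrow\infty)\geq c\log(1/r(v))$, and then uses the boundedness of the hyperbolic radii (again from the Ring Lemma) to pass from $r(v)$ to $r_\bbH(v)$. You instead re-derive this resistance lower bound from scratch via the extremal-length characterisation of $\CeffF$, using the discrete analogue of the conformal metric $|dz|/|z|$ on the annulus $\{\rho<|z|<1/2\}$; the per-scale area bound on $\sum c(e)\ell(e)^2$ and the telescoping lower bound on $d_\ell(v,S)$ are the standard circle-packing analogues of the modulus-of-an-annulus computation, and they do go through. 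What your approach buys is self-containment --- the paper outsources exactly the estimate you prove --- at the cost of roughly a page of argument. Two small points worth flagging: (i) the step ``last leaves it at a vertex adjacent to $S$, which therefore has $|z|\succeq 1$'' is not immediate from adjacency alone; it needs the Ring Lemma (which forces $r(u_n)\asymp r(u_{n-1})\le|z(u_{n-1})|$ and hence $|z(u_n)|\preceq|z(u_{n-1})|$) together with $|z(u_n)|\ge1/2$ --- you acknowledge this dependence but the word ``therefore'' slightly understates it; (ii) for the reduction to $\ReffF(v\leftrightarrow S)$ you implicitly use that $S$ separates $v$ from $\partial_n$ in $G_n^*$ for $n$ large, so that $\Reff(v\leftrightarrow\partial_n;G_n^*)\ge\Reff(v\leftrightarrow S\cup\partial_n;G_n^*)=\Reff(v\leftrightarrow S;G_n^*)\to\ReffF(v\leftrightarrow S)$; this is fine but deserves a sentence. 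Also, for the easy range $r_\bbH(v)\ge\eps_0$, note $c(v)\le\bM\deg(v)\le\bM^2$, so the trivial lower bound is $\Reff(v\leftrightarrow\infty)\ge\bM^{-2}$ rather than $\bM^{-1}$; this has no effect on the conclusion.
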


\begin{proof}
By applying a M\"obius transformation if necessary, we may assume that the circle  $\Pp{v}$ is centered at the origin. By the Ring Lemma (\cref{Thm:DCPRing}), $r_{\bbH}(v)$ is bounded above by a constant depending only on the maximum degree and codegree of $G$. Applying \cite[Corollary 3.3]{GGN13} together with \cref{Thm:DCPRing} yields that $\Reff(v \leftrightarrow \infty) \geq c \log(1/r(v))$ for some constant $c=c(\bM)$. Since the hyperbolic radii are bounded from above, the Euclidean radius $r(v)$ is comparable to $r_{\bbH}(v)$.
\end{proof}

We are now ready to prove \cref{cor:Flengthtail}. In the rest of this subsection, we will use $\preceq,\succeq$ and $\asymp$ to denote inequalities or equalities that hold up to positive multiplicative constants depending only on $\bM$ and $\mathbf{p}$.

\begin{proof}[Proof of \cref{cor:Flengthtail}] Let $e=(x,y)$ be an edge of $G$ and let $\F$ be sample of $\FUSF_G$. By \cref{thm:duality} the dual forest $\F^\dagger$ is distributed as $\WUSF_G$ and so by \cref{T:planeWUSF} it is almost surely one-ended. Hence $\past_{\F^\dagger}(e^\dagger)$ (recall the definition above \cref{T:Wtailcp}), where $e^\dagger$ the edge of $G^\dagger$ dual to $e$, is well defined.

The past of $e^\dagger$ in the dual forest $\F^\dagger$ is contained in the region of the plane bounded by $e$ and $\Gamma_\F(e)$. The non-amenability of the hyperbolic plane implies that the perimeter of any set is at least a constant multiple of its area, and so
\begin{equation}\label{eq:length1} \sum_{v\in \Gamma_\F(x,y)}r_\bbH(v) \succeq \area_\bbH\left(\past_{\F^\dagger}(e^\dagger)\right).\end{equation}
 On the other hand, if $\past_{\F^\dagger}(e^\dagger)$ is non-empty, then every edge in the path $\Gamma_\F(x,y)$ is incident to a face of $G$ that is in $\past_{\F^\dagger}(e^\dagger)$.
 We deduce that if $\past_{\F^\dagger}(e^\dagger)$ is non-empty then
\begin{equation}\label{eq:length2}\area_\bbH\left(\past_{\F^\dagger}(e^\dagger)\right) \succeq \sum_{v\in \Gamma_\F(x,y)}a_\bbH(v).\end{equation}
Note that neither estimate \eqref{eq:length1} or \eqref{eq:length2} required uniform transience.
\cref{Prop:radii} and the Ring Lemma (\cref{Thm:DCPRing}) imply that
\[ |\Gamma_\F(x,y)| \asymp \sum_{v\in \Gamma_\F(x,y)}r_\bbH(v) \asymp \sum_{v\in \Gamma_\F(x,y)}a_\bbH(v). \]
Thus, we deduce \cref{cor:Flengthtail} from \cref{Thm:Wareacp}.
\end{proof}

Let $(X_1,d_1)$ and $(X_2,d_2)$ be metric spaces and let $\alpha,\beta$ be positive. A (not necessarily continuous) function $\phi:X_1\to X_2$ is said to be an \textbf{$(\alpha,\beta)$-rough isometry} if the following hold.
\begin{enumerate}[leftmargin=*]
\item (\textbf{$\phi$ roughly preserves distances.}) $\alpha^{-1}d_1(x,y) -\beta \leq d_2(\phi(x),\phi(y)) \leq \alpha d_1(x,y)+\beta$ for all $x,y \in X_1$.
 \item (\textbf{$\phi$ is almost surjective.}) For every $x_2 \in X_2$, there exists $x_1 \in X_1$ such that $d_2(\phi(x_1),x_2)\leq \beta$.
\end{enumerate}
See \cite[\S 2.6]{LP:book} for further background on rough isometries. We write $d_G$ for the graph distance on $V$.

\begin{corollary}\label{cor:rough}
Let $G$ be a uniformly transient, polyhedral, proper plane network with bounded codegrees and bounded local geometry. Let $d_G$ denote the graph distance on $V$, let $(P,P^\dagger)$ be a double circle packing of $G$ in $\D$, and let $z(v)$ be the centre of the disc in $P$ corresponding to the vertex $v$. Then there exist positive constants $\alpha=\alpha(\bM,\mathbf{p})$ and $\beta=\beta(\bM,\mathbf{p})$ such that  $z$ is an $(\alpha,\beta)$-rough isometry from $(V,d_{G})$ to $(\D,d_{\bbH})$.
\end{corollary}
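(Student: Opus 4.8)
The plan is to reduce \cref{cor:rough} to a uniform two-sided bound on the hyperbolic radii of the circles in $(P,P^\dagger)$ and then run the standard argument that a circle-packing--type embedding with uniformly bounded radii is a rough isometry onto its carrier. For the radius bounds: since $G$ has bounded local geometry, $c(v)\geq\bM^{-1}$ for every vertex $v$, so uniform transience gives $\Ceff(v\leftrightarrow\infty)=c(v)\bP_v(\tau_v^+=\infty)\geq\bM^{-1}\mathbf p$ and hence $\Reff(v\leftrightarrow\infty)\preceq 1$ uniformly in $v$; feeding this into \cref{Prop:radii} produces a constant $\rho=\rho(\bM,\mathbf p)>0$ with $r_\bbH(v)\geq\rho$ for all $v$. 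In the other direction, $G$ is CP hyperbolic by \cref{dblCP2}, and the Ring Lemma (\cref{Thm:DCPRing}) together with the remark following it bounds the hyperbolic radius of every disc in $P\cup P^\dagger$ above by a constant $\bar\rho=\bar\rho(\bM)$.

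The easy half of the statement is then immediate. If $u\sim v$ in $G$ then $\Pp u$ and $\Pp v$ are tangent and of hyperbolic radius at most $\bar\rho$, so $d_\bbH(z(u),z(v))\preceq 1$, and the triangle inequality gives $d_\bbH(z(u),z(v))\preceq d_G(u,v)$ for all $u,v$. Almost-surjectivity is similar: any $p\in\D$ lies in the carrier of $(P,P^\dagger)$, hence in some disc of $P\cup P^\dagger$; if this disc is $\Pp v$ then $d_\bbH(p,z(v))\leq\bar\rho$, and if it is $\Pd f$ then choosing a vertex $v$ incident to $f$ and a point $q\in\Pp v\cap\Pd f$ gives $d_\bbH(p,z(v))\leq d_\bbH(p,q)+d_\bbH(q,z(v))\leq 3\bar\rho$.

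The substance is the reverse bound $d_G(u,v)\preceq d_\bbH(z(u),z(v))+1$. I would fix $u,v$, take the hyperbolic geodesic $\gamma$ from $z(u)$ to $z(v)$ of length $L$, and, using that the carrier is $\D$ so that $\gamma$ is covered by the closed discs of $P\cup P^\dagger$, extract a chain of \emph{distinct} discs $D_0=\Pp u,\dots,D_k=\Pp v$, each meeting $\gamma$, with $D_{i-1}\cap D_i\neq\emptyset$ for all $i$. Working with the auxiliary triangulation $T=T(G)$ obtained by adding a vertex inside each face, two discs of $P\cup P^\dagger$ meet exactly when the corresponding elements of $V\cup F$ are $T$-adjacent, with the one exception that two dual discs can be tangent at a point that is also shared by a primal disc; inserting that primal disc (at most doubling $k$) lets me assume every consecutive pair corresponds to a $T$-edge. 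The chain then reads off a path in $T$ from $u$ to $v$ of length $\preceq k$, and replacing each face-vertex along it by an arc of at most $\bM$ edges around the corresponding face turns it into a path in $G$ from $u$ to $v$ of length $\preceq k$. Finally, to bound $k$, I would prove that a hyperbolic ball of radius $1$ meets at most $N=N(\bM,\mathbf p)$ discs of $P\cup P^\dagger$: every such disc lies in the concentric ball of radius $1+2\bar\rho$; the primal ones among them are interior-disjoint and each contain a hyperbolic disc of radius $\rho$, so there are boundedly many; and every dual disc meeting the ball is incident to one of the boundedly many primal discs lying in the slightly larger concentric ball of radius $1+4\bar\rho$, each of which is incident to at most $\bM$ faces. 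Covering $\gamma$ by the unit hyperbolic balls centred at $\gamma(0),\gamma(1),\dots,\gamma(\lceil L\rceil)$ (parametrising $\gamma$ by arclength) and noting that each $D_i$ meets $\gamma$ inside one of these balls while the $D_i$ are distinct, we get $k+1\leq N(\lceil L\rceil+1)$, hence $d_G(u,v)\preceq L+1=d_\bbH(z(u),z(v))+1$. Combining with the previous paragraph and choosing $\alpha$ and $\beta$ large enough in terms of $\bar\rho,\rho,N,\bM$ --- i.e.\ in terms of $\bM$ and $\mathbf p$ --- shows that $z$ is an $(\alpha,\beta)$-rough isometry.

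I expect the main obstacle to be this last step: the bookkeeping for passing between the disc pattern of $(P,P^\dagger)$ and the graph $G$ (handling the dual--dual tangencies and the face arcs without damaging the constants), and the volume-packing count, which is exactly where the lower bound $r_\bbH\geq\rho$ coming from \cref{Prop:radii} and uniform transience is used in an essential way. The rest is routine once the radius bounds are in hand.
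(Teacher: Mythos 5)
Your proof is correct and follows essentially the same strategy as the paper's: establish a two-sided bound on hyperbolic radii (upper bound from the Ring Lemma, lower bound from \cref{Prop:radii} together with uniform transience and the bound $c(v)\geq\bM^{-1}$), then deduce both directions of the rough isometry and almost-surjectivity from the circle-packing geometry. The one cosmetic difference is in the hard direction $d_G(u,v)\preceq d_\bbH(z(u),z(v))$: the paper counts the vertices $w$ whose primal or incident dual circle meets the geodesic $\gamma$ by a simple area-packing argument — all such circles lie in a tube of bounded thickness about $\gamma$, hence have total hyperbolic area $\preceq\mathrm{length}(\gamma)$, hence are $\preceq\mathrm{length}(\gamma)$ in number since each has area $\succeq 1$ — and then invokes (as elsewhere in the paper) that this set of vertices contains a $G$-path from $u$ to $v$. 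You instead extract an explicit chain of distinct overlapping discs along $\gamma$ and count them by covering $\gamma$ with unit balls; both work, and the paper's version is marginally cleaner but requires the same insertion of primal discs at dual--dual tangencies and the same ``divert around faces'' step that you spell out, so this is a matter of bookkeeping rather than of substance.
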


\begin{proof} \cref{Prop:radii} implies that for every vertex $v$ of $G$, $r_{\bbH}(v)$ is bounded both above and away from zero by positive constants. Almost surjectivity is immediate. For each two vertices $u$ and $v$ in $G$, the shortest graph distance path between them induces a curve in $\D$ (by going along the hyperbolic geodesics between the centres of the circles in the path) whose hyperbolic length is  $ \succeq d_G(u,v)$.

Conversely, let $\gamma$ be the hyperbolic geodesic between $z(u)$ and $z(v)$,  and consider the set $W$ of vertices $w$ of $G$ such that either $\Pp{w}$ intersects $\gamma$ or $\Pd{f}$ intersects $\gamma$ for some face $f$ incident to $w$. Let $d$ be the length of $\gamma$.
 Since all circles in $(P,P^\dagger)$ have a uniform upper bound on their hyperbolic radii, we deduce that all circles in $W$ are contained in a hyperbolic neighbourhood of constant thickness about $\gamma$, and hence the total area of these circles is $\preceq d$. Since the radii of the circles are also bounded away from zero, we deduce that the cardinality of $W$ is also $\preceq d$. Since $W$ contains a path in $G$ from $u$ to $v$, we deduce that $d_G(u,v)$ is $\preceq d$ as required.
\end{proof}

We summarise the situation for uniformly transient graphs in the following corollary, which follows immediately by combining \cref{cor:rough} with \cref{T:Ftailcp,T:Wtailcp,Thm:Wareacp,cor:Flengthtail}. We write $\diam_G$ for the graph distance diameter of a set of vertices in~$G$.

\begin{corollary}[Graph distance exponents]\label{Cor:graphexponents}
Let $G$ be a uniformly transient, polyhedral, proper plane network with bounded codegrees and bounded local geometry, and let $\mathbf{p}>0$ be a uniform lower bound on the escape probabilities of $G$. Then there exist positive constants $k_1=k_1(\bM,\mathbf{p})$ and $k_2=k_2(\bM,\mathbf{p})$ such that
\[
\hspace{1cm}
\begin{array}{llcll}
k_1 R^{-1} &\leq &\FUSF_G(\diam_G(\Gamma_\F(x,y)) \geq R) &\leq &k_2 R^{-1},\vspace{.25em}\\
k_1 R^{-1} &\leq &\WUSF_G(\diam_G(\past_\F(e))\geq R) &\leq &k_2 R^{-1},\vspace{.25em}\\
k_1 R^{-1/2} &\leq &\WUSF_G(|\past_\F(e)|\geq R) &\leq &k_2 R^{-1/2}, 
\\
k_1 R^{-1/2} &\leq &\FUSF_G(|\Gamma_\F(x,y)|\geq R) &\leq &k_2 R^{-1/2}
\vspace{.25em}
\end{array}
\]
for every edge $e=(x,y)$ of $G$ and every $R\geq 1$, where $\F$ is a sample of either the free or wired uniform spanning forest of $G$ as appropriate.
\end{corollary}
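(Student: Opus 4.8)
The plan is to obtain \cref{Cor:graphexponents} by transferring the four hyperbolic estimates \cref{T:Ftailcp,T:Wtailcp,Thm:Wareacp,cor:Flengthtail} to the graph metric via the rough isometry of \cref{cor:rough} (all of whose hypotheses are implied by those of \cref{Cor:graphexponents}, since bounded codegrees together with bounded local geometry give $\bM_G<\infty$ and uniform transience gives transience). Everything rests on two elementary comparisons. First, because $G$ is uniformly transient, \cref{Prop:radii} together with the Ring Lemma (\cref{Thm:DCPRing}) shows that $r_\bbH(v)$, and hence $\ah(v)\asymp r_\bbH(v)^2$ by \eqref{eq:hypeuclcomp}, is bounded both above and away from zero by positive constants depending only on $\bM$ and $\mathbf{p}$; since the discs of $P$ have disjoint interiors, it follows that $\area_\bbH(A)=\sum_{v\in A}\ah(v)\asymp |A|$ for every finite $A\subseteq V$. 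Second, writing $z(v)$ for the hyperbolic centre of $\Pp{v}$ --- which lies within a bounded hyperbolic distance of the Euclidean centre because the hyperbolic radii are bounded, so that \cref{cor:rough} applies to it as well --- the rough-isometry property gives $\alpha^{-1}\diam_G(A)-\beta\leq\diam_\bbH(A)\leq\alpha\diam_G(A)+\beta$ for every $A\subseteq V$, with $\alpha,\beta$ depending only on $\bM,\mathbf{p}$; these inequalities hold pointwise for each realisation of $\F$ since $z$ is deterministic.

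Given these, each line of \cref{Cor:graphexponents} follows from the corresponding hyperbolic estimate, the only point of care being the absorption of the additive error $\beta$ into multiplicative constants. Consider the wired diameter line. For the upper bound, $\{\diam_G(\past_\F(e))\geq R\}\subseteq\{\diam_\bbH(\past_\F(e))\geq\alpha^{-1}R-\beta\}$, which for $R\geq 2\alpha\beta$ is contained in $\{\diam_\bbH(\past_\F(e))\geq R/(2\alpha)\}$, so \cref{T:Wtailcp} gives $\P(\diam_G(\past_\F(e))\geq R)\leq 2\alpha k_2 R^{-1}$; for $1\leq R<2\alpha\beta$ the trivial bound $\P(\cdot)\leq 1\leq 2\alpha\beta R^{-1}$ suffices. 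For the lower bound, $\{\diam_\bbH(\past_\F(e))\geq\alpha R+\beta\}\subseteq\{\diam_G(\past_\F(e))\geq R\}$, and since $\alpha R+\beta\leq(\alpha+\beta)R$ for $R\geq 1$ and the constant $k_1=k_1(\bM,r_\bbH(x))$ of \cref{T:Wtailcp} may be replaced by a constant depending only on $\bM$ and $\mathbf{p}$ (because $r_\bbH(x)$ is bounded below), we obtain $\P(\diam_G(\past_\F(e))\geq R)\geq k_1(\alpha+\beta)^{-1}R^{-1}$. The free diameter line is handled verbatim with \cref{T:Ftailcp} in place of \cref{T:Wtailcp}; the cardinality-of-past line uses the comparison $|\past_\F(e)|\asymp\area_\bbH(\past_\F(e))$ together with \cref{Thm:Wareacp}, again with the same case split on $R$ converting multiplicative errors harmlessly; and the $|\Gamma_\F(x,y)|$ line is exactly the statement of \cref{cor:Flengthtail}.

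I do not anticipate any genuine obstacle: the corollary is a bookkeeping consequence of results already proved, and the only thing worth a sentence is that uniform transience is precisely what forces the lower bound on $r_\bbH(v)$ that legitimises both the cardinality--area comparison and the replacement of the $r_\bbH(x)$-dependent constants by constants depending only on $\bM$ and $\mathbf{p}$.
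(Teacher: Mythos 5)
Your proposal is correct and is exactly the argument the paper intends — the paper states that \cref{Cor:graphexponents} ``follows immediately by combining \cref{cor:rough} with \cref{T:Ftailcp,T:Wtailcp,Thm:Wareacp,cor:Flengthtail}'' without supplying the bookkeeping, and your write-up simply fills in those details (the case split on $R$ to absorb the additive rough-isometry error, the use of the uniform lower bound on $r_\bbH(v)$ from \cref{Prop:radii} both to make the constants depend only on $\bM,\mathbf{p}$ and to get $\area_\bbH(A)\asymp|A|$).
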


\section{A remark and a problem}\label{sec:conclusion}
\subsection{A remark}
\begin{remark}[Non-universality in the parabolic case.]\label{remark:parabolic}

 Unlike in the CP hyperbolic case, the exponents governing the behaviour of the USTs of CP parabolic, polyhedral proper plane graphs with bounded codegrees and bounded local geometry are not universal, and need not exist in general.

Indeed,
consider the double circle packing of the proper plane quadrangulation with underlying graph $\N\times \Z_4$, pictured in \cref{fig:dcptube}.
Let the packing be normalised to be symmetric under rotation by $\pi/2$ about the origin and to have $r(0,0)=1$. It is possible to compute that
$r(i,j)=(3+2\sqrt{2})^{i}$ and hence that $|z(i,j)|$ is comparable to $(3+2\sqrt{2})^{i}$ for every $(i,j)\in \N\times \Z_4$.
 Suppose that the edges connecting $(i,j)$ to $(i\pm 1,j)$ are given weight $1$ for every $(i,j)\in \N\times\Z_4$, while the edges connecting $(i,j)$ to $(i,j\pm 1)$ are given weight $c$ for each $(i,j)\in \N\times \Z_4$.
 It can be computed that the probability that a walk started at $(i,0)$ hits $(0,0)$ without ever changing its second coordinate is
$a(c)^i:=(1+c-\sqrt{c^2+2c})^i$.
Let $e=((0,0),(0,1))$. By running Wilson's algorithm rooted at $(0,0)$ starting from the vertices $(i,0)$ and $(i,1)$, we see that
\begin{align*}\UST\left(\past_T(e) \cap \{i\}\times \Z_4 \neq \emptyset\right) &\geq \bP_{(i,0)}(\tau_{(0,0)}<\tau_{\N \times\{1,2,3\}})\bP_{(i,1)}(\tau_{(0,1)}<\tau_{\N \times\{0,2,3\}})\\ & \hspace{6cm}\cdot\bP_{(0,1)}(X_1=(0,0))
\\
&=\frac{c}{2c+1}a(c)^{2i}. \end{align*}
the right-hand side is exactly the probability that the random walk from $(i,0)$ hits $(0,0)$ without ever changing its second coordinate, and that the random walk from $(i,1)$ hits $(0,1)$ without ever changing its second coordinate and then steps to $(0,0)$.
\begin{figure}
\centering
\includegraphics[width=0.375\textwidth]{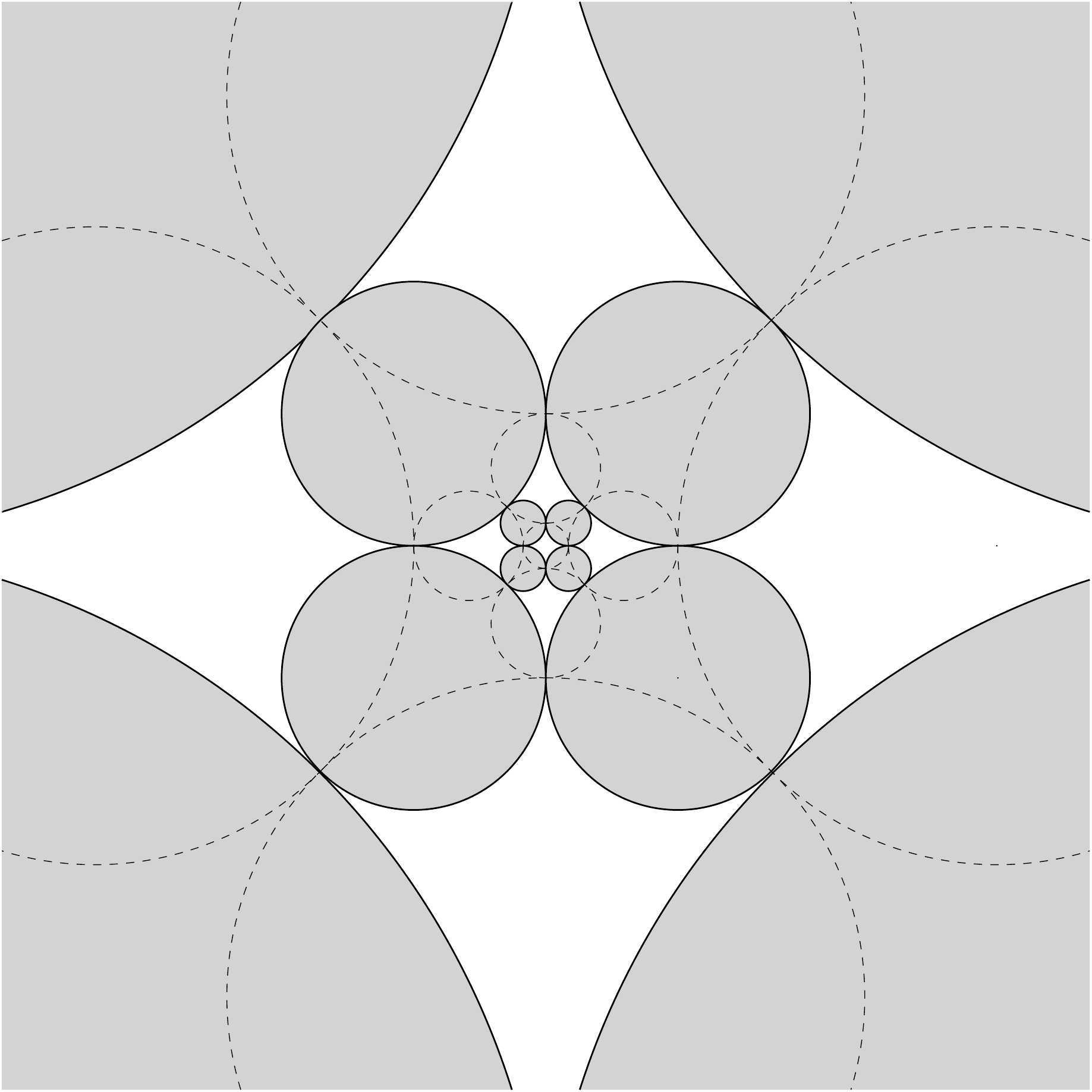}
\caption{The double circle packing of $\N \times \Z_4$.}
\label{fig:dcptube}
\end{figure}

Let $q(c)$ be the probability that a random walk started at $(i,j)$ visits every vertex of $\{i\}\times \Z_4$ before changing its vertical coordinate, which tends to one as $c\to\infty$.  Let $Y$ be a loop-erased random walk from $(0,0)$. It can be computed that the probability that a random walk  started from $(i,j)$ visits $\{0\}\times \Z_4$ before hitting the trace of $Y$ is at most $b(c)^i:=(1-q(c))^i$. Thus, by Wilson's algorithm and a union bound,
\[\UST(\past_T(e)\cap \{i\}\times \Z_4 \neq \emptyset) \leq 4b(c)^i.\]
It follows that there exist positive constants $k(c)$, $\alpha(c)$ and $\beta(c)$ such that $\alpha(c)\to0$ as $c\to 0$,  $\beta(c)\to\infty$ as $c\to\infty$, and
\[k(c)^{-1}R^{-\alpha(c)}\leq \UST\left(\mathrm{diam}_\C(\past_T(e)) \geq R\right) \leq k(c)R^{-\beta(c)}. \]

Thus, by varying $c$, we obtain CP parabolic proper plane networks with bounded codegrees and bounded local geometry with different exponents governing the diameter of the pasts of edges in their USTs: If $c$ is large the diameter has a light tail, while if $c$ is small the diameter has a heavy tail.
Furthermore, by varying the weight of $((i,j),(i,j\pm1))$ as a function of $i$ in the above example (i.e., making $c$ small at some scales and large at others), it is possible to construct a polyhedral, CP parabolic proper plane network $G$ with bounded codegrees and bounded local geometry such that
\[\frac{\log\UST_G\left(\mathrm{diam}_\C(\past_T(e)) \geq R\right)}{\log(R)}\]
does not converge as $R\to\infty$ for some edge $e$ of $G$. The details are left to the reader.

Similar constructions show that the behaviour of
$\WUSF_G(\diam_\bbH(\past_\F(e))\geq R\cdot r_\bbH(x))$
is not universal over polyhedral, CP hyperbolic proper plane network $G$ with bounded codegrees and bounded local geometry in the regime that $r_\bbH(x)$ is small.

\end{remark}

\subsection{A problem}

It is natural to ask to what extent  the assumption of planarity in \cref{T:mainthm} can be relaxed.
Part (1) of the following question was suggested by R.\ Lyons.

\begin{question}\label{Q:product}
Let $G$ be a bounded degree proper plane graph.
\begin{enumerate}
\item Let $H$ be a finite graph.
 Is the free uniform spanning forest of the product graph $G\times H$ connected almost surely?
 \item Let $G'$ be a bounded degree graph that is rough isometric to $G$. Is the
 the free uniform spanning forest of $G$ connected almost surely?
 \end{enumerate}
\end{question}

Without the assumption of planarity, connectivity of the FUSF is not preserved by rough isometries; this can be seen from an analysis of the graphs appearing in \cite[Theorem 3.5]{BS96a}.

\subsection*{Acknowledgments}
TH thanks Tel Aviv University and both authors thank the Issac Newton Institute, where part of this work was carried out, for their hospitality. We also thank Tyler Helmuth for finding several typos in an earlier version of this manuscript. Images of circle packings were created with Ken Stephenson's CirclePack software \cite{CP}.
This project was supported by NSERC Discovery grant 418203, ISF grant 1207/15, and ERC starting grant 676970.

\footnotesize{
\bibliographystyle{abbrv}
 \bibliography{unimodular}
 }
 \end{document}